\newtheorem{theorem}{Theorem}[section]
\newtheorem{proposition}[theorem]{Proposition}
\newtheorem{lemma}[theorem]{Lemma}
\newtheorem{corollary}[theorem]{Corollary}
\theoremstyle{definition}
\newtheorem{definition}[theorem]{Definition}
\newtheorem{remark}[theorem]{Remark}
\newtheorem{example}[theorem]{Example}
\newtheorem{conjecture}[theorem]{Conjecture} 
\newtheorem{problem}[theorem]{Problem} 
\newcommand{\conn}{\ensuremath{\#}} 
\newcommand{\cp}{\ensuremath{\mathbb{C}P}} 
\newcommand{\uxa}{\ensuremath{(\underline{X},\underline{A})}} 
\newcommand{\ux}{\ensuremath{(\underline{X},\underline{\ast})}} 
\newcommand{\sux}{\ensuremath{(\underline{\Sigma X},\underline{\ast})}} 
\newcommand{\cxx}{\ensuremath{(\underline{CX},\underline{X})}}
\newcommand{\sclxx}{\ensuremath{(\underline{C\Omega\Sigma X},\underline{\Omega\Sigma X})}}
\newcommand{\zk}{\ensuremath{\mathcal{Z}_{K}}}
\newcommand{\dm}{\ensuremath{\mbox{dim}}} 
\newcommand{\hlgy}[1]{\ensuremath{H_{*}(#1)}}
\newcommand{\rhlgy}[1]{\ensuremath{\widetilde{H}_{*}(#1)}}
\newcommand{\cohlgy}[1]{\ensuremath{H^{*}(#1)}}
\newcounter{bean}
\newenvironment{letterlist}{\begin{list}{\rm ({\alph{bean}})}
      {\usecounter{bean}\setlength{\rightmargin}{\leftmargin}}}
      {\end{list}}
\newcommand{\namedright}[3]{\ensuremath{#1\stackrel{#2}
 {\longrightarrow}#3}}
\newcommand{\nameddright}[5]{\ensuremath{#1\stackrel{#2}
 {\longrightarrow}#3\stackrel{#4}{\longrightarrow}#5}}
\newcommand{\namedddright}[7]{\ensuremath{#1\stackrel{#2}
 {\longrightarrow}#3\stackrel{#4}{\longrightarrow}#5
  \stackrel{#6}{\longrightarrow}#7}}
\newcommand{\larrow}{\relbar\!\!\relbar\!\!\rightarrow}
\newcommand{\llarrow}{\relbar\!\!\relbar\!\!\larrow}
\newcommand{\lllarrow}{\relbar\!\!\relbar\!\!\llarrow} 
\newcommand{\llllarrow}{\relbar\!\!\relbar\!\!\lllarrow}
\newcommand{\lnamedright}[3]{\ensuremath{#1\stackrel{#2}
 {\larrow}#3}}
\newcommand{\lnameddright}[5]{\ensuremath{#1\stackrel{#2}
 {\larrow}#3\stackrel{#4}{\larrow}#5}}
\newcommand{\lnamedddright}[7]{\ensuremath{#1\stackrel{#2}
 {\larrow}#3\stackrel{#4}{\larrow}#5
  \stackrel{#6}{\larrow}#7}}
\newcommand{\llnamedright}[3]{\ensuremath{#1\stackrel{#2}
 {\llarrow}#3}}
\newcommand{\llnameddright}[5]{\ensuremath{#1\stackrel{#2}
 {\llarrow}#3\stackrel{#4}{\llarrow}#5}}
\newcommand{\llnamedddright}[7]{\ensuremath{#1\stackrel{#2}
 {\llarrow}#3\stackrel{#4}{\llarrow}#5
  \stackrel{#6}{\llarrow}#7}}
\newcommand{\lllnamedright}[3]{\ensuremath{#1\stackrel{#2}
 {\lllarrow}#3}}
\newcommand{\lllnameddright}[5]{\ensuremath{#1\stackrel{#2}
 {\lllarrow}#3\stackrel{#4}{\lllarrow}#5}}
\newcommand{\lllnamedddright}[7]{\ensuremath{#1\stackrel{#2}
 {\lllarrow}#3\stackrel{#4}{\lllarrow}#5
  \stackrel{#6}{\lllarrow}#7}}
\newcommand{\qqed}{\hfill\Box}
\begin{document}


\title{Homotopy fibrations with a section after looping} 

\author{Stephen Theriault}
\address{School of Mathematical Sciences, University of Southampton, Southampton 
   SO17 1BJ, United Kingdom}
\email{S.D.Theriault@soton.ac.uk}

\subjclass[2010]{Primary 55P35, 57N65; Secondary 55Q15}
\keywords{fibration, cofibration, two-cone, Poincar\'{e} Duality complex, connected sum, polyhedral product}


\begin{abstract} 
We analyze a general family of fibrations which, after looping, have sections. 
Methods are developed to determine the homotopy type of the fibre and the homotopy 
classes of the map from the fibre to the base. The methods are driven by applications to two-cones, 
Poincar\'{e} Duality complexes, the connected sum operation, and polyhedral products. 
\end{abstract}

\maketitle 

\tableofcontents 
\newpage

\section{Introduction} 
\label{sec:intro} 

A fundamental goal in homotopy theory is to determine the homotopy types of spaces 
and the homotopy classes of the maps between them. This paper builds on new 
methods developed in~\cite{BT2} in order to do that in an appropriate context. The applications 
are wide-ranging, informing on the homotopy theory of two-cones, Poincar\'{e} 
Duality complexes, connected sums, and polyhedral products. 

To describe the context, it will be assumed throughout that all spaces are $CW$-complexes 
so that weak homotopy equivalences are homotopy equivalences. 
Suppose that there is a homotopy fibration 
\(\nameddright{E}{p}{Y}{h}{Z}\)  
and a homotopy cofibration 
\(\nameddright{\Sigma A}{f}{Y}{}{Y'}\). 
Suppose that $h$ extends to a map 
\(h'\colon\namedright{Y'}{}{Z}\) 
and let $E'$ be the homotopy fibre of $h'$. This data is assembled into a diagram 
\begin{equation} 
  \label{data} 
  \diagram 
       & E\rto\dto^{p} & E'\dto^{p'} \\ 
       \Sigma A\rto^-{f} & Y\rto\dto^{h} & Y'\dto^{h'} \\ 
       & Z\rdouble & Z. 
  \enddiagram 
\end{equation} 
where the vertical columns and the maps between them form a homotopy fibration diagram. 
Using either Dold and Lashof~\cite{DL} or Mather's Cube Lemma~\cite{M}, there is a 
homotopy pushout 
\begin{equation} 
  \label{DLpo} 
  \diagram 
      \Omega Z\times\Sigma A\rto\dto^{\pi_{1}} & E\dto \\ 
      \Omega Z\rto & E' 
  \enddiagram 
\end{equation}  
where $\pi_{1}$ is the projection. Under favourable circumstances, this homotopy pushout 
may allow for the homotopy type of $E'$ to be determined, and possibly also the homotopy 
class of the map 
\(\namedright{E}{}{E'}\). 
However, much depends on the homotopy class of the map 
\(\namedright{\Omega Z\times\Sigma A}{}{E}\), 
and this can be difficult to identify with sufficient precision. 

Suppose in addition that the map 
\(\namedright{\Omega Y}{\Omega h}{\Omega Z}\) 
has a right homotopy inverse 
\(s\colon\namedright{\Omega Z}{}{\Omega Y}\). 
Then the homotopy pushout~(\ref{DLpo}) simplifies to a homotopy cofibration  
\begin{equation} 
  \label{thetacofintro} 
  \nameddright{\Omega Z\ltimes\Sigma A}{\theta}{E}{}{E'} 
\end{equation}  
for some map $\theta$. In the special case when $Y'=Z$ and $h'$ is the identity map, 
this implies that~$E'$ is contractible so $\theta$ is a homotopy equivalence. But in general 
this cofibration by itself says little about the precision with which $\theta$ 
can be identified. However, as will be explained in Section~\ref{sec:background}, the 
existence of a right homotopy inverse for $\Omega h$ implies that there is a profound 
connection between $\theta$, the homotopy action of $\Omega Z$ on $E$, and Whitehead 
products mapping into $Y$. Specifically, there is a homotopy commutative diagram 
\begin{equation} 
  \label{introBTdgrm} 
  \diagram 
        \Omega Z\ltimes\Sigma A\rto^-{\theta}\dto^{\simeq} & E\dto^{p} \\ 
               (\Omega Z\wedge\Sigma A)\vee\Sigma A\rto^-{[\gamma,f]+f} 
        & Y   
  \enddiagram 
\end{equation} 
where $\gamma$ is the composite 
\(\gamma\colon\nameddright{\Sigma\Omega Z}{\Sigma s}{\Sigma\Omega Y}{ev}{Y}\), 
the map $ev$ is the canonical evaluation map, and $[\gamma,f]$ is the Whitehead 
product of $\gamma$ and $f$. That is, the homotopy class of $\theta$ is identified, 
at least up to composition with $p$, and this gives a measure of control over the 
homotopy cofibration 
\(\nameddright{\Omega Z\ltimes\Sigma A}{\theta}{E}{}{E'}\). 
But the level of control is often not fine enough to precisely describe the homotopy 
type of $E'$ in cases of interest. Obtaining that control is the thrust of this paper. 
\medskip 

We consider, then, homotopy fibrations 
\(\nameddright{E}{p}{Y}{h}{Z}\) 
which have a section after looping. That is, those for which $\Omega h$ has a right homotopy 
inverse. This begins with a simple but foundational case that will play an important role at 
many points later on. We move on to consider different families of examples, each of which 
involves distinctive features that influence how control over $\theta$ is obtained. 
\medskip 

\noindent 
\textbf{A foundational case}. 
Consider the homotopy fibration 
\(\nameddright{E}{p}{\Sigma X\vee\Sigma Y}{q_{1}}{\Sigma X}\) 
where $q_{1}$ is the pinch map to the first wedge summand. Note that $q_{1}$ has a right 
homotopy inverse, so $\Omega q_{1}$ does as well, implying that this is an example of 
a homotopy fibration with a section after looping. 

For $k\geq 1$, let $X^{\wedge k}$ be the $k$-fold smash product of $X$ with itself. 
By~\cite[Theorem 4.3.2]{N3} there is a homotopy equivalence 
\[E\simeq\bigvee_{k=0}^{\infty} X^{\wedge k}\wedge\Sigma Y\] 
 where, by convention, $X^{\wedge 0}\wedge\Sigma Y$ refers to $\Sigma Y$. 
 Further, let 
 \(i_{1}\colon\namedright{\Sigma X}{}{\Sigma X\vee\Sigma Y}\) 
 and 
 \(i_{2}\colon\namedright{\Sigma Y}{}{\Sigma X\vee\Sigma Y}\) 
 be the inclusions of the first and second wedge summands respectively. Let 
 $ad^{\, 0}(i_{1})(i_{2})=i_{2}$ and for $k\geq 1$ let $ad^{k}(i_{1})(i_{2})$ be the 
 Whitehead product $[i_{1},ad^{k-1}(i_{1})(i_{2})]$. Then~\cite[Theorem 4.3.2]{N3} shows 
 that, under the homotopy equivalence for $E$ above, the map $p$ may be identified as 
 $\bigvee_{k=0}^{\infty} ad^{k}(i_{1})(i_{2})$. 
 
 We give an alternative proof of this which has the advantage of having a compatibility with 
 the map $\theta$ in~(\ref{introBTdgrm}). Here, the general homotopy cofibration 
 \(\nameddright{\Sigma A}{f}{Y}{}{Y'}\) 
 specifies to  
 \(\nameddright{\Sigma Y}{i_{2}}{\Sigma X\vee\Sigma Y}{q_{1}}{\Sigma X}\) 
 and $\theta$ takes the form of a map 
 \(\namedright{\Omega\Sigma X\ltimes\Sigma Y}{\theta}{E}\).  
The point to emphasize is that our choice of a homotopy equivalence for $E$ has the 
additional property of respecting the homotopy action of $\Omega\Sigma X$ on $E$. 

\begin{theorem}[appearing in the text as Theorem~\ref{dbard}] 
   \label{dbardintro} 
   Let $X$ and $Y$ be path-connected, pointed spaces and consider the homotopy fibration  
   \(\nameddright{E}{}{\Sigma X\vee\Sigma Y}{q_{1}}{\Sigma X}\). 
   There is a homotopy commutative diagram 
   \[\diagram 
          \bigvee_{k=0}^{\infty} X^{\wedge k}\wedge\Sigma Y 
                 \rto^-{\mathfrak{d}}\drto_{\bigvee_{k=0}^{\infty} ad^{k}(i_{1})(i_{2})} 
               & E\dto^{p} \\ 
           & \Sigma X\vee\Sigma Y 
     \enddiagram\]      
   where: 
   \begin{letterlist} 
      \item $\mathfrak{d}$ is a homotopy equivalence; 
      \item $\Sigma\mathfrak{d}\simeq\Sigma d$, where $d$ is the composite 
               \[\nameddright{\bigvee_{k=0}^{\infty} X^{\wedge k}\wedge\Sigma Y}{c} 
                  {\Omega\Sigma X\ltimes\Sigma Y}{\theta}{E}.\] 
    \end{letterlist}  
\end{theorem}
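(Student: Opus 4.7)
The plan is to construct $\mathfrak{d}$ as a wedge of chosen lifts of iterated Whitehead products through the fibration, then verify (a) by a homology computation and (b) by unpacking the description of $p\circ\theta$ from diagram~(\ref{introBTdgrm}).

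For each $k\geq 0$ the map $ad^{k}(i_{1})(i_{2})\colon X^{\wedge k}\wedge\Sigma Y\to\Sigma X\vee\Sigma Y$ is annihilated by $q_{1}$: this is immediate for $k=0$, and for $k\geq 1$ it follows from bilinearity of Whitehead products since $q_{1}\circ i_{2}\simeq\ast$. So each $ad^{k}(i_{1})(i_{2})$ lifts through $p$ to a map $\tilde w_{k}\colon X^{\wedge k}\wedge\Sigma Y\to E$, and I set $\mathfrak{d}=\bigvee_{k\geq 0}\tilde w_{k}$. The triangle in the statement commutes by construction. To obtain (a), I compute $H_{\ast}(E)$ from the Serre spectral sequence of $E\to\Sigma X\vee\Sigma Y\to\Sigma X$, which collapses and matches $H_{\ast}(\bigvee_{k}X^{\wedge k}\wedge\Sigma Y)$ summand by summand via the James splitting of $\Omega\Sigma X$. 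Checking that each $\tilde w_{k}$ hits the bottom class of the $k$-th summand (using compatibility with the $\Omega\Sigma X$-action on $E$ induced by the section) shows $\mathfrak{d}_{\ast}$ is an isomorphism, and since $X$ and $Y$ are path-connected both spaces are simply connected, so Whitehead's theorem closes (a).

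For (b), choose the section $s=\Omega i_{1}$ (valid because $q_{1}\circ i_{1}=\mathrm{id}_{\Sigma X}$), so that $\gamma$ from diagram~(\ref{introBTdgrm}) becomes $i_{1}\circ ev_{\Sigma X}$ and the diagram yields $p\circ\theta\simeq\bigl([i_{1}\circ ev_{\Sigma X},\,i_{2}]+i_{2}\bigr)\circ(\text{wedge-pinch})$. The map $c$ is (the homotopy inverse of) the standard decomposition of $\Omega\Sigma X\ltimes\Sigma Y$ that uses the co-$H$ structure on $\Sigma Y$ together with the James splitting $\Sigma\Omega\Sigma X\simeq\bigvee_{k\geq 1}\Sigma X^{\wedge k}$. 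Restricting $ev_{\Sigma X}$ to the $k$-th James summand realises the classical iterated Whitehead product $ad^{k-1}(\iota_{\Sigma X})(\iota_{\Sigma X})$, and then the naturality identity $[f\circ g,h]=[f,h]\circ(g\wedge 1)$, together with the absorption $[i_{1},ad^{k-1}(i_{1})(i_{2})]=ad^{k}(i_{1})(i_{2})$, identifies $p\circ d$ on each summand as $ad^{k}(i_{1})(i_{2})$. Thus $p\circ d\simeq p\circ\mathfrak{d}$. To promote this to $\Sigma\mathfrak{d}\simeq\Sigma d$, I exploit that $E$ itself splits as a wedge of suspensions of $X^{\wedge k}\wedge Y$, so that the discrepancy between two lifts of the same map through $p$, which a priori lives in the fibre, is detected after a single suspension by its composition with $\Sigma p$; since that composition vanishes, the suspended lifts coincide.

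The main obstacle I anticipate is precisely this last step. The ambiguity in choosing a lift through $p$ is controlled by the $\Omega\Sigma X$-action, and collapsing it after one suspension requires carefully combining the stable splitting of $E$ with the action-theoretic definition of $\theta$. The other delicate ingredient is the identification of the restriction of $ev_{\Sigma X}$ to each James summand with the iterated Whitehead product, which drives everything in (b); while classical, its bookkeeping (particularly getting signs and bracket nestings right when matching $[i_{1}\circ w_{k},i_{2}]$ with $ad^{k}(i_{1})(i_{2})$) governs the precision of the entire argument.
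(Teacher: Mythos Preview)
Your argument for part~(b) contains a genuine error that undermines the whole approach. The claim that the restriction of $ev\colon\Sigma\Omega\Sigma X\to\Sigma X$ to the $k$-th James summand is $ad^{k-1}(\iota_{\Sigma X})(\iota_{\Sigma X})$ is not correct; $ev\circ\phi_{k}$ is a Hopf-construction--type map, not an iterated Whitehead product. Even if it were, naturality would give you $[i_{1}\circ ad^{k-1}(\iota)(\iota),\,i_{2}]=[ad^{k-1}(i_{1})(i_{1}),\,i_{2}]$, which is a different bracket arrangement from $ad^{k}(i_{1})(i_{2})=[i_{1},ad^{k-1}(i_{1})(i_{2})]$; the ``absorption'' you invoke does not convert one into the other. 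The upshot is that $p\circ d_{k}$ is \emph{not} homotopic to $ad^{k}(i_{1})(i_{2})$ for $k\geq 2$, and this is precisely why the paper introduces a second map $\mathfrak{d}_{k}$, distinct from $d_{k}$. Finally, your promotion step---inferring $\Sigma g_{1}\simeq\Sigma g_{2}$ from $p\circ g_{1}\simeq p\circ g_{2}$ because the discrepancy ``is detected by $\Sigma p$''---is circular: $\Sigma p$ annihilating the difference tells you nothing about the difference itself.

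The paper's route is essentially the reverse of yours. It defines $\mathfrak{d}_{k}$ \emph{recursively} via the reduced action, $\mathfrak{d}_{k}=\overline{a}\circ(E\ltimes\mathfrak{d}_{k-1})\circ i$, and this recursive structure is exactly what makes the inductive verification of $p\circ\mathfrak{d}_{k}\simeq ad^{k}(i_{1})(i_{2})$ work (Lemma~\ref{dbarkaction}, using Proposition~\ref{thetaWh} at each step with $\alpha=\mathfrak{d}_{k-1}$). Separately, $d=\overline{a}\circ(1\ltimes g)\circ c$ is shown to be a homotopy equivalence directly, since each factor is one (Lemma~\ref{dequiv}). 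The congruence $\Sigma\mathfrak{d}_{k}\simeq\Sigma d_{k}$ is then obtained not from any comparison through $p$, but from a partial associativity of the half-smash action (Lemma~\ref{halfsmashaction}): the diagram relating $\overline{a}\circ(\mu\ltimes 1)$ and $\overline{a}\circ(1\ltimes\overline{a})\circ\varphi$ commutes only after suspension, and this is the technical heart that produces the congruence (Lemma~\ref{dcongruent}). Part~(a) then follows from part~(b): since $d$ is an equivalence and $\mathfrak{d}$ is congruent to it, $\mathfrak{d}$ induces the same homology map and is therefore an equivalence. Your proposal for~(a)---arbitrary lifts plus a Serre spectral sequence count---does not supply the mechanism you need: since $ad^{k}$ is a Whitehead product it is zero in homology for $k\geq 1$, so knowing $p\circ\tilde w_{k}\simeq ad^{k}$ gives no control over $(\tilde w_{k})_{\ast}$, and the ``compatibility with the action'' you appeal to is not a property of arbitrary lifts.
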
 

Note that the maps $\mathfrak{d}$ and $d$ may not be homotopy equivalent but their 
suspensions are. Consequently, they induce the same map in homology. As $\mathfrak{d}$ 
is a homotopy equivalence, it induces an isomorphism in homology, and therefore so does $d$, 
and hence $d$ is also a homotopy equivalence by Whitehead's Theorem. This ability to 
use one homotopy equivalence to prove the existence of another will be used repeatedly 
throughout. It has the advantage of allowing us to exchange maps that have 
different properties: in this case the maps $\mathfrak{d}$ behaves well with respect to 
Whitehead products while the map $d$ (via $c$) behaves better with respect to the 
multiplication on $\Omega\Sigma X$. 
\medskip 

\noindent 
\textbf{Two-cones}. 
A two-cone is the homotopy cofibre $C$ of a map 
\(\namedright{\Sigma A}{}{\Sigma B}\) 
where $A$ and $B$ are both path-connected. More generally, one could consider 
a map between co-$H$-spaces instead of suspensions, but the latter simplifies the exposition. 
This notion can be iterated: a finite $CW$-complex $X$ has \emph{cone-length} $t$ if $t$ is 
the smallest number such that there is a sequence of homotopy cofibrations 
\(\nameddright{\Sigma A_{k}}{}{C_{k-1}}{}{C_{k}}\) 
for $1\leq k\leq t$ where $C_{0}$ is some initial space $\Sigma A_{0}$ and $C_{t}\simeq X$. 
Cone-length is an upper bound on the Lusternik-Schnirelmann category of $X$. A great deal 
of work has gone into studying cone-length (see~\cite{CLOT} for a comprehensive 
overview). The homotopy theory around two-cones and their based loop spaces has received 
particular attention~\cite{A,FHT,FT2} since they are the nearest neighbour to suspensions, whose based 
loop spaces are well understood through the Bott-Samelson Theorem, the James construction, 
and the Hilton-Milnor Theorem. 

In Theorem~\ref{E'typeI} we prove a general result which lets us consider, as examples, certain 
families of two-cones. One case is the following. Define the two-cone $M_{k}$ by 
the homotopy cofibration 
\[\lllnameddright{\Sigma X^{\wedge k}\wedge\Sigma Y}{ad^{k}(i_{1})(i_{2})}{\Sigma X\vee\Sigma Y}{}{M_{k}}.\] 
We give a homotopy decomposition of $\Omega M_{k}$. Note that as $ad^{k}(i_{1})(i_{2})$ 
is an iterated Whitehead product, it composes trivially with the pinch map 
\(\namedright{\Sigma X\vee\Sigma Y}{q_{1}}{\Sigma X}\), 
implying that $q_{1}$ extends to a map 
\(\namedright{M_{k}}{q'}{\Sigma X}\).  
Define the map $\gamma_{k}$ by the composite 
\[\gamma_{k}\colon\bigvee_{t=0}^{k-1} X^{\wedge t}\wedge\Sigma Y 
      \stackrel{\bigvee_{t=0}^{k-1} ad^{t}(i_{1})(i_{2})}{\llllarrow}\namedright{\Sigma X\vee\Sigma Y}{}{M_{k}}.\] 

\begin{theorem}[appearing in the text as Theorem~\ref{Mtypealt}] 
   \label{Mtypealtintro} 
   For $k\geq 1$, there is a homotopy fibration 
   \[\nameddright{\bigvee_{t=0}^{k-1} X^{\wedge t}\wedge\Sigma Y}{\gamma_{k}}{M_{k}}{q'}{\Sigma X}\] 
   which splits after looping to give a homotopy equivalence 
   \[\Omega M_{k}\simeq\Omega\Sigma X\times\Omega(\bigvee_{t=0}^{k-1} X^{\wedge k}\wedge\Sigma Y).\] 
\end{theorem}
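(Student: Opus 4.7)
The plan is to apply the framework of diagrams~(\ref{data})--(\ref{introBTdgrm}) to the foundational homotopy fibration of Theorem~\ref{dbardintro} together with the cofibration defining \(M_{k}\). Because \(ad^{k}(i_{1})(i_{2})\) is an iterated Whitehead product whose entries all lie in the \(\Sigma X\) summand, its composition with \(q_{1}\) is null-homotopic, so \(q_{1}\) extends to the map \(q'\colon M_{k}\to\Sigma X\); moreover the composite \(\Sigma X\xrightarrow{i_{1}}\Sigma X\vee\Sigma Y\to M_{k}\) is a strict section of \(q'\), so in particular \(\Omega q'\) has a right homotopy inverse. Writing \(F\) for the homotopy fibre of \(q'\),~(\ref{thetacofintro}) specialises to a homotopy cofibration
\[\nameddright{\Omega\Sigma X\ltimes(X^{\wedge k}\wedge\Sigma Y)}{\theta}{E}{}{F}.\]

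I would next identify the two ends. Theorem~\ref{dbardintro} provides a homotopy equivalence \(\mathfrak{d}\colon\bigvee_{t\geq 0}X^{\wedge t}\wedge\Sigma Y\to E\) with \(p\circ\mathfrak{d}\simeq\bigvee_{t\geq 0}ad^{t}(i_{1})(i_{2})\). For the domain of \(\theta\), the identities \(\Omega\Sigma X\ltimes W\simeq W\vee(\Omega\Sigma X\wedge W)\) and \(\Omega\Sigma X\wedge\Sigma W'\simeq\Sigma(\Omega\Sigma X\wedge W')\), combined with the James splitting \(\Sigma\Omega\Sigma X\simeq\bigvee_{j\geq 1}\Sigma X^{\wedge j}\), yield a homotopy equivalence
\[\Omega\Sigma X\ltimes(X^{\wedge k}\wedge\Sigma Y)\simeq\bigvee_{t\geq k}X^{\wedge t}\wedge\Sigma Y.\]
The heart of the argument is to show that under these two equivalences \(\theta\) corresponds to the natural inclusion of the summands indexed by \(t\geq k\).

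By~(\ref{introBTdgrm}), \(p\circ\theta\simeq[\gamma, f]+f\), where \(f=ad^{k}(i_{1})(i_{2})\) and naturality of the evaluation applied to \(s=\Omega i_{1}\) gives \(\gamma=i_{1}\circ ev_{\Sigma X}\). Using the James splitting to decompose its domain, \([\gamma, f]\) splits as \(\bigvee_{j\geq 1}[\gamma_{j}, f]\), where \(\gamma_{j}\colon\Sigma X^{\wedge j}\to\Sigma X\vee\Sigma Y\) is the \(j\)-th James component of \(\gamma\). Standard manipulations with Whitehead products --- naturality of brackets under precomposition together with the classical identification of \(ev_{\Sigma X}\) restricted to each James summand --- then give \([\gamma_{j}, f]\simeq ad^{k+j}(i_{1})(i_{2})\); appending \(f=ad^{k}(i_{1})(i_{2})\) from the \(j=0\) summand produces \(p\circ\theta\simeq\bigvee_{t\geq k}ad^{t}(i_{1})(i_{2})\), matching \(p\circ\mathfrak{d}\) on the sub-wedge. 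The \(\Sigma\mathfrak{d}\simeq\Sigma d\) mechanism of Theorem~\ref{dbardintro}(b) now shows that \(\Sigma\theta\) is homotopic to the suspension of the composite \(\bigvee_{t\geq k}X^{\wedge t}\wedge\Sigma Y\hookrightarrow\bigvee_{t\geq 0}X^{\wedge t}\wedge\Sigma Y\xrightarrow{\mathfrak{d}}E\); as both maps then induce the same isomorphism in homology onto that sub-wedge, Whitehead's theorem promotes this to an unsuspended homotopy equivalence of \(\theta\) onto the sub-wedge.

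Consequently the cofibre \(F\) is \(\bigvee_{t=0}^{k-1}X^{\wedge t}\wedge\Sigma Y\), and a diagram chase through \(\mathfrak{d}\) and the quotient \(\Sigma X\vee\Sigma Y\to M_{k}\) identifies the resulting map \(F\to M_{k}\) with \(\gamma_{k}\), establishing the stated homotopy fibration. Since \(q'\) admits an honest section, the classical splitting of a sectioned fibration after looping then yields \(\Omega M_{k}\simeq\Omega\Sigma X\times\Omega F\), completing the proof. The principal obstacle lies in the Whitehead-product analysis of the third paragraph: reducing to \(p\circ\theta\simeq[\gamma, f]+f\) via~(\ref{introBTdgrm}) is formal, but identifying the \(j\)-th James component of \([\gamma, f]\) with \(ad^{k+j}(i_{1})(i_{2})\) ---  unstably, not merely modulo the stable triviality of iterated brackets --- is the technical heart of the argument.
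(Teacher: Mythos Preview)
There is a genuine gap in the third paragraph, and it is not the Whitehead-product identity you flag as the obstacle but rather the step that follows it. You compute $p\circ\theta$ and compare it with $p\circ(\mathfrak{d}\vert_{\text{sub-wedge}})$, then assert that this, together with the $\Sigma\mathfrak{d}\simeq\Sigma d$ congruence, forces $\theta$ and $\mathfrak{d}\vert_{\text{sub-wedge}}$ to agree in homology. But $p$ cannot carry this information: under $\mathfrak{d}$ the restriction of $p$ to each summand $X^{\wedge t}\wedge\Sigma Y$ with $t\geq 1$ is the iterated Whitehead product $ad^{t}(i_{1})(i_{2})$, and Whitehead products induce the zero map in homology. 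Since $k\geq 1$, every summand in the image of $\theta$ has $t\geq 1$, so $p_{\ast}\circ\theta_{\ast}=0$ and the comparison through $p$ yields nothing. The congruence $\Sigma\mathfrak{d}\simeq\Sigma d$ relates two specific maps out of the \emph{full} wedge; it does not by itself relate the present $\theta$ (which is $\overline{a}\circ(1\ltimes g)$ for a lift $g$ of $ad^{k}(i_{1})(i_{2})$) to either of them. Separately, the claimed identity $[\gamma_{j},f]\simeq ad^{k+j}(i_{1})(i_{2})$ is not standard for $j\geq 2$: the $j$-th James component $ev_{j}\colon\Sigma X^{\wedge j}\to\Sigma X$ is not a simple iterated map, and $[\gamma_{j},f]$ is a single bracket, not an iterated one.

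The paper establishes the congruence $\theta\sim\mathfrak{d}\vert_{\bigvee_{t\geq k}}$ by a different, intrinsic route that avoids $p$ entirely. One chooses the lift $g=\overline{a}\circ\overline{\mathfrak{c}}_{k}=\mathfrak{d}_{k}$ and then analyses $\theta=\overline{a}\circ(1\ltimes\mathfrak{d}_{k})$ directly via the ``almost-associativity'' of the reduced action $\overline{a}$ (Lemma~\ref{halfsmashaction}): this yields, for each James component, a congruence between $\overline{a}\circ(1\ltimes\mathfrak{d}_{k})\circ c_{s}$ and the recursively defined $\overline{\mathfrak{d}}_{s}$ with seed $\delta=\mathfrak{d}_{k}$, and the latter unwinds to $\mathfrak{d}_{s+k}$. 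This is packaged as the homotopy equivalence in Proposition~\ref{JkEequivfrak}, which feeds into Theorem~\ref{E'typeI} to identify the cofibre of $\theta$ first as $J_{k-1}(X)\ltimes\Sigma Y$; a second congruence argument (Theorem~\ref{Mtypealt}) then converts this to the wedge $\bigvee_{t=0}^{k-1}X^{\wedge t}\wedge\Sigma Y$ with fibre inclusion $\gamma_{k}$. The missing idea in your sketch is precisely this half-smash action machinery: the comparison must happen at the level of $E$, not after composing with $p$.
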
 

Particular examples of interest occur when $X$ and $Y$ are both spheres or Moore spaces. 
These are discussed in Section~\ref{sec:2cone}; they give a large family of examples that satisfy Moore's 
conjecture. 
\medskip 

\noindent 
\textbf{Poincar\'{e} Duality complexes}. 
A finite $CW$-complex $X$ is a \emph{Poincar\'{e} Duality complex} if $\cohlgy{X;\mathbb{Z}}$ 
satisfies Poincar\'{e} Duality. These spaces are generalizations of closed, orientable 
manifolds. Poincar\'{e} Duality complexes have a long history in both geometry 
and topology (see the survey by Klein~\cite{K}) and recently there has been progress in analyzing 
their homotopy groups through homotopy decompositions of their loop spaces. In particular, 
Beben and Wu~\cite{BW} studied $(n-1)$-connected $(2n+1)$-dimensional Poincar\'{e} Duality 
complexes $M$ with $n$ odd, $n\geq 6$ and $H_{2n-1}(M;\mathbb{Z})$ consisting only of odd torsion; 
Beben and the author~\cite{BT1} studied all $(n-1)$-connected $2n$-dimensional Poincar\'{e} Duality 
complexes; this case was also considered using different methods by Sa. Basu and So. Basu~\cite{BB}, 
and Sa. Basu~\cite{Ba} went on to consider $(n-1)$-connected $(2n+1)$-dimensional Poincar\'{e} Duality 
complexes $M$ with $H_{n}(M;\mathbb{Z})$ having at least one integral summand. In~\cite{BT2}, 
Beben and the author developed the new methods that are the basis of this paper and used them to 
recover in a unified way the results in~\cite{Ba,BB,BT1}. 

The case of an $(n-1)$-connected $(2n+1)$-dimensional Poincar\'{e} Duality complex $M$ 
when $n$ is even and $H_{n}(M;\mathbb{Z})$ consists only of odd torsion is trickier. The 
methods used in~\cite{BW} do not work. They showed that if $n$ is odd then there is a space $V$ 
and a map 
\(\namedright{M}{h}{V}\) 
where $\Omega h$ has a right homotopy inverse and, for an appropriate prime $p$, 
$\hlgy{V;\mathbb{Z}/p\mathbb{Z}}\cong\Lambda(x,y)$ with $\vert x\vert=n$, $\vert y\vert=n+1$ 
and $x$ and $y$ connected by a Bockstein (possibly of higher order). No such space 
exists when $n$ is even. The problem boils down to the following. For a prime $p$ and integers $r\geq 1$ 
and $m\geq 2$, the \emph{mod-$p^{r}$ Moore space} $P^{m}(p^{r})$ is the homotopy 
cofibre of the degree~$p^{r}$ map on $S^{m-1}$. It is characterized by the fact that 
$\widetilde{H}_{n}(P^{m}(p^{r});\mathbb{Z})$ is $\mathbb{Z}/p^{r}\mathbb{Z}$ if $n=m$ 
and is~$0$ if $n\neq m$. The factor of least connectivity in $\Omega P^{2n}(p^{r})$ is 
the homotopy fibre of the degree~$p^{r}$ map on $S^{2n-1}$, which does retract off 
$\Omega V$ for a certain $3$-cell complex $V$, but the factor of least connectivity 
in $\Omega P^{2n+1}(p^{r})$ is a space constructed by Cohen, Moore and Neisendorfer~\cite{CMN} 
whose mod-$p$ homology is much more complex and is not a factor of $\Omega V$ 
for some $3$-cell complex $V$. So we approach 
the problem from a different perspective. Instead of trying to find a factor of least 
connectivity that is indecomposable, we are content to find a copy of $\Omega P^{n+1}(p^{r})$ 
in $\Omega M$ and aim to identify the complementary factor.  


In doing this we consider a much larger family of examples, most of which are 
not Poincar\'{e} Duality complexes. A general result is proved in Theorem~\ref{E'typeII} which 
is then increasingly specialized. In the case presented below, the attaching map $f$ in a 
homotopy cofibration 
\(\nameddright{S^{2n}}{f}{\bigvee_{i=1}^{m} P^{n+1}(p^{r})}{}{M}\) 
factors through Whitehead products and so composes trivially with the pinch map 
\(\namedright{\bigvee_{i=1}^{m} P^{n+1}(p^{r})}{q_{1}}{P^{n+1}(p^{r})}\) 
to the first wedge summand. Therefore $q_{1}$ extends to a map 
\(\namedright{M}{q'}{P^{n+1}(p^{r})}\). 
For $1\leq k\leq m$, let 
\[i_{k}\colon\namedright{P^{n+1}(p^{r})}{}{\bigvee_{i=1}^{m} P^{n+1}(p^{r})}\] 
be the inclusion of the $k^{th}$-wedge summand. Note that the Whitehead product 
$[i_{j},i_{k}]$ is a map 
\(\namedright{\Sigma P^{n}(p^{r})\wedge P^{n}(p^{r})}{}{\bigvee_{i=1}^{m} P^{n+1}(p^{r})}\). 
There is a map 
\(\namedright{S^{2n}}{v}{\Sigma P^{n}(p^{r})\wedge P^{n}(p^{r})}\) 
which induces an injection in mod-$p$ homology. 

\begin{theorem}[appearing in the text as Theorem~\ref{PDexintro2}]
   \label{PDexintro} 
   Let $p$ be an odd prime, $r\geq 1$ and $n\geq 2$. Suppose that there is a homotopy cofibration 
   \[\nameddright{S^{2n}}{f}{\bigvee_{i=1}^{m} P^{n+1}(p^{r})}{}{M}\] 
   where $f=\sum_{1\leq j<k\leq m} [i_{j},i_{k}]\circ (d_{j,k}\cdot v)$ for $d_{j,k}\in\mathbb{Z}$ 
   and at least one $d_{j,k}$ reduces to a unit mod-$p$. Rearranging the wedge summands 
   $\bigvee_{i=1}^{m} P^{n+1}(p^{r})$ so that some $d_{1,t}$ reduces to a unit mod-$p$, 
   there is a homotopy fibration 
   \[\nameddright{(\Omega P^{n+1}(p^{r})\ltimes\overline{C})\vee(\bigvee_{i=2}^{m} P^{n+1}(p^{r}))} 
        {}{M}{q'}{P^{n+1}(p^{r})}\] 
    where 
    $\overline{C}\simeq \bigg(P^{n}(p^{r})\wedge(\displaystyle\bigvee_{\substack{i=2 \\ i\neq t}}^{m} 
         P^{n+1}(p^{r}))\bigg)\vee \bigg(S^{2n+1}\vee P^{2n}(p^{r})\bigg)$, 
    and this homotopy fibration splits after looping to give a homotopy equivalence 
    \[\Omega M\simeq\Omega P^{n+1}(p^{r})\times
           \Omega\bigg((\Omega P^{n+1}(p^{r})\ltimes\overline{C})\vee(\bigvee_{i=2}^{m} P^{n+1}(p^{r}))\bigg).\] 
\end{theorem}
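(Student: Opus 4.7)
The plan is to apply the general machinery for fibrations with a section after looping set up around diagrams~(\ref{data})--(\ref{introBTdgrm}), using Theorem~\ref{dbardintro} as the foundational input, and then to specialise the resulting cofibration description of the fibre of $q'$ to the concrete data at hand. Set $Y=\bigvee_{i=1}^{m} P^{n+1}(p^{r})$, $Z=P^{n+1}(p^{r})$, $\Sigma A=S^{2n}$ and $h=q_{1}$. Since $f$ is a sum of Whitehead products, $q_{1}\circ f\simeq\ast$, so $q_{1}$ extends to a map $q'\colon M\to Z$; moreover the composite of $\namedright{P^{n+1}(p^{r})}{i_{1}}{Y}$ with the natural inclusion $\namedright{Y}{}{M}$ is a strict right inverse of $q'$, so $\Omega q'$ has a right homotopy inverse. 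Writing diagram~(\ref{data}) with $Y'=M$ and $h'=q'$, and passing to the induced cofibration~(\ref{thetacofintro}), gives
\[\nameddright{\Omega P^{n+1}(p^{r})\ltimes S^{2n}}{\theta}{E}{}{E'},\]
where $E$ is the homotopy fibre of $q_{1}$ and $E'$ is the homotopy fibre of $q'$. The task becomes identifying the cofibre of $\theta$.

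Running Theorem~\ref{dbardintro} with $\Sigma X=P^{n+1}(p^{r})$ and $\Sigma Y=\bigvee_{i=2}^{m} P^{n+1}(p^{r})$ yields an equivalence
\[E\simeq\bigvee_{k=0}^{\infty}\bigvee_{i=2}^{m} P^{n}(p^{r})^{\wedge k}\wedge P^{n+1}(p^{r}),\]
under which $\namedright{E}{p}{Y}$ splits as a wedge of iterated Whitehead products $ad^{k}(i_{1})(i_{j})$. Diagram~(\ref{introBTdgrm}) gives $p\circ\theta\simeq[\gamma,f]+f$, and because the chosen section of $\Omega q_{1}$ is $\Omega i_{1}$ the map $\gamma$ reduces to $i_{1}\circ ev$. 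Expanding $[\gamma,f]+f$ with $f=\sum_{j<k}[i_{j},i_{k}]\circ(d_{j,k}\cdot v)$ and exploiting the compatibility between $\theta$ and $\mathfrak{d}$ provided by part~(b) of Theorem~\ref{dbardintro} pins down, in each smash-layer of $\Omega P^{n+1}(p^{r})\ltimes S^{2n}\simeq S^{2n}\vee\bigvee_{k\geq 1} P^{n}(p^{r})^{\wedge k}\wedge S^{2n}$, which component of the above wedge description of $E$ the map $\theta$ lands in, and with which coefficients coming from the $d_{j,k}$.

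The third step uses the unit $d_{1,t}$ directly. The component of $\theta$ from the $k=1$ layer into the summand $P^{n}(p^{r})\wedge P^{n+1}(p^{r})_{t}$ of $E$ factors through $v\colon\namedright{S^{2n}}{}{\Sigma P^{n}(p^{r})\wedge P^{n}(p^{r})}$ with coefficient $d_{1,t}$, a unit mod $p$; since $\Sigma P^{n}(p^{r})\wedge P^{n}(p^{r})\simeq P^{2n+1}(p^{r})\vee P^{2n}(p^{r})$ after localisation at $p$ and collapsing the bottom cell of $P^{2n+1}(p^{r})$ yields $S^{2n+1}$, the cofibre of this component is $S^{2n+1}\vee P^{2n}(p^{r})$. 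The $k=1$ summands indexed by $i\neq 1,t$ are hit trivially, contributing $P^{n}(p^{r})\wedge\bigvee_{i\neq 1,t} P^{n+1}(p^{r})$ as a direct wedge summand of the cofibre. For $k\geq 2$ the layers assemble, via $\Omega Z\ltimes W\simeq W\vee(\Omega Z\wedge W)$, into the factor $\Omega P^{n+1}(p^{r})\ltimes\overline{C}$ with $\overline{C}$ as stated, while the $k=0$ layer (namely $\theta$ restricted to $S^{2n}$, which is $f$ itself) contributes the wedge $\bigvee_{i=2}^{m} P^{n+1}(p^{r})$ from the summands not absorbed above. Assembling the pieces produces the asserted fibration
\[\nameddright{\big(\Omega P^{n+1}(p^{r})\ltimes\overline{C}\big)\vee\Big(\bigvee_{i=2}^{m} P^{n+1}(p^{r})\Big)}{}{M}{q'}{P^{n+1}(p^{r})}.\]

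The loop splitting is then automatic: since $\Omega q'$ has a right homotopy inverse, the above fibration splits after looping into the indicated product. The main obstacle is the third step, namely separating a single clean copy of $v$ from the tangle of iterated Whitehead products encoded in $[\gamma,f]+f$ and verifying that the residual contributions assemble into the specific space $\overline{C}$ rather than into something more entangled. This is exactly the bookkeeping that the general intermediate result Theorem~\ref{E'typeII} is designed to perform; my plan is therefore to invoke it as a black box on the data $(f,\{d_{j,k}\},v)$ and read off the stated decomposition by specialising its conclusion to the Moore-space setting.
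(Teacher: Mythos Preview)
Your overall strategy is right up to the point where you propose to ``invoke Theorem~\ref{E'typeII} as a black box''. That is precisely where the argument breaks. Hypothesis~(e) of Theorem~\ref{E'typeII} requires the composite
\(\nameddright{\Sigma A}{\ell}{X^{\wedge k}\ltimes\Sigma D}{q}{X^{\wedge k}\wedge\Sigma D}\)
to have a \emph{left} homotopy inverse. In the present situation $\Sigma A=S^{2n}$, $k=1$, and $q\circ\ell$ is essentially the map
\(\namedright{S^{2n}}{d_{t}\cdot v}{\Sigma P^{n}(p^{r})\wedge P^{n}(p^{r})}\).
This cannot have a left homotopy inverse: the target has only $p$-torsion integral homology, while $S^{2n}$ does not. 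The paper says this explicitly at the start of Section~\ref{sec:PD}: ``we can no longer appeal to Proposition~\ref{wedgeex} or even to Theorem~\ref{E'typeII}''. Your informal layer-by-layer description in the ``third step'' runs into the same obstruction: the bottom summand $S^{2n}$ of $\Omega P^{n+1}(p^{r})\ltimes S^{2n}$ does not split off $E$ via $\theta$, so you cannot simply read off the cofibre summand-by-summand.

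The repair the paper makes is to observe that although $v$ itself has no left inverse, after smashing with $P^{n}(p^{r})$ it does (Lemma~\ref{XAretract} and Example~\ref{spheremooresplit}), and hence so does
\(\namedright{\Omega P^{n+1}(p^{r})\wedge S^{2n}}{1\wedge v}{\Omega P^{n+1}(p^{r})\wedge(\Sigma P^{n}(p^{r})\wedge P^{n}(p^{r}))}\).
This lets the argument of Theorem~\ref{E'typeII} be rerun one smash-level higher. The further ingredients you are missing are: (i)~that $\ell$ has order exactly $p^{r}$ and therefore extends through a mod-$p^{r}$ Moore space to a map $\ell'$ (Lemma~\ref{mooreell}); (ii)~introduction of the intermediate cofibre $\widetilde{C}$ and its splitting (Lemma~\ref{CtildeCtype}), which is what produces the $S^{2n+1}$ wedge summand in $\overline{C}$; and (iii)~a prime-by-prime homology check (Step~4 of the proof of Theorem~\ref{PDex}) showing the assembled map to $E'$ is an equivalence, since the desuspension from $\Sigma E'$ to $E'$ is no longer automatic once hypothesis~(e) fails. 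Once Theorem~\ref{PDex} is in hand, the passage to Theorem~\ref{PDexintro2} is exactly the $f=f_{1}+f_{2}$ splitting you describe.
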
 

The interpretation of Theorem~\ref{PDexintro} requires care. Some of the spaces $M$ 
are Poincar\'{e} Duality complexes while others are not, and not all $(n-1)$-connected 
$(2n+1)$-dimensional Poincar\'{e} Duality complexes with $H_{n}(M;\mathbb{Z})$ consisting 
only of odd torsion have the form described in the theorem. But there are tangible 
results. For example, simply-connected $5$-dimensional Poincar\'{e} Duality complexes 
have been classified by St\"{o}cker~\cite{St}. The classification shows that if $M$ is a Spin manifold 
and $H_{2}(M;\mathbb{Z})$ is a direct sum of $\mathbb{Z}/p^{r}\mathbb{Z}$'s for $p$ odd, 
then the attaching map for its top cell has the form described in Theorem~\ref{PDexintro}. 
If $M$ is either a non-Spin manifold or a Poincar\'{e} Duality complex that is not a manifold, 
then the attaching map for the top cell involves a stable term and the theorem does not apply. 
\medskip 

\noindent 
\textbf{Connected sums}. 
A classical problem in homotopy theory is to determine the effect on 
a $CW$-complex $X$, or its loop space $\Omega X$, by attaching 
a cell. Rational homotopy theory has had some success in this direction 
for certain families of attaching maps. Let 
\(\nameddright{S^{n-1}}{f}{X}{i}{X\cup e^{n}}\) 
be a cofibration where $f$ attaches an $n$-cell to $X$ and $i$ is the 
inclusion. The map $f$ is \emph{inert} if $\Omega i$ induces an epimorphism 
in rational homology. This implies that, rationally, $\Omega i$ has a right 
homotopy inverse. Inert maps have received notable attention, for example, 
in~\cite{FT1, HaL}, as have assorted variants such as nice, lazy and semi-inert 
attaching maps~\cite{Bu, HeL}. 

We consider an integral version of an inert map, and generalize 
from attaching a cell to attaching a cone, that is, to a cofibration 
\(\nameddright{A}{f}{X}{i}{X\cup CA}\).  
Modifiying, we consider a homotopy cofibration 
\(\nameddright{\Sigma A}{f}{X}{i}{X'}\) 
where all spaces are assumed to be simply-connected and have the homotopy type of 
$CW$-complexes. The map $f$ is \emph{inert} if $\Omega i$ has a right homotopy inverse. 
Note there is no localization hypothesis here. 

Let 
\(\nameddright{\Sigma A}{g}{Y}{}{Y'}\) 
be another such cofibration, where $g$ need not be inert. As $\Sigma A$ is a 
suspension we may add to obtain 
\(\namedright{\Sigma A}{f+g}{X\vee Y}\). 
In Theorem~\ref{inertideal} we show that $f+g$ is inert. If $C$ is the homotopy 
cofibre of $f+g$ then we give a homotopy decomposition for $\Omega C$ in terms of~$X$ 
and~$Y'$ and prove additional related statements. The property that $f+g$ is inert, 
regardless of whether~$g$ is inert, is intriguing. 

As a special case we consider the connected sum $M\conn N$ of two Poincar\'{e} Duality spaces $M$ 
and $N$ of the same dimension. It is natural to ask how the homotopy type of $M\conn N$ 
reflects the homotopy types of $M$ and $N$. Theorem~\ref{introconnsum} provides an answer. 

Let $X$ and $Y$ be the $(n-1)$-skeletons of $M$ and $N$ respectively. Then there are 
homotopy cofibrations 
\(\nameddright{S^{n-1}}{f}{X}{h}{M}\) 
and 
\(\nameddright{S^{n-1}}{g}{Y}{k}{N}\) 
that attach the top cells to $M$ and $N$ respectively. The connected sum of $M$ and $N$ 
has $(n-1)$-skeleton $X\vee Y$ and the attaching map for its top cell is $f+g$. We prove, 
among other properties, the following. 

\begin{theorem}[appearing in the text subsumed within Theorem~\ref{connsum}]  
   \label{introconnsum} 
   Let $M$ and $N$ be simply-connected Poincar\'{e} Duality complexes of dimension $n$, 
   where $n\geq 2$. Let $X$ and $Y$ be the $(n-1)$-skeletons of $M$ and $N$ respectively. 
   If the inclusion 
   \(\namedright{X}{h}{M}\) 
   has the property that $\Omega h$ has a right homotopy inverse, then the following hold: 
   \begin{letterlist} 
      \item there is a homotopy equivalence 
              $\Omega(M\conn N)\simeq\Omega M\times\Omega(\Omega M\ltimes Y)$; 
      \item the map 
               \(\namedright{X\vee Y}{}{M\conn N}\) 
               has a right homotopy inverse after looping. 
    \end{letterlist} 
\end{theorem}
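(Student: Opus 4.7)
The plan is to reduce the theorem to a direct application of Theorem \ref{inertideal}, the general inert-sum result stated earlier in the introduction. The key observation is that the hypothesis that $\Omega h$ has a right homotopy inverse is, by the paper's own definition, precisely the statement that the attaching map $f\colon S^{n-1}\to X$ of the top cell of $M$ is inert.

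First I would assemble the cell data. Standard cell theory presents $M$ and $N$ as $M = X\cup_{f} e^{n}$ and $N = Y\cup_{g} e^{n}$, with attaching maps $f\colon S^{n-1}\to X$ and $g\colon S^{n-1}\to Y$. For $n\geq 3$ the sphere $S^{n-1}$ is a simply-connected suspension, so the sum $f+g\colon S^{n-1}\to X\vee Y$ is well-defined, and its homotopy cofibre is $M\conn N$, exhibiting $X\vee Y$ as the $(n-1)$-skeleton of the connected sum. The low-dimensional case $n = 2$ is handled separately: every simply-connected $2$-dimensional Poincar\'{e} Duality complex is $S^{2}$, so $M\simeq N\simeq M\conn N\simeq S^{2}$ and both conclusions are immediate.

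With the setup in hand, both parts then follow from Theorem \ref{inertideal}. For part (b): that theorem asserts that the sum of an inert map and an arbitrary map is inert, so $f+g$ is inert. Inertness of $f+g$ is exactly the statement that $\Omega$ applied to the cofibre inclusion $X\vee Y\to M\conn N$ has a right homotopy inverse, which is (b). For part (a): the accompanying loop space decomposition in Theorem \ref{inertideal}, specialized with inert cofibre $X'=M$ and non-inert side $g\colon S^{n-1}\to Y$, gives exactly $\Omega(M\conn N)\simeq\Omega M\times\Omega(\Omega M\ltimes Y)$, which is (a).

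The substantive work is therefore not in this application but inside Theorem \ref{inertideal}, and this is where I expect to spend the main effort. The hard point is identifying the complementary factor as $\Omega M\ltimes Y$---involving the uncapped skeleton $Y$ rather than the closed complex $N$. The plan there is to study the cofibration $\Omega M\ltimes S^{n-1}\stackrel{\theta}{\longrightarrow}E\to E'$ from (\ref{thetacofintro}), where $E$ is the homotopy fibre of the composite $X\vee Y\to X\to M$ and $E'$ is the fibre of the extended map $M\conn N\to M$. One would decompose $E$ using the section of $\Omega h$, with the foundational Theorem \ref{dbardintro} providing the template for the $\Omega M$-equivariant structure, and identify $\theta$ via the Whitehead-product description in diagram (\ref{introBTdgrm}). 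Inertness of $f$ is what causes the $\theta$-cone to absorb precisely the $X$-contribution to $E$ and leave $\Omega M\ltimes Y$ as the cofibre $E'$; the product splitting in (a) then comes from the section of $\Omega(M\conn N\to M)$ supplied by part (b).
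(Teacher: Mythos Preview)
Your reduction is correct and matches the paper exactly: Theorem~\ref{connsum} is stated and immediately deduced from Theorem~\ref{inertideal} with $A=S^{n-2}$ and $C=M\conn N$, just as you propose (the paper in fact states Theorem~\ref{connsum} for $n\geq 3$, so your separate handling of $n=2$ is a welcome patch).

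Your final paragraph, sketching how Theorem~\ref{inertideal} itself would be proved, diverges from the paper's actual argument. You propose decomposing $E$ directly via Theorem~\ref{dbardintro} and the Whitehead-product description of $\theta$, but the paper does something slicker: it compares the cofibration $\Omega M\ltimes\Sigma A\xrightarrow{\theta_{f+g}}E\to E'$ against the special case $g=\ast$, where $E'$ becomes $\Omega M\ltimes Y$ on the nose. The key device is a cancellation lemma (Lemma~\ref{2cofibsplit}): both $\theta_{f+g}$ and $\theta_{f+\ast}$ share a common left inverse (coming from the map $\ell\colon E\to F$ to the fibre of $X\to M$), and the section $r\colon\Omega M\ltimes Y\to E$ from the $g=\ast$ case satisfies $\ell\circ r\simeq\ast$; these compatibilities force $\Omega M\ltimes Y\xrightarrow{r}E\to E'$ to be an equivalence. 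This avoids any explicit decomposition of $E$ or identification of $\theta$ via Whitehead products.
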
 

In particular, if 
\(\namedright{X}{h}{M}\) 
has the property that $\Omega h$ has a right homotopy inverse, then so does 
\(\namedright{X\vee Y}{}{M\conn N}\), 
regardless of whether 
\(\namedright{Y}{k}{N}\) 
has that property. The homotopy fibrations 
\(\nameddright{F}{}{X}{h}{M}\) 
and 
\(\nameddright{G}{}{X\vee Y}{}{M\conn N}\) 
then both have sections after looping and fit into the framework of the paper. 

Interesting examples include connected sums of products of two spheres, which 
play an important role in toric topology~\cite{BM,GPTW,GIPS}. Another example 
would take a connected sum of products of two spheres and take its connected sum 
with a complex projective space of the same dimension. Many more examples are 
considered in Section~\ref{sec:connsum}.

\medskip 

\noindent 
\textbf{Polyhedral products}. 
Let $K$ be a simplicial complex on $m$ vertices.  For $1\leq i\leq m$,
let $(X_{i},A_{i})$ be a pair of pointed $CW$-complexes, where $A_{i}$ is 
a pointed subspace of~$X_{i}$. Let $\uxa=\{(X_{i},A_{i})\}_{i=1}^{m}$ be 
an $m$-tuple of $CW$-pairs. For each simplex (face) $\sigma\in K$, let 
$\uxa^{\sigma}$ be the subspace of $\prod_{i=1}^{m} X_{i}$ defined by
\[\uxa^{\sigma}=\prod_{i=1}^{m} Y_{i}\qquad
       \mbox{where}\qquad Y_{i}=\left\{\begin{array}{ll}
                                             X_{i} & \mbox{if $i\in\sigma$} \\
                                             A_{i} & \mbox{if $i\notin\sigma$}.
                                       \end{array}\right.\]
The \emph{polyhedral product} determined by \uxa\ and $K$ is
\[\uxa^{K}=\bigcup_{\sigma\in K}\uxa^{\sigma}\subseteq\prod_{i=1}^{m} X_{i}.\] 
For example, suppose each $A_{i}$ is a point. If $K$ is a disjoint union
of $m$ points then $(\underline{X},\underline{\ast})^{K}$ is the wedge
$X_{1}\vee\cdots\vee X_{m}$, and if $K$ is the standard $(m-1)$-simplex
then $(\underline{X},\underline{\ast})^{K}$ is the product
$X_{1}\times\cdots\times X_{m}$. 

Polyhedral products are currently a subject of intense study. They are at the locus 
of several constructions from disparate areas of mathematics: moment-angle complexes 
in toric topology, complements of complex coordinate subspace arrangements in 
combinatorics, monomial rings with the Golod property in commutative algebra, 
intersections of quadrics in complex geometry, and Bestvina-Brady groups in geometric 
group theory. 

An important problem is to study the connection between Whitehead 
products (and higher Whitehead products) and polyhedral products. There has 
been significant headway on this in the context of the homotopy fibration 
\[\nameddright{\sclxx^{K}}{}{\sux^{K}}{}{\prod_{i=1}^{m}\Sigma X_{i}}.\] 
Here, $\sclxx^{K}$ is the polyhedral product formed from the pairs 
$(C\Omega\Sigma X_{i},\Omega\Sigma X_{i})$ 
where $C\Omega\Sigma X_{i}$ is the reduced cone on $\Omega\Sigma X_{i}$. 
In a sequence of papers~\cite{GT1,GT2,IK1,IK2} leading up to $K$ satisfying 
the combinatorial condition of being totally fillable (this includes shifted complexes 
and Alexander duals of shellable complexes) the space $\sclxx^{K}$ is shown to be homotopy equivalent 
to a wedge of spaces of the form $\Sigma^{t} X_{i_{1}}\wedge\cdots\wedge X_{i_{k}}$ for 
various $t\geq 1$ and $1\leq i_{1}<\cdots<i_{k}\leq m$. In~\cite{GT3,GPTW} for special 
cases and~\cite{AP,IK3} more generally, under such a decomposition the map 
\(\namedright{\sclxx^{K}}{}{\sux^{K}}\) 
is a wedge sum of iterated Whitehead products of the form 
$[v_{i_{k}},[\cdots[v_{i_{1}},w]\ldots]$ where each $v_{i_{k}}$ represents the inclusion 
of $\Sigma X_{i_{k}}$ into $\sux^{K}$ induced by the inclusion of the vertex $i_{k}$ into $K$, 
and $w$ is a higher Whitehead product corresponding to a (minimal) missing face of $K$. 

We go a step further by showing that Whitehead and higher Whitehead products 
are pervasive in the formation of the polyhedral products, regardless of whether 
$K$ is totally fillable. If a set of (minimal) missing faces is attached to $K$ to form 
a new simplicial complex $\overline{K}$, then we show that on the level of polyhedral 
products there is a corresponding homotopy cofibration 
\[\nameddright{\Sigma A}{f}{\sux^{K}}{}{\sux^{\overline{K}}}.\] 
The inclusion 
\(\namedright{\sux^{K}}{}{\prod_{i=1}^{m}\Sigma X_{i}}\) 
has a right homotopy inverse after looping, and so fits into the overall framework of 
the paper. Consider the homotopy cofibration 
\(\nameddright{(\prod_{i=1}^{m}\Omega\Sigma X_{i})\ltimes\Sigma A}{\theta}{E}{}{E'}\) 
from~(\ref{thetacofintro}). On the one hand, in this context $E=\sclxx^{K}$ and $E'=\sclxx^{\overline{K}}$, 
and on the other hand, the James construction implies that there is a homotopy equivalence 
\[(\prod_{i=1}^{m}\Omega\Sigma X_{i})\ltimes\Sigma A\simeq 
        \displaystyle\bigvee_{k=0}^{\infty}\ \bigvee_{1\leq i_{1}\leq\cdots\leq i_{k}\leq m} 
              (X_{i_{1}}\wedge\cdots\wedge X_{i_{k}})\wedge\Sigma A.\] 
This homotopy equivalence can be chosen so the following holds, where the homotopy 
between suspended maps reflects the same feature in Theorem~\ref{dbardintro}.  

\begin{theorem}[appearing in the text as Theorem~\ref{polyWh}]  
   \label{polyWhintro} 
   There is a homotopy cofibration 
   \[\nameddright{\displaystyle\bigvee_{k=0}^{\infty}\ \bigvee_{1\leq i_{1}\leq\cdots\leq i_{k}\leq m} 
              (X_{i_{1}}\wedge\cdots\wedge X_{i_{k}})\wedge\Sigma A}{\zeta}{\sclxx^{K}} 
              {}{\sclxx^{\overline{K}}}\] 
    where the map $\zeta$ has the property that $\Sigma\zeta\simeq\Sigma\zeta'$ for a map $\zeta'$ 
    satisfying a homotopy commutative diagram 
    \[\diagram 
            \displaystyle\bigvee_{k=0}^{\infty}\ \bigvee_{1\leq i_{1}\leq\cdots\leq i_{k}\leq m} 
              (X_{i_{1}}\wedge\cdots\wedge X_{i_{k}})\wedge\Sigma A\rto^-{\zeta'} 
                    \drto_-(0.6){\bigvee_{k=1}^{\infty}\ \bigvee_{1\leq i_{1}\leq\cdots\leq i_{k}\leq m} 
                    [v_{i_{1}},[v_{i_{2}},[\cdots [v_{i_{k}},f]]\cdots ]\hspace{2cm}} 
                 & \sclxx^{K}\dto^{p} \\ 
            & \sux^{K}.   
        \enddiagram\] 
\end{theorem}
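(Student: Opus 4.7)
The plan is to instantiate the general framework of the paper with $E=\sclxx^{K}$, $E'=\sclxx^{\overline{K}}$, $Y=\sux^{K}$, $Z=\prod_{i=1}^{m}\Sigma X_{i}$, and $h$ the canonical inclusion; as noted in the paper, $\Omega h$ admits a right homotopy inverse. The inclusion $\sux^{\overline{K}}\hookrightarrow\prod_{i=1}^{m}\Sigma X_{i}$ plays the role of $h'$. The homotopy cofibration~(\ref{thetacofintro}) then yields
\[\nameddright{(\prod_{i=1}^{m}\Omega\Sigma X_{i})\ltimes\Sigma A}{\theta}{\sclxx^{K}}{}{\sclxx^{\overline{K}}}.\]

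Next I would produce an unstable homotopy equivalence $(\prod_{i=1}^{m}\Omega\Sigma X_{i})\ltimes\Sigma A\simeq W$, where $W$ is the wedge appearing in the theorem. Since $\Sigma A$ is a suspension, there is a splitting $P\ltimes\Sigma A\simeq\Sigma A\vee(P\wedge\Sigma A)$ for any $P$. With $P=\prod_{i}\Omega\Sigma X_{i}$, the smash $P\wedge\Sigma A$ is a suspension and decomposes via the James splittings $\Sigma\Omega\Sigma X_{i}\simeq\bigvee_{k\geq 1}\Sigma X_{i}^{\wedge k}$ iterated across the factors, producing the wedge summands $(X_{i_{1}}\wedge\cdots\wedge X_{i_{k}})\wedge\Sigma A$ for $k\geq 1$ and $1\leq i_{1}\leq\cdots\leq i_{k}\leq m$. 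Combined with the $k=0$ summand $\Sigma A$ from the half-smash splitting, this gives $W$. Define $\zeta$ as the composite of this equivalence with $\theta$, so that the claimed homotopy cofibration follows from the one in the previous paragraph.

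The third step is the identification of $p\circ\zeta$ on each wedge summand. By~(\ref{introBTdgrm}), under the splitting $\Omega Z\ltimes\Sigma A\simeq(\Omega Z\wedge\Sigma A)\vee\Sigma A$, the composite $p\circ\theta$ equals $[\gamma,f]+f$, where $\gamma=ev\circ\Sigma s$. I would choose the right homotopy inverse $s$ to be the product of loop maps $\prod_{i}\Omega v_{i}\colon\prod_{i}\Omega\Sigma X_{i}\to\Omega\sux^{K}$ (formed via the $H$-space multiplication on $\Omega\sux^{K}$); this is a section of $\Omega h$ since $h\circ v_{i}$ is the $i$-th vertex inclusion into the product. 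The classical identification of the evaluation map $\Sigma\Omega\sux^{K}\to\sux^{K}$ on James-type summands via the Samelson--Whitehead adjunction then shows that $\gamma$ restricted to the summand $\Sigma X_{i_{1}}\wedge\cdots\wedge X_{i_{k}}$ of $\Sigma\Omega Z$ is the iterated Whitehead product $[v_{i_{1}},[v_{i_{2}},[\cdots[v_{i_{k-1}},v_{i_{k}}]\cdots]]]$. Taking Whitehead products with $f$ accordingly identifies $p\circ\zeta$ on the summand $(X_{i_{1}}\wedge\cdots\wedge X_{i_{k}})\wedge\Sigma A$ with $[v_{i_{1}},[v_{i_{2}},[\cdots[v_{i_{k}},f]]\cdots]]$ for $k\geq 1$, and with $f$ for $k=0$. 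Patching lifts of these Whitehead products through $p$ defines $\zeta'$, for which the diagram in the theorem commutes and $p\circ\zeta\simeq p\circ\zeta'$.

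Finally, to promote $p\circ\zeta\simeq p\circ\zeta'$ to $\Sigma\zeta\simeq\Sigma\zeta'$, I would follow the device used in Theorem~\ref{dbardintro}: after suspension, the section of $\Omega h$ induces enough splitting of $\Sigma\sclxx^{K}$ to detect $\Sigma\zeta$ from its composite with $\Sigma p$. The main obstacle will be the third step: verifying that $\gamma$ restricts on each James wedge summand of $\Sigma\Omega(\prod_{i}\Sigma X_{i})$ to the precise iterated Whitehead product stated, with the correct bracket structure when the indices contain repetitions and span distinct factors. This requires coordinating the James splittings of individual factors with the product decomposition of $\Sigma\prod_{i}\Omega\Sigma X_{i}$, together with a careful application of the Samelson--Whitehead identity to the chosen section $s$.
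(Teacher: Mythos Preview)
Your Step~3 contains a genuine gap. Granting the identification $p\circ\theta\simeq[\gamma,f]+f$ from~(\ref{introBTdgrm}), the restriction of $[\gamma,f]$ to the summand $(X_{i_{1}}\wedge\cdots\wedge X_{i_{k}})\wedge\Sigma A$ is $[\gamma\vert_{\Sigma X_{i_{1}}\wedge\cdots\wedge X_{i_{k}}},f]$, a \emph{single} Whitehead bracket. Even if your claim that $\gamma$ restricts on that summand to the iterated bracket $[v_{i_{1}},[v_{i_{2}},[\cdots[v_{i_{k-1}},v_{i_{k}}]\cdots]]]$ were correct, you would obtain $\bigl[[v_{i_{1}},[\cdots,v_{i_{k}}]\cdots],\,f\bigr]$, which is not $[v_{i_{1}},[v_{i_{2}},[\cdots[v_{i_{k}},f]]\cdots]]$. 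The bracketings differ, and no Jacobi identity collapses one to the other. Moreover, the claim about $\gamma$ itself is not established: the restriction of $ev\circ\Sigma\Psi$ to the smash summands of $\Sigma(\prod_{i}\Omega\Sigma X_{i})$ is governed by the Hopf construction of the loop multiplication, not by iterated Whitehead products of the~$v_{i}$, and your appeal to the Samelson--Whitehead adjunction does not produce the stated formula.

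The paper avoids this obstruction by \emph{not} attempting to evaluate $[\gamma,f]$ on the James summands. Instead it introduces the auxiliary homotopy fibre $\overline{E}$ of the pinch map $(\bigvee_{i}\Sigma X_{i})\vee\Sigma A\to\bigvee_{i}\Sigma X_{i}$ and factors the relevant composite through the equivalence $\overline{a}\circ(1\ltimes\overline{g})\colon\Omega\Sigma(\bigvee_{i}X_{i})\ltimes\Sigma A\to\overline{E}$. This places one in the setting of Theorem~\ref{dbard}, where the map $d$ is congruent to $\mathfrak{d}$ and $\mathfrak{d}$ is built \emph{recursively} via $\mathfrak{d}_{k}=\overline{a}\circ(E\ltimes\mathfrak{d}_{k-1})\circ i$. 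That recursion is precisely what produces $ad^{k}(i_{W})(i_{\Sigma A})$ with $f$ (via $i_{\Sigma A}$) sitting in the innermost slot; composing with $v\vee f$ then yields $[v_{i_{1}},[v_{i_{2}},[\cdots[v_{i_{k}},f]]\cdots]]$ with the correct bracketing. The congruence $\Sigma\zeta\simeq\Sigma\zeta'$ comes directly from $\Sigma d\simeq\Sigma\mathfrak{d}$, not from any detection argument through $p$ as in your Step~4; since $p$ has nontrivial fibre there is no reason to expect that $p\circ\zeta\simeq p\circ\zeta'$ alone forces $\Sigma\zeta\simeq\Sigma\zeta'$.
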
 
\medskip 

This paper is organized as follows. Section~\ref{sec:background} reviews the results 
in~\cite{BT2} that will be needed later. Theorem~\ref{dbardintro} is proved in 
Section~\ref{sec:ad}. Section~\ref{sec:2cone} 
then considers two-cones, proves Theorem~\ref{Mtypealtintro}, 
and relates the results to Moore's Conjecture. Section~\ref{sec:improve} proves a general 
decomposition result in Theorem~\ref{E'typeII} and in Section~\ref{sec:decompapps} this is 
specialized and applied to certain families of two-cones. Section~\ref{sec:PD} is a modification 
of the results in Section~\ref{sec:decompapps} that leads to the proof of Theorem~\ref{PDexintro} 
and applications to loop space decompositions of $(n-1)$-connected $(2n+1)$-dimensional 
Poincar\'{e} Duality complexes which are rationally copies of $S^{2n+1}$. Section~\ref{sec:inert} 
builds on the notion of an inert map and proves a general decomposition result in 
Theorem~\ref{inertideal}, while Section~\ref{sec:connsum} specializes this to prove 
Theorem~\ref{introconnsum} and give an array of examples. Section~\ref{sec:Hopf} 
turns momentarily to algebra to calculate $\hlgy{\Omega Y}$ as a Hopf algebra, where 
\(\nameddright{E}{p}{Y}{h}{Z}\) 
is a homotopy fibration with a section after looping. In Section~\ref{sec:iteratedad} 
we return to homotopy theory to address a second foundational case involving extensions 
across the inclusion of a wedge into a product, the James construction and Whitehead products 
that leads to the explicit description of Whitehead products in toric topology that is stated in 
Theorem~\ref{polyWhintro} and proved in Section~\ref{sec:polyprod}. 

It is also useful to have a guide on the sections needed to prove each of the main 
theorems. Theorem~\ref{dbardintro} appears in Section~\ref{sec:ad} and depends 
only on Section~\ref{sec:background}. Theorem~\ref{Mtypealtintro} appears in 
Section~\ref{sec:2cone} and depends on Sections~\ref{sec:background} and~\ref{sec:ad}. 
Theorem~\ref{PDexintro} appears in Section~\ref{sec:PD} and depends on 
Sections~\ref{sec:background} through~\ref{sec:decompapps}. Theorem~\ref{introconnsum} 
appears in Section~\ref{sec:connsum} and depends on Sections~\ref{sec:background}, 
\ref{sec:ad} and~\ref{sec:inert}. The homological interlude in Section~\ref{sec:Hopf} 
depends on Sections~\ref{sec:background} and~\ref{sec:ad}. Theorem~\ref{polyWhintro} 
appears in Section~\ref{sec:polyprod} and depends 
only on Sections~\ref{sec:background}, ~\ref{sec:ad} and~\ref{sec:iteratedad}. 

The author would like to thank the referee for a careful reading of the paper, many 
valuable suggestions for improvement, and spotting a gap in the original submission, 
the filling of which led to more interesting mathematics.

\newpage 

\section{Background} 
\label{sec:background} 

This section discusses the ingredients behind Theorem~\ref{GTcofib} as they relate 
to material that will come later in the paper. Recall the blanket assumption that all 
spaces are assumed to be $CW$-complexes so that weak homotopy equivalences 
are homotopy equivalences. We start with a well known lemma and some notation. 

The \emph{left half-smash} of two path-connected spaces $A$ and $B$ is the quotient space 
defined by the cofibration 
\(\nameddright{A}{i_{1}}{A\times B}{}{A\ltimes B}\) 
where $i_{1}$ is the inclusion of the first factor. 

\begin{lemma} 
   \label{halfsmashsusp} 
   Let $A$ and $B$ be pointed, path-connected spaces. Then there is a homotopy equivalence 
   \[A\ltimes\Sigma B\simeq (A\wedge\Sigma B)\vee\Sigma B\] 
   which is natural for maps 
   \(\namedright{A}{}{A'}\) 
   and 
   \(\namedright{B}{}{B'}\). 
   $\qqed$ 
\end{lemma}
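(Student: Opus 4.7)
The approach is to split a natural cofibration using the suspension structure of $\Sigma B$. The projection $A\times\Sigma B\to\Sigma B$ onto the second factor and the inclusion $\{*\}\times\Sigma B\hookrightarrow A\times\Sigma B$ at the basepoint of $A$ descend to based maps $\pi\colon A\ltimes\Sigma B\to\Sigma B$ and $j\colon\Sigma B\to A\ltimes\Sigma B$ satisfying $\pi\circ j=\mathrm{id}_{\Sigma B}$. The quotient $A\times\Sigma B\to A\wedge\Sigma B$ factors through $A\ltimes\Sigma B$ by further collapsing $j(\Sigma B)$, yielding a map $q\colon A\ltimes\Sigma B\to A\wedge\Sigma B$. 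This gives a cofibration
\[
\Sigma B \stackrel{j}{\longrightarrow} A\ltimes\Sigma B \stackrel{q}{\longrightarrow} A\wedge\Sigma B
\]
in which $j$ admits the retraction $\pi$ and $q\circ j$ is null.

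Using the co-$H$ pinch map $\sigma\colon\Sigma B\to\Sigma B\vee\Sigma B$, I would construct a candidate splitting as the composite
\[
\alpha\colon A\ltimes\Sigma B \stackrel{\mathrm{id}\ltimes\sigma}{\longrightarrow} A\ltimes(\Sigma B\vee\Sigma B) \simeq (A\ltimes\Sigma B)\vee(A\ltimes\Sigma B) \stackrel{q\vee\pi}{\longrightarrow} (A\wedge\Sigma B)\vee\Sigma B,
\]
where the middle identification is the distributivity of the left half-smash over wedges in the second variable. This identification follows from the natural homeomorphism $A\ltimes X\cong A_{+}\wedge X$ (for $A_{+}$ the space $A$ with a disjoint basepoint adjoined) together with the distributivity of the smash product over wedges.

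The essential verification is that $\alpha$ is compatible with the cofibration above. Using the co-$H$ counit identity that each wedge projection $\Sigma B\vee\Sigma B\to\Sigma B$ composed with $\sigma$ is homotopic to the identity, together with the relations $\pi\circ j\simeq\mathrm{id}$ and $q\circ j\simeq *$, a direct computation shows that $\alpha\circ j$ is homotopic to the wedge inclusion $\Sigma B\hookrightarrow(A\wedge\Sigma B)\vee\Sigma B$, and that the composite of $\alpha$ with the wedge projection onto $A\wedge\Sigma B$ is homotopic to $q$. Hence $\alpha$ gives a morphism of cofibrations which is the identity on the sub $\Sigma B$ and on the cofibre $A\wedge\Sigma B$, so a comparison of cofibrations (equivalently, the five-lemma applied to the Puppe sequences) implies $\alpha$ is a homotopy equivalence. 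Naturality in $A$ and $B$ is immediate from the naturality of each ingredient (projection, inclusion, co-$H$ pinch, smash product, wedge sum). The main obstacle is the bookkeeping in the compatibility check rather than any conceptual difficulty.
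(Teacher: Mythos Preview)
Your proof is correct. Note, however, that the paper does not actually prove this lemma: the $\qqed$ at the end of the statement indicates it is being quoted as well known. The discussion immediately following the lemma does point to the intended equivalence, namely the map $i\perp j\colon (A\wedge\Sigma B)\vee\Sigma B\to A\ltimes\Sigma B$ in the opposite direction to your $\alpha$, where $j$ is the inclusion of $\Sigma B$ and $i$ is a right homotopy inverse of the quotient $q$. Your construction is essentially the standard splitting argument for a cofibration $\Sigma B\stackrel{j}{\to} A\ltimes\Sigma B\stackrel{q}{\to} A\wedge\Sigma B$ in which $j$ has a retraction and the middle term is a co-$H$ space (here via $A\ltimes\Sigma B\cong A_{+}\wedge\Sigma B\cong\Sigma(A_{+}\wedge B)$); the map $i\perp j$ the paper uses is the inverse equivalence produced by that same argument. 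One small point: when you invoke ``the five-lemma applied to the Puppe sequences'' you should mean the long exact sequences in homology, followed by Whitehead's theorem (both $A\ltimes\Sigma B$ and $(A\wedge\Sigma B)\vee\Sigma B$ are simply connected since $B$ is path-connected), rather than homotopy groups directly.
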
 


For path-connected spaces $A$ and $B$, let 
\[j\colon\namedright{B}{}{A\ltimes B}\] 
be the inclusion and let 
\[q\colon\namedright{A\ltimes B}{}{A\wedge B}\] 
be the quotient map that collapses $B$ to a point. By Lemma~\ref{halfsmashsusp} 
there is a natural map 
\[i\colon\namedright{A\wedge\Sigma B}{}{A\ltimes\Sigma B}\] 
which is a right homotopy inverse for $\Sigma q$.  

Suppose that  
\(f\colon\namedright{A}{}{Y}\) 
and 
\(g\colon\namedright{B}{}{Y}\) 
are maps. The map from $A\vee B$ to $Y$ determined by $f$ and $g$ is denoted by 
\[f\perp g\colon\namedright{A\vee B}{}{Y}.\] 
In particular, a choice of the homotopy equivalence in Lemma~\ref{halfsmashsusp} is given by $i\perp j$. 

Given maps 
\(a\colon\namedright{\Sigma A}{}{Y}\) 
and 
\(b\colon\namedright{\Sigma B}{}{Y}\) 
let 
\([a,b]\colon\namedright{\Sigma A\wedge B}{}{Y}\) 
be the Whitehead product of $a$ and $b$. In what follows we will consider a map 
\(\namedright{\Omega Z\ltimes\Sigma A}{}{E}\) 
with the property that the composite 
\(\nameddright{\Omega Z\wedge\Sigma A}{i}{\Omega Z\ltimes\Sigma A}{}{E}\) 
is a Whitehead product $[\gamma,f]$ for some maps $\gamma$ and $f$. As such, 
in these cases we prefer to write the Whitehead product with domain $\Omega Z\wedge\Sigma A$ 
rather than $\Sigma\Omega Z\wedge A$ to emphasize the link to the half-smash. 

The following was proved in~\cite{BT2}. Let 
\(ev\colon\namedright{\Sigma\Omega Y}{ev}{Y}\) 
be the canonical evaluation map. 

\begin{theorem} 
   \label{GTcofib} 
   Suppose that there is a homotopy fibration 
   \(\nameddright{E}{p}{Y}{h}{Z}\)  
   and a homotopy cofibration 
   \(\nameddright{\Sigma A}{f}{Y}{}{Y'}\). 
   Suppose that $h$ extends to a map 
   \(h'\colon\namedright{Y'}{}{Z}\) 
   and let $E'$ be the homotopy fibre of $h'$. This data is assembled 
   into a diagram 
   \begin{equation} 
     \label{data1} 
     \diagram 
          & E\rto\dto^{p} & E'\dto^{p'} \\ 
          \Sigma A\rto^-{f} & Y\rto\dto^{h} & Y'\dto^{h'} \\ 
          & Z\rdouble & Z. 
     \enddiagram 
   \end{equation} 
   where the vertical columns and the maps between them form a homotopy fibration diagram. 
   Suppose in addition that the map 
   \(\namedright{\Omega Y}{\Omega h}{\Omega Z}\) 
   has a right homotopy inverse 
   \(s\colon\namedright{\Omega Z}{}{\Omega Y}\). 
   Then there is a map 
   \(\theta\colon\namedright{\Omega Z\ltimes\Sigma A}{}{E}\) 
   such that:  
   \begin{letterlist} 
      \item there is a homotopy cofibration 
               \[\nameddright{\Omega Z\ltimes\Sigma A}{\theta}{E}{}{E'};\] 
      \item there is a homotopy commutative diagram 
               \[\diagram 
                    \Omega Z\ltimes\Sigma A\rto^-{\theta}\dto^{(i\perp j)^{-1}} & E\dto^{p} \\ 
                         (\Omega Z\wedge\Sigma A)\vee\Sigma A\rto^-{[\gamma,f]\perp f} 
                        & Y   
                \enddiagram\] 
              where $\gamma$ is the composite 
              \(\gamma\colon\nameddright{\Sigma\Omega Z}{\Sigma s}{\Sigma\Omega Y}{ev}{Y}\).  
   \end{letterlist} 
\end{theorem}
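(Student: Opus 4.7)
The plan is to begin from the Dold--Lashof/Mather homotopy pushout
\[
  \diagram
      \Omega Z\times\Sigma A\rto\dto^{\pi_{1}} & E\dto \\
      \Omega Z\rto & E'
  \enddiagram
\]
in~(\ref{DLpo}), which is already available from the hypotheses of diagram~(\ref{data1}) without using the section assumption. The top horizontal arrow is the map built from the homotopy action of $\Omega Z$ on the fibre $E$ together with a choice of null homotopy of the composite $\namedright{\Sigma A}{f}{Y}\to Z$, and the left vertical arrow is the projection. The first goal is to convert this pushout into a cofibration of the form in~(a), and the key observation for this is that the section $s$ of $\Omega h$ trivialises the connecting map. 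More precisely, the restriction of the top map along $i_{1}\colon\Omega Z\hookrightarrow \Omega Z\times\Sigma A$ is (up to homotopy) the connecting map $\partial\colon\Omega Z\to E$ of the fibration $\nameddright{E}{p}{Y}{h}{Z}$, and $\partial$ is null-homotopic because in the exact sequence $\Omega Y\xrightarrow{\Omega h}\Omega Z\xrightarrow{\partial}E$ the first map is surjective on homotopy classes when it admits a section $s$. Consequently, the top map in the pushout descends through the quotient $\Omega Z\times\Sigma A\to \Omega Z\ltimes\Sigma A$, yielding a map $\theta\colon\Omega Z\ltimes\Sigma A\to E$, and the pushout collapses to the homotopy cofibration of~(a).

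For part~(b), I would use the decomposition $\Omega Z\ltimes\Sigma A\simeq (\Omega Z\wedge\Sigma A)\vee\Sigma A$ from Lemma~\ref{halfsmashsusp}, realised by $i\perp j$, and identify $p\circ\theta$ separately on the two wedge summands. On the $\Sigma A$--summand, precomposition with $j$ sends the basepoint of $\Omega Z$ into the action, so $p\circ\theta\circ j\simeq f$ by naturality of the Dold--Lashof construction, which gives the second coordinate of $[\gamma,f]\perp f$ for free. For the other summand, the map $p\circ\theta\circ i$ from $\Omega Z\wedge\Sigma A$ to $Y$ is the one that records how the $\Omega Z$--action on $E$, pushed to $Y$, interacts with the attaching map~$f$. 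The section $s$ enters here: the action of $\Omega Z$ on $E$ composed with $p$ factors, via $s$, through the action of $\Omega Y$ on $Y$, whose associated map $\Sigma\Omega Y\wedge\Sigma A\to Y$ is (after the James/James--Milnor identification of $\Omega Y$--actions with Whitehead products through $ev$) precisely the Whitehead product $[ev,\,-]$.

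The heart of the argument is therefore to show that the composite $p\circ\theta\circ i$ agrees with the Whitehead product $[\gamma,f]$, where $\gamma=ev\circ\Sigma s$. This is the step that genuinely uses the section and is the main technical obstacle: one has to verify, at the level of maps rather than just elements, that the evaluation description of the $\Omega Y$--action translates commutator brackets in $\pi_{*}(\Omega Y)$ into Whitehead brackets in $\pi_{*}(Y)$, and that lifting the action through $s$ replaces $ev$ by $ev\circ\Sigma s=\gamma$. I would carry this out by first writing the top map of~(\ref{DLpo}) explicitly as the composite $\Omega Z\times\Sigma A\xrightarrow{s\times\tilde f}\Omega Y\times E\xrightarrow{\mu}E$, where $\mu$ is the standard action, then applying $p$, and then using the standard identity $p\circ\mu\simeq ev\circ(\Sigma\wedge)\circ(\iota\vee\iota)$ on the wedge to convert the action into a sum of a trivial term (responsible for $f$ on the $\Sigma A$--summand) and a Whitehead product term (responsible for $[\gamma,f]$ on the $\Omega Z\wedge\Sigma A$--summand).

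Finally, I would assemble the two components via $i\perp j$ and invoke the naturality of the half-smash splitting to conclude the homotopy commutative square of~(b). The only subtlety beyond the main step above is ensuring coherence between the null homotopy of $h\circ f$ used to build the pushout and the null homotopy implicit in forming $\theta$; choosing the null homotopy compatibly with $s$ (e.g.\ by taking the one coming from the lift of~$f$ through~$E$, which exists because $\partial$ is null) removes this ambiguity and makes the identification with $[\gamma,f]\perp f$ canonical up to homotopy.
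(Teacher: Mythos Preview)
Your outline for part~(a) matches the paper's: both start from the Dold--Lashof/Mather pushout~(\ref{DLpo}) and use the nullity of $\partial$ (forced by the section~$s$) to factor the top map through the half-smash. One point you underweight is the choice of lift. You say choosing the null homotopy ``compatibly with $s$'' removes the ambiguity, but the paper is more specific: not every lift $g\colon\Sigma A\to E$ of $f$ gives a $\theta$ whose cofibre is $E'$; one needs the composite $\Sigma A\xrightarrow{g}E\to E'$ to be null, which the paper arranges by first lifting $f$ to the homotopy fibre of $Y\to Y'$ and then including into $E$. Your ``take the lift coming from the pushout'' is morally this, but you should say why that lift has the required property rather than tie it to $s$, which plays no role in this step.

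The substantive gap is in part~(b). You reduce the identification of $p\circ\theta\circ i$ with $[\gamma,f]$ to a ``standard identity $p\circ\mu\simeq ev\circ(\Sigma\wedge)\circ(\iota\vee\iota)$'' and an appeal to ``James/James--Milnor identification of $\Omega Y$--actions with Whitehead products''. Neither of these is a standard off-the-shelf fact in the form you need; the formula as written does not typecheck, and there is no canonical action of $\Omega Y$ on $Y$ to invoke. This is exactly the content the paper supplies. The paper introduces a second map $\Gamma\colon\Omega Y\ltimes E\to E$, defined as the induced map on fibres in the diagram
\(\Omega Y\ltimes E\to Y\vee E\to Y\) over \(E\to Y\to Z\),
and then cites a lemma of Gray that $\Gamma$ and the action-derived map $\Theta$ are homotopic. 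The Whitehead product then comes from Ganea's identification of the fibre of the inclusion $X_{1}\vee X_{2}\hookrightarrow X_{1}\times X_{2}$ as $[ev_{1},ev_{2}]$, applied to $Y\vee\Sigma B\to Y\times\Sigma B$ and pushed along $s$ and the lift $g$. In short, your plan is the right shape, but the step you label ``standard'' is the Gray--Ganea bridge between the principal action and Whitehead products, and it needs to be proved, not quoted.
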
 
\vspace{-1cm}~$\qqed$\medskip 

For the benefit of the reader, and to make explicit parts of the construction that will be 
used later on, a sketch of the proof of Theorem~\ref{GTcofib} will be given. The key 
is a link between two seemingly distinct constructions. First, let 
\(\namedright{\Omega Z}{\partial}{E}\) 
be the connecting map for the homotopy fibration 
\(\nameddright{E}{p}{Y}{h}{Z}\). 
There is a canonical homotopy action  
\[a\colon\namedright{\Omega Z\times E}{}{E}\] 
which extends the map 
\(\namedright{\Omega Z\vee E}{\partial\vee 1}{E}\). 
The composite 
\(\lnameddright{\Omega Y\times E}{\Omega h\times 1}{\Omega Z\times E}{a}{E}\) 
therefore has the property that its restriction to $\Omega Y$ is null homotopic, resulting 
in a quotient map 
\[\Theta\colon\namedright{\Omega Y\ltimes E}{}{E}.\] 
Second, it is well known that the homotopy fibre of the pinch map 
\(\namedright{Y\vee E}{}{Y}\) 
is naturally homotopy equivalent to $\Omega Y\ltimes E$. From this we obtain a homotopy fibration diagram 
\begin{equation} 
  \label{YEGammadgrm} 
  \diagram 
        \Omega Y\ltimes E\rto^-{\Gamma}\dto & E\dto^{p} \\ 
        Y\vee E\rto^-{1\perp p}\dto & Y\dto^{h} \\ 
        Y\rto^-{h} & Z 
  \enddiagram 
\end{equation}  
for some map $\Gamma$. The link between the two constructions is the following, proved in~\cite{Gr}. 

\begin{lemma} 
   \label{Gray} 
   The maps $\Theta$ and $\Gamma$ may be chosen so that they are homotopic..~$\qqed$ 
\end{lemma}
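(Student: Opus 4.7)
The plan is to describe both $\Theta$ and $\Gamma$ explicitly on the point-set level using compatible models, and then observe that they agree.

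First, I would fix the standard model $E = \{(y,\beta) \in Y \times PZ \mid \beta(0) = \ast,\ \beta(1) = h(y)\}$ for the homotopy fibre, with $p(y,\beta) = y$ and $\partial(\omega) = (\ast,\omega)$. The canonical $\Omega Z$-action $a\colon \Omega Z \times E \to E$ is then path concatenation, $a(\omega,(y,\beta)) = (y, \omega \cdot \beta)$, and this extends $\partial \vee 1$. Consequently the composite $a \circ (\Omega h \times 1)$ is given by $(\omega,(y,\beta)) \mapsto (y, (h \circ \omega) \cdot \beta)$; its restriction to $\Omega Y \times \{\ast\}$ is $\partial \circ \Omega h$, which is null-homotopic as the composite of two consecutive maps in a fibration sequence. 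A specific choice of such a null-homotopy descends the composite to $\Theta\colon \Omega Y \ltimes E \to E$ with the same explicit formula.

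Next, I would analyse $\Gamma$ using the Ganea model of $\Omega Y \ltimes E$ as the homotopy fibre of the pinch map $Y \vee E \to Y$: a point is a pair $(x,\alpha) \in (Y \vee E) \times PY$ with $\alpha(0)$ the image of $x$ under the pinch and $\alpha(1) = \ast$, and the equivalence sends $[\omega,e]$ to $(e,\omega)$ viewed in the wedge summand $E \subset Y \vee E$. To realise $\Gamma$ as the induced map on fibres from diagram~(\ref{YEGammadgrm}), I would use the null-homotopy of $h \circ p \simeq \ast$ supplied by $\beta$ itself: for $e = (y,\beta)$, the assignment $t \mapsto \beta(1-t)$ is a path in $Z$ from $h(p(e)) = h(y)$ to $\ast$. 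Feeding this data through the universal construction of an induced map on homotopy fibres yields $\Gamma([\omega,e]) = (y,(h \circ \omega) \cdot \beta)$, matching $\Theta$ on the nose.

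Both formulas agree, so $\Theta \simeq \Gamma$. The main obstacle is the second step: the induced map on fibres depends both on the Ganea model for $\Omega Y \ltimes E$ and on the null-homotopy making the bottom square of~(\ref{YEGammadgrm}) commute, so the strategy hinges on using the same parameterising path $\beta$ both to define the $\Omega Z$-action on $E$ and to supply the null-homotopy witnessing $h \circ p \simeq \ast$. Once these choices are aligned, the equality $\Theta \simeq \Gamma$ becomes visible at the point-set level, which is essentially Gray's argument in~\cite{Gr}.
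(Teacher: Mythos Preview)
The paper gives no proof of this lemma; it is stated with a $\qed$ and attributed to Gray~\cite{Gr}. Your point-set argument is essentially what Gray does: model everything by path spaces and check that, with the null-homotopy of $h\circ p$ supplied by the second coordinate $\beta$ of a point of $E$, the induced map on fibres $\Gamma$ coincides with the action map $\Theta$. The issue you correctly isolate---that both constructions depend on a choice of null-homotopy and one must use compatible ones---is exactly the content of the lemma, and aligning them via $\beta$ is the right move.

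One minor caution on your write-up: the formula $(\omega,(y,\beta))\mapsto (y,(h\circ\omega)\cdot\beta)$ does not literally descend to the strict quotient $\Omega Y\ltimes E$, since it sends $(\omega,\ast_E)$ to $\partial(\Omega h(\omega))$ rather than to the basepoint. So $\Theta$ really lives on the mapping-cone model $(\Omega Y\times E)\cup_{\Omega Y} C\Omega Y$, and on the other side the Ganea description of the fibre of the pinch map is likewise a pushout $(\Omega Y\times E)\cup_{\Omega Y} PY$ rather than the strict half-smash. Once both sides are placed on this common cofibre model (with $C\Omega Y\to PY$ the obvious contraction and the cone coordinate providing the null-homotopy of $\partial\circ\Omega h$), the comparison goes through exactly as you describe. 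You are already aware of this, but it is worth making explicit that ``the same explicit formula'' holds on the mapping-cone model, not on the naive quotient.
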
 
   
 Assume from now on that $\Theta$ and $\Gamma$ have been chosen so that Lemma~\ref{Gray} 
 holds. We will discuss some general properties through to Proposition~\ref{thetaWh}, and then use 
 the material developed to sketch a proof of Theorem~\ref{GTcofib}. Suppose for some space $B$ 
 there is a map 
\(\namedright{\Sigma B}{\alpha}{E}\). 
One example of this will be $B=A$, where $A$ is as in the data for Theorem~\ref{GTcofib} 
and $\alpha$ will be an appropriate lift for $f$, but other examples are also needed in 
Section~\ref{sec:ad}. The naturality of the homotopy fibration 
\(\nameddright{\Omega Y\ltimes E}{}{Y\vee E}{}{Y}\) 
implies that there is a homotopy commutative diagram  
\begin{equation} 
  \label{Gammadgrm1} 
  \diagram 
        \Omega Y\ltimes\Sigma B\rto^-{1\ltimes\alpha}\dto 
        & \Omega Y\ltimes E\rto^-{\Gamma}\dto & E\dto^{p} \\ 
        Y\vee\Sigma B\rto^-{1\vee\alpha} 
        & Y\vee E\rto^-{1\perp p} & Y.  
\enddiagram 
\end{equation}  
By Lemma~\ref{halfsmashsusp} and its use in defining the map $i$, there is a natural homotopy equivalence 
\[\namedright{(\Omega Y\wedge\Sigma B)\vee\Sigma B}{i\perp j}{\Omega Y\ltimes\Sigma B}.\]  
The composite 
\(\nameddright{\Sigma B}{j}{\Omega Y\ltimes\Sigma B}{}{Y\vee\Sigma B}\) 
is the inclusion $i_{2}$ of the second wedge summand. The composite 
\(\nameddright{\Omega Y\wedge\Sigma B}{i}{\Omega Y\ltimes\Sigma B}{}{Y\vee\Sigma B}\) 
can also be identified; it is a certain Whitehead product. 

To motivate the appearance of Whitehead products, in general suppose 
that $X_{1}$ and $X_{2}$ are pointed, path-connected spaces. For $j=1,2$, let 
\[i_{j}\colon\namedright{X_{j}}{}{X_{1}\vee X_{2}}\] 
be the inclusion of the $j^{th}$-wedge summand, and let $ev_{j}$ be the composite 
\[ev_{j}\colon\nameddright{\Sigma\Omega X_{j}}{ev}{X_{j}}{i_{j}}{X_{1}\vee X_{2}}.\] 
Ganea~\cite{Ga} showed that there is a homotopy fibration 
\[\llnameddright{\Sigma\Omega X_{1}\wedge\Omega X_{2}}{[ev_{1},ev_{2}]} 
     {X_{1}\vee X_{2}}{}{X_{1}\times X_{2}}\] 
where the right map is the inclusion of the wedge into the product and the left map is the 
Whitehead product of $ev_{1}$ and $ev_{2}$. 
Consider the composite 
\[\llnameddright{\Sigma\Omega X_{1}\wedge\Omega X_{2}}{[ev_{1},ev_{2}]} 
     {X_{1}\vee X_{2}}{q_{1}}{X_{1}}\]
where $q_{1}$ is the pinch map to the first wedge summand. The naturality of 
the Whitehead product implies that this composite is homotopic to $[q_{1}\circ ev_{1},q_{1}\circ ev_{2}]$. 
But $q_{1}\circ ev_{2}=q_{1}\circ i_{2}\circ ev$ is null homotopic since $q_{1}\circ i_{2}$ is. 
Therefore $q_{1}\circ [ev_{1},ev_{2}]$ is null homotopic, so there is a lift 
\[\diagram 
      & \Omega X_{1}\ltimes X_{2}\dto \\ 
      \Sigma\Omega X_{1}\wedge\Omega X_{2}\rto^-{[ev_{1},ev_{2}]}\urto^-{\xi} & X_{1}\vee X_{2} 
  \enddiagram\] 
for some map $\xi$. Suppose that $X_{2}$ is a suspension, $X_{2}\simeq\Sigma X'_{2}$, and let 
\(E\colon\namedright{X'_{2}}{}{\Omega\Sigma X'_{2}}\) 
be the suspension map, which is adjoint to the identity map on $\Sigma X'_{2}$. Then  
we may precompose $[ev_{1},ev_{2}]$ with 
\(\namedright{\Sigma\Omega X_{1}\wedge X'_{2}}{\Sigma 1\wedge E} 
      {\Sigma\Omega X_{1}\wedge\Omega\Sigma X'_{2}}\). 
The naturality of the Whitehead product implies that 
$[ev_{1},ev_{2}]\circ(\Sigma 1\wedge E)\simeq [ev_{1},ev_{2}\circ\Sigma E]$. Since 
$\Sigma E$ is a left homotopy inverse for $ev$, we have 
$ev_{2}\circ\Sigma E=i_{2}\circ ev\circ\Sigma E\simeq i_{2}$. Combining this with the lift $\xi$ 
gives a homotopy commutative diagram 
\[\diagram 
      & \Omega X_{1}\ltimes\Sigma X'_{2}\dto \\ 
      \Sigma\Omega X_{1}\wedge X'_{2}\rto^-{[ev_{1},i_{2}]}\urto^-{\xi'} & X_{1}\vee\Sigma X'_{2} 
  \enddiagram\] 
where $\xi'=\xi\circ(\Sigma 1\wedge E)$. Writing $\Sigma\Omega X_{1}\wedge X'_{2}$ as 
$\Omega X_{1}\wedge\Sigma X'_{2}$, in~\cite{BT2} it is shown that $\xi$ can be chosen 
so that $\xi'$ is the map 
\(\namedright{\Omega X_{1}\wedge\Sigma X'_{2}}{i}{\Omega X_{2}\ltimes\Sigma X'_{2}}\). 

Thus, returning to the case of $Y\vee\Sigma B$, there is a homotopy commutative diagram 
\begin{equation} 
  \label{Gammadgrm2} 
  \diagram 
       (\Omega Y\wedge\Sigma B)\vee\Sigma B\rto^-{i\perp j}\drto_{[ev_{1},i_{2}]\perp i_{2}} 
             & \Omega Y\ltimes\Sigma B\dto \\ 
       & Y\vee\Sigma B 
  \enddiagram 
\end{equation}   
and the map $i\perp j$ along the top row is a homotopy equivalence. 
Combining~(\ref{Gammadgrm1}) and~(\ref{Gammadgrm2}) and using the naturality 
of the Whitehead product results in a homotopy commutative diagram 
\begin{equation} 
  \label{Gammadgrm3} 
  \diagram 
      (\Omega Y\wedge\Sigma B)\vee\Sigma B\rto^-{\Psi}\drto_{[ev,p\circ\alpha]\perp(p\circ\alpha)} 
             & E\dto^{p} \\
          & Y  
   \enddiagram 
\end{equation} 
where $\Psi=\Gamma\circ(1\ltimes\alpha)\circ(i\perp j)$. 

Next, suppose that the map 
\(\namedright{Y}{h}{Z}\) 
in Theorem~\ref{GTcofib} has the additional property that $\Omega h$ has a right homotopy inverse 
\(s\colon\namedright{\Omega Z}{}{\Omega Y}\). 
Then the fibration connecting map 
\(\namedright{\Omega Z}{}{E}\) 
is null homotopic, so the homotopy action 
\(\namedright{\Omega Z\times E}{a}{E}\) 
factors as a composite 
\[\nameddright{\Omega Z\times E}{\pi}{\Omega Z\ltimes E}{}{E}\] 
where $\pi$ is the quotient map and the right map is a choice of extension. The next 
lemma shows that an extension may be chosen to be the composite 
\[\overline{a}\colon\nameddright{\Omega Z\ltimes E}{s\ltimes 1}{\Omega Y\ltimes E}{\Gamma}{E}.\] 

\begin{lemma} 
   \label{abarGamma} 
   Suppose that the homotopy fibration  
   \(\nameddright{E}{}{Y}{h}{Z}\) 
   has the property that $\Omega h$ has a right homotopy inverse 
   \(s\colon\namedright{\Omega Z}{}{\Omega Y}\). 
   Then there is a homotopy commutative diagram 
   \[\diagram 
         \Omega Z\times E\rto^-{a}\dto^{\pi} & E\ddouble \\ 
         \Omega Z\ltimes E\rto^-{\overline{a}} & E. 
      \enddiagram\] 
\end{lemma}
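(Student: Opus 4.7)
The plan is to leverage the construction of the map $\Theta$ as the quotient of $a\circ(\Omega h\times 1)$, combined with the hypothesis that $\Omega h\circ s\simeq 1$. Recall that $\Theta$ is defined via the fact that the restriction of $a\circ(\Omega h\times 1)\colon\Omega Y\times E\to E$ to $\Omega Y$ is null homotopic, so there is a homotopy commutative diagram in which $\Theta\circ\pi_{Y}\simeq a\circ(\Omega h\times 1)$, where $\pi_{Y}\colon\Omega Y\times E\to\Omega Y\ltimes E$ is the quotient map.

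Next, I would precompose with the map $s\times 1\colon\Omega Z\times E\to\Omega Y\times E$. On the one hand, since $\Omega h\circ s\simeq 1_{\Omega Z}$, the composite $a\circ(\Omega h\times 1)\circ(s\times 1)=a\circ((\Omega h\circ s)\times 1)$ is homotopic to $a$ itself. On the other hand, the quotient construction defining the half-smash is natural in the first variable, so there is an identity
\[\pi_{Y}\circ(s\times 1)=(s\ltimes 1)\circ\pi\]
of maps $\Omega Z\times E\to\Omega Y\ltimes E$. Combining these observations yields
\[a\simeq a\circ(\Omega h\times 1)\circ(s\times 1)\simeq\Theta\circ\pi_{Y}\circ(s\times 1)=\Theta\circ(s\ltimes 1)\circ\pi.\]

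Finally, I would invoke Lemma~\ref{Gray}, which allows us to replace $\Theta$ by the map $\Gamma$ constructed from the pinch-map fibration~(\ref{YEGammadgrm}). This substitution gives
\[a\simeq\Gamma\circ(s\ltimes 1)\circ\pi=\overline{a}\circ\pi,\]
which is exactly the desired homotopy commutativity. No real obstacle arises here: the argument is essentially a diagram chase, and the only subtle point is to ensure that the naturality of the quotient $\pi_{Y}$ with respect to $s$ is applied correctly, and that Lemma~\ref{Gray} is invoked at the end rather than beforehand, so that the quotient characterization of $\Theta$ can be used to perform the first identification.
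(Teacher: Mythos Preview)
Your proof is correct and follows essentially the same approach as the paper: use the defining property $\Theta\circ\pi_{Y}\simeq a\circ(\Omega h\times 1)$, precompose with $s\times 1$, apply naturality of $\pi$ and the right inverse property of $s$ to obtain $a\simeq\Theta\circ(s\ltimes 1)\circ\pi$, then invoke Lemma~\ref{Gray} and the definition of $\overline{a}$. The paper organizes these steps into a single commutative diagram, but the content is identical.
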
 
 
\begin{proof} 
Consider the diagram 
\[\diagram 
       \Omega Z\times E\rto^-{s\times 1}\dto^{\pi} 
           & \Omega Y\times E\rto^-{\Omega h\times 1}\dto^{\pi}  
           & \Omega Z\times E\rto^-{a} & E\ddouble \\  
       \Omega Z\ltimes E\rto^-{s\ltimes 1} & \Omega Y\ltimes E\rrto^-{\Theta} & & E.  
  \enddiagram\] 
The left square commutes by the naturality of $\pi$ while the right square commutes by 
definition of~$\Theta$. Since $s$ is a right homotopy inverse of $\Omega h$, the top row 
is homotopic to $a$. Thus the homotopy commutativity of the diagram implies that 
$a\simeq\Theta\circ(s\ltimes 1)\circ\pi$. By Lemma~\ref{Gray}, $\Theta\circ\Gamma$, 
and by definition, $\overline{a}=\Gamma\circ(s\ltimes 1)$. Therefore 
Therefore $a\simeq\Gamma\circ(s\ltimes 1)\circ\pi=\overline{a}\circ\pi$, giving the 
asserted homotopy commutative diagram.  
\end{proof} 

Define $\vartheta$ by the composite 
\[\vartheta\colon\nameddright{\Omega Z\ltimes\Sigma B}{s\ltimes\alpha}{\Omega Y\ltimes E}{\Gamma}{E}.\] 
Note that as $(s\ltimes\alpha)=(1\ltimes\alpha)\circ (s\ltimes 1)$, the definitions of $\theta$ and $\overline{a}$ 
immediately imply the following. 

\begin{lemma} 
   \label{varthetabara} 
   Suppose that the homotopy fibration  
   \(\nameddright{E}{}{Y}{h}{Z}\) 
   has the property that $\Omega h$ has a right homotopy inverse 
   \(s\colon\namedright{\Omega Z}{}{\Omega Y}\). 
   Then for any map 
   \(\namedright{\Sigma B}{\alpha}{E}\) 
   there is a homotopy commutative diagram 
   \[\diagram 
         \Omega Z\ltimes\Sigma B\drto^{\vartheta}\dto^{1\ltimes\alpha} & \\ 
         \Omega Z\ltimes E\rto^-{\overline{a}} & E. 
     \enddiagram\] 
\end{lemma}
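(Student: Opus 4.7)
The plan is to reduce the claim to the functoriality of the half-smash construction applied to the pair of maps $s$ and $\alpha$. Recall that for any maps $u\colon A \to A'$ and $v\colon B \to B'$ of pointed spaces, the assignment $A \ltimes B \mapsto A' \ltimes B'$ gives a well-defined map $u \ltimes v$, and this construction is a bifunctor. In particular one has the strict equality
\[
    (s \ltimes 1_{E}) \circ (1_{\Omega Z} \ltimes \alpha) \;=\; s \ltimes \alpha \;=\; (1_{\Omega Y} \ltimes \alpha) \circ (s \ltimes 1_{\Sigma B}),
\]
as was already noted in the excerpt just before the lemma is stated.

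From here the argument is a matter of unwinding definitions. By definition, $\vartheta = \Gamma \circ (s \ltimes \alpha)$ and $\overline{a} = \Gamma \circ (s \ltimes 1_{E})$. Therefore
\[
    \overline{a} \circ (1_{\Omega Z} \ltimes \alpha) \;=\; \Gamma \circ (s \ltimes 1_{E}) \circ (1_{\Omega Z} \ltimes \alpha) \;=\; \Gamma \circ (s \ltimes \alpha) \;=\; \vartheta,
\]
which is precisely the homotopy commutativity asserted by the diagram in the lemma.

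There is essentially no obstacle; the only subtlety worth flagging is to confirm that the two ways of writing $s \ltimes \alpha$ as a composite hold on the nose (not merely up to homotopy), since $\pi \colon A \times B \to A \ltimes B$ is natural in both variables and the half-smash is defined as a strict pushout. Once that is in place, the lemma follows immediately from the definitions of $\vartheta$ and $\overline{a}$, with no appeal to Lemma~\ref{abarGamma} or Lemma~\ref{Gray} required beyond what is already built into $\overline{a}$.
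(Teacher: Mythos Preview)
Your proof is correct and is exactly the argument the paper gives: the paper's entire proof is the one-sentence observation ``as $(s\ltimes\alpha)=(1\ltimes\alpha)\circ(s\ltimes 1)$, the definitions of $\vartheta$ and $\overline{a}$ immediately imply the following'' placed just before the lemma statement. You have simply written this out in full, using the other (equally valid) bifunctorial factorization $(s\ltimes 1_{E})\circ(1_{\Omega Z}\ltimes\alpha)=s\ltimes\alpha$, which is indeed the one that matches the definition of $\overline{a}$ directly.
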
 
\vspace{-1cm}~$\qqed$\bigskip 

The map $\vartheta$ is the bridge between the homotopy action, in the form 
of $\overline{a}$, and Whitehead products as in~(\ref{Gammadgrm3}). 

\begin{proposition} 
   \label{thetaWh} 
   Suppose that there is a homotopy fibration sequence 
   \(\namedddright{\Omega Z}{\partial}{E}{p}{Y}{h}{Z}\) 
   where $\Omega h$ has a right homotopy inverse 
   \(s\colon\namedright{\Omega Z}{}{\Omega Y}\). 
   Let 
   \(\alpha\colon\namedright{\Sigma B}{}{E}\) 
   be a map. Then there is a homotopy commutative diagram 
   \[\diagram 
         (\Omega Z\wedge\Sigma B)\vee\Sigma B\rto^-{i\perp j}\drrto_{[ev\circ\Sigma s,p\circ\alpha]\perp p\circ\alpha} 
              & \Omega Z\ltimes\Sigma B\rto^-{\vartheta} & E\dto^{p} \\ 
          & & Y. 
      \enddiagram\] 
\end{proposition}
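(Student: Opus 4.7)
The plan is to combine Lemma~\ref{varthetabara} with the already-established diagram~(\ref{Gammadgrm3}) and then pull the right homotopy inverse $s$ through, using naturality at two points: the half-smash splitting $i \perp j$ and the Whitehead product.

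First, I would unwind $\vartheta$. By definition $\vartheta = \Gamma \circ (s \ltimes \alpha)$, and one has the factorization $s \ltimes \alpha = (1 \ltimes \alpha) \circ (s \ltimes 1)$ through $\Omega Y \ltimes \Sigma B$. Next, the naturality of the homotopy equivalence in Lemma~\ref{halfsmashsusp} applied to $s \colon \Omega Z \to \Omega Y$ gives
\[
(s \ltimes 1) \circ (i \perp j) \simeq (i \perp j) \circ \bigl((s \wedge 1) \vee 1\bigr).
\]
Combining these steps yields
\[
\vartheta \circ (i \perp j) \simeq \Gamma \circ (1 \ltimes \alpha) \circ (i \perp j) \circ \bigl((s \wedge 1) \vee 1\bigr) = \Psi \circ \bigl((s \wedge 1) \vee 1\bigr),
\]
where $\Psi$ is the map of~(\ref{Gammadgrm3}).

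Composing with $p$ and invoking~(\ref{Gammadgrm3}) then gives
\[
p \circ \vartheta \circ (i \perp j) \simeq \bigl([ev, p \circ \alpha] \perp (p \circ \alpha)\bigr) \circ \bigl((s \wedge 1) \vee 1\bigr).
\]
To finish, I would use the naturality of the Whitehead product: precomposing $[ev, p \circ \alpha] \colon \Omega Y \wedge \Sigma B \to Y$ with $s \wedge 1 \colon \Omega Z \wedge \Sigma B \to \Omega Y \wedge \Sigma B$ produces $[ev \circ \Sigma s, p \circ \alpha]$, while the wedge summand $p \circ \alpha$ is unaffected. This yields the desired identity $p \circ \vartheta \circ (i \perp j) \simeq [ev \circ \Sigma s, p \circ \alpha] \perp (p \circ \alpha)$.

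The only point requiring care is the naturality of the half-smash splitting in the $\Omega(-)$ variable, but this is built into Lemma~\ref{halfsmashsusp} and the ensuing discussion. Beyond that, the argument is essentially a matter of reading off the correct factorizations of $s \ltimes \alpha$ and transporting $s$ through the Whitehead product by naturality; there is no genuine obstruction, as the key hard work (identifying $p \circ \Psi$ as a Whitehead product sum) has already been carried out in establishing~(\ref{Gammadgrm3}).
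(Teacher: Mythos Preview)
Your proposal is correct and follows essentially the same route as the paper: factor $\vartheta$ as $\Gamma\circ(1\ltimes\alpha)\circ(s\ltimes 1)$, use naturality of the half-smash splitting $i\perp j$ to commute $(s\ltimes 1)$ past it, invoke diagram~(\ref{Gammadgrm3}) to identify $p\circ\Psi$, and finish with naturality of the Whitehead product. The paper organizes these same ingredients into a single commutative diagram rather than a chain of homotopies, but the content is identical.
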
 

\begin{proof} 
Consider the diagram 
\[\diagram 
     (\Omega Z\wedge\Sigma B)\vee\Sigma B\rto^-{(s\wedge 1)\vee 1}\dto^{i\perp j} 
          & (\Omega Y\wedge\Sigma B)\vee\Sigma B\dto^{i\perp j} 
                   \xto[drrr]^{[ev,p\circ\alpha]\perp(p\circ\alpha)} & & & \\ 
     \Omega Z\ltimes\Sigma B\rto^-{s\ltimes 1} 
          & \Omega Y\ltimes\Sigma B\rto^-{\Gamma\circ(1\ltimes\alpha)} 
          & E\rrto^{p} & & Y.
  \enddiagram\] 
The left square homotopy commutes by the naturality of $i$ and $j$ while the right triangle 
homotopy commutes by~(\ref{Gammadgrm3}). The composite 
$\Gamma\circ(1\ltimes\alpha)\circ(s\ltimes 1)=\Gamma\circ(s\ltimes\alpha)$ along the bottom row 
is the definition of $\vartheta$. The naturality of the Whitehead product implies that the composite 
in the upper direction around the diagram is $[ev\circ\Sigma s,p\circ\alpha]\perp(p\circ\alpha)$. Thus 
the homotopy commutativity of the diagram implies that 
$p\circ\theta\circ(i\perp j)\simeq [ev\circ\Sigma s,p\circ\alpha]\perp(p\circ\alpha)$, as asserted.  
\end{proof}  

Finally, we justify Theorem~\ref{GTcofib}. In the data for Theorem~\ref{GTcofib}, focus on the map 
\(\namedright{\Sigma A}{f}{Y}\). 
The extension of $h$ to $h'$ implies that $h\circ f$ is null homotopic. Thus $f$ lifts to a map 
\(g\colon\namedright{\Sigma A}{}{E}\). 
However, not every choice of lift $g$ will also make part~(a) of Theorem~\ref{GTcofib} hold. 
For part~(a) to hold, the composite 
\(\nameddright{\Sigma A}{g}{E}{}{E'}\) 
must be null homotopic, or equivalently, $g$ must factor through the homotopy fibre $F$ of 
\(\namedright{E}{}{E'}\). 
In terms of the data in~(\ref{data1}), as the upper square is a homotopy pullback, $F$ is also 
the homotopy fibre of the map 
\(\namedright{Y}{}{Y'}\). 
Since $Y'$ is the homotopy cofibre of 
\(\namedright{\Sigma A}{f}{Y}\), 
the map $f$ lifts to $F$ so for any such lift we may choose $g$ to be the composite 
\(\nameddright{\Sigma A}{}{F}{}{E}\). 
Assume from now on that such a $g$ has been chosen. Then~\cite{BT2} ensures 
that Theorem~\ref{GTcofib}~(a) holds. 

Define $\theta$ by the composite 
\[\theta\colon\nameddright{\Omega Z\ltimes\Sigma A}{s\ltimes g}{\Omega Y\ltimes E}{\Gamma}{E}.\] 
Then $\theta$ is a special case of the map $\vartheta$ in Lemma~\ref{varthetabara}, and as in 
that lemma, there is a homotopy commutative diagram 
\begin{equation} 
  \label{thetabara} 
  \diagram 
      \Omega Z\ltimes\Sigma A\drto^{\theta}\dto^{1\ltimes g} & \\ 
      \Omega Z\ltimes E\rto^-{\overline{a}} & E. 
  \enddiagram 
\end{equation} 
Applying Proposition~\ref{thetaWh} to $g$ and $\theta$ we obtain a homotopy commutative diagram 
\begin{equation} 
  \label{thetaWhnat} 
  \diagram 
      (\Omega Z\wedge\Sigma A)\vee\Sigma A\rto^-{i\perp j}\drrto_{[ev\circ\Sigma s,p\circ g]\perp p\circ g} 
           & \Omega Z\ltimes\Sigma A\rto^-{\theta} & E\dto^{p} \\ 
       & & Y, 
   \enddiagram 
\end{equation} 
which is exactly the statement of Theorem~\ref{GTcofib}~(b).

\begin{remark} 
\label{GTcofibnat} 
Theorem~\ref{GTcofib} also has a naturality property, not explicitly stated in~\cite{BT2}. Suppose 
that there is a map of principal fibrations 
\[\diagram 
      \Omega Z\rto\dto & E\rto^-{p}\dto & Y\dto \\ 
      \Omega\widehat{Z}\rto & \widehat{Z}\rto^-{\widehat{p}} & \widehat{Y}. 
  \enddiagram\] 
The map $\Gamma$ is natural for maps of principal fibrations by~\cite[Proposition 2.9]{BT2} 
and therefore, by its definition, so is $\bar{a}$ provided there is a homotopy commutative 
diagram of right homotopy inverses 
\begin{equation} 
\label{snat} 
  \diagram 
      \Omega Z\rto^-{s}\dto & \Omega Y\dto \\ 
      \Omega\widehat{Z}\rto^-{\widehat{s}} & \Omega\widehat{Y}.  
  \enddiagram 
\end{equation} 
The construction of the lift $g$ in~\cite{BT2} is natural and hence so is the homotopy 
commutative diagram~(\ref{thetabara}). The naturality of the Whitehead product then 
implies that~(\ref{thetaWhnat}) is natural. Thus the homotopy commutative 
diagram in Theorem~\ref{GTcofib}~(b) is natural for maps of principal fibrations with 
compatible right homotopy inverses. Further, the naturality of $\theta\simeq\overline{a}\circ(1\ltimes g)$ 
implies that the homotopy cofibration in Theorem~\ref{GTcofib}~(a) is natural in the 
sense that we obtain a homotopy cofibration diagram  
\[\diagram 
     \Omega Z\ltimes A\rto^-{\theta}\dto & E\rto\dto & E'\ddashed|>\tip \\ 
     \Omega\widehat{Z}\ltimes\widehat{A}\rto^-{\widehat{\theta}} & \widehat{E}\rto & \widehat{E}' 
  \enddiagram\] 
where the left square homotopy commutes by the naturality of $\theta$ and 
the dashed arrow is an induced map of homotopy cofibres that makes the right 
square homotopy commute. 

In conclusion, Theorem~\ref{GTcofib} is natural for maps of principal homotopy fibrations 
with the property that the right homotopy inverses satisfy the homotopy commutative 
diagram~\ref{snat}. 
\end{remark}

\newpage 
\section{The fibre of the pinch map} 
\label{sec:ad} 

In this section we prove Theorem~\ref{dbardintro}. This begins with some information 
on homotopy actions and half-smashes. In Lemma~\ref{halfsmashaction} it is shown that 
the homotopy associativity of the homotopy action 
\(\namedright{\Omega Z\times E}{a}{E}\) 
has a partial analogue in the half-smash case with respect to the map 
\(\namedright{\Omega Z\ltimes E}{\overline{a}}{E}\). 

\begin{lemma} 
   \label{halfsmashquotient} 
   Let $B, C$ and $D$ be pointed, path-connected spaces. Then there is a natural homeomorphism 
   \[\namedright{(B\times C)\ltimes D}{\varphi}{B\ltimes(C\ltimes D)}\] 
   satisfying commutative diagrams 
   \[\diagram 
          B\times C\times D\rto^-{1\times\pi}\dto^{\pi} & B\times(C\ltimes D)\dto^{\pi} \\ 
          (B\times C)\ltimes D\rto^-{\varphi} & B\ltimes(C\ltimes D). 
      \enddiagram\] 
   and 
   \[\diagram 
        (B\times C)\ltimes D\rto^-{\varphi}\dto & B\ltimes(C\ltimes D)\dto \\ 
        B\wedge C\wedge D\rdouble & B\wedge C\wedge D 
     \enddiagram\] 
   where in the second diagram the vertical maps are the quotients to the smash products. 
\end{lemma}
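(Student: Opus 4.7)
The plan is to identify both half-smashes explicitly as quotients of $B\times C\times D$ and check that the identifications agree. By the definition of the left half-smash as the cofibre of the inclusion of the first factor, we have
\[(B\times C)\ltimes D \;=\; (B\times C\times D)/(B\times C\times\{d_{0}\}),\]
while
\[B\ltimes(C\ltimes D) \;=\; \bigl(B\times(C\ltimes D)\bigr)/(B\times\{*\}),\]
where $*\in C\ltimes D$ is the basepoint, that is, the image of $C\times\{d_{0}\}$.

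First I would observe that the projection $B\times C\times D\to B\times (C\ltimes D)$, $(b,c,d)\mapsto(b,[c,d])$, is itself a quotient map whose only nontrivial identifications are $(b,c,d_{0})\sim(b,c',d_{0})$ for each fixed $b$. Further collapsing $B\times\{*\}\subset B\times(C\ltimes D)$ identifies \emph{all} the points $(b,c,d_{0})$ (any $b$, any $c$) to a single point, and so presents $B\ltimes(C\ltimes D)$ as the quotient of $B\times C\times D$ by the subspace $B\times C\times\{d_{0}\}$. This is literally the defining quotient for $(B\times C)\ltimes D$, so the universal property of the quotient produces a continuous bijection $\varphi$ and a continuous inverse, hence a homeomorphism.

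With $\varphi$ constructed in this way, both diagrams commute essentially by inspection. The first diagram asserts compatibility of $\varphi$ with the quotient maps $\pi$ from the triple product, which holds because at the level of $B\times C\times D$ both composites $B\times C\times D\to B\ltimes(C\ltimes D)$ and $B\times C\times D\to(B\times C)\ltimes D\xrightarrow{\varphi} B\ltimes(C\ltimes D)$ are the same canonical quotient map. For the second diagram I would note that $B\wedge C\wedge D$ is the quotient of $B\times C\times D$ by the fat wedge $(B\times C\times\{d_{0}\})\cup(B\times\{c_{0}\}\times D)\cup(\{b_{0}\}\times C\times D)$, and from each of $(B\times C)\ltimes D$ and $B\ltimes(C\ltimes D)$ the map to $B\wedge C\wedge D$ is induced by collapsing the remaining two pieces of the fat wedge; since $\varphi$ is the identity on triple-product representatives, the two composites to $B\wedge C\wedge D$ coincide.

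Naturality in $B$, $C$, $D$ is automatic because every map in sight is built from the identity on the triple product together with quotients, and these are strictly natural. I do not expect a real obstacle here: the entire statement is a point-set identification, and the only care needed is in tracking which subspaces are being collapsed at each stage so that the defining quotients match up cleanly.
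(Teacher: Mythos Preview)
Your proof is correct and follows essentially the same approach as the paper: both arguments identify $(B\times C)\ltimes D$ and $B\ltimes(C\ltimes D)$ as the quotient of $B\times C\times D$ collapsing $B\times C\times\{\ast\}$, deduce the first diagram from this common quotient description, and obtain the second diagram by further collapsing to the smash product. Your write-up is slightly more explicit about the fat wedge and the universal property, but the substance is the same.
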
 

\begin{proof} 
The map 
\(\namedright{B\times C\times D}{\pi}{(B\times C)\ltimes D}\) 
identifies the subspace $B\times C\times\ast\subseteq B\times C\times D$ to the basepoint. 
On the other hand, the map 
\(\namedright{B\times(C\ltimes D)}{\pi}{B\ltimes(C\ltimes D)}\) 
identifies the subspace $B\times\ast'\subset B\times(C\ltimes D)$ to the basepoint, 
where $\ast'$ is the basepoint of $C\ltimes D$. 
But $\ast'$ is the result of identifying the subspace $C\times\ast\subseteq C\times D$ 
to the basepoint. Thus $\pi\circ(1\times \pi)$ identifies the subspace 
$B\times C\times\ast\subseteq B\times C\times D$ to the basepoint. Therefore 
$(B\times C)\ltimes D$ and $B\ltimes(C\ltimes D)$ are identical as quotient spaces of 
$B\times C\times D$. This identification is natural since each quotient map involved 
is natural. 

The identification of $(B\times C)\ltimes D$ and $B\ltimes (C\ltimes D)$ as identical 
quotient spaces of $B\times C\times D$ implies that the further quotient maps in both 
cases to the smash product $B\wedge C\wedge D$ are also identical, giving the second 
asserted commutative diagram. 
\end{proof} 

Given a homotopy fibration sequence 
\(\namedddright{\Omega Z}{\partial}{E}{p}{Y}{h}{Z}\), 
one property of the homotopy action 
\(\namedright{\Omega Z\times E}{a}{E}\) 
is that it satisfies a homotopy commutative diagram 
\begin{equation} 
  \label{actionassoc} 
  \diagram 
       \Omega Z\times\Omega Z\times E\rto^-{\mu\times 1}\dto^{1\times a} & \Omega Z\times E\dto^{a} \\ 
       \Omega Z\times E\rto^-{a} & E 
  \enddiagram 
\end{equation} 
where $\mu$ is the loop space multiplication. If $\Omega h$ has a right homotopy inverse 
then $a$ factors as the composite 
\[\nameddright{\Omega Z\times E}{\pi}{\Omega Z\ltimes E}{\overline{a}}{E}\] 
where we may take $\overline{a}$ as in Lemma~\ref{abarGamma}. Ideally, we would 
like~(\ref{actionassoc}) to descend to a homotopy commutative diagram 
\[\diagram 
     (\Omega Z\times\Omega Z)\ltimes E\rrto^-{\mu\ltimes 1}\dto^{\varphi} 
          & & \Omega Z\ltimes E\dto^{\overline{a}} \\ 
     \Omega Z\ltimes(\Omega Z\ltimes E)\rto^-{1\ltimes\overline{a}} 
          & \Omega Z\ltimes E\rto^-{\overline{a}} & E.  
   \enddiagram\] 
However, it is not clear if this diagram does in fact homotopy commute, as will be explained momentarily. 
As it would be useful to have such a diagram in what is to come, we will discuss to 
what extent valid information can be extracted from the diagram. 

The issue is a general one. Let $B$, $C$ and $W$ be pointed, path-connected spaces. 
The homotopy cofibration sequence 
\(\namedddright{B}{}{B\times C}{\pi}{B\ltimes C}{\delta}{\Sigma B}\) 
induces an exact sequence of pointed sets 
\begin{equation} 
  \label{setexact} 
  \namedddright{[\Sigma B,W]}{\delta^{\ast}}{[B\ltimes C,W]}{\pi^{\ast}}{[B\times C,W]} 
        {}{[B,W]}.  
\end{equation} 
The inclusion of $B$ into $B\times C$ has a left inverse given by the projection 
\(\namedright{B\times C}{}{B}\). 
Therefore~$\delta$ is null homotopic, implying that $\delta^{\ast}$ is the zero map. However, 
as~(\ref{setexact}) is only an exact sequence of pointed sets the fact that $\delta^{\ast}=0$ 
does not in general imply that $\pi^{\ast}$ is a monomorphism. More precisely, the homotopy coaction 
\(\namedright{B\ltimes C}{\psi}{(B\ltimes C)\vee\Sigma B}\) 
induces an action 
\(\namedright{[B\ltimes C,W]\times [\Sigma B,W]}{}{[B\ltimes C,W]}\) 
of the group $[\Sigma B,W]$ on the set $[B\ltimes C,W]$. The orbits of this action 
have the property that if $a,b\in [B\ltimes C,W]$ are in different orbits then $\pi^{\ast}(a)\neq\pi^{\ast}(b)$. 
However, it may be the case that distinct homotopy classes are in the same orbit and both are sent 
by $\pi^{\ast}$ to the same element. The fact that $\delta^{\ast}=0$ only implies that 
the orbit of the trivial map consists of a single homotopy class. 

In our case, we may not be able to use the fact that 
$a\circ(1\times a)\simeq a\circ(\mu\times 1)$ to show that the quotient map 
\(\namedright{(\Omega Z\times\Omega Z)\times E}{\pi}{(\Omega Z\times\Omega Z)\ltimes E}\) 
induces a homotopy 
$\overline{a}\circ(1\times\overline{a})\circ\varphi\simeq\overline{a}\circ(\mu\ltimes 1)$. 
However, we are able to prove the following, which will suffice for our purposes. 

\begin{lemma} 
   \label{halfsmashaction} 
   Suppose that there is a homotopy fibration sequence 
   \(\namedddright{\Omega Z}{\partial}{E}{}{Y}{h}{Z}\) 
   where~$\Omega h$ has a right homotopy inverse. Then the diagram 
   \[\diagram 
        (\Omega Z\times\Omega Z)\ltimes E\rrto^-{\mu\ltimes 1}\dto^{\varphi} 
             & & \Omega Z\ltimes E\dto^{\overline{a}} \\ 
        \Omega Z\ltimes(\Omega Z\ltimes E)\rto^-{1\ltimes\overline{a}} 
             & \Omega Z\ltimes E\rto^-{\overline{a}} & E  
      \enddiagram\] 
   has the following properties: 
   \begin{letterlist} 
      \item it homotopy commutes when precomposed with the map 
               \(\namedright{(\Omega Z\times\Omega Z)\times E}{\pi}{(\Omega Z\times\Omega Z)\ltimes E}\); 
      \item it homotopy commutes after suspending; 
      \item it commutes in homology. 
   \end{letterlist} 
\end{lemma}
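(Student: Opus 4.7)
\emph{Plan.} My strategy is to establish part (a) directly by unwinding both composites in the diagram back to the known homotopy associativity of the action $a$, and then to derive parts (b) and (c) from (a) by observing that the quotient map $\pi\colon(\Omega Z\times\Omega Z)\times E\to(\Omega Z\times\Omega Z)\ltimes E$ becomes split surjective after one suspension and on reduced homology. The first step is the only one with real content; the remaining two are formal consequences.

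For (a), I precompose the entire diagram with $\pi$. By Lemma~\ref{abarGamma}, $\overline{a}\circ\pi'\simeq a$, where $\pi'$ is the quotient $\Omega Z\times E\to\Omega Z\ltimes E$. The naturality of the quotient construction gives $(\mu\ltimes 1)\circ\pi=\pi'\circ(\mu\times 1)$, so the upper route becomes $a\circ(\mu\times 1)$. For the lower route, Lemma~\ref{halfsmashquotient} identifies $\varphi\circ\pi$ with $\pi''\circ(1\times\pi')$, where $\pi''\colon\Omega Z\times(\Omega Z\ltimes E)\to\Omega Z\ltimes(\Omega Z\ltimes E)$. Two further applications of naturality of $\pi$, combined with Lemma~\ref{abarGamma}, reduce the lower route first to $\pi'\circ(1\times\overline{a})\circ(1\times\pi')$ and then to $a\circ(1\times a)$. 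The homotopy associativity square~(\ref{actionassoc}) closes the argument.

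For (b), the cofibration $\Omega Z\times\Omega Z\hookrightarrow(\Omega Z\times\Omega Z)\times E$ (inclusion of the first two factors) admits a retraction given by projection, so after one suspension the James--Milnor splitting identifies $\Sigma((\Omega Z\times\Omega Z)\times E)$ with $\Sigma(\Omega Z\times\Omega Z)\vee\Sigma((\Omega Z\times\Omega Z)\ltimes E)$, and $\Sigma\pi$ corresponds to the pinch onto the second summand. In particular $\Sigma\pi$ has a section, so $(\Sigma\pi)^{\ast}$ is split injective on homotopy classes into any target. Applying $\Sigma$ to the identity furnished by (a) and cancelling the split injection $(\Sigma\pi)^{\ast}$ yields (b). For (c), the same retraction makes the long exact sequence in reduced homology for the cofibration $\Omega Z\times\Omega Z\to(\Omega Z\times\Omega Z)\times E\to(\Omega Z\times\Omega Z)\ltimes E$ split into short split exact sequences, so $\pi_{\ast}$ is a split surjection. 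Applying $H_{\ast}$ to (a) and cancelling $\pi_{\ast}$ gives (c).

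The only genuinely delicate step is the diagrammatic bookkeeping in (a): one must keep straight which instance of the quotient map is being invoked when using Lemma~\ref{halfsmashquotient} and the naturality of $\pi$, so that both routes around the diagram really do collapse to the two sides of~(\ref{actionassoc}). Once (a) is in hand, (b) and (c) are essentially formal; the underlying reason the diagram fails to commute outright is exactly the failure of $\pi^{\ast}$ to be injective before suspending or passing to homology, as foreshadowed by the discussion of the exact sequence~(\ref{setexact}).
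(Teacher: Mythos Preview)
Your proof is correct and follows essentially the same route as the paper's: part~(a) is obtained by reducing both composites, after precomposition with $\pi$, to the two sides of the associativity square~(\ref{actionassoc}) via Lemma~\ref{halfsmashquotient}, naturality of $\pi$, and Lemma~\ref{abarGamma}; part~(b) then follows because $\Sigma\pi$ has a right homotopy inverse. The only cosmetic difference is in part~(c): the paper deduces it from~(b) (suspension induces an isomorphism on homology), whereas you deduce it directly from~(a) using that $\pi_{\ast}$ is surjective---both arguments are immediate.
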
 

\begin{proof} 
First consider the diagram 
\[\diagram 
      \Omega Z\times\Omega Z\times E\rto^-{\mu\times 1}\dto^{\pi} & \Omega Z\times E\dto^{\pi}\drto^{a} & \\ 
      (\Omega Z\times\Omega Z)\ltimes E\rto^-{\mu\ltimes 1} & \Omega Z\ltimes E\rto^-{\overline{a}} & E.  
  \enddiagram\] 
The left square commutes by the naturality of the quotient map $\pi$ and the right side 
commutes by Lemma~\ref{abarGamma}. Thus the homotopy commutativity of the diagram 
implies that $a\circ(\mu\times 1)\simeq\overline{a}\circ(\mu\ltimes 1)\circ \pi$. Next consider 
the diagram 
\[\diagram 
      \Omega Z\times\Omega Z\times E\rto^-{1\times\pi}\dto^{\pi} 
           & \Omega Z\times(\Omega Z\ltimes E)\rto^-{1\times \overline{a}}\dto^{\pi} 
           & \Omega Z\times E\dto^{\pi}\drto^{a} & \\ 
      (\Omega Z\times\Omega Z)\ltimes E\rto^-{\varphi} 
           & \Omega Z\ltimes(\Omega Z\ltimes E)\rto^-{1\ltimes\overline{a}} 
           & \Omega Z\ltimes E\rto^-{\overline{a}} & E. 
  \enddiagram\] 
The left square commutes by Lemma~\ref{halfsmashquotient}, the middle square commutes by the naturality 
of the quotient map $\pi$, and the right triangle commutes by Lemma~\ref{abarGamma}. Notice 
that Lemma~\ref{abarGamma} also implies that the top row is homotopic to $1\times a$. 
Thus the homotopy commutativity of the diagram implies that 
$a\circ(1\times a)\simeq\overline{a}\circ(1\ltimes\overline{a})\circ\varphi\circ\pi$. 
Hence, from the property $a\circ(1\times a)\simeq a\circ(\mu\times 1)$ of a homotopy action, 
we obtain $\overline{a}\circ(1\ltimes\overline{a})\circ\varphi\circ\pi\simeq\overline{a}\circ(\mu\ltimes 1)\circ\pi$, 
proving part~(a). 

Since the map $\Sigma\pi$ has a right homotopy inverse, part~(b) follows from part~(a). Part~(c) 
then follows from part~(b) since suspending induces an isomorphism in homology. 
\end{proof} 

Lemma~\ref{halfsmashaction} motivates a definition, which appeared in a different 
context in~\cite{Gr}. 

\begin{definition} 
Let 
\(f,g\colon\namedright{X}{}{Y}\) 
be maps of pointed, path-connected spaces. Then $f$ and $g$ are \emph{congruent} if 
$\Sigma f\simeq\Sigma g$. 
\end{definition} 

Note that if $f$ and $g$ are homotopic then they are congruent but the converse need not 
hold. Note also that $f$ congruent to $g$ implies that, in homology, $f_{\ast}=g_{\ast}$. 
For our purposes, congruence is often shorthand for saying two maps induces the same 
map in homology. This will often be used in the context of homotopy equivalences when 
both $X$ and $Y$ are simply-connected. In this case, if $f$ is a homotopy equivalence 
then it induces an isomorphism in homology, so $g$ also induces an isomorphism in 
homology by the congruence property, and hence is also a homotopy equivalence by 
Whitehead's Theorem. Other properties of congruent maps are discussed in~\cite{Gr} 
but they will not be used here. 

\begin{remark} 
Lemma~\ref{halfsmashaction} may now be rephrased as saying the composites 
$\overline{a}\circ(\mu\ltimes 1)$ and $\overline{a}\circ(1\ltimes\overline{a})\circ\varphi$ are congruent. 
\end{remark} 

Now specialize to the homotopy fibration sequence 
\[\namedddright{\Omega\Sigma X}{\partial}{E}{p}{\Sigma X\vee\Sigma Y}{q_{1}}{\Sigma X}\] 
where~$q_{1}$ is the pinch map to the first wedge summand. It is known (see, 
for example, \cite[4.3.2]{N4}) that there is a homotopy equivalence 
\[E\simeq\bigvee_{k=0}^{\infty} X^{\wedge k}\wedge\Sigma Y\] 
where $X^{0}\wedge\Sigma Y$ is regarded as $\Sigma Y$, and the map from $p$ can be 
identified in terms of iterated Whitehead products based on the inclusions of $\Sigma X$ 
and $\Sigma Y$ into $\Sigma X\vee\Sigma Y$. We show in Theorem~\ref{dbard} that such 
an equivalence can be chosen so that it also inherits properties of the homotopy action 
of $\Omega\Sigma X$ on $E$. 

This begins with an initial homotopy equivalence for $E$ that depends on a special case 
of Theorem~\ref{GTcofib} proved in~\cite{BT2}. 

\begin{theorem} 
   \label{BTfibinclusion} 
   Suppose that there is a homotopy cofibration 
   \(\nameddright{\Sigma A}{f}{Y}{h}{Y'}\). 
   Let $E$ be the homotopy fibre of $h$ and let 
   \(g\colon\namedright{\Sigma A}{}{E}\) 
   be a lift of $f$. If $\Omega h$ has a right homotopy inverse 
   \(s\colon\namedright{\Omega Y'}{}{\Omega Y}\) 
   then the the lift $g$ may be chosen so that: 
   \begin{letterlist} 
      \item the composite 
               \(\nameddright{\Omega Y'\ltimes\Sigma A}{1\ltimes g}{\Omega Y'\ltimes E} 
                    {\overline{a}}{E}\) 
               is a homotopy equivalence; 
      \item there is a homotopy fibration 
               \[\nameddright{\Omega Y'\ltimes\Sigma A}{\chi}{Y}{h}{Y'}\] 
               where $\chi$ is the sum of the maps 
               \(\nameddright{\Omega Y'\ltimes\Sigma A}{\pi}{\Sigma A}{f}{Y}\) 
              and 
              \(\namedright{\Omega Y'\ltimes\Sigma A}{q}{\Omega Y'\wedge\Sigma A}
                  \stackrel{[ev\circ s,f]}{\llarrow} Y\). 
    \end{letterlist} 
\end{theorem}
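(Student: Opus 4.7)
The plan is to derive the statement as a specialization of Theorem~\ref{GTcofib}, taking $Z = Y'$ and $h' = \mathrm{id}_{Y'}$ in the data~(\ref{data1}). With these choices the hypotheses of Theorem~\ref{GTcofib} are satisfied automatically: the fibration $\nameddright{E}{p}{Y}{h}{Y'}$ is the fibre sequence of $h$, the cofibration $\nameddright{\Sigma A}{f}{Y}{h}{Y'}$ is the one given, the identity on $Y'$ extends $h$ across its own cofibre, and the right homotopy inverse $s$ of $\Omega h$ is supplied by hypothesis.

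Under this specialization the homotopy fibre $E'$ of $h' = \mathrm{id}_{Y'}$ is contractible. The homotopy cofibration of Theorem~\ref{GTcofib}(a) therefore degenerates to a homotopy equivalence $\theta \colon \namedright{\Omega Y' \ltimes \Sigma A}{\simeq}{E}$. As noted in the discussion following Theorem~\ref{GTcofib}, $\theta$ is realized by the composite $\overline{a} \circ (1 \ltimes g)$ for a suitably chosen lift $g$ of $f$; because $E' \simeq \ast$, the fibre condition used to select $g$ becomes vacuous and any lift works. This establishes part~(a).

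For part~(b), composing $\theta$ with $\nameddright{E}{p}{Y}{h}{Y'}$ transports the fibration to $\nameddright{\Omega Y' \ltimes \Sigma A}{\chi}{Y}{h}{Y'}$ with $\chi := p \circ \theta$. Theorem~\ref{GTcofib}(b) identifies this composite by asserting that $\chi \circ (i \perp j) \simeq [\gamma, f] \perp f$, where $\gamma = ev \circ \Sigma s$. Since $i \perp j$ is the equivalence produced by Lemma~\ref{halfsmashsusp}, the co-$H$ structure it transports to $\Omega Y' \ltimes \Sigma A$ has the projection $\pi$ onto $\Sigma A$ and the quotient $q$ onto the smash as its wedge-component maps. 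Reading the identification through this co-$H$ structure converts $[\gamma, f] \perp f$ into the sum $f \circ \pi + [ev \circ s, f] \circ q$ claimed in the statement. The only delicate point is this final translation step, namely confirming that the co-$H$ structure inherited from $i \perp j$ indeed has $\pi$ and $q$ as its component projections; but this is a bookkeeping consequence of Lemma~\ref{halfsmashsusp} and the universal property of the wedge, so no serious obstruction arises.
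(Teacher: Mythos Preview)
Your proof is correct and follows essentially the same route as the paper's: specialize Theorem~\ref{GTcofib} with $Z=Y'$ and $h'=\mathrm{id}_{Y'}$, observe that $E'\simeq\ast$ forces $\theta$ to be an equivalence and identify it with $\overline{a}\circ(1\ltimes g)$ via~(\ref{thetabara}), then read off part~(b) by setting $\chi=p\circ\theta$ and invoking Theorem~\ref{GTcofib}(b). Your additional remarks---that the fibre condition on the lift $g$ becomes vacuous here, and the explicit translation from $[\gamma,f]\perp f$ to the sum $f\circ\pi + [ev\circ\Sigma s,f]\circ q$ via the co-$H$ structure---are correct elaborations that the paper leaves implicit.
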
 

\begin{proof} 
While proved in~\cite{BT2}, the proof is included to make the assertions transparent. 
Taking $Z=Y'$ in Theorem~\ref{GTcofib} gives a diagram of data 
\[\diagram 
      & E\rto\dto^{p} & E'\dto^{p'} \\ 
      \Sigma A\rto^-{f} & Y\rto^-{h}\dto^{h} & Y'\ddouble \\ 
      & Y'\rdouble & Y'. 
 \enddiagram\] 
where the vertical columns and the maps between them form a homotopy fibration diagram. 
Since~$\Omega h$ has a right homotopy inverse, by Theorem~\ref{GTcofib} there is a 
homotopy cofibration 
\[\nameddright{\Omega Y'\ltimes\Sigma A}{\theta}{E}{}{E'}.\] 
As $E'$ is contractible, $\theta$ is a homotopy equivalence. By~(\ref{thetabara}), 
$\theta$ is homotopic to the composite $\overline{a}\circ(1\ltimes g)$ for an appropriate 
choice of a lift $g$ of $f$. This proves part~(a). Defining $\chi$ as $p\circ\theta$, part~(b) 
follows immediately from the statement of Theorem~\ref{GTcofib}~(b). 
\end{proof} 

\begin{example} 
\label{specialGTadexample} 
Start with the homotopy cofibration 
\[\nameddright{\Sigma Y}{i_{2}}{\Sigma X\vee\Sigma Y}{q_{1}}{\Sigma X}\] 
where $i_{2}$ is the inclusion of the second wedge summand. Let $E$ be the 
homotopy fibre of $q_{1}$ and let 
\(g\colon\namedright{\Sigma Y}{}{E}\) 
be a lift of $i_{2}$. Since the inclusion $i_{1}$ of the first wedge summand is a right homotopy inverse 
for $q_{1}$, Theorem~\ref{BTfibinclusion} applies to show that $g$ can be chosen so there is 
a homotopy equivalence 
\[\nameddright{\Omega\Sigma X\ltimes\Sigma Y}{1\ltimes g}{\Omega\Sigma X\ltimes E}{\overline{a}}{E}\] 
and there is a homotopy fibration 
\[\nameddright{\Omega\Sigma X\ltimes\Sigma Y}{\chi}{\Sigma X\vee\Sigma Y}{q_{1}}{\Sigma X}\] 
where $\chi$ is the wedge sum of the maps 
\(\nameddright{\Omega\Sigma X\ltimes\Sigma Y}{\pi}{\Sigma Y}{i_{2}}{\Sigma X\vee\Sigma Y}\) 
and 
\(\lllnameddright{\Omega\Sigma X\ltimes\Sigma Y}{q}{\Omega\Sigma X\wedge\Sigma Y}{[ev\circ\Omega i_{1},i_{2}]} 
     {\Sigma X\vee\Sigma Y}\).  
\end{example} 

\begin{remark} 
Example~\ref{specialGTadexample} works equally well with $\Sigma X$ replaced by a space $X'$ 
that is not necessarily a suspension, but the use of $\Sigma X$ is to align with later examples 
and applications. 
\end{remark} 

Next, we build towards Theorem~\ref{dbard}. In general, the James construction gives 
a homotopy equivalence $\Sigma\Omega\Sigma X\simeq\bigvee_{k=1}^{\infty}\Sigma X^{\wedge k}$ 
which is natural for maps 
\(\namedright{X}{}{Y}\). 
There are different choices of such an equivalence and it will help if we fix one. Focus 
on the suspension  
\(\namedright{X}{E}{\Omega\Sigma X}\). 
For $k\geq 1$, let $e_{k}$ be the composite 
\[e_{k}\colon\nameddright{X^{\times k}}{E^{\times k}}{(\Omega\Sigma X)^{\times k}} 
     {\mu}{\Omega\Sigma X}\] 
where $\mu$ is the standard loop multiplication. There is a natural homotopy equivalence 
$\Sigma(A\times B)\simeq\Sigma A\vee\Sigma B\vee(\Sigma A\wedge B)$. Iterating 
this we obtain a natural map 
\(\namedright{\Sigma X_{1}\wedge\cdots\wedge X_{k}}{}{\Sigma(X_{1}\times\cdots\times X_{k})}\). 
Let $\phi_{k}$ be the composite 
\[\phi_{k}\colon\nameddright{\Sigma X^{\wedge k}}{}{\Sigma(X^{\times k})}{\Sigma e_{k}} 
     {\Sigma\Omega\Sigma X}.\] 
Let 
\[\phi\colon\namedright{\bigvee_{k=1}^{\infty}\Sigma X^{\wedge k}}{}{\Sigma\Omega\Sigma X}\] 
be the wedge sum of the maps $\phi_{k}$ for $k\geq 1$. 

\begin{lemma} 
   \label{SigmaJsplitting} 
   The map 
   \(\namedright{\bigvee_{k=1}^{\infty}\Sigma X^{\wedge k}}{\phi}{\Sigma\Omega\Sigma X}\) 
   is a homotopy equivalence that is natural for maps 
   \(\namedright{X}{}{Y}\). 
\end{lemma}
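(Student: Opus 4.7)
The plan is to show that $\phi$ induces an isomorphism on reduced integral homology and then invoke Whitehead's theorem, since both the domain and target are simply-connected CW complexes. Naturality of $\phi$ in $X$ is automatic because every ingredient in its construction—the suspension $E$, the loop multiplication $\mu$, and the iterated natural splitting of $\Sigma(X^{\times k})$ as a wedge of smash products—is natural in $X$. So the real content is the homology calculation.

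First I would fix a field $\mathbf{F}$ and work with $H_*(-;\mathbf{F})$. By the Bott--Samelson theorem, $H_*(\Omega\Sigma X;\mathbf{F})$ is the tensor algebra $T(\widetilde{H}_*(X;\mathbf{F}))$ with generators given by the image of $E_*$, and this identification is natural in $X$. Hence $\widetilde{H}_*(\Sigma\Omega\Sigma X;\mathbf{F})$ splits as $\bigoplus_{k \geq 1} \Sigma \widetilde{H}_*(X;\mathbf{F})^{\otimes k}$, graded by tensor length. On the other side, the natural iterated splitting
\[\Sigma(X^{\times k}) \simeq \bigvee_{\emptyset \neq I \subseteq \{1,\ldots,k\}} \Sigma \bigwedge_{i\in I} X\]
realizes $\Sigma X^{\wedge k}$ as the top wedge summand of $\Sigma(X^{\times k})$, and the map $\Sigma X^{\wedge k} \to \Sigma(X^{\times k})$ used in defining $\phi_k$ is the inclusion of this top summand.

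Next I would unwind $\phi_k$ on homology. The composite $\Sigma e_k = \Sigma\mu \circ \Sigma E^{\times k}$ sends a cross-product class $a_1 \times \cdots \times a_k$ in $\widetilde{H}_*(X;\mathbf{F})^{\otimes k}$ to the Pontryagin product $E_*(a_1) \cdots E_*(a_k)$, which is precisely the corresponding length-$k$ tensor in $T(\widetilde{H}_*(X;\mathbf{F}))$. Restricting to the top wedge summand $\Sigma X^{\wedge k}$ discards the classes of lower tensor length, so $(\phi_k)_*$ carries $\widetilde{H}_*(\Sigma X^{\wedge k};\mathbf{F})$ isomorphically onto the length-$k$ graded piece of $\widetilde{H}_*(\Sigma\Omega\Sigma X;\mathbf{F})$. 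Taking the wedge over $k$, $\phi_*$ is an isomorphism with $\mathbf{F}$-coefficients.

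Since this holds for every prime field and for $\mathbf{F} = \mathbb{Q}$, $\phi$ induces an isomorphism on integral homology by a standard coefficient comparison, and Whitehead's theorem then implies that $\phi$ is a homotopy equivalence. Naturality in $X$ has already been observed. The main obstacle is that when $\widetilde{H}_*(X;\mathbb{Z})$ has torsion the integral homology of $\Omega\Sigma X$ is not literally a free tensor algebra, so the calculation must genuinely be carried out with field coefficients and then assembled, rather than done once integrally.
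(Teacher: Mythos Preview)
Your proof is correct and follows essentially the same approach as the paper: both use the Bott--Samelson theorem with field coefficients to identify $(\phi_k)_*$ with the inclusion of the length-$k$ tensors, deduce an integral homology isomorphism by running over all prime fields and $\mathbb{Q}$, and conclude via Whitehead's theorem, with naturality coming from the naturality of $e_k$ and the smash-to-product inclusion. Your version is slightly more explicit about the Pontryagin-product computation and the reason for working with field coefficients, but the argument is the same.
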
 

\begin{proof} 
By the Bott-Samelson Theorem there is an algebra 
isomorphism $\hlgy{\Omega\Sigma X;\mathbf{k}}\cong T(\rhlgy{X;\mathbf{k}})$ where $T(\ \ )$ is the 
free tensor algebra functor and $\mathbf{k}$ is a field. By construction, the map $\phi_{k}$ 
induces an isomorphism onto the suspension of the submodule of tensors of length $k$. 
Thus $\phi_{\ast}$ is an isomorphism. As this is true for homology with mod-$p$ coefficients 
for any prime $p$ and for rational coefficients, $\phi$ induces an isomorphism in integral 
homology. Thus $\phi$ is a homotopy equivalence by Whitehead's Theorem. 

The naturality of $\phi$ follows by the naturality of $e_{k}$ and the map 
\(\namedright{\Sigma X^{\wedge k}}{}{\Sigma(X^{\times k})}\). 
\end{proof} 

We now turn to specifying a homotopy equivalence 
$\Omega\Sigma X\ltimes\Sigma Y\simeq\bigvee_{k=0}^{\infty} X^{\wedge k}\wedge\Sigma Y$ 
that will be needed to prove Theorem~\ref{dbardintro}. Let 
\[b_{1}\colon\namedright{X\wedge\Sigma Y}{}{X\ltimes\Sigma Y}\] 
be the inclusion $i$. For $k\geq 2$, define 
\[b_{k}\colon\namedright{X^{\wedge k}\wedge\Sigma Y}{}{(X^{\times k})\ltimes\Sigma Y}\] 
recursively by the composite 
\[\llnameddright{X^{\wedge k}\wedge\Sigma Y}{=}{X\wedge(X^{\wedge k-1}\wedge\Sigma Y)} 
      {i}{X\ltimes(X^{\wedge k-1}\wedge\Sigma Y)}\] 
\[\hspace{4cm}\stackrel{1\ltimes b_{k-1}}{\llarrow}\llnameddright{X\ltimes(X^{\times k-1}\ltimes\Sigma Y)} 
        {\varphi^{-1}}{(X\times X^{k-1})\ltimes\Sigma Y}{=}{(X^{\times k})\ltimes\Sigma Y}\]  
where $\varphi$ is the homeomorphism from Lemma~\ref{halfsmashquotient}. 

For $k\geq 1$, let 
\[q_{k}\colon\namedright{(X^{\times k})\ltimes\Sigma Y}{}{X^{\wedge k}\wedge\Sigma Y}\] 
be the quotient map to the smash product. 

\begin{lemma} 
   \label{bkquotient} 
   For $k\geq 1$ the composite 
   \[\nameddright{X^{\wedge k}\wedge\Sigma Y}{b_{k}}{(X^{\times k})\ltimes\Sigma Y} 
          {q_{k}}{X^{\wedge k}\wedge\Sigma Y}\] 
   is homotopic to the identity map. 
\end{lemma}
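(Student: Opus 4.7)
I would proceed by induction on $k\geq 1$.

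For the base case $k=1$, $b_{1}=i$ and $q_{1}$ is the standard half-smash quotient $X\ltimes\Sigma Y\to X\wedge\Sigma Y$. Lemma~\ref{halfsmashsusp} provides a natural homotopy equivalence $i\perp j\colon (X\wedge\Sigma Y)\vee\Sigma Y\simeq X\ltimes\Sigma Y$ under which $q_{1}$ corresponds to the pinch onto the first wedge summand and $i$ corresponds to the inclusion of that summand. Hence $q_{1}\circ b_{1}\simeq\mathrm{id}$.

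For the inductive step, assume $q_{k-1}\circ b_{k-1}\simeq\mathrm{id}$. The key maneuver is to rewrite $q_{k}$ in a form that meshes with the recursive definition of $b_{k}$. The second commutative square of Lemma~\ref{halfsmashquotient} (with $B=X$, $C=X^{k-1}$, $D=\Sigma Y$) identifies $q_{k}$ with $Q\circ\varphi$, where $Q\colon X\ltimes(X^{\times k-1}\ltimes\Sigma Y)\to X^{\wedge k}\wedge\Sigma Y$ is the quotient to the triple smash product. Next I would factor $Q$ as $q\circ(1\ltimes q_{k-1})$: the passage from $X\ltimes(X^{\times k-1}\ltimes\Sigma Y)$ to the full smash can be performed in two stages, first by applying $1\ltimes q_{k-1}$ to collapse the inner half-smash, and then by applying the standard quotient $q\colon X\ltimes(X^{\wedge k-1}\wedge\Sigma Y)\to X\wedge(X^{\wedge k-1}\wedge\Sigma Y)=X^{\wedge k}\wedge\Sigma Y$. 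This identification is a formal consequence of the universal properties of the quotient maps defining the half-smashes.

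Granting the factorisation $q_{k}=q\circ(1\ltimes q_{k-1})\circ\varphi$, substituting the recursive definition $b_{k}=\varphi^{-1}\circ(1\ltimes b_{k-1})\circ i$ makes $\varphi$ and $\varphi^{-1}$ cancel, and functoriality of $1\ltimes(-)$ collapses the middle pair of maps into $1\ltimes(q_{k-1}\circ b_{k-1})$. The inductive hypothesis gives $1\ltimes(q_{k-1}\circ b_{k-1})\simeq 1\ltimes\mathrm{id}=\mathrm{id}$, so the composite reduces to $q\circ i$, which is homotopic to the identity by the base-case argument, now applied with $X^{\wedge k-1}\wedge\Sigma Y$ in the role of $\Sigma B$ in Lemma~\ref{halfsmashsusp}. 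The only non-cosmetic point is the factorisation $Q=q\circ(1\ltimes q_{k-1})$, which I would spell out explicitly; everything else is formal diagram chasing.
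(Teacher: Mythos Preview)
Your proposal is correct and follows essentially the same approach as the paper's proof. Both argue by induction on $k$, use $q\circ i\simeq\mathrm{id}$ for the base case, and for the inductive step invoke Lemma~\ref{halfsmashquotient} together with the factorisation $Q=q\circ(1\ltimes q_{k-1})$ and the inductive hypothesis; the paper packages this into a single commutative diagram while you unwind it symbolically, but the content is the same.
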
 

\begin{proof} 
The proof is by induction on $k$. The $k=1$ case holds because $b_{1}$ is defined as the inclusion 
\(\namedright{X\wedge\Sigma Y}{i}{X\ltimes\Sigma Y}\),  
the map $i$ is a left homotopy inverse of the quotient map 
\(\namedright{X\ltimes\Sigma Y}{q}{X\ltimes\Sigma Y}\), 
and by definition $q_{1}=q$. 

For $k\geq 2$, suppose that $q_{k-1}\circ b_{k-1}$ is homotopic to the identity map. 
Consider the diagram 
\[\spreaddiagramcolumns{-0.5pc}\diagram 
     X\wedge(X^{\wedge k-1}\wedge\Sigma Y)\rto^-{i}\drdouble 
        & X\ltimes(X^{\wedge k-1}\wedge\Sigma Y)\rto^-{1\ltimes b_{k-1}}\dto^{q} 
        & X\ltimes(X^{\times k-1}\ltimes\Sigma Y)\rto^-{\varphi^{-1}}\dto 
        & (X\times X^{k-1})\ltimes\Sigma Y\dto  \\ 
     & X\wedge(X^{\wedge k-1}\wedge\Sigma Y)\rdouble 
        & X\wedge(X^{\wedge k-1}\wedge\Sigma Y)\rdouble 
        & (X\wedge X^{\wedge k-1})\wedge\Sigma Y 
   \enddiagram\] 
where the vertical maps are quotient maps to the smash product. The left triangle 
homotopy commutes since $i$ is a right homotopy inverse of $q$, the middle square 
homotopy commutes by inductive hypothesis, and the right square commutes by 
Lemma~\ref{halfsmashquotient}. By definition, $b_{k}$ is the top row (up to identification 
of $X\wedge(X^{\wedge k-1}\wedge\Sigma Y)$ as $X^{\wedge k}\wedge\Sigma Y$ 
and $(X\times X^{k-1})\ltimes\Sigma Y$ as $(X^{\times k})\ltimes\Sigma Y$) and 
the right vertical map can be identified with $q_{k}$. Thus $q_{k}\circ b_{k}$ is 
homotopic to the identity map, proving the inductive step. 
\end{proof} 

Let 
\[c_{0}\colon\namedright{\Sigma Y}{}{\Omega\Sigma X\ltimes\Sigma Y}\] 
be the inclusion $j$ and for $k\geq 1$ define $c_{k}$ by the composite 
\[c_{k}\colon\lnameddright{X^{\wedge k}\wedge\Sigma Y}{b_{k}}{(X^{\times k})\ltimes\Sigma Y} 
      {e_{k}\ltimes 1}{\Omega\Sigma X\ltimes\Sigma Y}.\] 
Let 
\[c\colon\namedright{\bigvee_{k=0}^{\infty} X^{\wedge k}\wedge\Sigma Y}{} 
     {\Omega\Sigma X\ltimes\Sigma Y}\] 
be the wedge sum of the maps $c_{k}$ for $k\geq 0$, where $X^{\wedge 0}\wedge\Sigma Y$ 
is understood to be $\Sigma Y$.       
      
\begin{lemma} 
   \label{cequiv} 
   The map 
   \(\namedright{\bigvee_{k=0}^{\infty} X^{\wedge k}\wedge\Sigma Y}{c} 
       {\Omega\Sigma X\ltimes\Sigma Y}\)
    is a homotopy equivalence. 
\end{lemma}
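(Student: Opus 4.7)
The plan is to show that $c$ induces an isomorphism in integral homology and then to invoke Whitehead's Theorem. Both the source $\bigvee_{k\geq 0}X^{\wedge k}\wedge\Sigma Y$ and the target $\Omega\Sigma X\ltimes\Sigma Y$ are simply-connected $CW$-complexes: the source as a wedge of suspensions, the target through the splitting $(\Omega\Sigma X\wedge\Sigma Y)\vee\Sigma Y\simeq\Omega\Sigma X\ltimes\Sigma Y$ from Lemma~\ref{halfsmashsusp}, where the simply-connectivity is inherited from $\Sigma Y$.

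To detect $c_{*}$, I would post-compose with the quotient $q\colon\Omega\Sigma X\ltimes\Sigma Y\to\Omega\Sigma X\wedge\Sigma Y$ and with the retraction $\rho\colon\Omega\Sigma X\ltimes\Sigma Y\to\Sigma Y$ induced by projection onto $\Sigma Y$. Under the splitting from Lemma~\ref{halfsmashsusp} the pair $(q_{*},\rho_{*})$ is an isomorphism on reduced homology, so it suffices to analyze $q\circ c$ and $\rho\circ c$ separately.

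For $q\circ c_{k}$ with $k\geq 1$, naturality of the quotient gives $q\circ(e_{k}\ltimes 1)=(e_{k}\wedge 1)\circ q_{k}$, and combined with Lemma~\ref{bkquotient} this yields $q\circ c_{k}\simeq e_{k}\wedge 1$. By the construction of $\phi_{k}$ in Lemma~\ref{SigmaJsplitting}, the map $(e_{k})_{*}$ sends $\widetilde{H}_{*}(X^{\wedge k})$ (viewed as a direct summand of $H_{*}(X^{\times k})$ via the stable splitting of a product) isomorphically onto the $k$-th tensor summand of $H_{*}(\Omega\Sigma X)=T(\widetilde{H}_{*}(X))$; after smashing with $\Sigma Y$, these maps assemble across $k\geq 1$ to an isomorphism onto $\widetilde{H}_{*}(\Omega\Sigma X\wedge\Sigma Y)$. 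The $k=0$ piece is trivial, since $q\circ c_{0}=q\circ j$ is null. Dually, $\rho\circ c_{0}=\rho\circ j=\mathrm{id}_{\Sigma Y}$ needs no check, while for $k\geq 1$ naturality of $\rho$ reduces $\rho\circ c_{k}$ to $\rho_{k}\circ b_{k}$, where $\rho_{k}\colon(X^{\times k})\ltimes\Sigma Y\to\Sigma Y$ is the analogous retraction.

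The main technical step is to prove $\rho_{k}\circ b_{k}\simeq\ast$ for all $k\geq 1$, which I would do by induction on $k$. The base case $k=1$ is immediate: $b_{1}=i$, and $i$ and $\rho_{1}$ correspond to opposite wedge summands in the splitting $X\ltimes\Sigma Y\simeq(X\wedge\Sigma Y)\vee\Sigma Y$. For the inductive step, Lemma~\ref{halfsmashquotient} identifies $\rho_{k}\circ\varphi^{-1}$ with the composite $X\ltimes(X^{\times k-1}\ltimes\Sigma Y)\xrightarrow{\tau}X^{\times k-1}\ltimes\Sigma Y\xrightarrow{\rho_{k-1}}\Sigma Y$, where $\tau$ is the retraction collapsing the leading $X$ factor. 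Naturality of $\tau$ in the second variable then gives $\rho_{k}\circ b_{k}\simeq\rho_{k-1}\circ b_{k-1}\circ(\tau_{0}\circ i)$ for the analogous retraction $\tau_{0}\colon X\ltimes(X^{\wedge k-1}\wedge\Sigma Y)\to X^{\wedge k-1}\wedge\Sigma Y$; since $X^{\wedge k-1}\wedge\Sigma Y$ is a suspension, Lemma~\ref{halfsmashsusp} applies and $\tau_{0}\circ i$ is null by the same wedge-summand argument as in the base case. Combining these calculations, $c_{*}$ is an isomorphism on integral homology, and Whitehead's Theorem completes the proof. The main obstacle is the inductive identification $\rho_{k}\circ b_{k}\simeq\ast$, essentially a careful bookkeeping exercise in the naturality properties of the half-smash.
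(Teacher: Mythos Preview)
Your approach is essentially correct and close in spirit to the paper's, but takes a more elaborate route. The paper argues directly in homology with field coefficients: using the Bott--Samelson isomorphism $H_{*}(\Omega\Sigma X)\cong T(V)$ with $V=\widetilde{H}_{*}(X)$, it observes that Lemma~\ref{bkquotient} makes $(b_{k})_{*}$ a section whose image is carried by $(e_{k}\ltimes 1)_{*}$ onto the summand $V^{\otimes k}\otimes\widetilde{H}_{*}(\Sigma Y)$, and assembling over $k$ gives the isomorphism. You instead split the target via the projections $q$ and $\rho$ and try to identify $q\circ c_{k}$ and $\rho\circ c_{k}$ as maps. This buys you a cleaner block structure but costs the extra inductive verification of $\rho_{k}\circ b_{k}\simeq\ast$---which, incidentally, is not strictly needed: once $(q\circ c)\vert_{k\geq 1}$ is an isomorphism onto $\widetilde{H}_{*}(\Omega\Sigma X\wedge\Sigma Y)$ and $q\circ c_{0}\simeq\ast$, the resulting lower-triangular block matrix (with isomorphisms on the diagonal) is already invertible regardless of the off-diagonal $\rho\circ c_{k}$ terms.

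One step does need repair. The equation $q\circ(e_{k}\ltimes 1)=(e_{k}\wedge 1)\circ q_{k}$ does not typecheck with the paper's $q_{k}\colon(X^{\times k})\ltimes\Sigma Y\to X^{\wedge k}\wedge\Sigma Y$, because $e_{k}\colon X^{\times k}\to\Omega\Sigma X$ does \emph{not} factor through $X^{\wedge k}$ (for instance $(x,\ast)\mapsto E(x)\neq\ast$). Naturality of the half-smash-to-smash quotient only gives $q\circ(e_{k}\ltimes 1)=(e_{k}\wedge 1)\circ q'_{k}$ with $q'_{k}$ landing in $(X^{\times k})\wedge\Sigma Y$, and Lemma~\ref{bkquotient} says nothing directly about $q'_{k}\circ b_{k}$. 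The easiest fix is to argue on homology as the paper does, or equivalently to use the James filtration: $c_{k}$ factors through $J_{k}(X)\ltimes\Sigma Y$, so $(q\circ c_{k})_{*}$ lands in $\bigoplus_{j\leq k}V^{\otimes j}\otimes\widetilde{H}_{*}(\Sigma Y)$ and is the identity modulo the $j<k$ part. The assembled map is then upper-triangular with identities on the diagonal, hence an isomorphism---which is all you need.
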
 

\begin{proof} 
Take homology with field coefficients. By the Bott-Samelson Theorem there is an algebra 
isomorphism $\hlgy{\Omega\Sigma X}\cong T(V)$ where $V=\rhlgy{X}$. The homotopy 
equivalence $\Omega\Sigma X\ltimes\Sigma Y\simeq(\Omega\Sigma X\wedge\Sigma Y)\vee\Sigma Y$ 
therefore implies that there is a module isomorphism 
\[\hlgy{\Omega\Sigma X\ltimes\Sigma Y}\cong \bigg(T(V)\otimes\rhlgy{\Sigma Y}\bigg)\oplus\hlgy{\Sigma Y}.\] 

By Lemma~\ref{bkquotient}, the map 
\(\namedright{X^{\wedge k}\wedge\Sigma Y}{b_{k}}{(X^{\times k})\ltimes\Sigma Y}\) 
is a left homotopy inverse for the quotient map 
\(\namedright{(X^{\times k})\ltimes\Sigma Y}{q_{k}}{X^{\wedge k}\wedge\Sigma Y}\). 
Therefore the image of $(b_{k})_{\ast}$ is isomorphic to the submodule 
$\rhlgy{X}^{\otimes k}\otimes\rhlgy{\Sigma Y}$. The map $(e_{k}\ltimes 1)_{\ast}$ maps 
this submodule isomorphically onto $V^{\otimes k}\otimes\rhlgy{\Sigma Y}$. Thus, if $k\geq 1$,  
as $c_{k}$ is defined as $(e_{k}\ltimes 1)\circ b_{k}$, the image of $(c_{k})_{\ast}$ 
is isomorphic to the submodule $V^{\otimes k}\otimes\rhlgy{\Sigma Y}$. Since $c_{0}$ 
is the inclusion of $\Sigma Y$ into $\Omega\Sigma X\ltimes\Sigma Y$, its image in 
homology is $\hlgy{\Sigma Y}$. Thus $c_{\ast}$ is an isomorphism. As this is true for 
homology with any field coefficients, $c$ induces an isomorphism in integral homology 
so $c$ is a homotopy equivalence by Whitehead's Theorem. 
\end{proof} 

We now define two maps 
\(\namedright{\bigvee_{k=0}^{\infty} X^{\wedge k}\wedge\Sigma Y}{}{E}\), 
both of which will be homotopy equivalences, and which are congruent. First, let 
\[d_{0}\colon\namedright{\Sigma Y}{}{E}\] 
be $g$. For $k\geq 1$, let $d_{k}$ be the composite 
\[d_{k}\colon\namedddright{X^{\wedge k}\wedge\Sigma Y}{c_{k}}{\Omega\Sigma X\ltimes\Sigma Y} 
      {1\ltimes g}{\Omega\Sigma X\ltimes E}{\overline{a}}{E}.\] 
Let 
\[d\colon\namedright{\bigvee_{k=0}^{\infty} X^{\wedge k}\wedge\Sigma Y}{}{E}\] 
be the wedge sum of the maps $d_{k}$ for $k\geq 0$. Since $c$ is the wedge sum 
of the maps $c_{k}$ for $k\geq 0$, the map $d$ may equivalently be written as the composite 
\[\namedddright{\bigvee_{k=0}^{\infty} X^{\wedge k}\wedge\Sigma Y}{c}{\Omega\Sigma X\ltimes\Sigma Y} 
      {1\ltimes g}{\Omega\Sigma X\ltimes E}{\overline{a}}{E}.\] 

\begin{lemma} 
   \label{dequiv} 
   The map $d$ is a homotopy equivalence. 
\end{lemma}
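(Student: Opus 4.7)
The plan is very short because Lemma~\ref{dequiv} is essentially a composition of two already-established equivalences. First I would recall the alternative description of $d$ given just before the lemma statement: since $c = \bigvee_{k\geq 0} c_{k}$ and each $d_{k}$ is defined as $\overline{a} \circ (1 \ltimes g) \circ c_{k}$, the wedge sum $d = \bigvee_{k\geq 0} d_{k}$ can be written as the single composite
\[
d \colon \namedddright{\bigvee_{k=0}^{\infty} X^{\wedge k}\wedge\Sigma Y}{c}{\Omega\Sigma X\ltimes\Sigma Y}{1\ltimes g}{\Omega\Sigma X\ltimes E}\stackrel{\overline{a}}{\longrightarrow}E.
\]
So it suffices to show that each of the two composed maps $c$ and $\overline{a}\circ(1\ltimes g)$ is a homotopy equivalence.

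Next I would invoke Lemma~\ref{cequiv}, which has just been proved, to assert that $c$ is a homotopy equivalence. For the second factor, I would apply Example~\ref{specialGTadexample} (a direct specialization of Theorem~\ref{BTfibinclusion}~(a)) to the homotopy cofibration $\nameddright{\Sigma Y}{i_{2}}{\Sigma X\vee\Sigma Y}{q_{1}}{\Sigma X}$. Since $q_{1}$ has the right homotopy inverse $i_{1}$, the hypothesis that $\Omega q_{1}$ admits a section is immediate, and the example guarantees that the lift $g$ of $i_{2}$ may be chosen so that the composite
\[
\nameddright{\Omega\Sigma X\ltimes\Sigma Y}{1\ltimes g}{\Omega\Sigma X\ltimes E}{\overline{a}}{E}
\]
is a homotopy equivalence. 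Since $g$ is fixed to be exactly this choice throughout the construction of $d$, we may apply the conclusion verbatim.

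Combining these two facts, $d$ is a composite of homotopy equivalences and hence is itself a homotopy equivalence. There is essentially no obstacle here: the content has been pre-loaded into Lemmas~\ref{cequiv} and Theorem~\ref{BTfibinclusion}, and the role of this lemma is to package their conjunction into a named statement for use in the proof of Theorem~\ref{dbard}, where the compatibility of $d$ with both the half-smash action (through $\overline{a}$) and the wedge decomposition (through $c$) will be exploited.
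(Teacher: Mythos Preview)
Your proof is correct and follows essentially the same approach as the paper: write $d = \overline{a}\circ(1\ltimes g)\circ c$, then invoke Lemma~\ref{cequiv} and Example~\ref{specialGTadexample} to conclude that each factor is a homotopy equivalence. The paper's proof is a one-sentence version of exactly this argument.
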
 

\begin{proof} 
This follows immediately since $d=\overline{a}\circ(1\ltimes g)\circ c$ and, by 
Example~\ref{specialGTadexample} and Lemma~\ref{cequiv} respectively, 
both $\overline{a}\circ(1\ltimes g)$ and $c$ are homotopy equivalences. 
\end{proof} 

Next, let 
\[\mathfrak{d}_{0}\colon\namedright{\Sigma Y}{}{E}\] 
be $g$. For $k\geq 1$, let $\mathfrak{d}_{k}$ be the composite 
\[\mathfrak{d}_{k}\colon\llnamedddright{X\wedge(X^{\wedge k-1}\wedge\Sigma Y)}{i} 
      {X\ltimes(X^{\wedge k-1}\wedge\Sigma Y)} 
      {E\ltimes\mathfrak{d}_{k-1}}{\Omega\Sigma X\ltimes E}{\overline{a}}{E}.\] 
Let 
\[\mathfrak{d}\colon\namedright{\bigvee_{k=0}^{\infty} X^{\wedge k}\wedge\Sigma Y}{}{E}\] 
be the wedge sum of the maps $\mathfrak{d}_{k}$ for $k\geq 0$. We will show that $\mathfrak{d}$ 
is congruent to $d$, that $\mathfrak{d}$ is a homotopy equivalence, and that it 
lifts a certain wedge sum of Whitehead products on $\Sigma X\vee\Sigma Y$. 

\begin{lemma} 
   \label{dcongruent} 
   If $k=0$ or $k=1$ then $\mathfrak{d}_{k}=d_{k}$. If $k\geq 2$ then $\mathfrak{d}_{k}$ 
   is congruent to $d_{k}$. 
\end{lemma}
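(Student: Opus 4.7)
The proof is by induction on $k$. The base cases are straightforward: $\mathfrak{d}_0 = g = d_0$ by construction, and for $k=1$ one unwinds the definitions to obtain $\mathfrak{d}_1 = \overline{a}\circ(E\ltimes g)\circ i$ and $d_1 = \overline{a}\circ(1\ltimes g)\circ(e_1\ltimes 1)\circ b_1$; since $e_1 = E$ and $b_1 = i$, these coincide.

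For $k\geq 2$, assume $\Sigma\mathfrak{d}_{k-1}\simeq\Sigma d_{k-1}$. The first task is to recast $d_k$ in a form that mirrors the recursive definition of $\mathfrak{d}_k$. Combining $e_k = \mu\circ(e_1\times e_{k-1})$ with the recursion $b_k = \varphi^{-1}\circ(1\ltimes b_{k-1})\circ i$ and the naturality of $\varphi$ from Lemma~\ref{halfsmashquotient} yields
\[
c_k = (\mu\ltimes 1)\circ\varphi^{-1}\circ(e_1\ltimes c_{k-1})\circ i.
\]
Substituting into $d_k = \overline{a}\circ(1\ltimes g)\circ c_k$ and using naturality of $\varphi$ once more to push $(1\ltimes g)$ past $\varphi^{-1}$ gives
\[
d_k = \overline{a}\circ(\mu\ltimes 1)\circ\varphi^{-1}\circ\bigl(e_1\ltimes((1\ltimes g)\circ c_{k-1})\bigr)\circ i.
\]
Now invoke Lemma~\ref{halfsmashaction}(b), which supplies the congruence $\Sigma(\overline{a}\circ(\mu\ltimes 1))\simeq\Sigma(\overline{a}\circ(1\ltimes\overline{a})\circ\varphi)$. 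Substituting, collapsing $\varphi\circ\varphi^{-1}$, and then pulling $\overline{a}$ inside the half-smash via functoriality, delivers
\[
\Sigma d_k \simeq \Sigma\bigl(\overline{a}\circ(e_1\ltimes d_{k-1})\circ i\bigr),
\]
while by definition $\mathfrak{d}_k = \overline{a}\circ(e_1\ltimes\mathfrak{d}_{k-1})\circ i$.

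It therefore suffices to show that if $\Sigma f\simeq\Sigma f'$ for maps $f,f'\colon X^{\wedge k-1}\wedge\Sigma Y\to E$, then $\Sigma\bigl((e_1\ltimes f)\circ i\bigr)\simeq\Sigma\bigl((e_1\ltimes f')\circ i\bigr)$; applying this with $f=d_{k-1}$ and $f'=\mathfrak{d}_{k-1}$ and post-composing with $\Sigma\overline{a}$ then completes the induction. The cofibration $E\to\Omega\Sigma X\ltimes E\to\Omega\Sigma X\wedge E$ has its first map split by the projection $r\colon\Omega\Sigma X\ltimes E\to E$, so after one suspension it splits as $\Sigma(\Omega\Sigma X\ltimes E)\simeq\Sigma E\vee\Sigma(\Omega\Sigma X\wedge E)$, and a map from a suspension into this target is determined up to homotopy by its components along $\Sigma r$ and $\Sigma q$. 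For $(e_1\ltimes f)\circ i$, the $\Sigma r$-component is $\Sigma(f\circ r'\circ i)$, where $r'\colon X\ltimes(X^{\wedge k-1}\wedge\Sigma Y)\to X^{\wedge k-1}\wedge\Sigma Y$ is the analogous projection; since $X^{\wedge k-1}\wedge\Sigma Y$ is a suspension, Lemma~\ref{halfsmashsusp} gives $r'\circ i\simeq\ast$, so this component vanishes. The $\Sigma q$-component is $\Sigma\bigl((e_1\wedge f)\circ(q\circ i)\bigr)\simeq\Sigma(e_1\wedge f)$, using $\Sigma(q\circ i)\simeq 1$. Since $\Sigma f\simeq\Sigma f'$ forces $\Sigma(e_1\wedge f)\simeq\Sigma(e_1\wedge f')$ by functoriality of the smash product, both components agree.

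The main obstacle is precisely this last step: the inductive hypothesis is only a congruence, not an honest homotopy, so one cannot simply substitute $\mathfrak{d}_{k-1}$ for $d_{k-1}$ in the formula derived above. The way through is to observe that although $E$ is not known \emph{a priori} to be a suspension, the retract $\Omega\Sigma X\ltimes E\to E$ still forces $\Sigma(\Omega\Sigma X\ltimes E)$ to split as a wedge; the comparison then reduces to the smash-product coordinate, where congruence behaves well under smashing with $e_1$.
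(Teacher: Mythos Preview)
Your overall strategy coincides with the paper's: the base cases are verified directly, and for $k\geq 2$ you unwind $d_k$ via the recursion for $b_k$ and $e_k$, invoke Lemma~\ref{halfsmashaction} to pass from $\overline{a}\circ(\mu\ltimes 1)$ to $\overline{a}\circ(1\ltimes\overline{a})\circ\varphi$, and arrive at $\Sigma d_k\simeq\Sigma\bigl(\overline{a}\circ(E\ltimes d_{k-1})\circ i\bigr)$ before applying the inductive hypothesis. You are also right to flag the final step as the crux: the inductive hypothesis gives only $\Sigma d_{k-1}\simeq\Sigma\mathfrak{d}_{k-1}$, and one must argue that this forces $\Sigma\bigl(\overline{a}\circ(E\ltimes d_{k-1})\circ i\bigr)\simeq\Sigma\bigl(\overline{a}\circ(E\ltimes\mathfrak{d}_{k-1})\circ i\bigr)$. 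The paper asserts this step without comment.

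However, your proposed justification has a genuine gap. You split the target as $\Sigma(\Omega\Sigma X\ltimes E)\simeq\Sigma E\vee\Sigma(\Omega\Sigma X\wedge E)$ and claim that a map from a suspension into this wedge is determined by its projections along $\Sigma r$ and $\Sigma q$. This is false: a map into a wedge $U\vee V$ is \emph{not} determined by its pinch-components to $U$ and $V$, even when the domain is a suspension. The Whitehead product $[\iota_{1},\iota_{2}]\colon S^{3}\to S^{2}\vee S^{2}$ has trivial projection to each $S^{2}$ yet is essential. So showing that the $\Sigma r$-components of both maps vanish and the $\Sigma q$-components agree does not let you conclude the maps are homotopic.

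The fix is much simpler than your splitting argument and is presumably what the paper has in mind. There is a natural homeomorphism $Z\ltimes B\cong Z_{+}\wedge B$, under which $1_{Z}\ltimes f$ becomes $1_{Z_{+}}\wedge f$. Since suspension commutes with smashing (up to the obvious shuffle), $\Sigma(1_{Z}\ltimes f)\cong 1_{Z_{+}}\wedge\Sigma f$, which depends only on the homotopy class of $\Sigma f$. Hence $\Sigma f\simeq\Sigma f'$ implies $\Sigma(1_{Z}\ltimes f)\simeq\Sigma(1_{Z}\ltimes f')$. Writing $E\ltimes f=(1_{\Omega\Sigma X}\ltimes f)\circ(E\ltimes 1)$ and then pre- and post-composing with $\Sigma i$ and $\Sigma\overline{a}$ gives the required congruence immediately, with no need to analyse the target as a wedge.
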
 

\begin{proof} 
First, observe that $\mathfrak{d}_{0}=d_{0}$ since, by their definitions, both equal $g$. 
Next, by definition, $\mathfrak{d}_{1}=\overline{a}\circ(E\ltimes d_{0})\circ i=\overline{a}\circ(E\ltimes g)\circ i$. 
On the other hand, by definition of $d_{1}$ and $c_{1}$ we have 
$d_{1}=\overline{a}\circ(1\ltimes g)\circ c_{1}=\overline{a}\circ(1\ltimes g)\circ(e_{1}\ltimes 1)\circ b_{1}$. 
By definition, $e_{1}=E$ and $b_{1}=i$, so we obtain $d_{1}=\overline{a}\circ(E\ltimes g)\circ i$. Thus 
$\mathfrak{d}_{1}=d_{1}$. 

Now suppose that $k\geq 2$ and assume inductively that $\mathfrak{d}_{k-1}$ is congruent 
to $d_{k-1}$. By definition, $d_{k}=\overline{a}\circ(1\ltimes g)\circ c_{k}$ and 
$c_{k}=(e_{k}\ltimes 1)\circ b_{k}$, so $d_{k}$ is equivalently described by the composite 
\begin{equation} 
  \label{dkalt} 
  \lnamedddright{X^{\wedge k}\wedge Y}{b_{k}}{(X^{\times k})\ltimes\Sigma Y}{e_{k}\ltimes g} 
       {\Omega\Sigma X\ltimes E}{\overline{a}}{E}. 
\end{equation} 
Consider the diagram 
\[\diagram 
     X\ltimes(X^{\times k-1}\ltimes\Sigma Y)\rrto^-{E\ltimes(e_{k-1}\ltimes g)}\dto^{\varphi^{-1}} 
        & & \Omega\Sigma X\ltimes(\Omega\Sigma X\ltimes E)\rrto^-{1\ltimes\overline{a}}\dto^{\varphi^{-1}} 
        & & \Omega\Sigma X\ltimes E\dto^-{\overline{a}} \\ 
     (X\times X^{\times k-1})\ltimes\Sigma Y\rrto^-{(E\times e_{k-1})\ltimes g} 
        & & (\Omega\Sigma X\times\Omega\Sigma X)\ltimes E\rto^-{\mu\ltimes 1} 
        & \Omega\Sigma X\ltimes E\rto^-{\overline{a}} & E 
  \enddiagram\] 
where $\varphi$ is the homeomorphism in Lemma~\ref{halfsmashquotient}. 
The left square commutes by the naturality of $\varphi$. The right rectangle may not 
homotopy commute but the two ways around the diagram are congruent 
by Lemma~\ref{halfsmashaction}. By definition, $e_{k}=\mu\circ E^{\times k}$ 
where $\mu$ is an iterated loop multiplication on $\Omega\Sigma X$. Thus the composite 
\[\lllnamedddright{X\times X^{\times k-1}}{E\times E^{\times k-1}} 
      {\Omega\Sigma X\times(\Omega\Sigma X)^{\times k-1}}{1\times\mu} 
      {\Omega\Sigma X\times\Omega\Sigma X}{\mu}{\Omega\Sigma X}\] 
is, on the one hand, $\mu\circ(E\times e_{k-1})$, and on the other hand, $e_{k}$. 
Therefore the bottom row in the diagram is $\overline{a}\circ(e_{k}\ltimes g)$. 

Next, by definition of $b_{k}$, there is a commutative diagram 
\[\diagram 
     X\wedge(X^{\wedge k-1}\wedge\Sigma Y)\rto^-{i}\drrto_{b_{k}} 
          & X\ltimes(X^{\wedge k-1}\wedge\Sigma Y)\rto^-{1\ltimes b_{k-1}} 
          & X\ltimes(X^{\times k-1}\ltimes\Sigma Y)\dto^{\varphi^{-1}} \\ 
      & & (X\times X^{\times k-1})\ltimes\Sigma Y. 
  \enddiagram\] 
Juxtapose the two previous diagrams. In the lower direction we obtain 
$\overline{a}\circ(e_{k}\ltimes g)\circ b_{k}$ which, by~(\ref{dkalt}), is $d_{k}$. 
On the other hand, the upper direction is  
$\overline{a}\circ(1\ltimes\overline{a})\circ(E\ltimes(e_{k-1}\ltimes g))\circ(1\ltimes b_{k-1})\circ i$  
which may be rewritten as 
$\overline{a}\circ(E\ltimes(\overline{a}\circ (e_{k-1}\ltimes g)\circ b_{k-1}))\circ i$.  
By~(\ref{dkalt}) this is the same as $\overline{a}\circ(E\ltimes d_{k-1})\circ i$.  
Hence $d_{k}$ is congruent to $\overline{a}\circ(E\ltimes d_{k-1})\circ i$. 

By the inductive hypothesis, $d_{k-1}$ is congruent to $\mathfrak{d}_{k-1}$ so 
$\overline{a}\circ(E\ltimes d_{k-1})\circ i$ is congruent to $\overline{a}\circ(E\ltimes\mathfrak{d}_{k-1})\circ i$. 
By definition, $\mathfrak{d}_{k}=\overline{a}\circ(E\ltimes\mathfrak{d}_{k-1})\circ i$. Hence 
$d_{k}$ is congruent to $\mathfrak{d}_{k}$. 
\end{proof} 

\begin{lemma} 
   \label{bardequiv} 
   The map $\mathfrak{d}$ is congruent to $d$. Consequently, $\mathfrak{d}$ is a homotopy equivalence. 
\end{lemma}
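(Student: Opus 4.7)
The plan is to combine Lemma~\ref{dcongruent} (summand-wise congruence of $\mathfrak{d}_k$ and $d_k$) with Lemma~\ref{dequiv} ($d$ is a homotopy equivalence) and then invoke Whitehead's theorem.

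First, I would observe that both $\mathfrak{d} = \bigvee_{k\geq 0} \mathfrak{d}_k$ and $d = \bigvee_{k\geq 0} d_k$ are defined as wedges of their summand maps, and that suspension distributes over wedge sums. Consequently, the relations $\Sigma\mathfrak{d}_k \simeq \Sigma d_k$ furnished by Lemma~\ref{dcongruent} (which cover the cases $k=0,1$ trivially and $k\geq 2$ inductively) assemble into a single homotopy $\Sigma\mathfrak{d} \simeq \Sigma d$. This is precisely the first assertion, that $\mathfrak{d}$ is congruent to $d$.

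Next I would extract the homological consequence. Since $\Sigma$ induces an isomorphism on reduced integral homology, congruence implies $\mathfrak{d}_\ast = d_\ast$ on $\widetilde{H}_\ast(-;\mathbb{Z})$. By Lemma~\ref{dequiv}, $d$ is a homotopy equivalence, so $d_\ast$ is an isomorphism; therefore $\mathfrak{d}_\ast$ is an isomorphism as well.

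Finally I would apply Whitehead's theorem. The domain $\bigvee_{k=0}^{\infty} X^{\wedge k}\wedge\Sigma Y$ is a wedge of suspensions, hence simply connected, while the codomain $E$ is the homotopy fibre of the map $q_{1}\colon\Sigma X\vee\Sigma Y\to \Sigma X$ between simply connected spaces and so is simply connected. Whitehead's theorem then converts the integral homology isomorphism into a homotopy equivalence, proving the second assertion. The argument is essentially an assembly of existing tools, so no step poses a genuine obstacle; the substantive work has been absorbed into the naturality and associativity manipulations of Lemma~\ref{dcongruent} and the fibre-equivalence statement of Lemma~\ref{dequiv}.
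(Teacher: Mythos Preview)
Your argument is correct and follows essentially the same route as the paper's own proof: assemble the summand-wise congruences from Lemma~\ref{dcongruent} into $\Sigma\mathfrak{d}\simeq\Sigma d$, pass to homology, and invoke Whitehead's Theorem using Lemma~\ref{dequiv}. The only addition is that you spell out the simple-connectivity of the domain and of $E$, which the paper leaves implicit.
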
 

\begin{proof} 
By Lemma~\ref{dcongruent}, $d_{k}$ is congruent to $\mathfrak{d}_{k}$ for all $k\geq 0$. 
Since $d$ and $\mathfrak{d}$ are the wedge sums of the maps $d_{k}$ and $\mathfrak{d}_{k}$ 
respectively, we obtain that $d$ is congruent to $\mathfrak{d}$. Congruence implies that 
$d_{\ast}=\mathfrak{d}_{\ast}$. By Lemma~\ref{dequiv}, $d$ is a homotopy equivalence so $d_{\ast}$ is 
an isomorphism. Hence $\mathfrak{d}_{\ast}$ is an isomorphism and so is also a homotopy 
equivalence by Whitehead's Theorem. 
\end{proof} 

We next show that $\mathfrak{d}$ lifts certain Whitehead products. In general, given maps 
\(u\colon\namedright{\Sigma A}{}{Z}\) 
and 
\(v\colon\namedright{\Sigma B}{}{Z}\) 
define the iterated Whitehead product 
\[ad^{k}(u)(v)\colon\namedright{A^{\wedge k}\wedge\Sigma B}{}{Z}\] 
recursively as follows. If $k=0$ then $ad^{0}(u)(v)=v$. If $k>0$ then 
$ad^{k}(u)(v)=[u,ad^{k-1}(u)(v)]$. In our case the roles of $u$ and $v$ will be played by 
the inclusions 
\[i_{1}\colon\namedright{\Sigma X}{}{\Sigma X\vee\Sigma Y}\qquad 
     i_{2}\colon\namedright{\Sigma Y}{}{\Sigma X\vee\Sigma Y}\] 
of the left and right wedge summands respectively. 

\begin{lemma} 
   \label{dbarkaction} 
    For each $k\geq 1$ there is a homotopy commutative diagram 
   \[\diagram 
          X^{\wedge k}\wedge\Sigma Y 
                 \rto^-{\mathfrak{d}_{k}}\drto_{ad^{k}(i_{1})(i_{2})} 
               & E\dto^{p} \\ 
           & \Sigma X\vee\Sigma Y. 
     \enddiagram\]  
\end{lemma}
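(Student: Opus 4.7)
The plan is to induct on $k \geq 1$, rewriting $\mathfrak{d}_k$ so that Proposition~\ref{thetaWh} applies directly at each stage, and then peeling off one level of the iterated bracket using naturality of the Whitehead product.

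First I would factor $E \ltimes \mathfrak{d}_{k-1} = (1 \ltimes \mathfrak{d}_{k-1}) \circ (E \ltimes 1)$. Since $X^{\wedge k-1} \wedge \Sigma Y$ is a suspension, Lemma~\ref{halfsmashsusp} supplies naturality of the inclusion $i$ in the first factor, giving $(E \ltimes 1) \circ i = i \circ (E \wedge 1)$. Moreover, because $\overline{a} = \Gamma \circ (s \ltimes 1)$ with $s = \Omega i_{1}$, we have
\[
\overline{a} \circ (1 \ltimes \mathfrak{d}_{k-1}) \;=\; \Gamma \circ (s \ltimes \mathfrak{d}_{k-1}),
\]
which is precisely the map $\vartheta$ from Lemma~\ref{varthetabara} taken with $\alpha = \mathfrak{d}_{k-1}$. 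Assembling these identifications yields $\mathfrak{d}_k \simeq \vartheta \circ i \circ (E \wedge 1)$, and so $p \circ \mathfrak{d}_k \simeq (p \circ \vartheta \circ i) \circ (E \wedge 1)$.

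Next I would apply Proposition~\ref{thetaWh} to obtain
\[
p \circ \vartheta \circ i \;\simeq\; [ev \circ \Sigma s,\; p \circ \mathfrak{d}_{k-1}].
\]
Naturality of the evaluation gives $ev \circ \Sigma \Omega i_{1} \simeq i_{1} \circ ev$. The base case $k = 1$ uses $p \circ \mathfrak{d}_0 = p \circ g \simeq i_{2} = ad^{0}(i_{1})(i_{2})$, since $g$ is a lift of $i_{2}$ through $p$, and the inductive step uses $p \circ \mathfrak{d}_{k-1} \simeq ad^{k-1}(i_{1})(i_{2})$. In either case,
\[
p \circ \vartheta \circ i \;\simeq\; [i_{1} \circ ev,\; ad^{k-1}(i_{1})(i_{2})].
\]
Precomposing with $E \wedge 1$ and invoking naturality of the Whitehead product in the first slot (the same move used in the excerpt to simplify $ev_{2} \circ \Sigma E$) rewrites the composite as $[i_{1} \circ ev \circ \Sigma E,\; ad^{k-1}(i_{1})(i_{2})]$. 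Since $ev \circ \Sigma E \simeq 1_{\Sigma X}$, this equals $[i_{1}, ad^{k-1}(i_{1})(i_{2})] = ad^{k}(i_{1})(i_{2})$, closing the induction.

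The main obstacle I anticipate is keeping the naturality arguments cleanly separated. The factorization $E \ltimes \mathfrak{d}_{k-1} = (1 \ltimes \mathfrak{d}_{k-1}) \circ (E \ltimes 1)$ is essential, because Lemma~\ref{halfsmashsusp} provides naturality of $i$ only when the second factor is a suspension; the step involving $E$ must be handled separately, while the step involving $\mathfrak{d}_{k-1}$ must be absorbed via the translation to $\vartheta$. Once that bookkeeping is in place, each inductive step amounts to a single application of Proposition~\ref{thetaWh} followed by one evaluation-versus-suspension identity.
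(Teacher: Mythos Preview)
Your proof is correct and follows essentially the same approach as the paper: induction on $k$, factoring $\mathfrak{d}_k$ as $\overline{a}\circ(1\ltimes\mathfrak{d}_{k-1})\circ(E\ltimes 1)\circ i$, applying Proposition~\ref{thetaWh} with $\alpha=\mathfrak{d}_{k-1}$ to extract the Whitehead product, and then using $ev\circ\Sigma E\simeq 1$ to strip the evaluation. The only cosmetic difference is that the paper writes out the base case $k=1$ separately (invoking $\mathfrak{d}_1=d_1$), whereas you start the induction uniformly from $\mathfrak{d}_0=g$; both are fine.
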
 

\begin{proof} 
The proof is by induction on $k$. For the base case when $k=1$ 
we want to show that $p\circ\mathfrak{d}_{1}\simeq [i_{1},i_{2}]$. By Lemma~\ref{dcongruent}, 
$\mathfrak{d}_{1}=d_{1}$ so it is equivalent to show that $p\circ d_{1}\simeq [i_{1},i_{2}]$. 
Consider the diagram 
\begin{equation} 
  \label{i1i2dgrm} 
  \diagram 
     X\wedge\Sigma Y\rto^-{i}\dto^{E\wedge 1} 
         & X\ltimes\Sigma Y\dto^{E\ltimes 1} & & \\ 
     \Omega\Sigma X\wedge\Sigma Y
            \rto^-{i}\xto[drrr]_{[ev\circ\Sigma\Omega i_{1},p\circ g]} 
         & \Omega\Sigma X\ltimes\Sigma Y\rto^-{1\ltimes g} 
         & \Omega\Sigma X\ltimes E\rto^-{\overline{a}} & E\dto^{p} \\ 
      & & & \Sigma X\vee\Sigma Y. 
  \enddiagram 
\end{equation} 
The top left square homotopy commutes by the naturality of $i$. The lower triangle 
homotopy commutes by Proposition~\ref{thetaWh} with $B=Y$ and $\alpha=g$. 
Observe that the composite $\overline{a}\circ(1\ltimes g)\circ(E\ltimes 1)\circ i$ along 
the top direction around the diagram is the definition of $d_{1}$. 

Now consider the composite in the lower direction around~(\ref{i1i2dgrm}). Write the 
identity map on $\Sigma Y$ as the suspension of the identity map on $Y$. So we 
are considering $[ev\circ\Sigma\Omega i_{1},p\circ g]\circ(E\wedge\Sigma 1)$.  
Observe that 
\(\namedright{\Sigma X}{i_{1}}{\Sigma X\vee\Sigma Y}\) 
is a suspension, say $i_{1}\simeq\Sigma i'_{1}$, so the naturality of $E$ implies that 
$\Omega i_{1}\circ E\simeq\Omega\Sigma i'_{1}\circ E\simeq E\circ i'_{1}$. As $ev$ is 
a right homotopy inverse for $\Sigma E$ we obtain 
$ev\circ\Sigma\Omega i_{1}\circ\Sigma E\simeq ev\circ\Sigma E\circ\Sigma i'_{1}\simeq 
    \Sigma i'_{1}\simeq i_{1}$. 
Therefore  the naturality of the Whitehead product and the fact that $p\circ g=i_{2}$ imply that 
\[[ev\circ\Sigma\Omega i_{1},p\circ g]\circ(E\wedge\Sigma 1)\simeq  
       [ev\circ\Sigma\Omega i_{1}\circ\Sigma E,p\circ g]\simeq [i_{1}, i_{2}].\] 
Thus the homotopy commutativity of~(\ref{i1i2dgrm}) implies that $p\circ d_{1}\simeq [i_{1},i_{2}]$.  

Assume inductively that $p\circ\mathfrak{d}_{k-1}\simeq ad^{k-1}(i_{1})(i_{2})$. Consider the diagram 
\[\diagram 
     X\wedge (X^{\wedge k-1}\wedge\Sigma Y)\rto^-{i}\dto^{E\wedge 1} 
         & X\ltimes\Sigma(X^{\wedge k-1}\wedge Y)\dto^{E\ltimes 1} & & \\ 
     \Omega\Sigma X\wedge (X^{\wedge k-1}\wedge\Sigma Y)
            \rto^-{i}\xto[drrr]_{[ev\circ\Sigma\Omega i_{1},p\circ\mathfrak{d}_{k-1}]} 
         & \Omega\Sigma X\ltimes\Sigma(X^{\wedge k-1}\wedge Y)\rto^-{1\ltimes\mathfrak{d}_{k-1}} 
         & \Omega\Sigma X\ltimes E\rto^-{\overline{a}} & E\dto^{p} \\ 
      & & & \Sigma X\vee\Sigma Y. 
  \enddiagram\] 
The top left square homotopy commutes by the naturality of $i$. The lower triangle 
homotopy commutes by Proposition~\ref{thetaWh} with $B=X^{\wedge k-1}\wedge Y$, $Z=\Sigma X$ and 
$\alpha=\mathfrak{d}_{k-1}$. The composite 
$\overline{a}\circ(1\ltimes\mathfrak{d}_{k-1})\circ(E\ltimes 1)\circ i$ 
along the top direction around the diagram is the definition of $\mathfrak{d}_{k}$. 
Again writing the identity map on $\Sigma Y$ as the suspension of the identity 
map on~$Y$, the naturality of the Whitehead product and the inductive hypothesis 
$p\circ\mathfrak{d}_{k-1}\simeq ad^{k-1}(i_{1})(i_{2})$ then imply that  
\[[ev\circ\Sigma\Omega i_{1},p\circ\mathfrak{d}_{k-1}]\circ(E\wedge\Sigma 1)\simeq  
       [ev\circ\Sigma\Omega i_{1}\circ\Sigma E,p\circ\mathfrak{d}_{k-1}]\simeq  
       [i_{1}, ad^{k-1}(i_{1})(i_{2})]=ad^{k}(i_{1})(i_{2}).\] 
Thus the diagram implies that $p\circ\mathfrak{d}_{k}\simeq ad^{k}(i_{1})(i_{2})$, completing 
the induction. 
\end{proof} 

Putting all this together we are able to prove Theorem~\ref{dbardintro}, re-stated as follows. 
 
\begin{theorem} 
   \label{dbard} 
   Let $X$ and $Y$ be path-connected, pointed spaces and consider the homotopy fibration  
   \(\nameddright{E}{}{\Sigma X\vee\Sigma Y}{q_{1}}{\Sigma X}\). 
   There is a homotopy commutative diagram 
   \[\diagram 
          \bigvee_{k=0}^{\infty} X^{\wedge k}\wedge\Sigma Y 
                 \rto^-{\mathfrak{d}}\drto_{\bigvee_{k=0}^{\infty} ad^{k}(i_{1})(i_{2})} 
               & E\dto^{p} \\ 
           & \Sigma X\vee\Sigma Y 
     \enddiagram\]      
   where: 
   \begin{letterlist} 
      \item $\mathfrak{d}$ is a homotopy equivalence; 
      \item $\mathfrak{d}$ is congruent to $d$, where $d$ is the composite 
               \[\namedddright{\bigvee_{k=0}^{\infty} X^{\wedge k}\wedge\Sigma Y}{c} 
                  {\Omega\Sigma X\ltimes\Sigma Y}{1\ltimes g}{\Omega\Sigma X\ltimes E}{\overline{a}}{E}.\] 
    \end{letterlist}  
\end{theorem}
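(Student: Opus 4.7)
The plan is to assemble the statement directly from the lemmas already established in this section. The map $\mathfrak{d}$ has already been defined as the wedge sum $\bigvee_{k=0}^{\infty}\mathfrak{d}_{k}$, where $\mathfrak{d}_{0}=g$ and for $k\geq 1$ the map $\mathfrak{d}_{k}$ is the recursive composite built from $\overline{a}$, $E\ltimes(-)$, and~$i$. So all that remains is to verify the two labelled assertions and the homotopy commutativity of the triangle.

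For part~(a), I would first invoke Lemma~\ref{bardequiv}, which states that $\mathfrak{d}$ is congruent to the composite $d = \overline{a}\circ(1\ltimes g)\circ c$ and that $\mathfrak{d}$ is a homotopy equivalence. The homotopy equivalence claim in Lemma~\ref{bardequiv} is extracted by combining Lemma~\ref{dequiv} (the fact that $d$ itself is a homotopy equivalence, which in turn rests on Example~\ref{specialGTadexample} for $\overline{a}\circ(1\ltimes g)$ and Lemma~\ref{cequiv} for~$c$) with the observation that congruent maps induce the same map on integral homology and hence can be exchanged under Whitehead's Theorem. Part~(b) is then exactly the first conclusion of Lemma~\ref{bardequiv}, noting that the formulation there via $\Sigma\mathfrak{d}\simeq\Sigma d$ is precisely the definition of congruence.

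For the homotopy commutativity of the triangle, I would apply Lemma~\ref{dbarkaction} wedge summand by wedge summand. For $k\geq 1$ it gives directly that $p\circ\mathfrak{d}_{k}\simeq ad^{k}(i_{1})(i_{2})$. For $k=0$, the relation $\mathfrak{d}_{0}=g$ together with the fact that $g$ is a lift of $f=i_{2}$ and the convention $ad^{0}(i_{1})(i_{2})=i_{2}$ give $p\circ\mathfrak{d}_{0}\simeq i_{2}=ad^{0}(i_{1})(i_{2})$. Taking the wedge sum of these homotopies over all $k\geq 0$ assembles into the asserted triangle.

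The genuine obstacle in this section is not in the final assembly but in constructing a wedge decomposition of $E$ that simultaneously respects the homotopy action $\overline{a}$ and lifts the iterated Whitehead products $ad^{k}(i_{1})(i_{2})$. The difficulty, which was navigated via the congruence formalism and Lemma~\ref{halfsmashaction}, is that the half-smash map $\overline{a}$ fails to be strictly homotopy associative: only after suspension, or in homology, does the associativity diagram for the action descend from the product to the half-smash. The map $d$ is the one most directly compatible with the multiplication on $\Omega\Sigma X$ (hence the recursive use of~$c$), while $\mathfrak{d}$ is the one most directly compatible with Whitehead products (because each $\mathfrak{d}_{k}$ is built via a single $E\ltimes(-)$ step which matches a single Whitehead bracket); producing these two maps separately and linking them only up to congruence is the mechanism by which both sets of properties are captured simultaneously.
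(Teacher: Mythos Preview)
Your proposal is correct and follows essentially the same approach as the paper: the paper's proof verifies the triangle via Lemma~\ref{dbarkaction} for $k\geq 1$ and the definition $\mathfrak{d}_{0}=g$ with $p\circ g=i_{2}$ for $k=0$, then cites Lemma~\ref{bardequiv} for parts~(a) and~(b). Your additional commentary on why the congruence device is needed is accurate and helpful context, though not strictly part of the argument.
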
 

\begin{proof} 
By definition, $\mathfrak{d}$ is the wedge sum of the maps $\mathfrak{d}_{k}$ for $k\geq 0$. 
When $k=0$ we have $\mathfrak{d}_{0}=g$ where $p\circ g=i_{2}$ while $ad^{0}(i_{1})(i_{2})=i_{2}$. 
When $k\geq 1$, by Lemma~\ref{dbarkaction} we have $p\circ\mathfrak{d}_{k}\simeq ad^{k}(i_{1})(i_{2})$. 
Thus $p\circ\mathfrak{d}\simeq\bigvee_{k=0}^{\infty} ad^{k}(i_{1})(i_{2})$, implying that the 
asserted diagram homotopy commutes. 

Parts~(a) and~(b) are proved by Lemma~\ref{bardequiv}. 
\end{proof}

\newpage 

\section{Based loops on certain $2$-cones} 
\label{sec:2cone} 

The main result in this section is Theorem~\ref{E'typeI}, which will then be specialized 
to prove Theorem~\ref{Mtypealtintro}. We go on to give applications to Moore's conjecture. 

In general, start with the data 
\[\diagram 
       & E\rto\dto^{p} & E'\dto \\ 
       \Sigma A\rto^-{f} & Y\rto\dto^{h} & Y'\dto \\ 
       & Z\rdouble & Z 
  \enddiagram\] 
and suppose that $\Omega h$ has a right homotopy inverse. 
By Theorem~\ref{GTcofib}~(a) there is a homotopy cofibration 
\[\nameddright{\Omega Z\ltimes\Sigma A}{\theta}{E}{}{E'}\] 
where the restriction of $\theta$ to $\Sigma A$ is a map 
\[g\colon\namedright{\Sigma A}{}{E}\] 
which lifts $f$ through $p$. The goal is to determine the homotopy type of $E'$ by knowing 
properties of the space $E$ and the map~$\theta$. In Theorem~\ref{E'typeI} several 
hypotheses are given which will let us do this. 

One hypothesis is that $Z$ is a suspension. Rewriting the data, we have 
\[\diagram 
       & E\rto\dto^{p} & E'\dto \\ 
       \Sigma A\rto^-{f} & Y\rto\dto^{h} & Y'\dto \\ 
       & \Sigma X\rdouble & \Sigma X 
  \enddiagram\] 
where $\Omega h$ has a right homotopy inverse, there is a homotopy cofibration 
\[\nameddright{\Omega\Sigma X\ltimes\Sigma A}{\theta}{E}{}{E'}\] 
and the restriction of $\theta$ to $\Sigma A$ is a map 
\(g\colon\namedright{\Sigma A}{}{E}\) 
that lifts $f$ through $p$. The appearance of $\Omega\Sigma X$ lets us take advantage 
of the James construction. 

For $k\geq 0$, let $J_{k}(X)$ be the $k^{th}$-stage of the James construction. 
Explicitly, $J_{0}(X)=\ast$ and if~$k\geq 1$ then $J_{k}(X)=X^{\times k}/\sim$ where 
$(x_{1},\ldots,x_{t},\ast,x_{t+1},\ldots,x_{k})\sim (x_{1},\ldots,\ast,x_{t},x_{t+1},\ldots,x_{k})$. 
There is an inclusion 
\(\namedright{J_{k}(X)}{}{J_{k+1}(X)}\) 
given by sending $(x_{1},\ldots,x_{k})$ to $(x_{1},\ldots,x_{k},\ast)$. Taking a direct limit 
gives the space $J(X)$, and James~\cite{J1} showed that there is a homotopy equivalence 
of $H$-spaces $J(X)\simeq\Omega\Sigma X$. Let $j_{k}$ be the composite 
\[j_{k}\colon\nameddright{J_{k}(X)}{}{J(X)}{\simeq}{\Omega\Sigma X}.\] 

Let $D$ be any pointed, path-connected space. For $k\geq 1$ let $I_{k}$ be the composite 
\[I_{k}\colon\llnameddright{\Omega\Sigma X\ltimes(X^{\wedge k}\wedge\Sigma D)} 
      {1\ltimes c_{k}}{\Omega\Sigma X\ltimes(\Omega\Sigma X\ltimes\Sigma D)}{\varphi^{-1}} 
      {(\Omega\Sigma X\times\Omega\Sigma X)\ltimes\Sigma D}\] 
where $c_{k}$ was defined in Section~\ref{sec:ad} and $\varphi$ is the homeomorphism 
in Lemma~\ref{halfsmashquotient}. Recall that, generically, the map 
\(\namedright{B}{j}{A\ltimes B}\) 
is the inclusion. For $k\geq 1$ let $J_{k}$ be the composite 
\[J_{k}\colon\llnamedright{J_{k-1}(X)\ltimes\Sigma D}{j_{k-1}\ltimes 1} 
       {\Omega\Sigma X\ltimes\Sigma D}\stackrel{j}{\longrightarrow}
       \namedright{\Omega\Sigma X\ltimes(\Omega\Sigma X\ltimes\Sigma D)} 
       {\varphi^{-1}}{(\Omega\Sigma X\times\Omega\Sigma X)\ltimes\Sigma D}.\] 
     
\begin{lemma} 
   \label{IkJkequiv} 
   The composite 
   \[\llnameddright{\Omega\Sigma X\ltimes(X^{\wedge k}\wedge\Sigma D)\vee (J_{k-1}(X)\ltimes\Sigma D)} 
         {I_{k}\perp J_{k}}{(\Omega\Sigma X\times\Omega\Sigma X)\ltimes\Sigma D}{\mu\ltimes 1} 
         {\Omega\Sigma X\ltimes\Sigma D}\] 
   is a homotopy equivalence. 
\end{lemma}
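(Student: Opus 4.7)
The plan is to prove Lemma~\ref{IkJkequiv} by showing the given composite induces an isomorphism on reduced homology with arbitrary field coefficients, and then invoking Whitehead's Theorem. Both source and target are simply-connected: by Lemma~\ref{halfsmashsusp}, each half-smash with the suspension $\Sigma D$ splits as a wedge of simply-connected spaces, so Whitehead's Theorem will apply.

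First I would simplify $(\mu\ltimes 1)\circ J_k$. Chasing representatives through the definition of $J_k$ and using the characterization of $\varphi$ in Lemma~\ref{halfsmashquotient}, the composite $(\mu\ltimes 1)\circ\varphi^{-1}\circ j$ sends $[x,s]\in\Omega\Sigma X\ltimes\Sigma D$ through $[\ast,[x,s]]\mapsto[(\ast,x),s]\mapsto[\mu(\ast,x),s]=[x,s]$ and so is the identity. Therefore $(\mu\ltimes 1)\circ J_k=j_{k-1}\ltimes 1$, the half-smash of the James filtration inclusion with the identity on $\Sigma D$.

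Second I would compute the homology of the three half-smashes involved. Fix a field $\mathbf{k}$ and set $V=\widetilde{H}_*(X;\mathbf{k})$. By Bott--Samelson, $H_*(\Omega\Sigma X)\cong T(V)$, and the James filtration identifies $\widetilde{H}_*(J_{k-1}(X))$ with $V\oplus V^{\otimes 2}\oplus\cdots\oplus V^{\otimes(k-1)}$. Combining this with the half-smash splitting $A\ltimes\Sigma D\simeq(A\wedge\Sigma D)\vee\Sigma D$ of Lemma~\ref{halfsmashsusp}, and absorbing the free copy of $\widetilde{H}_*(\Sigma D)$ into the $V^{\otimes 0}=\mathbf{k}$ summand, one obtains
\begin{align*}
\widetilde{H}_*(\Omega\Sigma X\ltimes\Sigma D)&\cong\bigoplus_{n\geq 0}V^{\otimes n}\otimes\widetilde{H}_*(\Sigma D),\\
\widetilde{H}_*\bigl(\Omega\Sigma X\ltimes(X^{\wedge k}\wedge\Sigma D)\bigr)&\cong\bigoplus_{n\geq k}V^{\otimes n}\otimes\widetilde{H}_*(\Sigma D),\\
\widetilde{H}_*(J_{k-1}(X)\ltimes\Sigma D)&\cong\bigoplus_{n=0}^{k-1}V^{\otimes n}\otimes\widetilde{H}_*(\Sigma D).
\end{align*}

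Third I would identify the two contributions in homology. From the first step $(\mu\ltimes 1)\circ J_k$ induces the inclusion $\bigoplus_{n=0}^{k-1}V^{\otimes n}\otimes\widetilde{H}_*(\Sigma D)\hookrightarrow\bigoplus_{n\geq 0}V^{\otimes n}\otimes\widetilde{H}_*(\Sigma D)$ of the lowest tensor-length summands. For $(\mu\ltimes 1)\circ I_k$, chasing representatives gives $[a,u]\mapsto[a\cdot b,s]$ where $c_k(u)=[b,s]$; by Lemma~\ref{cequiv} the map $(c_k)_*$ sends $V^{\otimes k}\otimes\widetilde{H}_*(\Sigma D)$ isomorphically onto the $V^{\otimes k}$-summand of $\widetilde{H}_*(\Omega\Sigma X\ltimes\Sigma D)$, and the subsequent product in $T(V)$ concatenates tensors, so the summand $V^{\otimes n}\otimes V^{\otimes k}\otimes\widetilde{H}_*(\Sigma D)$ of the source is carried isomorphically onto $V^{\otimes(n+k)}\otimes\widetilde{H}_*(\Sigma D)$. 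Hence the image of $(\mu\ltimes 1)\circ I_k$ is exactly $\bigoplus_{n\geq k}V^{\otimes n}\otimes\widetilde{H}_*(\Sigma D)$, complementary to the image of $(\mu\ltimes 1)\circ J_k$, and the two contributions together realize the full decomposition of the target. The combined map is therefore a homology isomorphism over every field, hence an integral homology isomorphism, and Whitehead's Theorem concludes the proof. The main obstacle is the bookkeeping in this third step: verifying that the Bott--Samelson product on $T(V)$ is induced by $\mu\ltimes 1$ after $\varphi^{-1}$, and that the image of $c_k$ sits precisely on the length-$k$ summand so the $I_k$ and $J_k$ contributions split the target cleanly without overlap.
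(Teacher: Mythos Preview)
Your proposal is correct and follows essentially the same route as the paper: compute in field-coefficient homology using Bott--Samelson, show $(\mu\ltimes 1)\circ J_k\simeq j_{k-1}\ltimes 1$ (the paper does this via naturality of $\varphi$ applied to the basepoint inclusion rather than by chasing representatives, but the content is identical), identify the image of $(\mu\ltimes 1)\circ I_k$ as the tensor-length $\geq k$ summands via the description of $(c_k)_*$ from Lemma~\ref{cequiv}, and conclude by Whitehead. Your explicit attention to simple-connectivity is a point the paper leaves implicit.
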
 

\begin{proof} 
Take homology with field coefficients. Let $V=\rhlgy{X}$. By the Bott-Samelson Theorem 
there is an algebra isomorphism $\hlgy{\Omega\Sigma X}\cong T(V)$. Let us rewrite this 
as a module isomorphism $\rhlgy{\Omega\Sigma X}\cong\bigoplus_{t=1}^{\infty} V^{\otimes t}$. 
Therefore there is a module isomorphism 
\[\rhlgy{\Omega\Sigma X\ltimes\Sigma D}\cong\bigoplus_{t=0}^{\infty} V^{\otimes t}\otimes\rhlgy{\Sigma D}\] 
where we regard $V^{0}\otimes\rhlgy{\Sigma D}$ as $\rhlgy{\Sigma D}$. 

In homology, the map 
\(\namedright{J_{k-1}(X)}{j_{k-1}}{\Omega\Sigma X}\) 
induces the injection 
\(\namedright{\bigotimes_{t=1}^{k-1} V^{\otimes t}}{}{\bigotimes_{t=1}^{\infty} V^{\otimes t}}\). 
Observe that the composite 
\[\namedddright{\Omega\Sigma X\ltimes\Sigma D}{j}{\Omega\Sigma X\ltimes(\Omega\Sigma X\ltimes\Sigma D)} 
      {\varphi^{-1}}{(\Omega\Sigma X\times\Omega\Sigma X)\ltimes\Sigma D}{\mu\ltimes 1} 
      {\Omega\Sigma X\ltimes\Sigma D}\] 
is homotopic to the identity map: for if the domain $\Omega\Sigma X\ltimes\Sigma D$ is 
regarded as $\ast\ltimes(\Omega\Sigma X\ltimes\Sigma D)$ then~$j$ can be regarded as 
$b\ltimes(1\ltimes 1)$ where $b$ is the inclusion of the basepoint, so the naturality of~$\varphi$ 
implies that $\varphi^{-1}\circ j$ is equal 
to the composite 
\(\llnameddright{\ast\ltimes(\Omega\Sigma X\ltimes\Sigma D)}{\varphi^{-1}} 
     {(\ast\times\Omega\Sigma X)\ltimes\Sigma D}{(b\times 1)\ltimes 1} 
     {(\Omega\Sigma X\times\Omega\Sigma X)\ltimes\Sigma D}\),      
implying that $(\mu\ltimes 1)\circ\varphi^{-1}\circ j$ is homotopic to the identity map.       
Therefore in homology the map $(\mu\ltimes 1)\circ J_{k}$ induces the same map as 
$j_{k-1}\ltimes 1$, which is the injection 
\(\namedright{\bigotimes_{t=1}^{k-1} V^{\otimes t}\otimes\rhlgy{\Sigma D}}{} 
       {\bigotimes_{t=1}^{\infty} V^{\otimes t}\otimes\rhlgy{\Sigma D}}\). 
On the other hand, as in the proof of Lemma~\ref{cequiv}, in homology the 
map $c_{k}$ induces the inclusion 
\(\namedright{V^{\otimes k}\otimes\rhlgy{\Sigma D}}{} 
      {\bigotimes_{t=0}^{\infty} V^{\otimes t}\otimes\rhlgy{\Sigma D}}\). 
Therefore $(\mu\ltimes 1)\circ I_{k}$ induces the inclusion 
\(\namedright{\bigotimes_{t=k}^{\infty} V^{\otimes k}\otimes\rhlgy{\Sigma D}}{} 
      {\bigotimes_{t=0}^{\infty} V^{\otimes t}\otimes\rhlgy{\Sigma D}}\). 
Hence $I_{k}\perp J_{k}$ induces an isomorphism in homology. As this is true for 
homology with mod-$p$ coefficients for all primes $p$ and rational coefficients, $I_{k}\perp J_{k}$ 
therefore induces an isomorphism in integral homology, and so is a homotopy equivalence 
by Whitehead's Theorem. 
\end{proof} 

Next, suppose that there is a map 
\[\delta\colon\namedright{\Sigma D}{}{E}.\] 
For $k\geq 0$, let $\overline{c}_{k}$ be the composite 
\[\overline{c}_{k}\colon\nameddright{X^{\wedge k}\wedge\Sigma D}{c_{k}} 
      {\Omega\Sigma X\ltimes\Sigma D}{1\ltimes\delta}{\Omega\Sigma X\ltimes E}\] 
and let $\overline{J}_{k}$ be the composite 
\[\overline{J}_{k}\colon\llnameddright{J_{k-1}(X)\ltimes\Sigma D}{j_{k-1}\ltimes\delta} 
       {\Omega\Sigma X\ltimes E}{j}{\Omega\Sigma X\ltimes(\Omega\Sigma X\ltimes E)}.\] 

\begin{lemma} 
   \label{IkJklemma} 
   There is a homotopy commutative diagram 
   \[\diagram 
        (\Omega\Sigma X\ltimes(X^{\wedge k}\wedge\Sigma D))\vee (J_{k-1}(X)\ltimes\Sigma D) 
                  \rto^-{I_{k}\perp J_{k}}\ddrto_-{(1\ltimes\overline{c}_{k})\perp\overline{J}_{k}\ } 
             & (\Omega\Sigma X\times\Omega\Sigma X)\ltimes\Sigma D\dto^{(1\times 1)\ltimes\delta} \\ 
        & (\Omega\Sigma X\times\Omega\Sigma X)\ltimes E\dto^{\varphi} \\ 
        & \Omega\Sigma X\ltimes(\Omega\Sigma X\ltimes E). 
   \enddiagram\]  
\end{lemma}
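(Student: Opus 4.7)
Since the source is a wedge, the plan is to verify homotopy commutativity on each wedge summand separately, then combine the two triangles. Both verifications reduce to unwinding the definitions of $I_k$, $J_k$, $\overline{c}_k$, $\overline{J}_k$ and applying naturality of the homeomorphism $\varphi$ from Lemma~\ref{halfsmashquotient} and of the inclusion $j$.

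For the first summand, recall that $I_k = \varphi^{-1}\circ(1\ltimes c_k)$ and $\overline{c}_k = (1\ltimes\delta)\circ c_k$. The naturality of $\varphi$ in the last coordinate (Lemma~\ref{halfsmashquotient}, applied to the map $\delta\colon\Sigma D\to E$) gives a commutative square
\[\diagram
    (\Omega\Sigma X\times\Omega\Sigma X)\ltimes\Sigma D \rto^-{\varphi}\dto_{(1\times 1)\ltimes\delta}
        & \Omega\Sigma X\ltimes(\Omega\Sigma X\ltimes\Sigma D)\dto^{1\ltimes(1\ltimes\delta)} \\
    (\Omega\Sigma X\times\Omega\Sigma X)\ltimes E \rto^-{\varphi}
        & \Omega\Sigma X\ltimes(\Omega\Sigma X\ltimes E).
\enddiagram\]
Precomposing with $I_k$ collapses $\varphi\circ\varphi^{-1}$ to the identity, leaving $(1\ltimes(1\ltimes\delta))\circ(1\ltimes c_k) = 1\ltimes((1\ltimes\delta)\circ c_k) = 1\ltimes\overline{c}_k$, as required.

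For the second summand, recall that $J_k = \varphi^{-1}\circ j\circ(j_{k-1}\ltimes 1)$ and $\overline{J}_k = j\circ(j_{k-1}\ltimes\delta)$. Applying the same naturality square for $\varphi$ as above cancels $\varphi\circ\varphi^{-1}$ and yields $(1\ltimes(1\ltimes\delta))\circ j\circ(j_{k-1}\ltimes 1)$. The naturality of the inclusion $j\colon B\to A\ltimes B$ applied to the map $1\ltimes\delta\colon\Omega\Sigma X\ltimes\Sigma D\to\Omega\Sigma X\ltimes E$ gives $(1\ltimes(1\ltimes\delta))\circ j = j\circ(1\ltimes\delta)$. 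Finally, the identity $(1\ltimes\delta)\circ(j_{k-1}\ltimes 1) = j_{k-1}\ltimes\delta$ (functoriality of the half-smash in both variables) produces $j\circ(j_{k-1}\ltimes\delta) = \overline{J}_k$.

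The argument is entirely formal; the only subtlety is keeping track of which variable of the half-smash the various natural transformations act in. In particular, no obstacle is expected beyond the careful bookkeeping of applying Lemma~\ref{halfsmashquotient} to the single map $\delta$ in the last coordinate, and applying naturality of $j$ in its single (non-basepoint) variable.
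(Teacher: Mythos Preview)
Your proposal is correct and follows essentially the same approach as the paper: split into the two wedge summands, use the definition of $I_k$ and $J_k$ in terms of $\varphi^{-1}$, and reduce everything to the naturality of $\varphi$ (Lemma~\ref{halfsmashquotient}) in the $\delta$-variable together with the naturality of $j$. The paper's write-up organizes the second summand as a single larger commutative diagram rather than the step-by-step cancellation you give, but the content is identical.
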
 

\begin{proof} 
It suffices to show that the diagram homotopy commutes when restricted to each wedge 
summand in the domain. Observe that the definition of $I_{k}$ as $\varphi^{-1}\circ (1\ltimes c_{k})$ 
and the naturality of $\varphi$ give a homotopy commutative diagram 
\[\spreaddiagramcolumns{0.4pc}\diagram 
        (\Omega\Sigma X\ltimes(X^{\wedge k}\wedge\Sigma D))\rto^-{I_{k}}\drto_{1\ltimes c_{k}} 
             & (\Omega\Sigma X\times\Omega\Sigma X)\ltimes\Sigma D\rto^{(1\times 1)\ltimes\delta}\dto^{\varphi}   
             & (\Omega\Sigma X\times\Omega\Sigma X)\ltimes E\dto^{\varphi} \\ 
        & \Omega\Sigma X\ltimes(\Omega\Sigma X\ltimes\Sigma D)\rto^-{1\ltimes (1\ltimes\delta)} 
        & \Omega\Sigma X\ltimes(\Omega\Sigma X\ltimes E). 
  \enddiagram\]    
By definition, $\overline{c}_{k}=(1\ltimes\delta)\circ c_{k}$ so the composite in the lower 
direction around the diagram is $1\ltimes\overline{c}_{k}$. Thus 
$\varphi\circ((1\times 1)\ltimes\delta)\circ I_{k}\simeq 1\ltimes\overline{c}_{k}$, as asserted. 

Next, the naturality of $\varphi$ and $j$ give a homotopy commutative diagram 
\[\diagram 
     J_{k-1}(X)\ltimes\Sigma D\rto^-{j_{k-1}\ltimes 1}\drto_{j_{k-1}\ltimes\delta} 
         & \Omega\Sigma X\ltimes\Sigma D\rto^-{j}\dto^{1\ltimes\delta}  
         & \Omega\Sigma X\ltimes(\Omega\Sigma X\ltimes\Sigma D)\rto^-{\varphi^{-1}}\dto^{1\ltimes(1\ltimes\delta)} 
         & (\Omega\Sigma X\times\Omega\Sigma X)\ltimes\Sigma D\dto^{(1\times 1)\ltimes\delta} \\ 
      & \Omega\Sigma X\ltimes E\rto^-{j} & \Omega\Sigma X\ltimes(\Omega\Sigma X\ltimes E)\rto^-{\varphi^{-1}} 
      & (\Omega\Sigma X\times\Omega\Sigma X)\ltimes E. 
  \enddiagram\] 
Observe that the top row is the definition of $J_{k}$ while along the bottom row the composite 
$j\circ(j_{k-1}\ltimes\delta)$ is the definition of $\overline{J}_{k}$. Therefore the diagram implies that 
$((1\times 1)\ltimes\delta)\circ J_{k}\simeq\varphi^{-1}\circ\overline{J}_{k}$. Thus 
$\varphi\circ((1\times 1)\ltimes\delta)\circ J_{k}\simeq\varphi\circ\varphi^{-1}\circ\overline{J}_{k}\simeq 
     \overline{J}_{k}$, 
as asserted. 
\end{proof} 

\begin{proposition} 
   \label{JkEequiv} 
   Suppose that there is a homotopy fibration sequence 
   \(\namedddright{\Omega\Sigma X}{\partial}{E}{p}{Y}{h}{\Sigma X}\) 
   where $\Omega h$ has a right homotopy inverse. Suppose that there is a map 
   \(\delta\colon\namedright{\Sigma D}{}{E}\) 
   such that the composite 
   \[\nameddright{\Omega\Sigma X\ltimes\Sigma D}{1\ltimes\delta}{\Omega\Sigma X\ltimes E}{\overline{a}}{E}\] 
   is a homotopy equivalence. Then the composite 
   \[\lllnamedright{\Omega\Sigma X\ltimes(X^{\wedge k}\wedge\Sigma D)\vee(J_{k-1}(X)\ltimes\Sigma D)} 
         {(1\ltimes\overline{c}_{k})\perp\overline{J}_{k}}{\Omega\Sigma X\ltimes(\Omega\Sigma X\ltimes E)} 
         \stackrel{1\ltimes\overline{a}}{\longrightarrow}\namedright{\Omega\Sigma X\ltimes E}{\overline{a}}{E}\] 
   is a homotopy equivalence. 
\end{proposition}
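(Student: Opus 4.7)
The plan is to reduce the proposition to the two previous lemmas by a diagram chase, accepting that one of the intermediate identifications will only be up to congruence (not genuine homotopy), and then applying Whitehead's Theorem at the end.

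First, I would invoke Lemma~\ref{IkJklemma} to replace the map $(1\ltimes\overline{c}_{k})\perp\overline{J}_{k}$ with the composite $\varphi\circ((1\times 1)\ltimes\delta)\circ(I_{k}\perp J_{k})$. Then the composite in the proposition becomes, up to homotopy,
\[
\overline{a}\circ(1\ltimes\overline{a})\circ\varphi\circ((1\times 1)\ltimes\delta)\circ(I_{k}\perp J_{k}).
\]
Next I would apply Lemma~\ref{halfsmashaction}(b) to the prefix $\overline{a}\circ(1\ltimes\overline{a})\circ\varphi$, which is congruent to $\overline{a}\circ(\mu\ltimes 1)$. Thus the whole composite is congruent to
\[
\overline{a}\circ(\mu\ltimes 1)\circ((1\times 1)\ltimes\delta)\circ(I_{k}\perp J_{k}).
\]

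The next step is a naturality observation: the map $\mu\ltimes 1$ is natural in the second half-smash factor, so
\[
(\mu\ltimes 1)\circ((1\times 1)\ltimes\delta)=(1\ltimes\delta)\circ(\mu\ltimes 1).
\]
Substituting this in, the composite becomes $\overline{a}\circ(1\ltimes\delta)\circ(\mu\ltimes 1)\circ(I_{k}\perp J_{k})$. By Lemma~\ref{IkJkequiv} the map $(\mu\ltimes 1)\circ(I_{k}\perp J_{k})$ is a homotopy equivalence, and by hypothesis $\overline{a}\circ(1\ltimes\delta)$ is a homotopy equivalence. Hence the composite on the display is a homotopy equivalence.

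To finish, I would observe that the map in the statement is congruent to a homotopy equivalence, hence induces an isomorphism in integral homology (congruence implies equality of induced maps in homology, by suspending). Since the source is a wedge of spaces of the form $\Omega\Sigma X\ltimes (\,\cdot\,)$ and $J_{k-1}(X)\ltimes\Sigma D$ with simply-connected suspension factors, and the target $E$ is simply-connected (being homotopy equivalent to $\Omega\Sigma X\ltimes\Sigma D$ by the hypothesis), Whitehead's Theorem upgrades the homology isomorphism to a homotopy equivalence. The only subtle point, and the one that requires care, is that Lemma~\ref{halfsmashaction} does not give a genuine homotopy between $\overline{a}\circ(1\ltimes\overline{a})\circ\varphi$ and $\overline{a}\circ(\mu\ltimes 1)$; working through congruence rather than homotopy is precisely why the conclusion is extracted via Whitehead's Theorem rather than directly.
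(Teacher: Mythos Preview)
Your proof is correct and follows essentially the same approach as the paper's own proof. The paper organizes the argument as a single large diagram whose left triangle is Lemma~\ref{IkJklemma}, upper right square is the naturality $(\mu\ltimes 1)\circ((1\times 1)\ltimes\delta)=(1\ltimes\delta)\circ(\mu\ltimes 1)$, and lower right square is the congruence from Lemma~\ref{halfsmashaction}; you trace through the same identifications linearly, but the ingredients and the concluding appeal to Whitehead's Theorem via congruence are identical.
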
 

\begin{proof} 
Consider the diagram 
\[\small\spreaddiagramcolumns{-0.5pc}\diagram 
     (\Omega\Sigma X\ltimes(X^{\wedge k}\wedge\Sigma D))\vee (J_{k-1}(X)\ltimes\Sigma D) 
                  \rto^-{I_{k}\perp J_{k}}\ddrto_-{(1\ltimes\overline{c}_{k})\perp\overline{J}_{k}\ } 
          & (\Omega\Sigma X\times\Omega\Sigma X)\ltimes\Sigma D\rrto^-{\mu\ltimes 1}\dto^{(1\times 1)\ltimes\delta} 
          & & \Omega\Sigma X\ltimes\Sigma D\dto^{1\ltimes\delta} \\ 
     & (\Omega\Sigma X\times\Omega\Sigma X)\ltimes E\rrto^-{\mu\ltimes 1}\dto^{\varphi} 
          & & \Omega\Sigma X\ltimes E\dto^{\overline{a}} \\ 
     & \Omega\Sigma X\ltimes(\Omega\Sigma X\ltimes E)\rto^-{1\ltimes\overline{a}} 
          & \Omega\Sigma X\ltimes E\rto^-{\overline{a}} & E. 
  \enddiagram\normalsize\]  
The left triangle homotopy commutes by Lemma~\ref{IkJklemma} and the upper right square 
clearly commutes. The lower right square may not homotopy commute but by Lemma~\ref{halfsmashaction} 
the two ways around the square are congruent.  The top row is a homotopy equivalence by 
Lemma~\ref{IkJkequiv} and the right column is a homotopy equivalence by hypothesis. 
Therefore the upper direction around the diagram is a homotopy equivalence. In particular, 
it induces an isomorphism in homology. As congruent maps induce isomorphisms in homology, 
this implies that the entire diagram commutes in homology, and therefore the lower direction around the 
diagram also induces an isomorphism in homology. Thus the lower direction around the 
diagram is a homotopy equivalence by Whitehead's Theorem, proving the lemma. 
\end{proof}   

Now we begin a process of altering the homotopy equivalence in Proposition~\ref{JkEequiv} by 
one in which the composite $\overline{a}\circ\overline{c}_{k}$ has been replaced by a congruent 
composite $\overline{a}\circ\overline{\mathfrak{c}}_{k}$. As in Section~\ref{sec:ad}, the point 
is that $\overline{c}_{k}$ behaves well with respect to multiplications, making Proposition~\ref{JkEequiv} 
easy to prove, while~$\overline{\mathfrak{c}}_{k}$ behaves well with respect to Whitehead products, 
which is where the applications lie.  
 
Return to the starting map 
\(\delta\colon\namedright{\Sigma D}{}{E}\). 
Let 
\[\overline{\mathfrak{c}}_{0}\colon\namedright{\Sigma D}{}{E}\] 
be $\delta$. For $k\geq 1$, let $\overline{\mathfrak{c}}_{k}$ be the composite 
\[\overline{\mathfrak{c}}_{k}\colon\llnameddright
      {X^{\wedge k}\wedge\Sigma D\simeq X\wedge (X^{\wedge k-1}\wedge\Sigma D)}{i} 
      {X\ltimes(X^{\wedge k-1}\wedge\Sigma D)}{E\ltimes\overline{\mathfrak{c}}_{k-1}} 
      {\Omega\Sigma X\ltimes E}.\] 
Define $\overline{d}_{0}=\overline{c}_{0}$ and $\overline{\mathfrak{d}}_{0}=\overline{\mathfrak{c}_{0}}$ 
(so $\overline{d}_{0}=\overline{\mathfrak{d}}_{0}=\delta$), and for $k\geq 1$ define 
$\overline{d}_{k}$ and $\overline{\mathfrak{d}}_{k}$ by the composites 
\[\overline{d}_{k}\colon\nameddright{X^{\wedge k}\wedge D}{\overline{c}_{k}}{\Omega\Sigma X\ltimes E} 
      {\overline{a}}{E}\] 
\[\overline{\mathfrak{d}}_{k}\colon\nameddright{X^{\wedge k}\wedge D}{\overline{\mathfrak{c}}_{k}} 
      {\Omega\Sigma X\ltimes E}{\overline{a}}{E}\] 

\begin{lemma} 
   \label{frakdcongruent} 
   If $k=0$ or $k=1$ then $\overline{d}_{k}=\overline{\mathfrak{d}}_{k}$. If $k\geq 2$ then 
   $\overline{d}_{k}$ is congruent to $\overline{\mathfrak{d}}_{k}$. 
\end{lemma}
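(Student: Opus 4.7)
The plan is to follow exactly the pattern of Lemma~\ref{dcongruent}, with $g$ replaced by the more general map $\delta$, and $c_k, d_k, \mathfrak{d}_k$ replaced by their $\overline{\cdot}$ counterparts. The base cases $k=0,1$ will be pure unwinding of definitions. The induction step will again hinge on the interaction between the homotopy action and the half-smash as controlled by Lemma~\ref{halfsmashaction}, and this is where congruence rather than honest homotopy enters.

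For $k=0$ both $\overline{d}_0$ and $\overline{\mathfrak{d}}_0$ are defined to be $\delta$, so they agree. For $k=1$, I would unravel $\overline{c}_1 = (1\ltimes\delta)\circ c_1 = (1\ltimes\delta)\circ(e_1\ltimes 1)\circ b_1$ using $e_1=E$ and $b_1=i$ to obtain $\overline{d}_1 = \overline{a}\circ(E\ltimes\delta)\circ i$, which is precisely the definition of $\overline{\mathfrak{d}}_1$.

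For the inductive step with $k\ge 2$, assume $\overline{d}_{k-1}$ is congruent to $\overline{\mathfrak{d}}_{k-1}$. I would first rewrite $\overline{d}_k = \overline{a}\circ(e_k\ltimes\delta)\circ b_k$, then use the factorization $e_k = \mu\circ(E\times e_{k-1})$ together with the recursive definition of $b_k$ through $\varphi^{-1}\circ(1\ltimes b_{k-1})\circ i$, to express $\overline{d}_k$ as the lower horizontal composite of a two-row diagram whose top horizontal is $\overline{a}\circ(1\ltimes\overline{a})\circ(E\ltimes(e_{k-1}\ltimes\delta))\circ(1\ltimes b_{k-1})\circ i$. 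The left half of this diagram commutes by naturality of $\varphi$; the right half fits the template of Lemma~\ref{halfsmashaction}, so the two sides are congruent rather than homotopic. Re-identifying the upper composite as $\overline{a}\circ(E\ltimes\overline{d}_{k-1})\circ i$ via the definition of $\overline{d}_{k-1}$, I would conclude that $\overline{d}_k$ is congruent to $\overline{a}\circ(E\ltimes\overline{d}_{k-1})\circ i$.

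Finally, I would invoke the inductive hypothesis: since $\overline{d}_{k-1}$ and $\overline{\mathfrak{d}}_{k-1}$ are congruent, so are $E\ltimes\overline{d}_{k-1}$ and $E\ltimes\overline{\mathfrak{d}}_{k-1}$ (suspension is functorial, and $E\ltimes(-)$ preserves the property of having homotopic suspensions because $\Sigma(E\ltimes f)$ splits off $\Sigma f$ via Lemma~\ref{halfsmashsusp}). Composing with $\overline{a}\circ(\_)\circ i$ preserves congruence for the same reason, so $\overline{a}\circ(E\ltimes\overline{d}_{k-1})\circ i$ is congruent to $\overline{\mathfrak{d}}_k = \overline{a}\circ(E\ltimes\overline{\mathfrak{d}}_{k-1})\circ i$. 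Combining with the previous paragraph yields $\overline{d}_k \equiv \overline{\mathfrak{d}}_k$. The main obstacle is the middle step: verifying that Lemma~\ref{halfsmashaction} really does apply in this setting and that the resulting congruence propagates cleanly through the induction, since genuine homotopy commutativity is unavailable. But this is exactly the role the congruence relation was introduced to play, and the proof of Lemma~\ref{dcongruent} offers a complete blueprint to follow.
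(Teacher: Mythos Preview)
Your proposal is correct and takes exactly the same approach as the paper; indeed the paper's entire proof is the single sentence ``Argue just as for Lemma~\ref{dcongruent}, replacing \(\namedright{\Sigma Y}{g}{E}\) by \(\namedright{\Sigma D}{\delta}{E}\),'' which is precisely the strategy you outline in detail.
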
 

\begin{proof} 
Argue just as for Lemma~\ref{dcongruent}, replacing 
\(\namedright{\Sigma Y}{g}{E}\) 
by 
\(\namedright{\Sigma D}{\delta}{E}\). 
\end{proof} 

The congruence between $\overline{d}_{k}$ and $\overline{\mathfrak{d}}_{k}$ 
lets us alter Proposition~\ref{JkEequiv} to the following. 

\begin{proposition} 
   \label{JkEequivfrak} 
   Suppose that there is a homotopy fibration sequence 
   \(\namedddright{\Omega\Sigma X}{\partial}{E}{p}{Y}{h}{\Sigma X}\) 
   where $\Omega h$ has a right homotopy inverse. Suppose that there is a map 
   \(\delta\colon\namedright{\Sigma D}{}{E}\) 
   such that the composite 
   \[\nameddright{\Omega\Sigma X\ltimes\Sigma D}{1\ltimes\delta}{\Omega\Sigma X\ltimes E}{\overline{a}}{E}\] 
   is a homotopy equivalence. Then the composite 
   \[\lllnamedright{\Omega\Sigma X\ltimes(X^{\wedge k}\wedge\Sigma D)\vee(J_{k-1}(X)\ltimes\Sigma D)} 
      {(1\ltimes\overline{\mathfrak{c}}_{k})\perp\overline{J}_{k}}{\Omega\Sigma X\ltimes(\Omega\Sigma X\ltimes E)} 
      \stackrel{1\ltimes\overline{a}}{\longrightarrow}\namedright{\Omega\Sigma X\ltimes E}{\overline{a}}{E}\] 
   is a homotopy equivalence. 
\end{proposition}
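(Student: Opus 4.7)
The plan is to deduce Proposition~\ref{JkEequivfrak} directly from Proposition~\ref{JkEequiv} by exploiting the congruence between $\overline{d}_k$ and $\overline{\mathfrak{d}}_k$ established in Lemma~\ref{frakdcongruent}. First I would compare the two composites appearing in the two propositions. They agree identically on the wedge summand $J_{k-1}(X)\ltimes\Sigma D$ because the map $\overline{J}_k$ and the subsequent post-composition by $\overline{a}\circ(1\ltimes\overline{a})$ are unchanged between the two statements. Thus the only summand on which the composites differ is $\Omega\Sigma X\ltimes(X^{\wedge k}\wedge\Sigma D)$. On that summand I would use the functoriality of $\Omega\Sigma X\ltimes(-)$ to rewrite $\overline{a}\circ(1\ltimes\overline{a})\circ(1\ltimes\overline{c}_k)$ as $\overline{a}\circ\bigl(1\ltimes(\overline{a}\circ\overline{c}_k)\bigr)=\overline{a}\circ(1\ltimes\overline{d}_k)$, and analogously to rewrite the corresponding piece of the new composite as $\overline{a}\circ(1\ltimes\overline{\mathfrak{d}}_k)$.

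Next I would invoke Lemma~\ref{frakdcongruent}, which tells us that $\overline{d}_k$ and $\overline{\mathfrak{d}}_k$ are congruent, hence induce the same homomorphism on integral homology. Using the natural splitting $A\ltimes\Sigma B\simeq(A\wedge\Sigma B)\vee\Sigma B$ of Lemma~\ref{halfsmashsusp} together with the K\"unneth theorem, this equality transfers to $(1\ltimes\overline{d}_k)_{\ast}=(1\ltimes\overline{\mathfrak{d}}_k)_{\ast}$ on integral homology. Consequently the full composites of Propositions~\ref{JkEequiv} and~\ref{JkEequivfrak} induce identical homomorphisms on integral homology. Since the former is a homotopy equivalence by Proposition~\ref{JkEequiv}, it is a homology isomorphism, so the latter is also a homology isomorphism. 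Both source and target are simply-connected, so Whitehead's theorem yields the desired homotopy equivalence.

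The main subtlety, rather than an obstacle, is the necessary descent from homotopy-level to homology-level agreement when replacing $\overline{c}_k$ by $\overline{\mathfrak{c}}_k$: one cannot expect $\overline{d}_k$ and $\overline{\mathfrak{d}}_k$ to be honestly homotopic, because Lemma~\ref{halfsmashaction} supplies only a congruence-level form of associativity for $\overline{a}$, not a genuine homotopy commutative diagram. This is precisely the point of the congruence formalism introduced in Section~\ref{sec:ad}, which is strong enough to conclude via Whitehead's theorem but weak enough to be available in this half-smash setting. In the special cases $k=0,1$, Lemma~\ref{frakdcongruent} upgrades congruence to equality $\overline{d}_k=\overline{\mathfrak{d}}_k$, so the two composites in Propositions~\ref{JkEequiv} and~\ref{JkEequivfrak} coincide on the nose and no homological argument is needed.
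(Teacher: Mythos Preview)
Your proposal is correct and follows essentially the same route as the paper's proof: both restrict attention to the summand $\Omega\Sigma X\ltimes(X^{\wedge k}\wedge\Sigma D)$, rewrite the relevant composites as $\overline{a}\circ(1\ltimes\overline{d}_k)$ and $\overline{a}\circ(1\ltimes\overline{\mathfrak{d}}_k)$, invoke Lemma~\ref{frakdcongruent} to see these agree in homology, and then apply Whitehead's theorem using simple-connectivity of source and target. The only cosmetic difference is that the paper phrases the passage from $\overline{d}_k\sim\overline{\mathfrak{d}}_k$ to agreement of the full composites in terms of congruence (so that~(\ref{JkEequivhe2}) is congruent to~(\ref{JkEequivhe})), whereas you spell out the homology step via the half-smash splitting and K\"unneth; both are equally valid.
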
 
   
\begin{proof} 
By Proposition~\ref{JkEequiv} there is a homotopy equivalence   
\begin{equation} 
  \label{JkEequivhe} 
  \lllnamedright{\Omega\Sigma X\ltimes(X^{\wedge k}\wedge\Sigma D)\vee(J_{k-1}(X)\ltimes\Sigma D)} 
      {(1\ltimes\overline{c}_{k})\perp\overline{J}_{k}}{\Omega\Sigma X\ltimes(\Omega\Sigma X\ltimes E)} 
      \stackrel{1\ltimes\overline{a}}{\longrightarrow}\namedright{\Omega\Sigma X\ltimes E}{\overline{a}}{E}. 
 \end{equation}  
The restriction of this homotopy equivalence to $\Omega\Sigma X\ltimes(X^{\wedge k}\wedge\Sigma D)$ is 
$\overline{a}\circ(1\ltimes\overline{a})\circ(1\ltimes\overline{c}_{k})$. By definition of $\overline{d}_{k}$, 
this equals $\overline{a}\circ(1\ltimes\overline{d}_{k})$. 
By Lemma~\ref{frakdcongruent}, $\overline{d}_{k}$ is congruent to $\overline{\mathfrak{d}}_{k}$, 
so $\overline{a}\circ(1\ltimes\overline{d}_{k})$ is congruent to 
$\overline{a}\circ(1\ltimes\overline{\mathfrak{d}}_{k})$, which by definition of $\overline{\mathfrak{d}}_{k}$ 
equals $\overline{a}\circ(1\ltimes\overline{a})\circ(1\ltimes\overline{\mathfrak{c}}_{k})$. 
Thus the composite 
\begin{equation} 
  \label{JkEequivhe2} 
  \lllnamedright{\Omega\Sigma X\ltimes(X^{\wedge k}\wedge\Sigma D)\vee(J_{k-1}(X)\ltimes\Sigma D)} 
      {(1\ltimes\overline{\mathfrak{c}}_{k})\perp\overline{J}_{k}}{\Omega\Sigma X\ltimes(\Omega\Sigma X\ltimes E)} 
      \stackrel{1\ltimes\overline{a}}{\longrightarrow}\namedright{\Omega\Sigma X\ltimes E}{\overline{a}}{E} 
\end{equation}  
is congruent to~(\ref{JkEequivhe}). Consequently, both~(\ref{JkEequivhe}) and~(\ref{JkEequivhe2}) 
induce the same map in homology. Since~(\ref{JkEequivhe}) is a homotopy equivalence 
it induces an isomorphism in homology. Thus~(\ref{JkEequivhe2}) also induces an isomorphism 
in homology. By hypothesis, $E\simeq\Omega\Sigma X\ltimes\Sigma D$ so $E$ is simply-connected. 
Thus the domain and range of~(\ref{JkEequivhe2}) are simply-connected so the fact that it induces 
an isomorphism in homology implies that it is a homotopy equivalence by Whitehead's Theorem. 
\end{proof} 

Recall from the setup at the beginning of the section that the restriction of 
\(\namedright{\Omega\Sigma X\ltimes\Sigma A}{\theta}{E}\) 
to $\Sigma A$ is a map 
\(g\colon\namedright{\Sigma A}{}{E}\)  
that lifts $f$ through $p$. 
      
\begin{theorem} 
   \label{E'typeI} 
   Suppose that there is a homotopy fibration sequence 
   \(\namedddright{\Omega\Sigma X}{\partial}{E}{p}{Y}{h}{\Sigma X}\) 
   with the following properties: 
   \begin{itemize} 
      \item[(a)] $\Omega h$ has a right homotopy inverse; 
      \item[(b)] there is a map 
                     \(\delta\colon\namedright{\Sigma D}{}{E}\) 
                     such that the composite 
                      \[\nameddright{\Omega\Sigma X\ltimes\Sigma D}{1\ltimes\delta} 
                              {\Omega\Sigma X\ltimes E}{\overline{a}}{E}\] 
                     is a homotopy equivalence; 
     \item[(c)] $g$ can be chosen to factor as a composite 
                     \[g\colon\namedddright{\Sigma A}{\ell}{X^{\wedge k}\wedge\Sigma D}{\overline{\mathfrak{c}}_{k}} 
                           {\Omega\Sigma X\ltimes E}{\overline{a}}{E}\] 
                     for some map $\ell$. 
    \end{itemize} 
   Let $C$ be the homotopy cofibre of $\ell$. Then there is a homotopy equivalence 
   \[\namedright{(\Omega\Sigma X\ltimes C)\vee (J_{k-1}(X)\ltimes\Sigma D)}{}{E'}.\]  
\end{theorem}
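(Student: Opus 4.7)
The plan is to combine the homotopy cofibration for $E'$ coming from Theorem~\ref{GTcofib}~(a) with the decomposition of $E$ provided by Proposition~\ref{JkEequivfrak}, and then to observe that, under that decomposition, the map $\theta$ lands (up to homotopy) entirely in the first wedge summand in a particularly simple way.

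First I would record the starting homotopy cofibration
\[
\nameddright{\Omega\Sigma X\ltimes\Sigma A}{\theta}{E}{}{E'}
\]
from Theorem~\ref{GTcofib}~(a), together with the key identification $\theta\simeq\overline{a}\circ(1\ltimes g)$ from~(\ref{thetabara}). Hypothesis~(b) is exactly the input required to invoke Proposition~\ref{JkEequivfrak}, which gives a homotopy equivalence
\[
\Phi\colon\namedright{\bigl(\Omega\Sigma X\ltimes(X^{\wedge k}\wedge\Sigma D)\bigr)\vee\bigl(J_{k-1}(X)\ltimes\Sigma D\bigr)}{\simeq}{E},
\]
where $\Phi=\overline{a}\circ(1\ltimes\overline{a})\circ\bigl((1\ltimes\overline{\mathfrak{c}}_{k})\perp\overline{J}_{k}\bigr)$.

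The central step is to chase hypothesis~(c) through $\Phi$. Since $g=\overline{a}\circ\overline{\mathfrak{c}}_{k}\circ\ell$ by~(c), and $1\ltimes(-)$ respects composition, we have
\[
\theta\simeq\overline{a}\circ(1\ltimes g)\simeq\overline{a}\circ(1\ltimes\overline{a})\circ(1\ltimes\overline{\mathfrak{c}}_{k})\circ(1\ltimes\ell).
\]
Up to the homotopy equivalence $\Phi$, this exhibits $\theta$ as the inclusion
\[
\iota\colon\namedright{\Omega\Sigma X\ltimes\Sigma A}{1\ltimes\ell}{\Omega\Sigma X\ltimes(X^{\wedge k}\wedge\Sigma D)}
\]
into the first wedge summand, followed by $\Phi$. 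That is, there is a homotopy commutative triangle with $\theta\simeq\Phi\circ(\iota\vee\ast)$, where $\ast$ denotes the trivial map into $J_{k-1}(X)\ltimes\Sigma D$.

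The cofibre computation then finishes the argument. Because $\Phi$ is a homotopy equivalence, the homotopy cofibre of $\theta$ agrees with the homotopy cofibre of $\iota\vee\ast$, which splits as a wedge of cofibres. The second cofibre is the target of $\ast$, namely $J_{k-1}(X)\ltimes\Sigma D$. For the first, since $\Omega\Sigma X\ltimes(-)$ preserves homotopy cofibrations (apply it to the cofibration $\Sigma A\xrightarrow{\ell}X^{\wedge k}\wedge\Sigma D\to C$), the cofibre of $1\ltimes\ell$ is $\Omega\Sigma X\ltimes C$. Combining these yields
\[
E'\simeq(\Omega\Sigma X\ltimes C)\vee(J_{k-1}(X)\ltimes\Sigma D),
\]
as asserted. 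The only delicate point is the factorization of $\theta$ through $\Phi$ via $1\ltimes\ell$; this rests on the care taken in Proposition~\ref{JkEequivfrak} to work with the $\overline{\mathfrak{c}}_{k}$ (rather than $\overline{c}_{k}$), so that hypothesis~(c) plugs in directly. Everything else is formal manipulation of half-smashes and cofibres.
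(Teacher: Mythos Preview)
Your proof is correct and follows essentially the same route as the paper: identify $\theta\simeq\overline{a}\circ(1\ltimes g)$, use hypothesis~(c) to factor this as $\Phi$ composed with $(1\ltimes\ell)$ into the first wedge summand of the decomposition of $E$ from Proposition~\ref{JkEequivfrak}, and then read off the cofibre. The only cosmetic difference is that the paper writes the map into the wedge as $(1\ltimes\ell)+\ast$ and finishes with an explicit five-lemma/Whitehead argument on the induced map of cofibres, whereas you appeal directly to $\Omega\Sigma X\ltimes(-)$ preserving homotopy cofibrations; both are standard and amount to the same thing.
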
 

\begin{proof} 
Consider the diagram 
\[\spreaddiagramcolumns{-0.5pc}\diagram 
     \Omega\Sigma X\ltimes\Sigma A\xdouble[rrr]\dto^{(1\ltimes\ell)+\ast} 
          & & & \Omega\Sigma X\ltimes\Sigma A\dto^{1\ltimes g}\drto^{\theta} & \\ 
     \Omega\Sigma X\ltimes(X^{\wedge k}\wedge\Sigma D)\vee
            (J_{k-1}(X)\ltimes\Sigma D)\rrto^-{(1\ltimes\overline{\mathfrak{c}}_{k})\perp\overline{J}_{k}} 
          & & \Omega\Sigma X\ltimes(\Omega\Sigma X\ltimes E)\rto^-{1\ltimes\overline{a}} 
          & \Omega\Sigma X\ltimes E\rto^-{\overline{a}} & E.  
  \enddiagram\] 
Since $\Omega\Sigma X\ltimes\Sigma A$ maps trivially to $J_{k-1}(X)\ltimes\Sigma D$, 
to show the rectangle homotopy commutes it suffices to show that 
$(1\ltimes\overline{a})\circ(1\ltimes\overline{\mathfrak{c}}_{k})\circ(1\ltimes\ell)\simeq 1\ltimes g$. 
But this holds by hypothesis~(c). The right triangle 
homotopy commutes by~(\ref{thetabara}). As hypotheses~(a) and~(b) hold, 
Proposition~\ref{JkEequivfrak} implies that the composite along the bottom row is a homotopy 
equivalence. Rewriting, there is a homotopy commutative square 
\begin{equation} 
  \label{genellthetadgrm} 
  \diagram 
       \Omega\Sigma X\ltimes\Sigma A\rdouble\dto^{(1\ltimes \ell)+\ast} 
            & \Omega\Sigma X\ltimes\Sigma A\dto^{\theta} \\
       \Omega\Sigma X\ltimes(X^{\wedge k}\wedge\Sigma D)\vee(J_{k-1}(X)\ltimes\Sigma D)\rto^-{\simeq} 
            & E. 
   \enddiagram 
\end{equation}     
As the homotopy cofibre of $\ell$ is $C$, the homotopy cofibre of $(1\ltimes\ell)+\ast$ 
in~(\ref{genellthetadgrm}) is 
\[(\Omega\Sigma X\ltimes C)\vee(J_{k-1}(X)\ltimes\Sigma D).\] 
As the homotopy 
cofibre of $\theta$ is $E'$, the homotopy commutativity of~(\ref{genellthetadgrm}) implies that 
there is an induced map of cofibres 
\[\alpha\colon\namedright{(\Omega\Sigma X\ltimes C)\vee(J_{k-1}(X)\ltimes\Sigma D)}{}{E'}.\] 
The homotopy equivalence in~(\ref{genellthetadgrm}) and the five-lemma then imply that $\alpha$ 
induces an isomorphism in homology and so is a homotopy equivalence by Whitehead's Theorem.  
\end{proof} 

\begin{remark} 
\label{E'Jmap} 
Note from the proof of Theorem~\ref{E'typeI} that the restriction of the 
homotopy equivalence 
\(\namedright{(\Omega\Sigma X\ltimes C)\vee (J_{k-1}(X)\ltimes\Sigma D)}{}{E'}\) 
to $J_{k-1}(X)\ltimes\Sigma D$ is homotopic to the composite 
\begin{equation} 
  \label{E'Jfirst} 
  \namedddright{J_{k-1}(X)\ltimes\Sigma D}{\overline{J}_{k}}{\Omega\Sigma X\ltimes(\Omega\Sigma X\ltimes E)} 
      {1\ltimes\overline{a}}{\Omega\Sigma X\ltimes E}{\overline{a}}{E}\longrightarrow E'. 
\end{equation}  
It will be useful for later to give an alternative description of this composite. Consider the diagram 
\[\diagram 
      J_{k-1}(X)\ltimes\Sigma D\rto^-{j_{k-1}\ltimes\delta}\ddouble 
           & \Omega\Sigma X\ltimes E\rto^-{\overline{a}}\dto^{j} 
           & E\dto^{j}\drdouble & \\ 
      J_{k-1}(X)\ltimes\Sigma D\rto^-{\overline{J}_{k}} 
           & \Omega\Sigma X\ltimes(\Omega\Sigma X\ltimes E)\rto^-{1\ltimes\overline{a}} 
           & \Omega\Sigma X\ltimes E\rto^-{\overline{a}} & E. 
   \enddiagram\] 
The left square commutes by definition of $\overline{J}_{k}$, the middle square 
commutes by the naturality of~$j$, and the right square homotopy commutes since $\overline{a}$ 
is a quotient of the action $a$ and $a$ restricts to the identity map on $E$. The diagram 
therefore implies that~(\ref{E'Jfirst}) is homotopic to the composite 
$\overline{a}\circ(j_{k-1}\ltimes\delta)$. Writing $j_{k-1}\ltimes\delta$ as 
$(1\ltimes\delta)\circ(j_{k-1}\ltimes 1)$, we conclude that~(\ref{E'Jfirst}) is homotopic to 
the composite 
\[\llnamedddright{J_{k-1}(X)\ltimes\Sigma D}{j_{k-1}\ltimes 1}{\Omega\Sigma X\ltimes\Sigma D} 
         {\overline{a}\circ(1\ltimes\delta)}{E}{}{E'}\] 
where $\overline{a}\circ(1\ltimes\delta)$ is assumed to be a homotopy equivalence 
in hypothesis~(c) of Theorem~\ref{E'typeI}. 
\end{remark} 

An interesting general example of Theorem~\ref{E'typeI} is the following. Start with the 
homotopy fibration sequence 
\[\namedddright{\Omega\Sigma X}{}{E}{p}{\Sigma X\vee\Sigma Y}{q_{1}}{\Sigma X}.\] 
Fix a positive integer $k$ and take $A=X^{\wedge k}\wedge Y$. 
Recall the definition of $M_{k}$ as the homotopy cofibration 
\[\lllnameddright{X^{\wedge k}\wedge\Sigma Y}{ad^{k}(i_{1})(i_{2})}{\Sigma X\vee\Sigma Y}{}{M_{k}}.\] 
Observe that $q_{1}$ extends to a map 
\(q'_{k}\colon\namedright{M_{k}}{}{\Sigma X}\). 
By Example~\ref{specialGTadexample} there is a map  
\(\namedright{\Sigma Y}{\delta}{E}\) 
lifting~$i_{2}$ through $p$ and for which there is a homotopy equivalence 
\[\nameddright{\Omega\Sigma X\ltimes\Sigma Y}{1\ltimes\delta}{\Omega\Sigma X\ltimes E} 
        {\overline{a}}{E}.\] 
(The map 
\(\namedright{\Sigma Y}{}{E}\) 
was called ``$g$" in Example~\ref{specialGTadexample} for use in Theorem~\ref{BTfibinclusion} 
but in the setup for Theorem~\ref{E'typeI} we are about to consider this map will play the role of $\delta$  
and the Whitehead product $ad^{k-1}(i_{1})(i_{2})$ will play the role of $g$.) Let $p'_{k}$ be the composite 
\[p'_{k}\colon\llnamedddright{J_{k-1}(X)\ltimes\Sigma Y}{j_{k-1}\ltimes 1}{\Omega\Sigma X\ltimes\Sigma Y} 
      {\overline{a}\circ(1\ltimes\delta)}{E}{p}{\Sigma X\vee\Sigma Y}\longrightarrow M_{k}.\] 

\begin{lemma} 
   \label{Mtype} 
   For $k\geq 1$, there is a homotopy fibration 
   \[\nameddright{J_{k-1}(X)\ltimes\Sigma Y}{p'_{k}}{M_{k}}{q'_{k}}{\Sigma X}\] 
   which splits after looping to give a homotopy equivalence 
   \[\Omega M_{k}\simeq\Omega\Sigma X\times\Omega(J_{k-1}(X)\ltimes\Sigma Y).\] 
\end{lemma}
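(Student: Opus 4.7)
The plan is to apply Theorem~\ref{E'typeI} directly. Take $Z=\Sigma X$ and identify the cofibration
\(\nameddright{\Sigma A}{f}{\Sigma X\vee\Sigma Y}{}{M_{k}}\)
with
\(\lllnameddright{X^{\wedge k}\wedge\Sigma Y}{ad^{k}(i_{1})(i_{2})}{\Sigma X\vee\Sigma Y}{}{M_{k}}\).
For hypothesis~(b) take $D=Y$, so $\Sigma A=X^{\wedge k}\wedge\Sigma D$, and let
\(\delta\colon\namedright{\Sigma Y}{}{E}\)
be the lift of $i_{2}$ produced in Example~\ref{specialGTadexample}.

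First I verify the three hypotheses of Theorem~\ref{E'typeI}. Hypothesis~(a) holds because $q_{1}$
admits the section
\(\namedright{\Sigma X}{i_{1}}{\Sigma X\vee\Sigma Y}\),
so $\Omega q_{1}$ has a right homotopy inverse. Hypothesis~(b) is precisely the content of
Example~\ref{specialGTadexample} with this choice of $\delta$. For hypothesis~(c), I observe that, with
$\delta=g$ the lift from Example~\ref{specialGTadexample}, the recursive definition of $\overline{\mathfrak{c}}_{k}$
in Section~\ref{sec:2cone} exactly matches the recursion used to build $\mathfrak{d}_{k}$ in
Section~\ref{sec:ad}, so $\overline{a}\circ\overline{\mathfrak{c}}_{k}$ coincides with $\mathfrak{d}_{k}$ for $k\geq 1$.
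By Theorem~\ref{dbard}, $p\circ\mathfrak{d}_{k}\simeq ad^{k}(i_{1})(i_{2})=f$, so $\mathfrak{d}_{k}$ is a lift of $f$;
moreover, this lift composes trivially with
\(\namedright{\Sigma X\vee\Sigma Y}{}{M_{k}}\)
by definition of $M_{k}$, so it is a valid choice for the map called $g$ in the setup of Theorem~\ref{GTcofib}.
Taking $g=\overline{a}\circ\overline{\mathfrak{c}}_{k}$ provides the factorization required in~(c) with $\ell$ the
identity map on $X^{\wedge k}\wedge\Sigma Y$.

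Since $\ell$ is the identity, its homotopy cofibre $C$ is contractible and $\Omega\Sigma X\ltimes C$ is a point.
Theorem~\ref{E'typeI} therefore produces a homotopy equivalence
\(\namedright{J_{k-1}(X)\ltimes\Sigma Y}{}{E'}\),
where $E'$ is the homotopy fibre of $q'_{k}\colon\namedright{M_{k}}{}{\Sigma X}$. By Remark~\ref{E'Jmap}, the
composite of this equivalence with the natural fibre inclusion
\(\namedright{E'}{}{M_{k}}\)
is homotopic to the composite
\(\llnamedddright{J_{k-1}(X)\ltimes\Sigma Y}{j_{k-1}\ltimes 1}{\Omega\Sigma X\ltimes\Sigma Y}{\overline{a}\circ(1\ltimes\delta)}{E}{p}{\Sigma X\vee\Sigma Y}\longrightarrow M_{k}\),
which is precisely the map $p'_{k}$ in the statement. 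This yields the desired homotopy fibration
\(\nameddright{J_{k-1}(X)\ltimes\Sigma Y}{p'_{k}}{M_{k}}{q'_{k}}{\Sigma X}\).

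For the loop space decomposition, I observe that the composite
\(\nameddright{\Sigma X}{i_{1}}{\Sigma X\vee\Sigma Y}{}{M_{k}}\)
is a section of $q'_{k}$, so $\Omega q'_{k}$ has a right homotopy inverse. Multiplying the loop of the fibre
inclusion with this right inverse using the loop multiplication on $\Omega M_{k}$ yields a weak equivalence
\(\namedright{\Omega(J_{k-1}(X)\ltimes\Sigma Y)\times\Omega\Sigma X}{}{\Omega M_{k}}\).
The main obstacle I foresee is the identification $\overline{a}\circ\overline{\mathfrak{c}}_{k}=\mathfrak{d}_{k}$:
it is just a matter of matching the two parallel recursive definitions in Sections~\ref{sec:ad} and~\ref{sec:2cone} on
the same base case $\delta=g$, but it requires careful bookkeeping. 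Once that is in place, Theorem~\ref{E'typeI}
applies with $\ell$ the identity and the rest of the argument is formal.
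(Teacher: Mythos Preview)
Your proof is correct and follows essentially the same route as the paper: apply Theorem~\ref{E'typeI} with $D=Y$, $\delta$ the lift of $i_{2}$ from Example~\ref{specialGTadexample}, and $\ell$ the identity map, so that $C\simeq\ast$ and the conclusion reduces to $E'\simeq J_{k-1}(X)\ltimes\Sigma Y$; then use Remark~\ref{E'Jmap} to identify the fibre inclusion with $p'_{k}$ and the section $i_{1}$ to split after looping. The only cosmetic difference is that the paper invokes Lemma~\ref{dbarkaction} directly rather than Theorem~\ref{dbard} to verify that $\overline{a}\circ\overline{\mathfrak{c}}_{k}=\mathfrak{d}_{k}$ lifts $ad^{k}(i_{1})(i_{2})$, and it records the loop splitting before the identification of $p'_{k}$ rather than after.
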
 

\begin{proof} 
We wish to apply Theorem~\ref{E'typeI} to the homotopy fibration sequence 
\[\namedddright{\Omega\Sigma X}{}{E}{p}{\Sigma X\vee\Sigma Y}{q_{1}}{\Sigma X}\] 
with appropriate choices of the maps $\delta$ and $g$. To do so, hypotheses~(a) to (c) 
for the theorem need to be checked. As~$q_{1}$ has a right homotopy inverse so 
does $\Omega q_{1}$, and therefore hypothesis~(a) holds. The map $\delta$ has 
the property that the composite 
\(\nameddright{\Omega\Sigma X\ltimes\Sigma Y}{1\ltimes\delta}{\Omega\Sigma X\ltimes E} 
        {\overline{a}}{E}\) 
is a homotopy equivalence so hypothesis~(b) holds. Notice that with this map $\delta$ 
the definition of $\overline{\mathfrak{d}}_{k}$ identifies with the definition of $\mathfrak{d}_{k}$ 
in Section~\ref{sec:ad}. Thus, relabelling $\mathfrak{d}_{k}$ in Lemma~\ref{dbarkaction} 
by $\overline{\mathfrak{d}}_{k}$, and using the definition of $\overline{\mathfrak{d}}_{k}$ 
as $\overline{a}\circ\overline{\mathfrak{c}}_{k}$, by  
Lemma~\ref{dbarkaction} there is a homotopy commutative diagram  
\[\diagram 
       X^{\wedge k}\wedge\Sigma Y 
              \rto^-{\overline{\mathfrak{c}}_{k}}\drrto_{ad^{k}(i_{1})(i_{2})} 
            & \Omega\Sigma X\ltimes E\rto^-{\overline{a}} & E\dto^{p} \\ 
        & & \Sigma X\vee\Sigma Y. 
  \enddiagram\] 
Thus a map $g$ lifting $ad^{k}(i_{1})(i_{2})$ through~$p$ is~$\overline{a}\circ\overline{\mathfrak{c}}_{k}$. 
Taking $\ell$ to be the identity map on $X^{\wedge k}\wedge\Sigma Y$, the factorization 
of $g$ as $\overline{a}\circ\overline{\mathfrak{c}}_{k}\circ\ell$ satisfies hypothesis~(c). Therefore 
all the hypotheses of Theorem~\ref{E'typeI} hold. 
Since $\ell$ is the identity map on $X^{\wedge k}\wedge\Sigma Y$, its homotopy cofibre $C$ is 
a point. Thus, by Theorem~\ref{E'typeI}, if $E'$ is the homotopy fibre of 
\(\namedright{M_{k}}{q'_{k}}{\Sigma X}\), 
then there is a homotopy equivalence $J_{k-1}(X)\ltimes\Sigma Y\simeq E'$. The 
fact that $\Omega q_{1}$ has a right homotopy inverse then implies that there are 
homotopy equivalences 
\[\Omega M\simeq\Omega\Sigma X\times\Omega E'\simeq 
      \Omega\Sigma X\times\Omega(J_{k-1}(X)\ltimes\Sigma Y).\] 
   
It remains to identify the composite 
\(\nameddright{J_{k-1}(X)\ltimes\Sigma Y}{\simeq}{E'}{}{M_{k}}\) 
as $p'_{k}$. By Remark~\ref{E'Jmap}, the homotopy equivalence 
$J_{k-1}(X)\ltimes\Sigma Y\simeq E'$ is realized by the composite 
\[\llnamedddright{J_{k-1}(X)\ltimes\Sigma Y}{j_{k-1}\ltimes 1} 
        {\Omega\Sigma X\ltimes\Sigma Y}{\overline{a}\circ(1\ltimes\delta)}{E}{}{E'}.\]  
Composing with 
\(\namedright{E'}{}{M_{k}}\) 
and using the fact that 
\(\nameddright{E}{}{E'}{}{M_{k}}\) 
is homotopic to 
\(\nameddright{E}{p}{\Sigma X\vee\Sigma Y}{}{M_{k}}\) 
then shows that 
\(\nameddright{J_{k-1}(X)\ltimes\Sigma Y}{\simeq}{E'}{}{M_{k}}\) 
is homotopic to the composite 
\[\llnamedddright{J_{k-1}(X)\ltimes\Sigma Y}{j_{k-1}\ltimes 1}{\Omega\Sigma X\ltimes\Sigma Y} 
      {\overline{a}\circ(1\ltimes\delta)}{E}{p}{\Sigma X\vee\Sigma Y}\longrightarrow M_{k}\] 
which is the definition of $p'_{k}$. 
\end{proof} 

In particular, observe that if $k=1$ then we have attached $ad(i_{1})(i_{2})=[i_{1},i_{2}]$ so 
$M_{1}=\Sigma X\times\Sigma Y$, and $\bigvee_{k=0}^{0} X^{k}\wedge\Sigma Y=\Sigma Y$, 
so we recover the usual homotopy fibration 
\(\nameddright{\Sigma Y}{}{\Sigma X\times\Sigma Y}{}{\Sigma X}\). 

Lemma~\ref{Mtype} identifies the homotopy fibre of $q'_{k}$ as $J_{k-1}(X)\ltimes\Sigma Y$ 
and the map from the fibre to the total space as $p'_{k}$, but we would like an alternate 
description of the fibre that identifies the map from it to $M_{k}$ as a wedge sum of 
Whitehead products. Since $p'_{k}$ depends on the inclusion 
\(\namedright{J_{k-1}(X)}{j_{k-1}}{\Omega\Sigma X}\) 
it blends well with the multiplication on $\Omega\Sigma X$, so we will make use of 
congruence again to make the conversion from multiplication to Whitehead products. 

Define the map $\gamma_{k}$ by the composite 
\[\gamma_{k}\colon\bigvee_{t=0}^{k-1} X^{\wedge t}\wedge\Sigma Y 
      \stackrel{\bigvee_{t=0}^{k-1} ad^{t}(i_{1})(i_{2})}{\llllarrow}\namedright{\Sigma X\vee\Sigma Y}{}{M_{k}}.\] 
We now prove Theorem~\ref{Mtypealtintro}, restated verbatim.  

\begin{theorem} 
   \label{Mtypealt} 
   For $k\geq 1$, there is a homotopy fibration 
   \[\nameddright{\bigvee_{t=0}^{k-1} X^{\wedge t}\wedge\Sigma Y}{\gamma_{k}}{M_{k}}{q'_{k}}{\Sigma X}\] 
   which splits after looping to give a homotopy equivalence 
   \[\Omega M_{k}\simeq\Omega\Sigma X\times\Omega(\bigvee_{t=0}^{k-1} X^{\wedge k}\wedge\Sigma Y).\] 
\end{theorem}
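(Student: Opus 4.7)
The plan is to apply Lemma~\ref{Mtype}, which already produces a homotopy fibration
\(\nameddright{F}{p'_{k}}{M_{k}}{q'_{k}}{\Sigma X}\)
with $F=J_{k-1}(X)\ltimes\Sigma Y$ splitting after looping, and then to exhibit a homotopy equivalence $\widetilde{\gamma}_k\colon\namedright{\bigvee_{t=0}^{k-1} X^{\wedge t}\wedge\Sigma Y}{}{F}$ satisfying $p'_k\circ\widetilde{\gamma}_k\simeq\gamma_k$. Once this is done the homotopy fibration in the statement is equivalent to that of Lemma~\ref{Mtype} and inherits its loop space splitting, with $\bigvee_{t=0}^{k-1} X^{\wedge t}\wedge\Sigma Y$ replacing $J_{k-1}(X)\ltimes\Sigma Y$.

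To construct $\widetilde{\gamma}_k$, let $\beta\colon\namedright{E}{}{F}$ be the natural map of fibres induced by $\iota\colon\namedright{\Sigma X\vee\Sigma Y}{}{M_k}$ in the commutative square of Theorem~\ref{GTcofib} applied to our setting, so that $p'_k\circ\beta\simeq\iota\circ p$. By Theorem~\ref{dbard} there is a homotopy equivalence $\mathfrak{d}\colon\namedright{\bigvee_{t=0}^{\infty} X^{\wedge t}\wedge\Sigma Y}{}{E}$ with $p\circ\mathfrak{d}\simeq\bigvee_{t=0}^{\infty} ad^t(i_1)(i_2)$. Let $\mathfrak{d}|_{\leq k-1}$ denote its restriction to the first $k$ wedge summands and set $\widetilde{\gamma}_k:=\beta\circ\mathfrak{d}|_{\leq k-1}$. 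Then $p'_k\circ\widetilde{\gamma}_k\simeq\iota\circ p\circ\mathfrak{d}|_{\leq k-1}\simeq\iota\circ\bigvee_{t=0}^{k-1} ad^t(i_1)(i_2)=\gamma_k$, confirming that $\widetilde{\gamma}_k$ is a lift of $\gamma_k$. To check that $\widetilde{\gamma}_k$ is an equivalence I would work in reduced homology with field coefficients, set $V=\rhlgy{X}$, and combine Theorem~\ref{GTcofib}(a) (which supplies a homotopy cofibration $\lllnameddright{\Omega\Sigma X\ltimes(X^{\wedge k}\wedge\Sigma Y)}{\theta}{E}{\beta}{F}$) with the Bott--Samelson theorem and Lemma~\ref{halfsmashsusp}: these yield $H_*(E)\cong\bigoplus_{t\geq 0} V^{\otimes t}\otimes\rhlgy{\Sigma Y}$ and $H_*(\Omega\Sigma X\ltimes(X^{\wedge k}\wedge\Sigma Y))\cong\bigoplus_{t\geq k} V^{\otimes t}\otimes\rhlgy{\Sigma Y}$, while Lemma~\ref{Mtype} supplies $H_*(F)\cong\bigoplus_{t=0}^{k-1} V^{\otimes t}\otimes\rhlgy{\Sigma Y}$. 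A rank count in the long exact sequence of the cofibration forces $\theta_{\ast}$ to be injective with image the $t\geq k$ summand, so $\beta_{\ast}$ is the projection onto the $t<k$ summand. Since $(\mathfrak{d}|_{\leq k-1})_{\ast}$ is the inclusion of the $t<k$ summand into $H_*(E)$ under the $\mathfrak{d}$-identification, the composite $(\widetilde{\gamma}_k)_{\ast}=\beta_{\ast}\circ(\mathfrak{d}|_{\leq k-1})_{\ast}$ is an isomorphism, and Whitehead's theorem (both source and target are simply connected) concludes that $\widetilde{\gamma}_k$ is a homotopy equivalence.

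The main obstacle is choosing the \emph{right} lift of $\gamma_k$ to $F$: a generic lift need not be a homotopy equivalence, but the specific choice $\widetilde{\gamma}_k=\beta\circ\mathfrak{d}|_{\leq k-1}$ works because $\mathfrak{d}|_{\leq k-1}$ targets exactly the part of $H_*(E)$ that survives the cofibration quotient $\beta$. Verifying this boils down to the rank argument identifying $\operatorname{im}(\theta_{\ast})$ with the $t\geq k$ summand of $H_*(E)$, for which one needs $H_*(F)$ from Lemma~\ref{Mtype} together with the standard James/half-smash computation of the source of $\theta$.
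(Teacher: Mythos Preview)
Your overall strategy matches the paper's: start from Lemma~\ref{Mtype}, let $\beta\colon E\to F$ be the induced fibre map (the paper writes $\lambda$), and show that $\beta\circ\mathfrak{d}|_{\leq k-1}$ is a homotopy equivalence lifting $\gamma_k$. The verification $p'_k\circ\widetilde{\gamma}_k\simeq\gamma_k$ is fine.

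The gap is in the homology step. The rank count does yield short exact sequences
\[0\to H_n\bigl(\Omega\Sigma X\ltimes(X^{\wedge k}\wedge\Sigma Y)\bigr)\to H_n(E)\to H_n(F)\to 0,\]
so $\theta_*$ is injective and $\beta_*$ surjective. But a dimension count determines only the \emph{size} of $\operatorname{im}(\theta_*)=\ker(\beta_*)$ in each degree, not \emph{which} subspace of $H_*(E)$ it is. When $V=\rhlgy{X}$ is spread over several degrees the summands $V^{\otimes t}\otimes\rhlgy{\Sigma Y}$ for $t<k$ and for $t\geq k$ overlap in a given degree, and nothing you have written excludes $\ker(\beta_*)$ from meeting the image of $(\mathfrak{d}|_{\leq k-1})_*$ nontrivially. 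The assertion ``$\beta_*$ is the projection onto the $t<k$ summand'' does not follow from the rank count, and with it your conclusion that $(\widetilde{\gamma}_k)_*$ is an isomorphism collapses.

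The paper closes this gap by a different route. Instead of locating $\operatorname{im}(\theta_*)$, it builds an explicit equivalence $c'_{k-1}\colon\bigvee_{t=0}^{k-1}X^{\wedge t}\wedge\Sigma Y\to J_{k-1}(X)\ltimes\Sigma Y$ compatible with $c$, and shows that the self-map $\kappa=\beta\circ\overline{a}\circ(j_{k-1}\ltimes\delta)$ of $F$ is homotopic to the identity: one has $p'_k\circ\kappa\simeq p'_k$ by construction, so $1-\kappa$ lifts through the connecting map $\Omega\Sigma X\to F$, which is null because $\Omega q'_k$ has a section. Since $d=\overline{a}\circ(1\ltimes\delta)\circ c$ and $d|_{\leq k-1}$ factors through $\overline{a}\circ(j_{k-1}\ltimes\delta)\circ c'_{k-1}$, it follows that $(\beta\circ d|_{\leq k-1})_*$ is an isomorphism; the congruence $\mathfrak{d}_*=d_*$ from Theorem~\ref{dbard} then gives the same for $\beta\circ\mathfrak{d}|_{\leq k-1}$. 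You need an argument of this kind---something that actually pins down how $\beta_*$ interacts with the $\mathfrak{d}$-decomposition---rather than a bare dimension count.
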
 

\begin{proof} 
By Lemma~\ref{cequiv} the map 
\(\namedright{\bigvee_{t=0}^{\infty} X^{\wedge t}\wedge\Sigma Y}{c}{\Omega\Sigma X\ltimes\Sigma Y}\) 
is a homotopy equivalence. By definition, $c$ is the wedge sum of maps 
$c_{t}$ for $0\leq t<\infty$, and the definition of $c_{t}$ via the mulitiplication on $\Omega\Sigma X$ 
implies that it factors through 
\(\namedright{J_{k-1}(X)}{j_{k-1}}{\Omega\Sigma X}\) 
if $t\leq k-1$. Thus there is a homotopy commutative square 
\begin{equation} 
  \label{ctoc'} 
  \diagram 
       \bigvee_{t=0}^{k-1} X^{\wedge t}\wedge\Sigma Y\rto^-{c'_{k-1}}\dto^{I} 
            & J_{k-1}(X)\ltimes\Sigma Y\dto^{j_{k-1}\ltimes 1} \\ 
       \bigvee_{t=0}^{\infty} X^{\wedge k}\wedge\Sigma Y\rto^-{c}
            & \Omega\Sigma X\ltimes\Sigma Y 
  \enddiagram 
\end{equation} 
where $I$ is the inclusion and $c'_{t}$ lifts $\bigvee_{t=0}^{k-1} c_{t}$ through $j_{k-1}$. 
The same argument in Lemma~\ref{cequiv} that shows $c$ is a homotopy equivalence 
also shows that $c'_{k-1}$ is also a homotopy equivalence. 

Next, since $q'_{k}$ factors through $q_{1}$ there is a homotopy fibration diagram 
\begin{equation} 
  \label{Ep'k} 
  \diagram 
       E\rto^-{\lambda}\dto^{p} & J_{k-1}(X)\ltimes\Sigma Y\dto^{p'_{k}} \\ 
       \Sigma X\vee\Sigma Y\rto\dto^{q_{1}} & M_{k}\dto^{q'_{k}} \\ 
       \Sigma X\rdouble & \Sigma X 
  \enddiagram 
\end{equation} 
for some induced map $\lambda$ of fibres. Consider the composite 
\[\kappa\colon\llnamedddright{J_{k-1}(X)\ltimes\Sigma Y}{j_{k-1}\ltimes 1}{\Omega\Sigma X\ltimes\Sigma Y} 
      {\overline{a}\circ(1\ltimes\delta)}{E}{\lambda}{J_{k-1}(X)\ltimes\Sigma Y}.\] 
We claim that $\kappa$ is homotopic to the identity map. To see this, observe that by~(\ref{Ep'k}), 
$p'_{k}\circ\kappa$ is homotopic to the composite 
\(\llnamedddright{J_{k-1}(X)\ltimes\Sigma Y}{j_{k-1}\ltimes 1}{\Omega\Sigma X\ltimes\Sigma Y} 
      {\overline{a}\circ(1\ltimes\delta)}{E}{p}{\Sigma X\vee\Sigma Y}\longrightarrow M_{k}\), 
which is the definition of $p'_{k}$. Thus $p'_{k}\circ\kappa\simeq p'_{k}$. Since 
$J_{k-1}(X)\ltimes\Sigma Y$ is a suspension, we may subtract in order to get 
$p'_{k}\circ(1-\kappa)\simeq\ast$. There is a homotopy fibration 
\(\nameddright{\Omega\Sigma X}{s}{J_{k-1}(X)\ltimes\Sigma Y}{p'_{k}}{M_{k}}\) 
where $s$ is null homotopic by Lemma~\ref{Mtype}. The null homotopy for $p'_{k}\circ(1-\kappa)$  
implies that $1-\kappa$ lifts through $s$, and hence is null homotopic. Thus $\kappa$ is 
homotopic to the identity map. 

Putting~(\ref{ctoc'}) and~(\ref{Ep'k}) together gives a homotopy commutative diagram 
\[\diagram 
       \bigvee_{t=0}^{k-1} X^{\wedge t}\wedge\Sigma Y\rto^-{c'_{k-1}}\dto^{I} 
            & J_{k-1}(X)\ltimes\Sigma Y\rrto^-{\overline{a}\circ(j_{k-1}\ltimes\delta)}\dto^{j_{k-1}\ltimes 1}
            & & E\rdouble\ddouble & E\rto^-{\lambda}\dto^{p} & J_{k-1}(X)\ltimes\Sigma Y\dto^{p'_{k}} \\ 
       \bigvee_{t=0}^{\infty} X^{\wedge k}\wedge\Sigma Y\rto^-{c}
            & \Omega\Sigma X\ltimes\Sigma Y\rrto^-{\overline{a}\circ(1\ltimes\delta)} 
            & & E\rto^-{p} & \Sigma X\vee\Sigma Y\rto & M_{k}. 
  \enddiagram\]
Note that the middle squares commute. Along the top row both $c'_{k-1}$ and 
$\kappa=\lambda\circ\overline{a}\circ(j_{k-1}\ltimes\delta)$ are homotopy equivalences. 
In particular, in homology $\lambda_{\ast}\circ(\overline{a}\circ(j_{k-1}\ltimes\delta)\circ c'_{k-1})_{\ast}$ 
is an isomorphism. Along the bottom row the composite $\overline{a}\circ(1\ltimes\delta)\circ c$ is the 
definition of the map $d$ in Section~\ref{sec:ad}. By Lemma~\ref{bardequiv}, $d$ is congruent 
to the map $\mathfrak{d}$. In particular, in homology $d_{\ast}=\mathfrak{d}_{\ast}$. Therefore 
$(\mathfrak{d}\circ I)_{\ast}=(d\circ I)_{\ast}$, which by the previous diagram equals 
$(\overline{a}\circ(j_{k-1}\ltimes\delta)\circ c'_{k-1})_{\ast}$. Hence 
$\lambda_{\ast}\circ(\mathfrak{d}\circ I)_{\ast}$ is an isomorphism, implying that 
$\lambda\circ\mathfrak{d}\circ I$ is a homotopy equivalence by Whitehead's Theorem. 

By Theorem~\ref{dbard} there is a homotopy commutative diagram 
\[\diagram 
        \bigvee_{t=0}^{k-1} X^{\wedge t}\wedge\Sigma Y\rto^-{I} 
             & \bigvee_{t=0}^{\infty} X^{\wedge t}\wedge\Sigma Y  
                    \rto^-{\mathfrak{d}}\drto_{\bigvee_{t=0}^{\infty} ad^{t}(i_{1})(i_{2})} 
             & E\rto^-{\lambda}\dto^{p} & J_{k-1}(X)\ltimes\Sigma Y\dto^{p'_{k}} \\ 
        & & \Sigma X\vee\Sigma Y\rto & M_{k}. 
  \enddiagram\] 
As the top row of this diagram is a homotoopy equivalence and the bottom 
direction around the diagram matches the definition of $\gamma_{k}$,  
the homotopy fibration 
\(\nameddright{J_{k-1}(X)\ltimes\Sigma Y}{p'_{k}}{M_{k}}{q'_{k}}{\Sigma X}\) 
may be replaced up to equivalence by a homotopy fibration 
\(\nameddright{\bigvee_{t=0}^{k-1} X^{\wedge t}\wedge\Sigma Y}{\gamma_{k}}{M_{k}}{q'_{k}}{\Sigma X}\). 
This proves the first assertion of the lemma, the splitting of the fibration after looping 
follows from the existence of a right homotopy inverse of $\Omega q'_{k}$. 
\end{proof} 

Interesting specific examples occur when $X=S^{m}$ and $Y=S^{n}$. Note then that 
for each $k\geq 1$ we have $X^{\wedge k}\wedge\Sigma Y\simeq S^{km+n+1}$. 

\begin{example} 
\label{Mkexample} 
For $k\geq 1$, define $M_{k}$ by the homotopy cofibration 
\[\lllnameddright{S^{km+n+1}}{ad^{k}(i_{1})(i_{2})}{S^{m+1}\vee S^{n+1}}{}{M_{k}}.\] 
Then there is a homotopy fibration 
\[\nameddright{\bigvee_{t=0}^{k-1} S^{tm+n+1}}{\gamma_{k}}{M_{k}}{}{S^{m+1}}\] 
where $\gamma_{k}$ is the composite 
\[\bigvee_{t=0}^{k-1} S^{tm+n+1}\stackrel{\bigvee_{t=0}^{k-1} ad^{t}(i_{1})(i_{2})}{\llllarrow} 
      \namedright{S^{m+1}\vee S^{n+1}}{}{M_{k}},\]  
and after looping this homotopy fibration splits to give a homotopy equivalence 
\[\Omega M_{k}\simeq\Omega S^{m+1}\times\Omega\bigg(\bigvee_{t=0}^{k-1} S^{tm+n+1}\bigg).\] 
In particular, if $k=2$ then $M_{2}$ is the homotopy cofibre of $[i_{1},[i_{1},i_{2}]]$ 
and there is a homotopy equivalence 
$\Omega M_{2}\simeq\Omega S^{m+1}\times\Omega(S^{n+1}\vee S^{m+n+1})$. 
\end{example} 

A bit more generally, take $X=S^{m}$ and let $Y$ be arbitrary. Note then 
that for each $k\geq 1$ we have $X^{\wedge k}\wedge\Sigma Y\simeq\Sigma^{km+1} Y$. 

\begin{example} 
\label{YMkexample} 
For $k\geq 1$, define $M_{k}$ by the homotopy cofibration 
\[\lllnameddright{\Sigma^{km+1} Y}{ad^{k}(i_{1})(i_{2})}{S^{m+1}\vee\Sigma Y}{}{M_{k}}.\] 
Then there is a homotopy fibration 
\[\nameddright{\bigvee_{t=0}^{k-1}\Sigma^{tm+1} Y}{\gamma_{k}}{M_{k}}{}{S^{m+1}}\] 
where $\gamma_{k}$ is the composite 
\[\bigvee_{t=0}^{k-1}\Sigma^{tm+1} Y\stackrel{\bigvee_{t=0}^{k-1} ad^{t}(i_{1})(i_{2})}{\llllarrow}  
      \namedright{S^{m+1}\vee\Sigma Y}{}{M_{k}},\] 
and after looping this homotopy fibration splits to give a homotopy equivalence 
\[\Omega M_{k}\simeq\Omega S^{m+1}\times\Omega\bigg(\bigvee_{t=0}^{k-1}\Sigma^{tm+1} Y\bigg).\] 
In particular, if $k=2$ then $M_{2}$ is the homotopy cofibre of $[i_{1},[i_{1},i_{2}]]$ 
and there is a homotopy equivalence 
$\Omega M_{2}\simeq\Omega S^{m+1}\times\Omega(\Sigma Y\vee\Sigma^{m+1} Y)$. 
\end{example} 

Even more can be said about the homotopy theory of the spaces $M_{k}$. Return to the 
general case, of a homotopy cofibration 
\(\llnameddright{X^{\wedge k}\wedge\Sigma Y}{ad^{k}(i_{1})(i_{2})}{\Sigma X\vee\Sigma Y}{m_{k}}{M_{k}}\), 
where the inclusion of $\Sigma X\vee\Sigma Y$ into $M_{k}$ is now labelled $m_{k}$.   
In Corollary~\ref{adinvcor} we identify the homotopy fibre of $m_{k}$. 
To compress notation, write $ad^{k}$ for $ad^{k}(i_{1})(i_{2})$. 

\begin{lemma} 
   \label{adinv} 
   The homotopy cofibration 
   \(\nameddright{X^{\wedge k}\wedge\Sigma Y}{ad^{k}}{\Sigma X\vee\Sigma Y}{m_{k}}{M_{k}}\) 
   has the property that the map $\Omega m_{k}$ has a right homotopy inverse. 
\end{lemma}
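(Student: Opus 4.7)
The plan is to leverage the splitting of $\Omega M_k$ already obtained in Theorem~\ref{Mtypealt} together with the fact that the fibre map $\gamma_k$ factors through the inclusion $m_k$ of the $(n-1)$-skeleton $\Sigma X\vee\Sigma Y$. Write $\widetilde{\gamma}_k\colon\bigvee_{t=0}^{k-1}X^{\wedge t}\wedge\Sigma Y\to\Sigma X\vee\Sigma Y$ for the wedge sum $\bigvee_{t=0}^{k-1}ad^{t}(i_1)(i_2)$. By definition of $\gamma_k$ we have $\gamma_k\simeq m_k\circ\widetilde{\gamma}_k$, so after looping there is a homotopy commutative factorisation
\[\diagram
   \Omega\bigl(\bigvee_{t=0}^{k-1}X^{\wedge t}\wedge\Sigma Y\bigr)\rto^-{\Omega\widetilde{\gamma}_k}\drto_{\Omega\gamma_k} & \Omega(\Sigma X\vee\Sigma Y)\dto^{\Omega m_k} \\
   & \Omega M_k.
 \enddiagram\]

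First, I would set up the splitting of $\Omega M_k$ explicitly. By Theorem~\ref{Mtypealt} the homotopy fibration
\(\nameddright{\bigvee_{t=0}^{k-1}X^{\wedge t}\wedge\Sigma Y}{\gamma_k}{M_k}{q'_k}{\Sigma X}\)
has the property that $\Omega q'_k$ has a right homotopy inverse. A section for $\Omega q'_k$ is given by $s=\Omega m_k\circ\Omega i_1$, since $q'_k\circ m_k\circ i_1\simeq q_1\circ i_1\simeq 1_{\Sigma X}$. The standard construction for a looped fibration with a section then yields a homotopy equivalence
\[\phi\colon\nameddright{\Omega F\times\Omega\Sigma X}{}{\Omega M_k},\qquad \phi(x,y)=\Omega\gamma_k(x)\cdot s(y),\]
where $F=\bigvee_{t=0}^{k-1}X^{\wedge t}\wedge\Sigma Y$ and the dot denotes loop multiplication.

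Next, I would define the candidate right homotopy inverse. Let $\phi^{-1}=(\pi_1,\pi_2)\colon\Omega M_k\to\Omega F\times\Omega\Sigma X$ be a homotopy inverse of $\phi$, and define
\[\psi\colon\Omega M_k\longrightarrow\Omega(\Sigma X\vee\Sigma Y),\qquad \psi(z)=\Omega\widetilde{\gamma}_k(\pi_1(z))\cdot\Omega i_1(\pi_2(z)).\]
The key point is that $\Omega m_k$ is an H-map, so it commutes with loop multiplication. Thus
\[\Omega m_k\circ\psi(z)\simeq\Omega(m_k\circ\widetilde{\gamma}_k)(\pi_1(z))\cdot\Omega(m_k\circ i_1)(\pi_2(z))\simeq\Omega\gamma_k(\pi_1(z))\cdot s(\pi_2(z))=\phi(\phi^{-1}(z))\simeq z.\]
Hence $\psi$ is a right homotopy inverse for $\Omega m_k$, as required.

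There is no real obstacle here: the entire argument relies on the pre-existing splitting from Theorem~\ref{Mtypealt} and the single observation that the attaching map $ad^k(i_1)(i_2)$ factors through Whitehead products in $\Sigma X\vee\Sigma Y$, so that $\gamma_k$ lifts through $m_k$ before being composed with $m_k$. The verification that $\Omega m_k\circ\psi\simeq 1$ is then an immediate consequence of $\Omega m_k$ being a loop map and of the definition of the splitting $\phi$.
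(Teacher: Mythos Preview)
Your proof is correct and follows essentially the same strategy as the paper's own argument. Both proofs invoke Theorem~\ref{Mtypealt} to obtain the splitting of $\Omega M_k$, observe that the section $s=\Omega m_k\circ\Omega i_1$ and the fibre inclusion $\Omega\gamma_k=\Omega m_k\circ\Omega\widetilde{\gamma}_k$ each factor through $\Omega m_k$, and then use that $\Omega m_k$ is an $H$-map to conclude that the resulting homotopy equivalence $\Omega\Sigma X\times\Omega F\to\Omega M_k$ factors through $\Omega m_k$; your explicit construction of $\psi$ via $\phi^{-1}$ is just a slightly more detailed packaging of the same idea.
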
 

\begin{proof}   
By Theorem~\ref{Mtypealt}, there is a homotopy fibration 
\[\nameddright{\bigvee_{t=0}^{k-1} X^{\wedge t}\wedge\Sigma Y}{\gamma_{k}}{M_{k}}{q'_{k}}{\Sigma X}\] 
which splits after looping to give a homotopy equivalence 
\[\Omega M_{k}\simeq\Omega\Sigma X\times\Omega(\bigvee_{t=0}^{k-1} X^{\wedge k}\wedge\Sigma Y).\] 
Here, $\gamma_{k}$ is the composite 
\[\llnameddright{\bigvee_{t=0}^{k-1} X^{\wedge t}\wedge\Sigma Y}{\bigvee_{t=0}^{k-1} ad^{t}} 
        {\Sigma X\vee\Sigma Y}{m_{k}}{M_{k}},\] 
a right homotopy inverse for $q'_{k}$ is the composite 
\[i'_{1}\colon\nameddright{\Sigma X}{i_{1}}{\Sigma X\vee\Sigma Y}{m_{k}}{M_{k}},\] 
and the homotopy equivalence is given by the composite 
\begin{equation} 
  \label{Mkequiv} 
  \lllnamedright{\Omega\Sigma X\times\Omega(\bigvee_{i=0}^{k-1} X^{\wedge k}\wedge\Sigma Y)} 
     {\Omega i'_{1}\times\Omega\gamma_{k}}{\Omega M_{k}\times\Omega M_{k}} 
     \stackrel{\mu}{\longrightarrow}\Omega M_{k} 
\end{equation} 
where $\mu$ is the standard loop multiplication. Observe that both $i'_{1}$ and $\gamma_{k}$ 
factor through $\Sigma X\vee\Sigma Y$, so as $\Omega m_{k}$ is an $H$-map the homotopy 
equivalence in~(\ref{Mkequiv}) is homotopic to the composite 
\[\lllnamedright{\Omega\Sigma X\times\Omega(\bigvee_{i=0}^{k-1} X^{\wedge k}\wedge\Sigma Y)} 
     {\Omega i_{1}\times\Omega a_{k}}{\Omega(\Sigma X\vee\Sigma Y)\times\Omega(\Sigma X\vee\Sigma Y)} 
     \stackrel{\mu}{\longrightarrow}\namedright{\Omega(\Sigma X\vee\Sigma Y)}{\Omega m_{k}}{\Omega M_{k}}\] 
where $a_{k}=\bigvee_{i=0}^{k-1} ad^{i}(i_{1})(i_{2})$. In particular, this implies that 
$\Omega m_{k}$ has a right homotopy inverse. 
\end{proof} 

By Lemma~\ref{adinv}, $\Omega m_{k}$ has a right homotopy inverse 
\(s\colon\namedright{\Omega M_{k}}{}{\Omega(\Sigma X\vee\Sigma Y)}\).  
The existence of $s$ implies that the hypotheses of Theorem~\ref{BTfibinclusion} are satisfied 
when applied to the homotopy cofibration 
\(\nameddright{X^{\wedge k}\wedge\Sigma Y}{ad^{k}}{\Sigma X\vee\Sigma Y}{ m_{k}}{M_{k}}\). 
Therefore we immediately obtain the following. 

\begin{corollary} 
   \label{adinvcor} 
   There is a homotopy fibration 
   \[\nameddright{\Omega M_{k}\ltimes(X^{\wedge k}\wedge\Sigma Y)}{\chi}{\Sigma X\vee\Sigma Y} 
         {m_{k}}{M_{k}}\] 
   where $\chi$ is the sum of the maps 
   \[\nameddright{\Omega M_{k}\ltimes(X^{\wedge k}\wedge\Sigma Y)}{\pi}{X^{\wedge k}\wedge\Sigma Y} 
          {ad^{k}}{\Sigma X\vee\Sigma Y}\] 
   and  
   \[\lllnameddright{\Omega M_{k}\ltimes(X^{\wedge k}\wedge\Sigma Y)}{q} 
           {\Omega M_{k}\wedge (X^{\wedge k}\wedge\Sigma Y)}{[ev\circ s, ad^{k}]}{\Sigma X\vee\Sigma Y}.\]  
\end{corollary}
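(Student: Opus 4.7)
The plan is to recognize Corollary~\ref{adinvcor} as a direct invocation of Theorem~\ref{BTfibinclusion} applied to the homotopy cofibration
\[\nameddright{X^{\wedge k}\wedge\Sigma Y}{ad^{k}}{\Sigma X\vee\Sigma Y}{m_{k}}{M_{k}},\]
with the role of $\Sigma A$ played by $X^{\wedge k}\wedge\Sigma Y$, of $f$ by $ad^{k}$, of $Y$ by $\Sigma X\vee\Sigma Y$, of $h$ by $m_{k}$, and of $Y'$ by $M_{k}$.

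To apply that theorem, the only hypothesis to verify is that $\Omega m_{k}$ admits a right homotopy inverse. But this is exactly the content of Lemma~\ref{adinv}, which was established using the splitting $\Omega M_{k}\simeq\Omega\Sigma X\times\Omega(\bigvee_{t=0}^{k-1}X^{\wedge t}\wedge\Sigma Y)$ from Theorem~\ref{Mtypealt} and the observation that both $i'_{1}$ and $\gamma_{k}$ factor through $m_{k}$, so their combined loop map provides a section for $\Omega m_{k}$ after precomposition.

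With the right homotopy inverse $s\colon\namedright{\Omega M_{k}}{}{\Omega(\Sigma X\vee\Sigma Y)}$ in hand, Theorem~\ref{BTfibinclusion}(b) immediately produces a homotopy fibration
\[\nameddright{\Omega M_{k}\ltimes(X^{\wedge k}\wedge\Sigma Y)}{\chi}{\Sigma X\vee\Sigma Y}{m_{k}}{M_{k}}\]
in which $\chi$ is the sum of the composite $ad^{k}\circ\pi$ and the composite $[ev\circ s,ad^{k}]\circ q$, exactly matching the description in the statement. No further work is required, and there is no substantive obstacle to overcome: the preceding lemma did all the heavy lifting by providing the splitting needed to produce the section after looping, and the description of $\chi$ is read off verbatim from the conclusion of Theorem~\ref{BTfibinclusion}(b).
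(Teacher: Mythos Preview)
Your proposal is correct and matches the paper's own argument essentially verbatim: the paper states that Lemma~\ref{adinv} supplies the right homotopy inverse $s$ for $\Omega m_{k}$, so Theorem~\ref{BTfibinclusion} applies directly to the cofibration $\nameddright{X^{\wedge k}\wedge\Sigma Y}{ad^{k}}{\Sigma X\vee\Sigma Y}{m_{k}}{M_{k}}$, yielding the fibration and the description of $\chi$ immediately.
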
 
\vspace{-0.8cm}~$\qqed$\bigskip

\begin{example} 
Consider the homotopy cofibration 
\(\lllnamedright{S^{km+n+1}}{ad^{k}(i_{1})(i_{2})}{S^{m+1}\vee S^{n+1}} 
     \stackrel{m_{k}}{\longrightarrow} M_{k}\) 
from Example~\ref{Mkexample}. By Lemma~\ref{adinv} the map $\Omega m_{k}$ 
has a right homotopy inverse and by Theorem~\ref{BTfibinclusion} there is a homotopy fibration 
\[\nameddright{\Omega M_{k}\ltimes S^{km+n+1}}{\chi}{S^{m+1}\vee S^{n+1}}{m_{k}}{M_{k}}\] 
where $\chi$ is the sum of 
\(\nameddright{\Omega M_{k}\ltimes S^{km+n+1}}{\pi}{S^{km+n+1}}{ad^{k}}{S^{m+1}\vee S^{n+1}}\) 
and 
\(\namedright{\Omega M_{k}\ltimes S^{km+n+1}}{}{\Omega M_{k}\wedge S^{km+n+1}} 
      \stackrel{[ev\circ s, ad^{k}]}{\lllarrow} S^{m+1}\vee S^{n+1}\). 
\end{example} 

\noindent 
\textbf{Relation to Moore's Conjecture}. 
A few definitions are necessary to state the conjecture. 

\begin{definition} 
Let $X$ be a simply-connected $CW$-complex and let $p$ be a prime. The 
\emph{homotopy exponent} of $X$ at $p$ is the least power of $p$ that annihilates 
the $p$-torsion in $\pi_{\ast}(X)$. 
\end{definition} 

Write $\exp_{p}(X)=p^{r}$ if $p^{r}$ is this least power of $p$. If the prime is understood 
this may be shortened to $\exp(X)=p^{r}$. If $\pi_{\ast}(X)$ has torsion of all orders, 
write $\exp_{p}(X)=\infty$. 

\begin{definition} 
Let $X$ be a simply-connected $CW$-complex. If there are finitely many $\mathbb{Z}$ 
summands in $\pi_{\ast}(X)$ then $X$ is \emph{elliptic}, otherwise $X$ is \emph{hyperbolic}. 
\end{definition} 

\begin{conjecture}[Moore] 
Let $X$ be a simply-connected finite $CW$-complex. Then the following are equivalent: 
\begin{itemize} 
   \item $X$ is elliptic; 
   \item $\exp_{p}(X)$ is finite for some prime $p$; 
   \item $\exp_{p}(X)$ is finite for all primes $p$. 
\end{itemize} 
\end{conjecture}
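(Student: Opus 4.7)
The plan is to acknowledge at the outset that Moore's conjecture is a longstanding open problem and cannot be settled in full generality by the techniques in this paper; my proposal is therefore to indicate the broad strategy and the particular families of finite complexes for which the conjecture can be verified using the decompositions developed above. The three implications to organise are trivially (i) finite $p$-exponent at all primes implies finite $p$-exponent at some prime, and nontrivially (ii) finite $p$-exponent at some prime implies ellipticity, and (iii) ellipticity implies finite $p$-exponent at every prime.

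For the implication from ellipticity to finite exponent at all primes, I would exploit the integral loop space decompositions proved earlier. If $\Omega X$ splits, either directly or after a judicious skeletal filtration, as a weak product of loop spaces of spheres, Moore spaces, and the two-cones $M_{k}$ of Theorem \ref{Mtypealt}, then the classical exponent theorems of James, Cohen--Moore--Neisendorfer, and Selick give a uniform $p$-exponent, since each factor has finite $p$-exponent and the exponent of a product is the maximum of the exponents of the factors. The key input is Theorem \ref{Mtypealt}, which for the two-cones $M_{k}$ produces a splitting $\Omega M_{k}\simeq\Omega\Sigma X\times\Omega(\bigvee_{t=0}^{k-1} X^{\wedge t}\wedge\Sigma Y)$ whose factors are loop spaces on wedges of spheres or Moore spaces. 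The obvious obstacle is that not every elliptic finite complex admits such a decomposition; the strategy therefore settles only the families enumerated in Sections \ref{sec:2cone}, \ref{sec:PD} and \ref{sec:connsum}.

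For the implication from hyperbolic to infinite $p$-exponent at all primes, I would invoke the theorem of Friedlander--Halperin that $\dim_{\mathbb{Q}}\pi_{n}(X)\otimes\mathbb{Q}$ grows exponentially in $n$ for hyperbolic $X$, and attempt to lift this rational information to integral $p$-local information via the fibre-of-pinch decomposition of Theorem \ref{dbardintro}. The hard part, which is the essential reason the conjecture remains open, is that exponential growth of rational homotopy does not by itself force unbounded $p$-torsion: one must locate, inside $\Omega X$, a retract that is already known to have infinite $p$-exponent (for example $\Omega S^{3}\langle 3\rangle$, or the loop space on a non-elliptic Moore space), and no existing technique produces such a retract from hyperbolicity alone.

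In summary, I would not claim a proof of the conjecture as stated; rather, I would present Sections \ref{sec:2cone}--\ref{sec:connsum} as verifying the conjecture for every complex whose looping splits into known pieces via Theorems \ref{Mtypealt}, \ref{PDexintro} and \ref{introconnsum}, and explicitly flag the passage from rational exponential growth to integral $p$-local unboundedness, in the absence of a loop space decomposition, as the genuine obstruction that keeps Moore's conjecture open.
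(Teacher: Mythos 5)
You are right that this statement is an open conjecture, not a theorem: the paper offers no proof, and only verifies it for the specific families (the two-cones $M_{k}$, Moore space constructions, and connected sums) whose loop spaces decompose via Theorems~\ref{Mtypealt}, \ref{PDexintro} and~\ref{introconnsum} into products of loop spaces of spheres and Moore spaces with known finite exponents. Your proposal matches the paper's treatment exactly, including the correct identification of the passage from rational hyperbolicity to unbounded $p$-torsion as the genuine open obstruction.
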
 

Moore's Conjecture posits a remarkable relationship between the rational homotopy groups 
of~$X$ and its torsion homotopy groups. The rational homotopy groups deeply influence 
the torsion homotopy groups, and torsion at any one prime deeply influences the torsion 
that occurs at any prime. The conjecture has been shown to hold in a wide variety of 
cases: for $H$-spaces~\cite{L}, for torsion-free suspensions~\cite{Se}, for various 
$2$ and $3$-cell complexes~\cite{NS}, for generalized moment-angle complexes~\cite{HST}, 
and for families of highly-connected Poincar\'{e} duality complexes~\cite{Ba,BB,BT1,BT2}.  

More examples of spaces for which Moore's Conjecture holds may be extracted from 
Proposition~\ref{Mtypealt}. We give three examples. 

\begin{example} 
\label{MkexampleMoore}
Return to Example~\ref{Mkexample}. The $k=1$ case has $M_{k}\simeq S^{m+1}\times S^{n+1}$. 
A sphere was shown to have a finite homotopy exponent for any prime $p$ by James~\cite{J2} 
for $p=2$ and Toda~\cite{To} for any odd prime. Therefore the product of a finite number of 
spheres also has an exponent for any prime $p$, and of course, this product is elliptic. 
The case when the attaching map $[i_{1},i_{2}]$ for the top cell in $S^{m+1}\times S^{n+1}$ 
is replaced by $q\cdot [i_{1},i_{2}]$ for a prime $q$ a nonzero integer was considered by 
Neisendorfer and Selick~\cite{NS} and shown to also be elliptic and have an exponent 
at every prime~$p$. Now suppose that $k\geq 2$. An argument using the Hilton-Milnor 
Theorem shows that a wedge of spheres is hyperbolic and has no exponent at any prime  
(see, for example,~\cite{NS}). Example~\ref{Mkexample} shows that 
$\Omega M_{k}\simeq\Omega S^{m+1}\times\Omega(\bigvee_{t=0}^{k-1} S^{tm+n+1})$. 
In particular, $\Omega(S^{n+1}\vee S^{m+n+1})$ is a retract of $\Omega M_{k}$, and so 
$M_{k}$ must be hyperbolic and have no exponent at any prime. 
\end{example} 

\begin{remark} 
Using different methods, Anick~\cite[Lemma 2.3 and Theorem 3.7]{A} showed that 
any space obtained by attaching a sphere to a wedge of two or more spheres by a 
linear combination of Whitehead products satisfies Moore's Conjecture for all primes 
except possibly $2$ and $3$. In particular, this is true of the spaces $M_{k}$ for $k\geq 2$. 
Example~\ref{MkexampleMoore} is an improvement in this case as there is no restriction 
on the primes. Further, Example~\ref{Mkexample} goes much further by giving an explicit 
integral homotopy decomposition of $\Omega M_{k}$. 
\end{remark} 

Recall from the Introduction that for $n\geq 2$, $p$ a prime and $r\geq 1$, 
the \emph{mod-$p^{r}$ Moore space} $P^{n}(p^{r})$ 
is defined as the homotopy cofibre of the degree~$p^{r}$ map on $S^{n-1}$. Note that 
$\Sigma P^{n}(p^{r})\simeq P^{n+1}(p^{r})$. A useful property that will be needed at several 
points is a result of Neisendorfer~\cite[Corollary 6.6]{N1} that describes the homotopy type 
of the smash product of two mod-$p^{r}$ Moore spaces. 

\begin{lemma} 
   \label{mooresmash} 
   Let $p$ be a prime, $r$ a nonnegative integer, and assume that $p^{r}\neq 2$. 
   If $s,t\geq 2$ then there is a homotopy equivalence 
   \[P^{s}(p^{r})\wedge P^{t}(p^{r})\simeq P^{s+t}(p^{r})\vee P^{s+t-1}(p^{r}).\] 
\end{lemma}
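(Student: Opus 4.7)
The plan is to deduce the decomposition from the Puppe cofiber sequence associated to the definition of $P^{s}(p^{r})$, by smashing with $P^{t}(p^{r})$ and then showing that the resulting attaching map is null homotopic.

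First I would consider the defining homotopy cofibration
\[\nameddright{S^{s-1}}{p^{r}}{S^{s-1}}{}{P^{s}(p^{r})}.\]
Smashing the whole sequence with $P^{t}(p^{r})$ and using the natural homeomorphism $S^{s-1}\wedge P^{t}(p^{r})\cong \Sigma^{s-1}P^{t}(p^{r})\simeq P^{s+t-1}(p^{r})$ produces a homotopy cofibration
\[\nameddright{P^{s+t-1}(p^{r})}{p^{r}\wedge 1}{P^{s+t-1}(p^{r})}{}{P^{s}(p^{r})\wedge P^{t}(p^{r})}.\]
The map $p^{r}\wedge 1$ is precisely the $(s-1)$-fold suspension of the map $p^{r}\wedge 1_{P^{t}(p^{r})}\colon P^{t}(p^{r})\to P^{t}(p^{r})$, which in turn equals $p^{r}$ times the identity on $P^{s+t-1}(p^{r})$.

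The key step is to argue that this self-map is null homotopic. Since $s,t\geq 2$ we have $s+t-1\geq 3$, and the hypothesis $p^{r}\neq 2$ rules out the exceptional Moore space $P^{n}(2)=\mathbb{R}P^{n-1}/\mathbb{R}P^{n-3}$ on which the identity famously has order $4$ rather than $2$. I would cite Neisendorfer's result that for $n\geq 3$ and $p^{r}\neq 2$ the identity map of $P^{n}(p^{r})$ has order $p^{r}$ in $[P^{n}(p^{r}),P^{n}(p^{r})]$, so that $p^{r}\cdot 1_{P^{s+t-1}(p^{r})}$ is null homotopic. (This is the only genuinely non-formal input and is the main obstacle; absent this result one would have to prove it by hand, which requires a careful analysis of the Bockstein-type attaching behaviour of Moore spaces.)

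With the null homotopy in hand, the above homotopy cofibration splits: the cofibre of a null homotopic map $X\to X$ is $X\vee\Sigma X$. Applying this with $X=P^{s+t-1}(p^{r})$ and using $\Sigma P^{s+t-1}(p^{r})\simeq P^{s+t}(p^{r})$ gives
\[P^{s}(p^{r})\wedge P^{t}(p^{r})\simeq P^{s+t-1}(p^{r})\vee P^{s+t}(p^{r}),\]
as claimed. The splitting is natural since it arises as the choice of a null homotopy for a self-map that is null homotopic on the nose.
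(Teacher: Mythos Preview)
The paper does not prove this lemma at all: it is stated with attribution to Neisendorfer~\cite[Corollary~6.6]{N1} and marked with a $\qed$. Your argument is correct and is essentially the standard proof underlying that citation. You have simply unpacked the reference one level further, reducing the smash decomposition to the more primitive fact that the identity map on $P^{n}(p^{r})$ has order exactly $p^{r}$ for $n\geq 3$ and $p^{r}\neq 2$; that fact is itself due to Neisendorfer, so both routes ultimately rest on the same source. One small remark: your closing sentence about naturality is a bit loose, since the splitting depends on a choice of null homotopy rather than being canonical, but this does not affect the statement being proved.
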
 
\vspace{-0.8cm}~$\qqed$\bigskip 

The $p^{r}=2$ case is very different: the smash product $P^{s}(2)\wedge P^{t}(2)$ is known 
to be indecomposable. 

By~\cite{N3}, if $n\geq 3$ and $p$ is odd then $\exp_{p}(P^{n}(p^{r}))=p^{r+1}$; by~\cite{Th}, 
if $n\geq 4$, $p=2$ and $r\geq 6$ then $\exp_{2}(P^{n}(2^{r}))=2^{r+1}$, and by~\cite{C}, 
if $n\geq 3$, $p=2$ and $r\geq 2$ then $P^{n}(2^{r})$ has a finite $2$-primary exponent. 
In all cases, as $P^{n}(p^{r})$ is contractible when localized at a prime $q\neq p$, or rationally, 
we see that $P^{n}(p^{r})$ is elliptic and has a finite homotopy exponent at every prime $p$ 
and so satisfies Moore's Conjecture. 

\begin{example} 
Return to Example~\ref{YMkexample}. Take $Y=P^{n}(p^{r})$ for $n\geq 3$ and $p^{r}\neq 2$. 
The example shows that 
\[\Omega M_{k}\simeq\Omega S^{m+1}\times\Omega(\bigvee_{t=0}^{k-1} P^{tm+n+1}(p^{r})).\] 
Using the Hilton-Milnor Theorem and Lemma~\ref{mooresmash} iteratively shows that the loops 
on a wedge of mod-$p^{r}$ Moore spaces with $p^{r}\neq 2$ is homotopy equivalent 
to a finite type infinite product of mod-$p^{r}$ Moore spaces. Consequently, $M_{k}$ is elliptic 
and has a finite exponent at every prime $p$. Hence $M_{k}$ satisfies Moore's Conjecture. 
\end{example} 

\begin{example} 
In Theorem~\ref{Mtypealt} take $X=P^{m}(p^{r})$ and $Y=P^{n}(p^{r})$ for $m,n\geq 3$ 
and $p^{r}\neq 2$. Then by Lemma~\ref{mooresmash} there is a homotopy equivalence 
\[\Omega M_{k}\simeq\Omega P^{m+1}(p^{r})\times\Omega W\] 
where $W$ is a finite type wedge of mod-$p^{r}$ Moore spaces. Arguing as in the 
previous example then shows that $M_{k}$ is elliptic (it is rationally trivial) and 
has a finite exponent at every prime $p$. Hence it satisfies Moore's Conjecture. 
\end{example}

\newpage 
\section{An improvement} 
\label{sec:improve} 

The factorization of 
\(\namedright{\Sigma A}{g}{E}\) 
through $X^{\wedge k}\wedge\Sigma D$ in Theorem~\ref{E'typeI} gives a condition which lets 
us find the homotopy type of $E'$, but it does not apply to many  of the cases we are interested in. 
The construction behind that theorem needs as input Whitehead products of the form $[i_{1},f]$ 
for some map $f$, but if we try to attach $S^{2n+1}$ to $S^{n+1}\vee S^{n+1}\vee S^{n+1}$ 
(with $X=S^{n}$ and $D=Y=S^{n}\vee S^{n}$) by $[i_{1},i_{2}]+[i_{2},i_{3}]$ then the map does not 
have the right form so the theorem does not apply. To handle the latter case we have to allow the 
attaching map to have some component in $\Sigma D$ as well as $X^{\wedge k}\wedge\Sigma D$, 
in other words, we need to consider the case where $g$ factors through $X^{\wedge k}\ltimes\Sigma D$. 

This will require some modifications to the strategy behind proving Theorem~\ref{E'typeI}. 
There were two key steps: first, take the half-smash of $\Omega\Sigma X$ with the factorization 
of $g$ as 
\(\namedddright{\Sigma A}{\ell}{X^{\wedge k}\wedge\Sigma D}{\overline{\mathfrak{c}}_{k}} 
       {\Omega\Sigma X\ltimes\Sigma D}{\overline{a}}{E}\) 
to obtain a homotopy commutative diagram      
\[\diagram 
       \Omega\Sigma X\ltimes\Sigma A\xdouble[rrr]\dto^{(1\ltimes \ell)} 
            & & & \Omega\Sigma X\ltimes\Sigma A\dto^{1\ltimes g}\drto^{\theta} & \\
       \Omega\Sigma X\ltimes(X^{\wedge k}\wedge\Sigma D))\rrto^-{1\ltimes\overline{\mathfrak{c}}_{k}} 
            & & \Omega\Sigma X\ltimes(\Omega\Sigma X\ltimes E)\rto^-{1\ltimes\overline{a}} 
            & \Omega\Sigma X\ltimes E\rto^-{\overline{a}} & E.  
   \enddiagram\] 
Second, adjust the bottom row by inserting the wedge $J_{k-1}\ltimes\Sigma D$ via the 
map $\overline{J}_{k}$ in order to obtain a homotopy equivalence along the bottom row. 

To modify this, first take the half-smash of $\Omega\Sigma X$ with a factorization of $g$ as 
\(\namedddright{\Sigma A}{\ell}{X^{\wedge k}\ltimes\Sigma D}{\overline{\mathfrak{c}}'_{k}} 
       {\Omega\Sigma X\ltimes\Sigma D}{\overline{a}}{E}\) 
for an appropriate map $\overline{\mathfrak{c}}'_{k}$ to obtain a homotopy commutative diagram      
\[\diagram 
       \Omega\Sigma X\ltimes\Sigma A\xdouble[rrr]\dto^{(1\ltimes \ell)} 
            & & & \Omega\Sigma X\ltimes\Sigma A\dto^{1\ltimes g}\drto^{\theta} & \\
       \Omega\Sigma X\ltimes(X^{\wedge k}\ltimes\Sigma D))\rrto^-{1\ltimes\overline{\mathfrak{c}}'_{k}} 
            & & \Omega\Sigma X\ltimes(\Omega\Sigma X\ltimes E)\rto^-{1\ltimes\overline{a}} 
            & \Omega\Sigma X\ltimes E\rto^-{\overline{a}} & E.  
   \enddiagram\] 
Then the bottom row has to be adjusted to obtain a homotopy equivalence. This adjustment 
involves more than just inserting an extra space, it also involves removing part of 
$\Omega\Sigma X\ltimes(X^{\wedge k}\ltimes\Sigma D)$, and this requires some 
extra hypotheses. The precise statement generalizing Theorem~\ref{E'typeI} is 
Theorem~\ref{E'typeII}. 
    
In general, let $B$, $C$ and $D$ be path-connected, pointed spaces. 
Define $e_{1}$, $e_{2}$ and $e_{3}$ by the composites 
\[e_{1}\colon\namedright{B\ltimes(C\wedge\Sigma D)}{1\ltimes i}{B\ltimes(C\ltimes\Sigma D)}\] 
\[e_{2}\colon\nameddright{\Sigma D}{j}{C\ltimes\Sigma D}{j}{B\ltimes(C\ltimes\Sigma D)}\] 
\[e_{3}\colon\nameddright{B\wedge\Sigma D}{i}{B\ltimes\Sigma D}{1\ltimes j}{B\ltimes(C\ltimes\Sigma D)}.\] 
Define maps $f_{1}$, $f_{2}$ and $f_{3}$ by the composites 
\[f_{1}\colon\namedright{B\ltimes(C\ltimes\Sigma D)}{1\ltimes q}{B\ltimes(C\wedge\Sigma D)}\] 
\[f_{2}\colon\nameddright{B\ltimes(C\ltimes\Sigma D)}{\pi}{C\ltimes\Sigma D}{\pi}{\Sigma D}\] 
\[f_{3}\colon\nameddright{B\ltimes(C\ltimes\Sigma D)}{1\ltimes\pi}{B\ltimes\Sigma D} 
       {q}{B\wedge\Sigma D}.\] 

\begin{lemma} 
   \label{efcomps} 
   For $1\leq i,j\leq 3$, the composite $f_{i}\circ e_{i}$ is homotopic to the identity map 
   while if $i\neq j$ then $f_{j}\circ e_{i}$ is null homotopic. 
\end{lemma}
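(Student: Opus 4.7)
The plan is to verify the nine composites $f_j \circ e_i$ by a direct case-by-case calculation that reduces every one of them, via the naturality of $i$, $j$, $q$, and $\pi$, to one of four elementary facts about the canonical maps between $A \ltimes B$, $A \times B$, and $A \wedge B$:
\begin{itemize}
  \item $q \circ i \simeq 1$ (from the definition of $i$ as a section of $q$ in Lemma~\ref{halfsmashsusp}),
  \item $\pi \circ j = 1$ (strictly, since the projection factors through the quotient),
  \item $q \circ j \simeq \ast$ (strictly, since $j(\Sigma D) = \{\ast\} \ltimes \Sigma D$ is collapsed by $q$),
  \item $\pi \circ i \simeq \ast$ (since under the splitting $A \ltimes \Sigma B \simeq (A \wedge \Sigma B) \vee \Sigma B$ given by $i \perp j$, the projection $\pi$ annihilates the $A \wedge \Sigma B$ summand).
\end{itemize}

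To handle the diagonal cases, I first push the inner structure through the outer $\ltimes$ using the naturality relations $(1 \ltimes f) \circ j = j \circ f$, $(1 \ltimes f) \circ i \simeq i \circ f$, and $\pi \circ (1 \ltimes f) = f \circ \pi$. For instance:
\[
f_1 \circ e_1 = (1 \ltimes q) \circ (1 \ltimes i) = 1 \ltimes (q \circ i) \simeq 1,
\]
\[
f_2 \circ e_2 = \pi \circ \pi \circ j \circ j = \pi \circ (\pi \circ j) \circ j = \pi \circ j = 1,
\]
\[
f_3 \circ e_3 = q \circ (1 \ltimes \pi) \circ (1 \ltimes j) \circ i = q \circ (1 \ltimes (\pi \circ j)) \circ i \simeq q \circ i \simeq 1.
\]

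For the six off-diagonal cases, the same naturality moves reduce each composite to either $q \circ j$ or $\pi \circ i$, both of which are null. For example, $f_1 \circ e_2 = (1 \ltimes q) \circ j \circ j = j \circ (q \circ j) \simeq \ast$, and $f_2 \circ e_1 = \pi \circ \pi \circ (1 \ltimes i) = \pi \circ i \circ \pi \simeq \ast$, and $f_3 \circ e_2 = q \circ (1 \ltimes \pi) \circ j \circ j = q \circ j \circ (\pi \circ j) \simeq q \circ j \simeq \ast$. The remaining three composites $f_1 \circ e_3$, $f_2 \circ e_3$, $f_3 \circ e_1$ follow by the same mechanism.

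There is no essential obstacle; the entire proof is bookkeeping. The only point that requires a bit of care is distinguishing, at each step, between occurrences of $i$, $j$, $q$, $\pi$ on the outer half-smash (the $B \ltimes -$ layer) and on the inner half-smash (the $C \ltimes \Sigma D$ layer), since several of the $e_k$ and $f_k$ mix both. I will present the nine verifications in three blocks indexed by $i = 1, 2, 3$, each block treating all three values of $j$ at once, so the pattern is transparent.
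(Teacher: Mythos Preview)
Your proposal is correct and follows exactly the same approach as the paper's proof: both reduce everything to the four elementary identities $q\circ i\simeq 1$, $\pi\circ j=1$, $q\circ j\simeq\ast$, $\pi\circ i\simeq\ast$, together with the naturality of $i$, $j$, $q$, $\pi$ with respect to the half-smash. The paper simply records these four identities and then says the assertions follow from the definitions of the $e_{i}$ and $f_{i}$, whereas you spell out several of the nine composites explicitly; there is no substantive difference.
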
 

\begin{proof} 
In general, the composites 
\(\nameddright{\Sigma D}{j}{B\ltimes\Sigma D}{\pi}{\Sigma D}\) 
and 
\(\nameddright{B\wedge\Sigma D}{i}{B\ltimes\Sigma D}{q}{B\wedge\Sigma D}\) 
are homotopic to the identity maps while the composites 
\(\nameddright{\Sigma D}{j}{B\ltimes\Sigma D}{q}{B\wedge\Sigma D}\) 
and 
\(\nameddright{B\wedge\Sigma D}{i}{B\ltimes\Sigma D}{\pi}{\Sigma D}\) 
are null homotopic. The assertions now follow from the definitions of the 
maps $e_{i}$ and $f_{i}$ for $1\leq i\leq 3$. 
\end{proof} 

The wedge sum of $e_{1}$, $e_{2}$ and $e_{3}$ is a map 
\[e\colon\namedright{(B\ltimes(C\wedge\Sigma D))\vee\Sigma D\vee(B\wedge\Sigma D)} 
         {}{B\ltimes(C\ltimes\Sigma D)}.\]    
    
\begin{lemma} 
   \label{eequiv} 
   The map $e$ is a homotopy equivalence. 
\end{lemma}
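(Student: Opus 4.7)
The plan is to verify that $e$ induces an isomorphism on integral homology and then invoke Whitehead's Theorem. First, since $\Sigma D$ is a suspension, Lemma~\ref{halfsmashsusp} gives $C\ltimes\Sigma D\simeq(C\wedge\Sigma D)\vee\Sigma D$, which is itself a wedge of suspensions and hence equivalent to a suspension. A second application of Lemma~\ref{halfsmashsusp}, combined with the natural splitting $B\ltimes(X\vee Y)\simeq(B\ltimes X)\vee(B\ltimes Y)$ of the half-smash over wedges, yields
\[B\ltimes(C\ltimes\Sigma D)\simeq(B\wedge C\wedge\Sigma D)\vee(B\wedge\Sigma D)\vee(C\wedge\Sigma D)\vee\Sigma D.\]
Applying the same lemma to $B\ltimes(C\wedge\Sigma D)$ decomposes the domain of $e$ as the same four-fold wedge up to reordering. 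Both spaces are therefore simply-connected, and their reduced integral homology groups agree abstractly in every degree.

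Next I would use Lemma~\ref{efcomps} to show that $e_\ast$ is split-injective in homology. Writing $V_1=B\ltimes(C\wedge\Sigma D)$, $V_2=\Sigma D$, $V_3=B\wedge\Sigma D$, the three maps $f_i$ induce
\[F=\bigl((f_1)_\ast,(f_2)_\ast,(f_3)_\ast\bigr)\colon\widetilde H_\ast(B\ltimes(C\ltimes\Sigma D))\longrightarrow\bigoplus_{i=1}^{3}\widetilde H_\ast(V_i).\]
Under the standard identification $\widetilde H_\ast(V_1\vee V_2\vee V_3)\cong\bigoplus_i\widetilde H_\ast(V_i)$, the identities $f_i\circ e_i\simeq 1$ and $f_j\circ e_i\simeq\ast$ for $i\neq j$ supplied by Lemma~\ref{efcomps} translate into $F\circ e_\ast=\mathrm{id}$. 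Hence $e_\ast$ is split-injective with image a direct summand abstractly isomorphic to the whole target.

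Combining the two steps, $e_\ast$ is a split injection between abelian groups that are abstractly isomorphic in every degree. For CW complexes of finite type these groups are finitely generated in each degree, which forces the split injection to be surjective as well. Hence $e_\ast$ is an isomorphism, and Whitehead's Theorem yields that $e$ is a homotopy equivalence. The main obstacle is really the dimension count in the final step; the organizational work is keeping the iterated wedge decompositions from Lemma~\ref{halfsmashsusp} straight, after which Lemma~\ref{efcomps} cleanly supplies the splitting that drives the argument.
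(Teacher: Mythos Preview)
Your argument has a genuine gap in the final step. You conclude by saying that a split injection between abstractly isomorphic abelian groups must be an isomorphism \emph{provided the groups are finitely generated in each degree}, and you justify this by appealing to finite type. But the lemma is stated for arbitrary path-connected pointed spaces $B$, $C$, $D$, and the paper makes no finite-type assumption here. In the applications $B$ is typically $\Omega\Sigma X$, which need not have finite-type homology. Without finite generation the implication fails: for instance, the shift map on $\bigoplus_{\mathbb N}\mathbb Z$ is a split injection between abstractly isomorphic groups that is not surjective. So the step ``abstract isomorphism plus split injection forces surjectivity'' does not go through in the stated generality.

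The paper's proof avoids homology entirely and is purely constructive. It applies Lemma~\ref{halfsmashsusp} once to get that $(1\ltimes i)\perp(1\ltimes j)\colon\bigl(B\ltimes(C\wedge\Sigma D)\bigr)\vee(B\ltimes\Sigma D)\to B\ltimes(C\ltimes\Sigma D)$ is a homotopy equivalence (this is $B\ltimes(-)$ applied to the equivalence $i\perp j$), then applies Lemma~\ref{halfsmashsusp} a second time to replace the summand $B\ltimes\Sigma D$ by $\Sigma D\vee(B\wedge\Sigma D)$ via another $i\perp j$; the resulting composite on each wedge summand is exactly $e_1$, $e_2$, $e_3$. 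This argument needs no Whitehead theorem, no finite-type hypothesis, and in fact never touches Lemma~\ref{efcomps}. Your use of the $f_i$'s to produce a retraction is a nice idea, but to close the argument you would need to show directly that the $f_i$'s jointly detect all of $\widetilde H_\ast\bigl(B\ltimes(C\ltimes\Sigma D)\bigr)$, which amounts to redoing the paper's decomposition anyway.
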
 

\begin{proof} 
The wedge sum of 
\(\namedright{C\wedge\Sigma D}{i}{C\ltimes\Sigma D}\) 
and 
\(\namedright{\Sigma D}{j}{C\ltimes\Sigma D}\) 
is a homotopy equivalence. Therefore, taking half-smashes with $B$, the wedge sum of 
\(\namedright{B\ltimes(C\wedge\Sigma D)}{1\ltimes i}{B\ltimes(C\ltimes\Sigma D)}\) 
and 
\(\namedright{B\ltimes\Sigma D}{1\ltimes j}{B\ltimes(C\ltimes\Sigma D)}\) 
is a homotopy equivalence. Notice that $1\ltimes i$ is the definition of $e_{1}$. Next, 
consider $1\ltimes j$. The wedge sum of 
\(\namedright{\Sigma D}{j}{B\ltimes\Sigma D}\) 
and 
\(\namedright{B\wedge\Sigma D}{i}{B\ltimes\Sigma D}\) 
is a homotopy equivalence. So $1\ltimes j$ may be rewritten as the wedge sum 
of the composites 
\(\nameddright{\Sigma D}{j}{B\ltimes\Sigma D}{j}{B\ltimes(C\ltimes\Sigma D)}\) 
and 
\(\nameddright{B\wedge\Sigma D}{i}{B\ltimes\Sigma D}{1\ltimes j}{B\ltimes(C\ltimes\Sigma D)}\), 
that is, $1\ltimes j$ may be rewritten as the wedge sum of~$e_{2}$ and $e_{3}$. 
Therefore the wedge sum of $e_{1}$, $e_{2}$ and $e_{3}$ (that is, $e$) is a homotopy equivalence. 
\end{proof} 

In our case, we start as in Theorem~\ref{E'typeI} with a homotopy fibration sequence 
\(\namedddright{\Omega\Sigma X}{\partial}{E}{p}{Y}{h}{\Sigma X}\) 
such that $\Omega h$ has a right homotopy inverse and there is a map 
\(\delta\colon\namedright{\Sigma D}{}{E}\) 
such that the composite 
\[\nameddright{\Omega\Sigma X\ltimes\Sigma D}{1\ltimes\delta} 
            {\Omega\Sigma X\ltimes E}{\overline{a}}{E}\] 
is a homotopy equivalence. Assume there is a map 
\(\namedright{\Sigma A}{}{Y}\) 
that lifts through $p$ to 
\(\namedright{\Sigma A}{g}{Y}\). 

The construction of the maps $e_{i}$ and $f_{i}$ above in this case are composites 
\[e_{1}\colon\lnamedright{\Omega\Sigma X\ltimes(X^{\wedge k}\wedge\Sigma D)}{1\ltimes i}  
         {\Omega\Sigma X\ltimes(X^{\wedge k}\ltimes\Sigma D)}\] 
\[e_{2}\colon\nameddright{\Sigma D}{j}{X^{\wedge k}\ltimes\Sigma D}{j} 
      {\Omega\Sigma X\ltimes(X^{\wedge k}\ltimes\Sigma D)}\]  
\[e_{3}\colon\nameddright{\Omega\Sigma X\wedge\Sigma D}{i}{\Omega\Sigma X\ltimes\Sigma D} 
      {1\ltimes j}{\Omega\Sigma X\ltimes(X^{\wedge k}\ltimes\Sigma D)}\]  
and 
\[f_{1}\colon\namedright{\Omega\Sigma X\ltimes(X^{\wedge k}\ltimes\Sigma D)}{1\ltimes q} 
       {\Omega\Sigma X\ltimes(X^{\wedge k}\wedge\Sigma D)}\] 
\[f_{2}\colon\nameddright{\Omega\Sigma X\ltimes(X^{\wedge k}\ltimes\Sigma D)}{\pi} 
       {X^{\wedge k}\ltimes\Sigma D}{\pi}{\Sigma D}\] 
 \[f_{3}\colon\nameddright{\Omega\Sigma X\ltimes(X^{\wedge k}\ltimes\Sigma D)}{1\ltimes\pi} 
       {\Omega\Sigma X\ltimes\Sigma D}{q}{\Omega\Sigma X\wedge\Sigma D}.\] 
By Lemma~\ref{efcomps}, for $1\leq i,j\leq 3$ the composite $f_{i}\circ e_{i}$ is homotopic 
to the identity map and if $i\neq j$ the composite $f_{j}\circ e_{i}$ is null 
homotopic. By Lemma~\ref{eequiv}, the wedge sum of $e_{1}$, $e_{2}$ and $e_{3}$ 
gives a homotopy equivalence 
\[e\colon\namedright{(\Omega\Sigma X\ltimes(X^{\wedge k}\wedge\Sigma D))\vee\Sigma D\vee 
      (\Omega\Sigma X\wedge\Sigma D)}{}{\Omega\Sigma X\ltimes(X^{\wedge k}\ltimes\Sigma D)}.\] 

Given a map 
\(\namedright{\Sigma A}{\ell}{X^{\wedge k}\ltimes\Sigma D}\), 
let $\kappa$ be the composite 
\[\kappa\colon\nameddright{\Omega\Sigma X\ltimes\Sigma A}{1\ltimes\ell} 
      {\Omega\Sigma X\ltimes(X^{\wedge k}\ltimes\Sigma D)}{e^{-1}} 
      {(\Omega\Sigma X\ltimes(X^{\wedge k}\wedge\Sigma D))\vee\Sigma D\vee(\Omega\Sigma X\wedge\Sigma D)}.\] 
By definition of $\kappa$ there is a commutative square 
\begin{equation} 
  \label{kappadgrm} 
  \diagram 
       \Omega\Sigma X\ltimes\Sigma A\rrdouble\dto^{\kappa} 
           & & \Omega\Sigma X\ltimes\Sigma A\dto^{1\ltimes\ell} \\ 
       (\Omega\Sigma X\ltimes(X^{\wedge k}\wedge\Sigma D))\vee\Sigma D\vee(\Omega\Sigma X\wedge\Sigma D) 
               \rrto^-{e} 
           & & \Omega\Sigma X\ltimes(X^{\wedge k}\ltimes\Sigma D). 
  \enddiagram 
\end{equation} 
By the Hilton-Milnor Theorem, $\kappa=\kappa_{1}+\kappa_{2}+\kappa_{3}+W$ 
where $\kappa_{1}$, $\kappa_{2}$ and $\kappa_{3}$ are obtained by composing~$\kappa$ 
with the pinch maps $p_{1}$, $p_{2}$ and $p_{3}$ to $\Omega\Sigma X\ltimes(X^{\wedge k}\wedge\Sigma D)$, 
$\Sigma D$ and $\Omega\Sigma X\wedge\Sigma D$ respectively, and~$W$ factors through 
a wedge sum of iterated Whitehead products. 

We identify $\kappa_{1}$, $\kappa_{2}$ and $\kappa_{3}$. Since $e$ is the wedge 
sum of $e_{1}$, $e_{2}$ and $e_{3}$, the fact that for $1\leq i,j\leq 3$ the composite 
$f_{i}\circ e_{i}$ is homotopic to the identity map while if $i\neq j$ the composite 
$f_{j}\circ e_{i}$ is null homotopic implies that $f_{i}\circ e\simeq p_{i}$. 
Thus using $e\circ\kappa=1\ltimes\ell$ in~(\ref{kappadgrm}) we obtain, for $1\leq i\leq 3$,  
\begin{equation} 
  \label{kappas} 
  \kappa_{i}=p_{i}\circ\kappa\simeq f_{i}\circ e\circ\kappa = f_{i}\circ(1\ltimes\ell). 
\end{equation} 

\begin{lemma} 
   \label{Sigmakappa3} 
   Suppose that the composite 
   \(\nameddright{\Sigma^{2} A}{\Sigma\ell}{\Sigma (X^{\wedge k}\ltimes\Sigma D)}{\Sigma\pi}{\Sigma^{2} D}\) 
   is null homotopic. Then the maps $\Sigma\kappa_{2}$ and $\Sigma\kappa_{3}$ are null homotopic. 
\end{lemma}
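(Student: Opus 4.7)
The plan is to reduce each of $\Sigma\kappa_2$ and $\Sigma\kappa_3$ to an expression that has $\Sigma\pi\circ\Sigma\ell$ as a factor, and then invoke the hypothesis. The key inputs are formula~(\ref{kappas}), the naturality of the quotient map $\pi\colon B\ltimes C\to C$, and the natural splitting $A\ltimes\Sigma B\simeq(A\wedge\Sigma B)\vee\Sigma B$ of Lemma~\ref{halfsmashsusp}.

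First I would handle $\Sigma\kappa_2$. By~(\ref{kappas}) and the definition of $f_{2}$, there is a homotopy $\kappa_{2}\simeq\pi\circ\pi\circ(1\ltimes\ell)$. The naturality of the quotient $\pi\colon \Omega\Sigma X\ltimes(-)\to(-)$ applied to $\ell$ gives $\pi\circ(1\ltimes\ell)\simeq\ell\circ\pi$, where the right-hand $\pi$ is the quotient $\Omega\Sigma X\ltimes\Sigma A\to\Sigma A$. Hence $\kappa_{2}\simeq\pi\circ\ell\circ\pi$, and after suspending, $\Sigma\kappa_{2}\simeq(\Sigma\pi\circ\Sigma\ell)\circ\Sigma\pi$. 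The hypothesis that $\Sigma\pi\circ\Sigma\ell$ is null homotopic then forces $\Sigma\kappa_{2}\simeq\ast$.

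Next I would handle $\Sigma\kappa_3$. By~(\ref{kappas}) and the definition of $f_{3}$, we have $\kappa_{3}\simeq q\circ(1\ltimes\pi)\circ(1\ltimes\ell)=q\circ(1\ltimes(\pi\circ\ell))$. Let $h=\pi\circ\ell\colon\Sigma A\to\Sigma D$. Apply Lemma~\ref{halfsmashsusp} to both source and target, so that $1\ltimes h$ corresponds, under the natural equivalences $\Omega\Sigma X\ltimes\Sigma A\simeq(\Omega\Sigma X\wedge\Sigma A)\vee\Sigma A$ and $\Omega\Sigma X\ltimes\Sigma D\simeq(\Omega\Sigma X\wedge\Sigma D)\vee\Sigma D$, to $(1\wedge h)\vee h$. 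Under these equivalences $q$ becomes the pinch to the first wedge summand, so $\kappa_{3}$ is homotopic to the composite of $(1\wedge h)\vee h$ followed by projection onto $\Omega\Sigma X\wedge\Sigma D$, i.e.\ to $1\wedge h$ on the first summand and to the trivial map on the second. After suspending, $\Sigma(1\wedge h)\simeq 1\wedge\Sigma h$, which is null because $\Sigma h=\Sigma\pi\circ\Sigma\ell$ is null by hypothesis. Therefore $\Sigma\kappa_{3}\simeq\ast$.

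Both reductions are essentially formal, so there is no real obstacle; the only point requiring a little care is the identification of $q$ with the pinch map to the smash-product summand under the natural half-smash splitting in Lemma~\ref{halfsmashsusp}, which is precisely the way $i$ and $q$ were set up at the start of Section~\ref{sec:background}.
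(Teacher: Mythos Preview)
Your proof is correct and follows essentially the same route as the paper. For $\kappa_{2}$ both arguments are identical (naturality of $\pi$ gives $\kappa_{2}\simeq\pi\circ\ell\circ\pi$, then suspend); for $\kappa_{3}$ the paper is marginally slicker, observing directly that $\Sigma\kappa_{3}$ factors through $\Sigma(1\ltimes(\pi\circ\ell))\simeq 1\ltimes\Sigma(\pi\circ\ell)$ and hence is null, whereas you unpack this same fact via the explicit half-smash splitting of Lemma~\ref{halfsmashsusp}---but this is only a cosmetic difference.
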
 

\begin{proof} 
By~(\ref{kappas}), $\kappa_{2}\simeq f_{2}\circ(1\ltimes\ell)$. Consider the diagram 
\[\diagram 
      \Omega\Sigma X\ltimes\Sigma A\rto^-{\pi}\dto^{1\ltimes\ell} & \Sigma A\dto^{\ell} & \\ 
      \Omega\Sigma X\ltimes(X^{\wedge k}\ltimes\Sigma D)\rto^-{\pi} 
      & X^{\wedge k}\ltimes\Sigma D\rto^-{\pi} & \Sigma D. 
  \enddiagram\] 
The square commutes by the naturality of $\pi$. As the bottom row is the definition of $f_{2}$, 
the lower direction around the diagram is $f_{2}\circ(1\ltimes\ell)$, that is, $\kappa_{2}$. 
This equals the upper direction around the diagram, which is null homotopic after 
suspending since $\Sigma\pi\circ\Sigma\ell$ is null homotopic by hypothesis. Thus 
$\Sigma\kappa_{2}$ is null homotopic. 

By~(\ref{kappas}), $\kappa_{3}\simeq f_{3}\circ(1\ltimes\ell)$. By definition, 
$f_{3}=q\circ(1\ltimes\pi)$. Thus $\kappa_{3}$ factors through $1\ltimes(\pi\circ\ell)$. 
Suspending, $\Sigma\kappa_{3}$ factors through 
$\Sigma(1\ltimes(\pi\circ\ell))\simeq 1\ltimes\Sigma(\pi\circ\ell)$.  
By hypothesis $\Sigma(\pi\circ\ell)$ is null homotopic, and therefore $\Sigma\kappa_{3}$ 
is null homotopic.  
\end{proof} 

\begin{corollary} 
   \label{Sigmakappa} 
   Suppose that the composite 
   \(\nameddright{\Sigma^{2} A}{\Sigma\ell}{\Sigma (X^{\wedge k}\ltimes\Sigma D)}{\Sigma\pi}{\Sigma^{2} D}\) 
   is null homotopic. Then there is a homotopy commutative square 
   \[\diagram 
         \Sigma(\Omega\Sigma X\ltimes\Sigma A)\rrdouble\dto^{\Sigma\kappa_{1}} 
             & & \Sigma(\Omega\Sigma X\ltimes\Sigma A)\dto^{\Sigma(1\ltimes\ell)} \\ 
         \Sigma(\Omega\Sigma X\ltimes(X^{\wedge k}\wedge\Sigma D))\rrto^-{\Sigma e_{1}} 
             & & \Sigma(\Omega\Sigma X\ltimes(X^{\wedge k}\ltimes\Sigma D)). 
    \enddiagram\] 
\end{corollary}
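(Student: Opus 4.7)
The plan is to start from the defining square (\ref{kappadgrm}), which gives $e \circ \kappa \simeq 1 \ltimes \ell$, and to suspend everything in sight, using the Hilton--Milnor description of $\kappa$ already established in the text, together with Lemma~\ref{Sigmakappa3}.

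First I would recall that, by the Hilton--Milnor Theorem applied to the map
\(\namedright{\Omega\Sigma X\ltimes\Sigma A}{\kappa}{W_{1}\vee W_{2}\vee W_{3}}\),
where I abbreviate $W_{1}=\Omega\Sigma X\ltimes(X^{\wedge k}\wedge\Sigma D)$, $W_{2}=\Sigma D$ and $W_{3}=\Omega\Sigma X\wedge\Sigma D$, there is a homotopy
\[\kappa \simeq \iota_{1}\circ\kappa_{1}+\iota_{2}\circ\kappa_{2}+\iota_{3}\circ\kappa_{3}+W\]
where $\iota_{i}\colon W_{i}\to W_{1}\vee W_{2}\vee W_{3}$ is the wedge inclusion, $\kappa_{i}=p_{i}\circ\kappa$, and $W$ factors through a wedge of iterated Whitehead products. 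Since iterated Whitehead products suspend to the zero map, $\Sigma W$ is null homotopic, and therefore on suspensions
\[\Sigma\kappa \simeq \Sigma\iota_{1}\circ\Sigma\kappa_{1}+\Sigma\iota_{2}\circ\Sigma\kappa_{2}+\Sigma\iota_{3}\circ\Sigma\kappa_{3}.\]

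Next I would invoke the hypothesis that $\Sigma\pi\circ\Sigma\ell$ is null homotopic together with Lemma~\ref{Sigmakappa3} to conclude that $\Sigma\kappa_{2}$ and $\Sigma\kappa_{3}$ are both null homotopic. Consequently the last two summands vanish and
\[\Sigma\kappa \simeq \Sigma\iota_{1}\circ\Sigma\kappa_{1}.\]

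Finally, suspending the outer square (\ref{kappadgrm}) gives $\Sigma e\circ\Sigma\kappa \simeq \Sigma(1\ltimes\ell)$. Since $e$ is by construction the wedge sum of $e_{1}$, $e_{2}$ and $e_{3}$, the restriction $e\circ\iota_{1}$ equals $e_{1}$, so
\[\Sigma(1\ltimes\ell) \simeq \Sigma e\circ\Sigma\iota_{1}\circ\Sigma\kappa_{1} \simeq \Sigma e_{1}\circ\Sigma\kappa_{1},\]
which is exactly the asserted homotopy commutative square. The only substantive ingredients are the suspension-kills-Whitehead-products identity and Lemma~\ref{Sigmakappa3}; there is no real obstacle here since the corollary is essentially a bookkeeping consequence of the preceding lemma combined with the Hilton--Milnor splitting.
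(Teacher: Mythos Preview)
Your proof is correct and follows essentially the same approach as the paper: both use the Hilton--Milnor decomposition $\kappa\simeq\kappa_{1}+\kappa_{2}+\kappa_{3}+W$, kill $\Sigma W$ by the suspension of Whitehead products, invoke Lemma~\ref{Sigmakappa3} to kill $\Sigma\kappa_{2}$ and $\Sigma\kappa_{3}$, and then suspend diagram~(\ref{kappadgrm}) and use $e\circ\iota_{1}=e_{1}$. Your version is in fact a bit more careful than the paper's, which writes the conclusion as ``$\Sigma\kappa\simeq\Sigma\kappa_{1}$'' without making the wedge inclusion $\iota_{1}$ explicit.
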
 

\begin{proof} 
Following~(\ref{kappadgrm}) we saw that $\kappa=\kappa_{1}+\kappa_{2}+\kappa_{3}+W$ 
where $W$ factors through a wedge sum of Whitehead products. In particular, as Whitehead 
products suspend trivially, $\Sigma W$ is null homotopic. By Lemma~\ref{Sigmakappa3}, 
$\Sigma\kappa_{2}$ and $\Sigma\kappa_{3}$ are also null homotopic. Therefore 
$\Sigma\kappa\simeq\Sigma\kappa_{1}$ and the homotopy commutativity 
of the asserted diagram follows. 
\end{proof} 

Some maps need to be defined that modify the maps $\mathfrak{\overline{c}}_{k}$ and $J_{k}$ 
from Section~\ref{sec:2cone}. For $k=1$ define the map $\overline{\mathfrak{c}}'_{1}$ by  
\[\overline{\mathfrak{c}}'_{1}\colon\namedright{X\ltimes\Sigma D}{E\ltimes\delta}{\Omega\Sigma X\ltimes E}.\] 
Note that $\overline{\mathfrak{c}}'_{1}\circ i=\overline{\mathfrak{c}}_{1}$. For $k\geq 2$ a 
recursive definition is used: define the maps $\overline{\mathfrak{c}}'_{k}$ by the composite 
\[\overline{\mathfrak{c}}'_{k}\colon\llnameddright{X^{\wedge k}\ltimes\Sigma D} 
      {q+\pi}{(X^{\wedge k}\wedge\Sigma D)\vee\Sigma D}{\overline{\mathfrak{c}}_{k}\perp j\circ\delta} 
      {\Omega\Sigma X\ltimes E}.\] 
Also, define the map $J'_{k}$ by the composite 
\[J'_{k}\colon\llnamedright{J_{k-1}(X)\ltimes\Sigma D}{j_{k-1}\ltimes j} 
          {\Omega\Sigma X\ltimes(X^{\wedge k}\ltimes\Sigma D)}.\] 
Recall that $\overline{J}_{k}$ was defined in Section~\ref{sec:2cone} as the composite 
\[\overline{J}_{k}\colon\llnameddright{J_{k-1}(X)\ltimes\Sigma D} 
      {j_{k-1}\ltimes\delta}{\Omega\Sigma X\ltimes E}{j}{\Omega\Sigma X\ltimes(\Omega\Sigma X\ltimes E)}.\]  

\begin{lemma} 
   \label{c'kprops} 
   The following hold: 
   \begin{letterlist} 
      \item the composite 
               \(\nameddright{\Sigma X^{\wedge k}\wedge\Sigma D}{\Sigma i}{\Sigma X^{\wedge k}\ltimes\Sigma D} 
                    {\Sigma\overline{\mathfrak{c}}'_{k}}{\Sigma\Omega\Sigma X\ltimes E}\) 
               is homotopic to~$\Sigma\overline{\mathfrak{c}}_{k}$; 
      \item the composite 
               \[\nameddright{\Sigma D}{j}{X^{\wedge k}\ltimes\Sigma D}{\overline{\mathfrak{c}}'_{k}} 
                      {\Omega\Sigma X\ltimes E}\] 
               is homotopic to $j\circ\delta$; 
      \item the composite 
               \(\nameddright{J_{k-1}(X)\ltimes\Sigma D}{J'_{k}} 
                       {\Omega\Sigma X\ltimes(X^{\wedge k}\ltimes\Sigma D)} 
                       {1\ltimes\overline{\mathfrak{c}}'_{k}}{\Omega\Sigma X\ltimes(\Omega\Sigma X\ltimes E)}\) 
               is homotopic to $j_{k-1}\ltimes(j\circ\delta)$; 
      \item the composite 
               \[\llnamedddright{J_{k-1}(X)\ltimes\Sigma D}{J'_{k}} 
                       {\Omega\Sigma X\ltimes(X^{\wedge k}\ltimes\Sigma D)} 
                       {1\ltimes\overline{\mathfrak{c}}'_{k}}{\Omega\Sigma X\ltimes(\Omega\Sigma X\ltimes E)} 
                       {\overline{a}\circ(1\ltimes\overline{a})}{E}\]
               is homotopic to $\overline{a}\circ(1\ltimes\overline{a})\circ\overline{J}_{k}$. 
   \end{letterlist} 
\end{lemma}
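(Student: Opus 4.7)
The plan is to handle parts (a) and (b) by unwinding the recursive definition of $\overline{\mathfrak{c}}'_k$, deduce (c) from (b) by functoriality of the half-smash, and then derive (d) from (c) using elementary identities for the quotient action $\overline{a}$.

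For part~(a), when $k=1$ we have $\overline{\mathfrak{c}}'_1=E\ltimes\delta$ while $\overline{\mathfrak{c}}_1=(E\ltimes\delta)\circ i$, so $\overline{\mathfrak{c}}'_1\circ i=\overline{\mathfrak{c}}_1$ already holds before suspending. For $k\geq 2$ I would use the defining formula $\overline{\mathfrak{c}}'_k=(\overline{\mathfrak{c}}_k\perp j\circ\delta)\circ(q+\pi)$. After suspending, $\Sigma(q+\pi)\simeq\Sigma i_1\circ\Sigma q+\Sigma i_2\circ\Sigma\pi$ is an honest sum into the wedge $(X^{\wedge k}\wedge\Sigma D)\vee\Sigma D$; composing with $\Sigma i$ and using $q\circ i\simeq 1$ together with $\pi\circ i\simeq\ast$ (under the splitting $i\perp j$, $\pi$ projects off the half-smash summand) yields $\Sigma((q+\pi)\circ i)\simeq\Sigma i_1$. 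Post-composition with $\overline{\mathfrak{c}}_k\perp j\circ\delta$ then recovers $\Sigma\overline{\mathfrak{c}}_k$.

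Part~(b) is strictly analogous. For $k=1$ it follows from the naturality of $j$, namely $(E\ltimes\delta)\circ j=j\circ\delta$; for $k\geq 2$ the computation $(q+\pi)\circ j\simeq i_2$ (using $q\circ j\simeq\ast$ and $\pi\circ j\simeq 1$) combined with $(\overline{\mathfrak{c}}_k\perp j\circ\delta)\circ i_2=j\circ\delta$ gives the claim. Part~(c) then follows immediately from functoriality of the half-smash together with~(b):
\[
(1\ltimes\overline{\mathfrak{c}}'_k)\circ J'_k=(1\ltimes\overline{\mathfrak{c}}'_k)\circ(j_{k-1}\ltimes j)=j_{k-1}\ltimes(\overline{\mathfrak{c}}'_k\circ j)\simeq j_{k-1}\ltimes(j\circ\delta).
\]

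For part~(d), I would use~(c) to rewrite the left-hand composite as $\overline{a}\circ(1\ltimes\overline{a})\circ(j_{k-1}\ltimes(j\circ\delta))$, while by definition the right-hand composite is $\overline{a}\circ(1\ltimes\overline{a})\circ j\circ(j_{k-1}\ltimes\delta)$. Two elementary identities finish the argument: $(1\ltimes\overline{a})\circ j\simeq j\circ\overline{a}$, by a direct point-set computation, and $\overline{a}\circ j\simeq 1_E$, since $\overline{a}$ is a quotient of the action $a$ and $a$ restricts to the identity on $E$ (as already used in Remark~\ref{E'Jmap}). Applying these, together with functoriality of the half-smash, both composites collapse to $\overline{a}\circ(j_{k-1}\ltimes\delta)$, proving~(d). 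The only mildly delicate point in the whole argument is the sum $q+\pi$ inside $\overline{\mathfrak{c}}'_k$; suspending (as the statement of~(a) already hints) makes the co-$H$ structure rigid and eliminates any ambiguity in distributing the sum across precomposition with $i$ or $j$.
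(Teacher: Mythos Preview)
Your proposal is correct and follows essentially the same route as the paper's proof: the paper also handles (a) and (b) by expanding $\overline{\mathfrak{c}}'_k=(\overline{\mathfrak{c}}_k\circ q)+(j\circ\delta\circ\pi)$ and distributing over the suspension $\Sigma i$ (respectively over the suspension map $j$), deduces (c) from (b) by functoriality of $\ltimes$, and obtains (d) from (c) via the identities $(1\ltimes\overline{a})\circ j\simeq j\circ\overline{a}$ and $\overline{a}\circ j\simeq 1_E$, collapsing both sides to $\overline{a}\circ(j_{k-1}\ltimes\delta)$.
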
 

\begin{proof} 
First, if $k=1$ then it has already been observe that observe that 
$\overline{\mathfrak{c}}'_{1}\circ i=\overline{\mathfrak{c}}_{1}$. For $k\geq 2$ observe that 
$\overline{\mathfrak{c}}'_{k}=(\overline{\mathfrak{c}}_{k}\perp j\circ\delta)\circ(q+\pi)= 
      (\overline{\mathfrak{c}}_{k}\circ q)+(j\circ\delta\circ\pi)$. 
Observe also that 
$q\circ i$ is homotopic to the identity map on $X^{\wedge k}\wedge\Sigma D$ while  
$\pi\circ i$ is null homotopic. As $\Sigma i$ is a suspension it distributes on the right so 
we therefore obtain 
\[\Sigma\overline{\mathfrak{c}}'_{k}\circ\Sigma i\simeq
      \Sigma\overline{\mathfrak{c}}_{k}\circ\Sigma q\circ\Sigma i+ 
      \Sigma j\circ\Sigma\delta\circ\Sigma\pi\circ\Sigma i\simeq  
      \simeq\Sigma\overline{\mathfrak{c}}_{k}+\ast\simeq\Sigma\overline{\mathfrak{c}}_{k},\] 
proving part~(a). 

Second, if $k=1$ the definition of $\overline{\mathfrak{c}}'_{1}$ as $E\ltimes\delta$ and 
the naturality of $j$ implies that $\overline{\mathfrak{c}}'_{1}\circ j=(E\ltimes\delta)\circ j\simeq j\circ\delta$.  
For $k\geq 2$ observe that $q\circ j$ is null homotopic while $\pi\circ j$ is homotopic to the identity map 
on $\Sigma D$. As 
\(\namedright{\Sigma D}{j}{X^{\wedge k}\ltimes\Sigma D}\) 
is a suspension it distributes on the right so we obtain 
\[\overline{\mathfrak{c}}'_{k}\circ j=((\overline{\mathfrak{c}}_{k}\circ q)+(j\circ\delta\circ \pi))\circ j\simeq 
       (\overline{\mathfrak{c}}_{k}\circ q\circ j)+(j\circ\delta\circ\pi\circ j)\simeq\ast + j\circ\delta,\] 
proving part~(b). 

Third, by definition, $J'_{k}=j_{k-1}\ltimes j$ so by part~(b) we have 
\[(1\ltimes\overline{\mathfrak{c}}'_{k})\circ J'_{k}= 
     (1\ltimes\overline{\mathfrak{c}}'_{k})\circ(j_{k-1}\ltimes j)= 
     (j_{k-1}\ltimes(\overline{\mathfrak{c}}'_{k}\circ j))=j_{k-1}\ltimes(j\circ\delta),\] 
proving part~(c). 

Fourth, since $\overline{a}$ is the quotient of the homotopy action 
\(\namedright{\Omega\Sigma X\times E}{a}{E}\), 
which restricts to the identity map on $E$, we obtain that the composite 
\(\nameddright{E}{j}{\Omega\Sigma X\ltimes E}{\overline{a}}{E}\) 
is homotopic to the identity map on $E$. Using this and part~(c) we obtain  
\[\overline{a}\circ(1\ltimes\overline{a})\circ(1\ltimes\overline{\mathfrak{c}}'_{k})\circ J'_{k}\simeq 
     \overline{a}\circ(1\ltimes\overline{a})\circ(j_{k-1}\ltimes(j\circ\delta))\simeq 
     \overline{a}\circ(j_{k-1}\ltimes\delta).\] 
On the other hand, by definition, $\overline{J}_{k}=j\circ(j_{k-1}\ltimes\delta)$, so using 
the naturality of $j$ we have 
\[\overline{a}\circ(1\ltimes\overline{a})\circ\overline{J}_{k}= 
       \overline{a}\circ(1\ltimes\overline{a})\circ j\circ (j_{k-1}\ltimes\delta)\simeq 
       \overline{a}\circ j\circ\overline{a}\circ(j_{k-1}\ltimes\delta)\simeq 
       \overline{a}\circ(j_{k-1}\ltimes\delta).\] 
Thus 
$\overline{a}\circ(1\ltimes\overline{a})\circ(1\ltimes\overline{\mathfrak{c}}'_{k})\circ J'_{k}\simeq 
      \overline{a}\circ(1\ltimes\overline{a})\circ\overline{J}_{k}$, 
proving part~(d). 
\end{proof} 

Recall from Section~\ref{sec:ad} that two maps $f$ and $g$ are congruent if $\Sigma f\simeq\Sigma g$. 

\begin{lemma} 
   \label{e1J'k} 
   The composite 
   \[\llnamedright{(\Omega\Sigma X\ltimes(X^{\wedge k}\wedge\Sigma D))\vee 
          J_{k-1}(X)\ltimes\Sigma D}{e_{1}\perp J'_{k}} {\Omega\Sigma X\ltimes(X^{\wedge k}\ltimes\Sigma D)} 
          \stackrel{1\ltimes\overline{c}'_{k}}{\llarrow}\hspace{2cm}\] 
   \[\hspace{8cm}\llnamedright{\Omega\Sigma X\ltimes(\Omega\Sigma X\ltimes E)} 
        {\overline{a}\circ(1\ltimes\overline{a})}{E}\]  
   is congruent to 
   $\overline{a}\circ(1\ltimes\overline{a})\circ((1\ltimes\overline{\mathfrak{c}}_{k})\perp\overline{J}_{k})$ 
   (the map appearing in Proposition~\ref{JkEequivfrak}).  
\end{lemma}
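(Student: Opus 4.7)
The plan is to show the two composites are congruent by restricting to each of the two wedge summands in the common domain, handling them separately, and then assembling. Recall that for maps $u\simeq v\colon A\to Z$ and $w\simeq x\colon B\to Z$ that are pairwise congruent, the induced maps $u\perp w$ and $v\perp x$ on $A\vee B$ are also congruent, since $\Sigma(A\vee B)\simeq\Sigma A\vee\Sigma B$ and $\Sigma$ distributes over $\perp$. So it suffices to verify congruence on each wedge summand.

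First, I restrict to $\Omega\Sigma X\ltimes(X^{\wedge k}\wedge\Sigma D)$. By definition, $e_{1}=1\ltimes i$, so the relevant composite becomes
\[\overline{a}\circ(1\ltimes\overline{a})\circ(1\ltimes\overline{\mathfrak{c}}'_{k})\circ(1\ltimes i)=
    \overline{a}\circ(1\ltimes\overline{a})\circ(1\ltimes(\overline{\mathfrak{c}}'_{k}\circ i)).\]
By Lemma~\ref{c'kprops}(a), $\Sigma(\overline{\mathfrak{c}}'_{k}\circ i)\simeq\Sigma\overline{\mathfrak{c}}_{k}$, so $\overline{\mathfrak{c}}'_{k}\circ i$ and $\overline{\mathfrak{c}}_{k}$ are congruent. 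Using the natural splitting $\Sigma(\Omega\Sigma X\ltimes B)\simeq\Sigma B\vee(\Omega\Sigma X\wedge\Sigma B)$, congruence of two maps $B\to B'$ out of a space $B$ implies congruence of their half-smashes $1\ltimes(-)$ with $\Omega\Sigma X$, so $1\ltimes(\overline{\mathfrak{c}}'_{k}\circ i)$ is congruent to $1\ltimes\overline{\mathfrak{c}}_{k}$. Post-composing with $\overline{a}\circ(1\ltimes\overline{a})$ preserves congruence (apply $\Sigma$ to the homotopy and post-compose with $\Sigma(\overline{a}\circ(1\ltimes\overline{a}))$). Hence the restriction of the first map to $\Omega\Sigma X\ltimes(X^{\wedge k}\wedge\Sigma D)$ is congruent to the restriction of the second map, which is $\overline{a}\circ(1\ltimes\overline{a})\circ(1\ltimes\overline{\mathfrak{c}}_{k})$.

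Second, I restrict to $J_{k-1}(X)\ltimes\Sigma D$. Here the first composite becomes
\[\overline{a}\circ(1\ltimes\overline{a})\circ(1\ltimes\overline{\mathfrak{c}}'_{k})\circ J'_{k},\]
which by Lemma~\ref{c'kprops}(d) is actually homotopic — hence a fortiori congruent — to $\overline{a}\circ(1\ltimes\overline{a})\circ\overline{J}_{k}$, matching the restriction of the second map to $J_{k-1}(X)\ltimes\Sigma D$.

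Combining the two summand-wise congruences via the remark at the start gives that the full composite $\overline{a}\circ(1\ltimes\overline{a})\circ(1\ltimes\overline{\mathfrak{c}}'_{k})\circ(e_{1}\perp J'_{k})$ is congruent to $\overline{a}\circ(1\ltimes\overline{a})\circ((1\ltimes\overline{\mathfrak{c}}_{k})\perp\overline{J}_{k})$, as required. The only real subtlety is bookkeeping: one must check that congruence is preserved both under pre-composition with the wedge-inclusions (automatic) and under post-composition by a fixed map (immediate after suspending), and that the half-smash construction $1\ltimes(-)$ preserves congruence, which follows from the natural wedge splitting of $\Sigma(\Omega\Sigma X\ltimes(-))$ used throughout Section~\ref{sec:2cone}.
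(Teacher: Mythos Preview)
Your proof is correct and follows essentially the same approach as the paper: restrict to each wedge summand, use Lemma~\ref{c'kprops}(a) on the first summand to obtain congruence of $\overline{\mathfrak{c}}'_{k}\circ i$ with $\overline{\mathfrak{c}}_{k}$ (and hence of their half-smashes), and use Lemma~\ref{c'kprops}(d) on the second summand to obtain an actual homotopy. The only difference is that you spell out the bookkeeping about congruence being preserved under $1\ltimes(-)$, post-composition, and $\perp$, which the paper leaves implicit.
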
 

\begin{proof} 
It is equivalent to prove the statement when restricted to each of the wedge summands 
$\Omega\Sigma X\ltimes(X^{\wedge k}\wedge\Sigma D)$ and $J_{k-1}(X)\ltimes\Sigma D$. 
By definition, $e_{1}$ is the map 
\(\namedright{\Omega\Sigma X\ltimes(X^{\wedge k}\wedge\Sigma D)}{1\ltimes i} 
    {\Omega\Sigma X\ltimes(X^{\wedge k}\ltimes\Sigma D)}\)  
and by Lemma~\ref{c'kprops}~(a), $\overline{\mathfrak{c}}'_{k}\circ i$ is congruent to $\overline{\mathfrak{c}}_{k}$. 
Therefore $(1\ltimes\overline{\mathfrak{c}}'_{k})\circ e_{1}=(1\ltimes\overline{\mathfrak{c}}'_{k})(1\ltimes i)$ 
is congruent to $1\ltimes\overline{\mathfrak{c}}_{k}$. Thus 
$\overline{a}\circ(1\ltimes\overline{a})\circ(1\ltimes\overline{\mathfrak{c}}'_{k})\circ e_{1}$ is congruent 
to $\overline{a}\circ(1\ltimes\overline{a})\circ(1\ltimes\overline{\mathfrak{c}}_{k})$. 
     
On the other hand, by Lemma~\ref{c'kprops}~(d), 
$\overline{a}\circ(1\ltimes\overline{a})\circ(1\ltimes\overline{\mathfrak{c}}'_{k})\circ J'_{k}\simeq 
      \overline{a}\circ(1\ltimes\overline{a})\circ\overline{J}_{k}$. 
As homotopic maps are congruent, the lemma follows. 
\end{proof} 

Putting things together to this point gives the following. 
  
\begin{proposition} 
   \label{Sigmakappatheta} 
   Suppose that there is a homotopy fibration sequence 
   \(\namedddright{\Omega\Sigma X}{\partial}{E}{p}{Y}{h}{\Sigma X}\) 
   and a map 
   \(\namedright{\Sigma A}{f}{Y}\) 
   that lifts through $p$ to 
   \(\namedright{\Sigma A}{g}{E}\),  
   together satisfying the following properties: 
   \begin{itemize} 
      \item[(a)] $\Omega h$ has a right homotopy inverse; 
      \item[(b)] there is a map 
                     \(\delta\colon\namedright{\Sigma D}{}{E}\) 
                     such that the composite 
                      \[\nameddright{\Omega\Sigma X\ltimes\Sigma D}{1\ltimes\delta} 
                              {\Omega\Sigma X\ltimes E}{\overline{a}}{E}\] 
                     is a homotopy equivalence; 
      \item[(c)] $g$ factors as a composite 
                     \[g\colon\namedddright{\Sigma A}{\ell}{X^{\wedge k}\ltimes\Sigma D}{\overline{\mathfrak{c}}'_{k}} 
                          {\Omega\Sigma X\ltimes E}{\overline{a}}{E}\] 
                      for some map $\ell$; 
       \item[(d)] the composite 
                      \(\nameddright{\Sigma^{2} A}{\Sigma\ell}{\Sigma (X^{\wedge k}\ltimes\Sigma D)} 
                           {\Sigma\pi}{\Sigma D}\) 
                      is null homotopic. 
   \end{itemize} 
   Then there is a homotopy commutative square 
   \[\diagram 
         \Sigma(\Omega\Sigma X\ltimes\Sigma A)\rdouble\dto^{\Sigma\kappa_{1}+\ast} 
              & \Sigma(\Omega\Sigma X\ltimes\Sigma A)\dto^{\Sigma\theta} \\ 
         \Sigma(\Omega\Sigma X\ltimes(X^{\wedge k}\wedge\Sigma D))\vee 
                 \Sigma J_{k-1}(X)\ltimes\Sigma D\rto^-{\Sigma\epsilon} 
              & \Sigma E 
       \enddiagram\] 
    where $\epsilon$ is a homotopy equivalence. 
\end{proposition}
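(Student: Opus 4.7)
The plan is to define $\epsilon$ as the composite
\[\llnamedright{(\Omega\Sigma X\ltimes(X^{\wedge k}\wedge\Sigma D))\vee(J_{k-1}(X)\ltimes\Sigma D)}
     {e_{1}\perp J'_{k}}{\Omega\Sigma X\ltimes(X^{\wedge k}\ltimes\Sigma D)}
     \stackrel{1\ltimes\overline{\mathfrak{c}}'_{k}}{\longrightarrow}\]
\[\hspace{5cm}\namedddright{\Omega\Sigma X\ltimes(\Omega\Sigma X\ltimes E)}{1\ltimes\overline{a}}
     {\Omega\Sigma X\ltimes E}{\overline{a}}{E}\]
and then verify the two claims of the proposition: first that $\Sigma\epsilon$ is a homotopy equivalence, and second that the square homotopy commutes after suspending.

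For the first claim, Lemma~\ref{e1J'k} asserts that $\epsilon$ is congruent to the composite
$\overline{a}\circ(1\ltimes\overline{a})\circ((1\ltimes\overline{\mathfrak{c}}_{k})\perp\overline{J}_{k})$,
which is a homotopy equivalence by Proposition~\ref{JkEequivfrak} (since hypotheses~(a) and~(b) are precisely the hypotheses of that proposition). By the definition of congruence, $\Sigma\epsilon$ is homotopic to the suspension of a homotopy equivalence, hence is itself a homotopy equivalence.

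For the second claim, I would first rewrite $\theta$ using hypothesis~(c) and the diagram~(\ref{thetabara}): since $g\simeq\overline{a}\circ\overline{\mathfrak{c}}'_{k}\circ\ell$, the functoriality of $1\ltimes(-)$ gives
\[\theta\simeq\overline{a}\circ(1\ltimes g)\simeq\overline{a}\circ(1\ltimes\overline{a})\circ(1\ltimes\overline{\mathfrak{c}}'_{k})\circ(1\ltimes\ell).\]
Next, hypothesis~(d) is exactly the hypothesis of Corollary~\ref{Sigmakappa}, which supplies the homotopy $\Sigma(1\ltimes\ell)\simeq\Sigma e_{1}\circ\Sigma\kappa_{1}$. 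Suspending the displayed expression for $\theta$ and inserting this substitution yields
\[\Sigma\theta\simeq\Sigma\bigl(\overline{a}\circ(1\ltimes\overline{a})\circ(1\ltimes\overline{\mathfrak{c}}'_{k})\bigr)\circ\Sigma e_{1}\circ\Sigma\kappa_{1}.\]
On the other hand, since $e_{1}\perp J'_{k}$ restricts to $e_{1}$ on the first wedge summand, and since $\Sigma\kappa_{1}+\ast$ lands only in that first summand, we have
$\Sigma\epsilon\circ(\Sigma\kappa_{1}+\ast)\simeq\Sigma\bigl(\overline{a}\circ(1\ltimes\overline{a})\circ(1\ltimes\overline{\mathfrak{c}}'_{k})\circ e_{1}\bigr)\circ\Sigma\kappa_{1}$,
which matches the expression for $\Sigma\theta$ above.

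The proof is essentially an assembly of the earlier lemmas, so I do not anticipate a major obstacle. The only subtle point is bookkeeping at the suspension level: both Corollary~\ref{Sigmakappa} and Lemma~\ref{e1J'k} produce identifications that are valid only after suspending (because of the Hilton–Milnor correction terms absorbed into $W$, $\kappa_{2}$, $\kappa_{3}$, and because of congruence), which is why the conclusion must be phrased as a square of suspended maps rather than a square of unsuspended ones.
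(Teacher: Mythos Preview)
Your proposal is correct and follows essentially the same route as the paper: you define $\epsilon=\overline{a}\circ(1\ltimes\overline{a})\circ(1\ltimes\overline{\mathfrak{c}}'_{k})\circ(e_{1}\perp J'_{k})$, use Lemma~\ref{e1J'k} together with Proposition~\ref{JkEequivfrak} for the equivalence claim, and combine the factorization of $\theta$ from~(\ref{thetabara}) with Corollary~\ref{Sigmakappa} for the commuting square. One small remark: the statement asserts that $\epsilon$ (not just $\Sigma\epsilon$) is a homotopy equivalence; this follows from your argument plus Whitehead's Theorem since the spaces involved are simply-connected, a step the paper also leaves implicit.
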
 

\begin{proof} 
By hypothesis~(c), $g$ factors as 
\(\namedddright{\Sigma A}{\ell}{X^{\wedge k}\ltimes\Sigma D}{\overline{\mathfrak{c}}'_{k}} 
               {\Omega\Sigma X\ltimes E}{\overline{a}}{E}\). 
Taking the half-smash with the identity map on $\Omega\Sigma X$ then gives 
the commutativity of the left rectangle in the diagram 
\[\diagram 
       \Omega\Sigma X\ltimes\Sigma A\xdouble[rrr]\dto^{(1\ltimes \ell)} 
            & & & \Omega\Sigma X\ltimes\Sigma A\dto^{1\ltimes g}\drto^{\theta} & \\
       \Omega\Sigma X\ltimes(X^{\wedge k}\ltimes\Sigma D))\rrto^-{1\ltimes\overline{\mathfrak{c}}'_{k}} 
            & & \Omega\Sigma X\ltimes(\Omega\Sigma X\ltimes E)\rto^-{1\ltimes\overline{a}} 
            & \Omega\Sigma X\ltimes E\rto^-{\overline{a}} & E.  
   \enddiagram\] 
The right triangle homotopy commutes by~(\ref{thetabara}). Consider the diagram 
\[\spreaddiagramcolumns{-1pc}\diagram 
      \Sigma(\Omega\Sigma X\ltimes\Sigma A)\rrdouble\dto^{\Sigma\kappa_{1}+\ast} 
            & &\Sigma(\Omega\Sigma X\ltimes\Sigma A)\rdouble\dto^{\Sigma(1\ltimes\ell)} 
            & \Sigma(\Omega\Sigma X\ltimes\Sigma A)\dto^{\Sigma\theta} \\ 
      \Sigma(\Omega\Sigma X\ltimes(X^{\wedge k}\wedge\Sigma D))\vee 
             \Sigma J_{k-1}(X)\ltimes\Sigma D\rrto^-{\Sigma e_{1}\perp\Sigma J'_{k}} 
            & & \Sigma(\Omega\Sigma X\ltimes(X^{\wedge k}\ltimes\Sigma D))\rto^-{\Sigma\alpha} & \Sigma E 
   \enddiagram\] 
where $\alpha=\overline{a}\circ(1\ltimes\overline{a})\circ(1\ltimes\overline{\mathfrak{c}}'_{k})$. 
The left square homotopy commutes by Lemma~\ref{Sigmakappa} and the right square is the 
suspension of the previous diagram. Consider the composite along the bottom row and the string 
of identifications:  
\[\begin{split} 
     \Sigma\alpha\circ(\Sigma e_{1}\perp\Sigma J'_{k}) 
          & =\Sigma\overline{a}\circ\Sigma(1\ltimes\overline{a})\circ\Sigma(1\ltimes\overline{\mathfrak{c}}'_{k})\circ 
                   (\Sigma e_{1}\perp\Sigma J'_{k}) \\ 
          & \simeq\Sigma\overline{a}\circ\Sigma(1\ltimes\overline{a})\circ 
                   (\Sigma(1\ltimes\overline{\mathfrak{c}}_{k})\perp\Sigma\overline{J}_{k}) \\ 
          & =\Sigma\overline{a}\circ(1\ltimes\overline{a})\circ(1\ltimes\overline{\mathfrak{c}}_{k}\perp\overline{J}_{k}).  
  \end{split}\]  
The first equality is from the definition of $\alpha$, the second is from Lemma~\ref{e1J'k} 
and the third is just pulling out a suspension coordinate. By Proposition~\ref{JkEequivfrak}, 
$\overline{a}\circ(1\ltimes\overline{a})\circ(1\ltimes\overline{\mathfrak{c}}_{k}\perp\overline{J}_{k})$ 
is a homotopy equivalence. Therefore $\Sigma\alpha\circ(\Sigma e_{1}\perp\Sigma J'_{k})$ is 
a homotopy equivalence. Taking $\epsilon=\alpha\circ(e_{1}\perp J'_{k})$ then gives the 
asserted homotopy commutative diagram and homotopy equivalence. 
\end{proof} 

The homotopy commutative diagram in Proposition~\ref{Sigmakappatheta} is the 
suspension of the diagram obtained in the proof of Theorem~\ref{E'typeI}. Let 
$\overline{C}$ be the homotopy cofibre of the composite 
\[\nameddright{\Sigma A}{\ell}{X^{\wedge k}\ltimes\Sigma D}{q}{X^{\wedge k}\wedge\Sigma D}.\] 
Then the homotopy cofibre of the map $\Sigma\kappa_{1}+\ast$ in 
Proposition~\ref{Sigmakappatheta} is 
$\Sigma(\Omega\Sigma X\ltimes\overline{C})\vee\Sigma J_{k-1}(X)\ltimes\Sigma D$. 
The homotopy cofibre of the map $\theta$ in Proposition~\ref{Sigmakappatheta} is $E'$. 
Therefore the homotopy commutativity of the diagram in the proposition implies that 
there is an induced map of cofibres 
\[\psi\colon\namedright{\Sigma(\Omega\Sigma X\ltimes\overline{C})\vee\Sigma J_{k-1}(X)\ltimes\Sigma D} 
      {}{\Sigma E'}\] 
and the fact that $\Sigma\epsilon$ is a homotopy equivalence implies that $\psi$ induces 
an isomorphism in homology by the five-lemma and so is a homotopy equivalence by 
Whitehead's Theorem. This gives a description of the homotopy type of $\Sigma E'$. 
However, we want to identify the homotopy type of $E'$. To do this an extra hypothesis 
is necessary.  

\begin{theorem} 
   \label{E'typeII} 
   Suppose that there is a homotopy fibration sequence 
   \(\namedddright{\Omega\Sigma X}{\partial}{E}{p}{Y}{h}{\Sigma X}\) 
   and a map 
   \(\namedright{\Sigma A}{f}{Y}\) 
   that lifts through $p$ to 
   \(\namedright{\Sigma A}{g}{E}\),  
   together satisfying the following properties: 
   \begin{itemize} 
      \item[(a)] $\Omega h$ has a right homotopy inverse; 
      \item[(b)] there is a map 
                     \(\delta\colon\namedright{\Sigma D}{}{E}\) 
                     such that the composite 
                      \[\nameddright{\Omega\Sigma X\ltimes\Sigma D}{1\ltimes\delta} 
                              {\Omega\Sigma X\ltimes E}{\overline{a}}{E}\] 
                     is a homotopy equivalence; 
      \item[(c)] $g$ factors as a composite 
                     \[g\colon\namedddright{\Sigma A}{\ell}{X^{\wedge k}\ltimes\Sigma D}{\overline{\mathfrak{c}}'_{k}} 
                          {\Omega\Sigma X\ltimes E}{\overline{a}}{E}\] 
                      for some map $\ell$; 
       \item[(d)] the composite 
                      \(\nameddright{\Sigma^{2} A}{\Sigma\ell}{\Sigma (X^{\wedge k}\ltimes\Sigma D)} 
                           {\Sigma\pi}{\Sigma D}\) 
                      is null homotopic;  
      \item[(e)] the composite 
                     \(\nameddright{\Sigma A}{\ell}{X^{\wedge k}\ltimes\Sigma D}{q}{X^{\wedge k}\wedge\Sigma D}\) 
                     has a left homotopy inverse.  
   \end{itemize} 
   Then if $\overline{C}$ is the homotopy cofibre of $q\circ\ell$ there is a homotopy equivalence 
   \[E'\simeq(\Omega\Sigma X\ltimes\overline{C})\vee(J_{k-1}(X)\ltimes\Sigma D).\] 
\end{theorem}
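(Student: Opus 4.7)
The plan is to upgrade the suspended homotopy equivalence implicit in Proposition~\ref{Sigmakappatheta} to an unsuspended one, by using hypothesis~(e) to split off a wedge summand and then applying Whitehead's Theorem to a directly constructed candidate map.

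First I would use hypothesis~(e) to produce a homotopy equivalence $X^{\wedge k}\wedge\Sigma D\simeq\Sigma A\vee\overline{C}$ under which $q\circ\ell$ becomes the wedge inclusion of $\Sigma A$. The argument is standard: compose the suspension comultiplication $X^{\wedge k}\wedge\Sigma D\to(X^{\wedge k}\wedge\Sigma D)\vee(X^{\wedge k}\wedge\Sigma D)$ with $r\vee\pi$, where $r$ is the left homotopy inverse of $q\circ\ell$ and $\pi$ is the cofibre projection onto $\overline{C}$; this composite is a homology isomorphism, so Whitehead's Theorem yields the equivalence. Taking the half-smash with $\Omega\Sigma X$ and using that $\ltimes$ distributes over wedges gives
\[\Omega\Sigma X\ltimes(X^{\wedge k}\wedge\Sigma D)\simeq(\Omega\Sigma X\ltimes\Sigma A)\vee(\Omega\Sigma X\ltimes\overline{C}).\]
Combined with Proposition~\ref{JkEequivfrak}, this produces a homotopy equivalence $\alpha_{0}\colon W_{1}\vee W_{2}\vee W_{3}\longrightarrow E$, where $W_{1}=\Omega\Sigma X\ltimes\Sigma A$, $W_{2}=\Omega\Sigma X\ltimes\overline{C}$, and $W_{3}=J_{k-1}(X)\ltimes\Sigma D$.

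Next I would define the candidate equivalence
\[\Phi\colon W_{2}\vee W_{3}\hookrightarrow W_{1}\vee W_{2}\vee W_{3}\stackrel{\alpha_{0}}{\longrightarrow}E\longrightarrow E'\]
where the final arrow is the cofibre projection in the homotopy cofibration $\Omega\Sigma X\ltimes\Sigma A\stackrel{\theta}{\longrightarrow}E\longrightarrow E'$ from Theorem~\ref{GTcofib}. Each of $E'$, $W_{2}$, $W_{3}$ is simply connected (for $E'$, use the cofibration just mentioned together with hypothesis~(b)), so by Whitehead's Theorem it suffices to show that $\Phi$ induces an isomorphism in homology.

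The homology calculation uses Proposition~\ref{Sigmakappatheta} and the congruence technology. From the cofibration, $\rhlgy{E'}\cong\rhlgy{E}/\mathrm{image}(\theta_{*})$. Proposition~\ref{Sigmakappatheta} shows that $\theta$ is congruent to $\epsilon\circ(\kappa_{1}+*)$, so $\theta_{*}=\epsilon_{*}\circ(\kappa_{1}+*)_{*}$; the identity~(\ref{kappas}) combined with the splitting of the first step identifies $\kappa_{1}\simeq 1\ltimes(q\circ\ell)$ with the inclusion $W_{1}\hookrightarrow W_{1}\vee W_{2}$, while Lemma~\ref{e1J'k} shows that $\epsilon$ is congruent to $\alpha_{0}$, so $\epsilon_{*}=(\alpha_{0})_{*}$. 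Therefore $\mathrm{image}(\theta_{*})=(\alpha_{0})_{*}(\rhlgy{W_{1}})$, and since $\alpha_{0}$ is a homology isomorphism the quotient is $\rhlgy{W_{2}}\oplus\rhlgy{W_{3}}\cong\rhlgy{W_{2}\vee W_{3}}$, with $\Phi$ realizing this isomorphism by construction. The main obstacle is the last paragraph: pinning down the image of $\theta_{*}$ precisely, which requires chaining together the congruences from Section~\ref{sec:improve} carefully. The overarching leverage throughout is the fact that congruent maps induce identical maps in homology, which lets Whitehead's Theorem bridge the gap between the suspended equivalence delivered by Proposition~\ref{Sigmakappatheta} and the unsuspended equivalence we need.
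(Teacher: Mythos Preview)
Your proposal is correct and follows essentially the same approach as the paper: both arguments use hypothesis~(e) to split $X^{\wedge k}\wedge\Sigma D\simeq\Sigma A\vee\overline{C}$, combine this with the congruences in Proposition~\ref{Sigmakappatheta} and Lemma~\ref{e1J'k} to pin down $\mathrm{image}(\theta_{*})$, and then invoke Whitehead's Theorem. The only organizational difference is that the paper explicitly desuspends the induced map $\psi$ on cofibres to obtain $\psi'=(\eta\circ\epsilon_{1}\circ(1\ltimes\nu))\perp(\eta\circ\epsilon_{2})$, whereas you build the candidate $\Phi$ directly from the Proposition~\ref{JkEequivfrak} equivalence and verify it on homology; the two candidate maps differ by the congruence between $\epsilon$ and the Proposition~\ref{JkEequivfrak} map, but the argument is structurally identical.
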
 

\begin{proof} 
By Proposition~\ref{Sigmakappatheta}, hypotheses~(a) through~(d) imply that there is a 
homotopy cofibration diagram 
\begin{equation} 
  \label{epsilonpsi} 
  \diagram 
         \Sigma(\Omega\Sigma X\ltimes\Sigma A)\rdouble\dto^{\Sigma\kappa_{1}+\ast} 
              & \Sigma(\Omega\Sigma X\ltimes\Sigma A)\dto^{\Sigma\theta} \\ 
         \Sigma(\Omega\Sigma X\ltimes(X^{\wedge k}\wedge\Sigma D))\vee 
                 (\Sigma J_{k-1}(X)\ltimes\Sigma D\rto^-{\Sigma\epsilon})\dto^{\Sigma\lambda\vee 1}  
              & \Sigma E\dto^{\Sigma\eta} \\  
      \Sigma(\Omega\Sigma X\ltimes\overline{C})\vee(\Sigma J_{k-1}(X)\ltimes\Sigma D)\rto^-{\psi} 
              & \Sigma E' 
  \enddiagram 
\end{equation} 
where $\epsilon$ is a homotopy equivalence, $\lambda$ is the map to the homotopy 
cofibre of $\kappa_{1}$, $\eta$ is the map to the homotopy cofibre of $\theta$, 
and $\psi$ is an induced map of cofibres. As $\epsilon$ is a homotopy equivalence the 
five-lemma implies that $\psi$ induces an isomorphism in homology and so is a homotopy 
equivalence by Whitehead's Theorem. 

We wish to show that $\psi\simeq\Sigma\psi'$. Write $\psi=\psi_{1}\perp\psi_{2}$ where 
$\psi_{1}$ and $\psi_{2}$ are the restrictions of $\psi$ to $\Sigma(\Omega\Sigma X\ltimes\overline{C})$ 
and $\Sigma J_{k-1}(X)\ltimes\Sigma D$ respectively. Similarly write 
$\epsilon=\epsilon_{1}\perp\epsilon_{2}$ where $\epsilon_{1}$ and $\epsilon_{2}$ are 
the restrictions of $\epsilon$ to $\Omega\Sigma X\ltimes(X^{\wedge k}\wedge\Sigma D)$ 
and $J_{k-1}(X)\ltimes\Sigma D$ respectively. Observe that the bottom square 
in~(\ref{epsilonpsi}) implies that $\psi_{2}=\Sigma\eta\circ\Sigma\epsilon_{2}$. In particular, 
$\psi_{2}$ is a suspension. Next, consider the homotopy cofibration 
\(\nameddright{\Sigma A}{q\circ\ell}{X^{\wedge k}\wedge\Sigma D}{\mu}{\overline{C}}\). 
By hypothesis~(e), $q\circ\ell$ has a left homotopy inverse. As $X^{\wedge k}\wedge\Sigma D$ 
is a suspension this implies that the homotopy cofibration splits to give a homotopy equivalence 
\[X^{\wedge k}\wedge\Sigma D\simeq\Sigma A\vee\overline{C}.\] 
In particular, $\mu$ has a right homotopy inverse 
\(\nu\colon\namedright{\overline{C}}{}{X^{\wedge k}\wedge\Sigma D}\).  
Observe that by~(\ref{kappas}) and the definition of $f_{1}$ we have 
$\kappa_{1}=f_{1}\circ(1\ltimes\ell)=(1\ltimes q)\circ(1\ltimes\ell)$. Therefore $\lambda$ can be 
chosen to be $1\ltimes\mu$ and so has $1\ltimes\nu$ as a right homotopy inverse. Hence 
the bottom square in~(\ref{epsilonpsi}) implies that $\psi_{1}$ is homotopic to 
$\Sigma\eta\circ\Sigma\epsilon_{1}\circ\Sigma\nu$. In particular, $\psi_{1}$ is a suspension. 
Hence $\psi$ is a suspension, $\psi=\Sigma\psi'$. Since $\psi$ is a homotopy equivalence, 
it induces an isomorphism in homology. Therefore so does $\psi'$. Since $\Sigma D$ and 
$\overline{C}$ are simply-connected, so is 
$(\Omega\Sigma X\ltimes\overline{C})\vee(J_{k-1}(X)\ltimes\Sigma D)$. Thus $\psi'$ induces 
an isomorphism in homology between simply-connected spaces and so is 
a homotopy equivalence by Whitehead's Theorem. That is, there is a homotopy equivalence 
$E'\simeq(\Omega\Sigma X\ltimes\overline{C})\vee(J_{k-1}(X)\ltimes\Sigma D)$. 
\end{proof}

\newpage 
\section{Applying Theorem~\ref{E'typeII}} 
\label{sec:decompapps} 

In this section examples are given of Theorem~\ref{E'typeII} in action. This begins 
with a general example in Proposition~\ref{wedgeex} which will then lead to several 
more specific families of examples. We first need a general lemma. 

For a space $X$, let 
\[E\colon\namedright{X}{}{\Omega\Sigma X}\] 
be the suspension, which is adjoint to the identity map on $\Sigma X$. Given pointed, 
path-connected spaces $X_{1},\ldots,X_{m}$, for $1\leq s\leq m$ let 
\[i_{s}\colon\namedright{\Sigma X_{s}}{}{\bigvee_{i=1}^{m}\Sigma X_{i}}\] 
be the inclusion of the $s^{th}$-wedge summand. Let 
\[I\colon\namedright{\bigvee_{i=2}^{m}\Sigma X_{i}}{}{\bigvee_{i=1}^{m}\Sigma X_{i}}\] 
be the inclusion, and note that $I=i_{2}\perp\cdots\perp i_{m}$. 

\begin{lemma} 
   \label{Whlift} 
   Let $X_{1},\ldots,X_{m}$ be pointed, path-connected spaces. Let 
   \(q_{1}\colon\namedright{\bigvee_{i=1}^{m}\Sigma X_{i}}{}{\Sigma X_{1}}\) 
   be the pinch map onto the first wedge summand and let $E$ be its homotopy fibre. 
   Then the following hold: 
   \begin{letterlist} 
      \item there is a map 
               \(g\colon\namedright{\bigvee_{i=2}^{m}\Sigma X_{i}}{}{E}\) 
               which lifts $I$ through $p$; 
      \item the composite 
               \[\nameddright{\Omega\Sigma X_{1}\ltimes(\bigvee_{i=2}^{m}\Sigma X_{i})}{1\ltimes g} 
                    {\Omega\Sigma X_{1}\ltimes E}{\overline{a}}{E}\] 
               is a homotopy equivalence; 
     \item  the composite 
               \[\namedddright{X_{1}\wedge(\bigvee_{i=2}^{m}\Sigma X_{i})}{i} 
                  {X_{1}\ltimes(\bigvee_{i=2}^{m}\Sigma X_{i})}{E\ltimes 1} 
                  {\Omega\Sigma X_{1}\ltimes(\bigvee_{i=2}^{m}\Sigma X_{i})}{1\ltimes g} 
                  {\Omega\Sigma X_{1}\ltimes E}\stackrel{\overline{a}}{\longrightarrow} E\] 
               is a lift of the Whitehead product $[i_{1},i_{2}]\perp\cdots\perp [i_{1},i_{m}]$ through $p$. 
   \end{letterlist} 
\end{lemma}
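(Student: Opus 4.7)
Parts~(a) and~(b) follow directly from Theorem~\ref{BTfibinclusion} applied to the homotopy cofibration
\[\nameddright{\bigvee_{i=2}^{m}\Sigma X_{i}}{I}{\bigvee_{i=1}^{m}\Sigma X_{i}}{q_{1}}{\Sigma X_{1}}.\]
The hypothesis of that theorem is satisfied because the inclusion $i_{1}$ of the first wedge summand is a right homotopy inverse of $q_{1}$, so $\Omega i_{1}$ is a right homotopy inverse of $\Omega q_{1}$. Thus the theorem produces a lift $g$ of $I$ through $p$ and a homotopy equivalence $\overline{a}\circ(1\ltimes g)$. This is the natural generalization of Example~\ref{specialGTadexample} from a single wedge summand $\Sigma Y$ to the wedge $\bigvee_{i=2}^{m}\Sigma X_{i}$.

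For part~(c), the approach is to apply Proposition~\ref{thetaWh} with $\Sigma B=\bigvee_{i=2}^{m}\Sigma X_{i}$, $\alpha=g$, $Z=\Sigma X_{1}$, and $s=\Omega i_{1}$. The composite appearing in~(c) is precisely $\overline{a}\circ(1\ltimes g)\circ(E\ltimes 1)\circ i$, which by Lemma~\ref{varthetabara} factors through $\vartheta=\overline{a}\circ(1\ltimes g)\circ(s\ltimes 1)\circ(\ldots)$ after the identification $E\circ 1 = (s\circ E)\ltimes 1$ up to the suspension convention. More directly, form the diagram from the proof of Lemma~\ref{dbarkaction}, replacing $\Sigma Y$ by $\bigvee_{i=2}^{m}\Sigma X_{i}$:
\[\diagram
   X_{1}\wedge(\bigvee_{i=2}^{m}\Sigma X_{i})\rto^-{i}\dto^{E\wedge 1}
      & X_{1}\ltimes(\bigvee_{i=2}^{m}\Sigma X_{i})\dto^{E\ltimes 1} & & \\
   \Omega\Sigma X_{1}\wedge(\bigvee_{i=2}^{m}\Sigma X_{i})\rto^-{i}
         \xto[drrr]_{[ev\circ\Sigma\Omega i_{1},\, p\circ g]}
      & \Omega\Sigma X_{1}\ltimes(\bigvee_{i=2}^{m}\Sigma X_{i})\rto^-{1\ltimes g}
      & \Omega\Sigma X_{1}\ltimes E\rto^-{\overline{a}} & E\dto^{p} \\
   & & & \bigvee_{i=1}^{m}\Sigma X_{i}.
\enddiagram\]
The top square commutes by naturality of $i$, and the lower triangle homotopy commutes by Proposition~\ref{thetaWh}.

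To finish, one writes $i_{1}\simeq\Sigma i_{1}'$ and uses $\Omega\Sigma i_{1}'\circ E\simeq E\circ i_{1}'$ together with $ev\circ\Sigma E\simeq 1$ to obtain $ev\circ\Sigma\Omega i_{1}\circ\Sigma E\simeq i_{1}$. Naturality of the Whitehead product combined with $p\circ g\simeq I=i_{2}\perp\cdots\perp i_{m}$ then yields
\[[ev\circ\Sigma\Omega i_{1},\,p\circ g]\circ(E\wedge\Sigma 1)\simeq[i_{1},\,i_{2}\perp\cdots\perp i_{m}].\]
Finally, since the second argument is a wedge of suspensions and Whitehead products distribute additively over co-$H$-space structure in the second variable,
\[[i_{1},\,i_{2}\perp\cdots\perp i_{m}]\simeq[i_{1},i_{2}]\perp\cdots\perp[i_{1},i_{m}],\]
so the composite in~(c) is a lift of $[i_{1},i_{2}]\perp\cdots\perp[i_{1},i_{m}]$ through $p$, as claimed.

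\textbf{Main obstacle.} Parts~(a) and~(b) are essentially formal consequences of the established machinery. The only real work is in~(c), and the subtlety lies in correctly tracking the identifications $\Sigma\Omega X_{1}\wedge B\equiv\Omega\Sigma X_{1}\wedge\Sigma B$ used to pass between the Whitehead-product form of $\vartheta$ and the half-smash form appearing in the stated composite, and in invoking the distributivity of the Whitehead product over the wedge in its second argument cleanly.
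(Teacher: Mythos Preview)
Your proof is correct and follows essentially the same approach as the paper: parts~(a) and~(b) via Theorem~\ref{BTfibinclusion}/Example~\ref{specialGTadexample}, and part~(c) via the $k=1$ diagram underlying Lemma~\ref{dbarkaction} together with the linearity of the Whitehead product in its second variable. The paper packages part~(c) by citing Lemma~\ref{dbarkaction} and Lemma~\ref{dcongruent} (to identify $\mathfrak{d}_{1}=d_{1}$), whereas you unpack that citation and work directly with Proposition~\ref{thetaWh}; the content is identical.
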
 

\begin{proof} 
Let $X=X_{1}$ and $Y=\bigvee_{i=2}^{m} X_{i}$, so that 
$\bigvee_{i=1}^{m}\Sigma X_{i}=\Sigma X\vee\Sigma Y$. To avoid overlapping notation, let  
\(i_{L}\colon\namedright{\Sigma X}{}{\Sigma X\vee\Sigma Y}\) 
and 
\(i_{R}\colon\namedright{\Sigma Y}{}{\Sigma X\vee\Sigma Y}\) 
be the inclusions of the left and right wedge summands respectively. 
Since $q_{1}\circ i_{R}$ is null homotopic, there is a map 
\(g\colon\namedright{\Sigma Y}{}{E}\) 
that lifts the inclusion through $p$. By Example~\ref{specialGTadexample} the composite 
\[\nameddright{\Omega\Sigma X\ltimes\Sigma Y}{1\ltimes g}{\Omega\Sigma X\ltimes E}{\overline{a}}{E}\] 
is a homotopy equivalence. This proves parts~(a) and~(b). By Lemma~\ref{dbarkaction} 
there is a homotopy commutative diagram 
\[\diagram 
      X\wedge\Sigma Y\rto^{\mathfrak{d}_{1}}\drto_{[i_{L},i_{R}]} & E\dto^{p} \\ 
      & \Sigma X\vee\Sigma Y. 
  \enddiagram\] 
By Lemma~\ref{dcongruent}, $\mathfrak{d}_{1}=d_{1}$, where $d_{1}$ is the composite 
\(\namedddright{X\wedge\Sigma Y}{c_{1}}{\Omega\Sigma X\ltimes\Sigma Y}{1\ltimes g} 
     {\Omega\Sigma X\ltimes E}{\overline{a}}{E}\) 
and $c_{1}$ is the composite 
\(\nameddright{X\wedge\Sigma Y}{i}{X\ltimes\Sigma Y}{E\ltimes 1}{\Omega\Sigma X\ltimes\Sigma Y}\). 
Remembeing that $X=X_{1}$ and $Y=\bigvee_{i=2}^{m} X_{i}$, we have 
$i_{R}=i_{1}$ and $i_{L}=i_{2}\perp\cdots\perp i_{m}$. The linearity of the Whitehead product 
therefore implies that 
\[[i_{L},i_{R}]\simeq [i_{1},i_{2}]\perp\cdots\perp [i_{1},i_{m}].\] 
Thus $\overline{a}\circ(1\ltimes g)\circ(E\ltimes 1)\circ i$ is a lift of 
$[i_{1},i_{2}]\perp\cdots\perp [i_{1},i_{m}]$ through $p$, proving part~(c). 
\end{proof} 

Parts~(b) and~(c) of Lemma~\ref{Whlift} have the following corollaries. 

\begin{corollary} 
   \label{Whliftcor1} 
   Let   
   \(\namedright{B}{\alpha}{\bigvee_{i=2}^{m}\Sigma X_{i}}\) 
   be a map. A lift of the composite 
    \(\nameddright{B}{\alpha}{\bigvee_{i=2}^{m}\Sigma X_{i}}{I}{\bigvee_{i=1}^{m}\Sigma X_{i}}\) 
    through $p$ is given by 
    \[\namedddright{B}{\alpha}{\bigvee_{i=2}^{m}\Sigma X_{i}}{j}{X_{1}\ltimes(\bigvee_{i=2}^{m}\Sigma X_{i})} 
         {E\ltimes 1}{\Omega\Sigma X_{1}\ltimes(\bigvee_{i=2}^{m}\Sigma X_{i})} 
         \stackrel{1\ltimes g}{\longrightarrow}\namedright{\Omega\Sigma X\ltimes E}{\overline{a}}{E}.\]  
\end{corollary}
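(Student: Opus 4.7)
The plan is to reduce the given long composite to the simpler map $g \circ \alpha$ using naturality of the inclusion $j \colon Y \to A \ltimes Y$ and a basic property of the action quotient $\overline{a}$, and then invoke Lemma~\ref{Whlift}(a) to conclude. Concretely, writing the final two stages of the composite, I would first observe that by naturality of $j$ in both slots one has $(E \ltimes 1) \circ j \simeq j$ (precomposing with $j$ introduces a basepoint in the left coordinate, and $E$ is a pointed map), so this factor reduces to the inclusion $j \colon \bigvee_{i=2}^{m} \Sigma X_{i} \to \Omega\Sigma X_{1} \ltimes \bigvee_{i=2}^{m} \Sigma X_{i}$.

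Next, by naturality of $j$ in the right coordinate, the composite $(1 \ltimes g) \circ j$ is homotopic to $j \circ g$, where now $j \colon E \to \Omega\Sigma X_{1} \ltimes E$. Finally, because $\overline{a}$ is the quotient of the homotopy action $a \colon \Omega\Sigma X_{1} \times E \to E$, and $a$ restricts to the identity on $\{*\} \times E = E$, the composite $\overline{a} \circ j$ is homotopic to the identity on $E$ (this fact is already used in Lemma~\ref{c'kprops}~(d) and Remark~\ref{E'Jmap}). Putting the three simplifications together, the five-fold composite
\[
\overline{a} \circ (1 \ltimes g) \circ (E \ltimes 1) \circ j \circ \alpha
\]
is homotopic to $g \circ \alpha$.

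Finally, composing with $p$ and applying Lemma~\ref{Whlift}~(a), which asserts $p \circ g \simeq I$, one obtains
\[
p \circ \overline{a} \circ (1 \ltimes g) \circ (E \ltimes 1) \circ j \circ \alpha \simeq p \circ g \circ \alpha \simeq I \circ \alpha,
\]
which is exactly the statement that the displayed composite is a lift of $I \circ \alpha$ through $p$. There is no real obstacle here: the whole argument is a chase through the naturality properties of half-smashes and the definition of $\overline{a}$; the only thing to be mildly careful about is keeping track of which copy of $j$ (inclusion into which half-smash) is in play at each step, which is entirely bookkeeping.
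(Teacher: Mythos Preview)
Your argument is correct and is essentially the immediate verification the paper has in mind: collapse the composite to $g\circ\alpha$ via naturality of $j$ and the fact that $\overline{a}\circ j\simeq 1_{E}$, then use that $p\circ g\simeq I$ from Lemma~\ref{Whlift}(a). The paper's introductory sentence attributes the corollary to parts~(b) and~(c), but the content you actually need is part~(a) together with the standard facts about $j$ and $\overline{a}$ you cite; this is exactly what the omitted proof amounts to.
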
 
\vspace{-1cm}~$\qqed$\bigskip 

\begin{corollary} 
   \label{Whliftcor2} 
   The restriction of the composite in Lemma~\ref{Whlift}~(c) to $X_{1}\wedge\Sigma X_{t}$ 
   for some $2\leq t\leq m$ is a lift of the Whitehead product $[i_{1},i_{t}]$ through $p$.~$\qqed$ 
\end{corollary}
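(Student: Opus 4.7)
\medskip

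\noindent
\textbf{Proof proposal.}
The plan is to reduce the statement to a computation already contained inside the proof of Lemma~\ref{Whlift}~(c). That proof produced a factorization of the large composite through the map $\mathfrak{d}_{1} = d_{1}$, whose image under $p$ was identified with the single Whitehead product $[i_{L},i_{R}]$, and then the linearity (distributivity) of the Whitehead product over $i_{R}=i_{2}\perp\cdots\perp i_{m}$ was used to rewrite $[i_{L},i_{R}]$ as the perpendicular sum $[i_{1},i_{2}]\perp\cdots\perp[i_{1},i_{m}]$. In particular, under the canonical identification
\[X_{1}\wedge\Sigma\bigg(\bigvee_{i=2}^{m} X_{i}\bigg)\simeq\bigvee_{t=2}^{m} X_{1}\wedge\Sigma X_{t},\]
the wedge summand indexed by $t$ is sent by $p\circ(\text{Lemma~\ref{Whlift}~(c) composite})$ precisely to $[i_{1},i_{t}]$.

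The proof I would write then proceeds in two short steps. First, let
\(\namedright{X_{1}\wedge\Sigma X_{t}}{\iota_{t}}{X_{1}\wedge\Sigma(\bigvee_{i=2}^{m} X_{i})}\)
be the inclusion of the $t^{th}$ wedge summand coming from the natural splitting of the smash above. Call the composite in Lemma~\ref{Whlift}~(c) $\Phi$. The restriction in question is the composite $\Phi\circ\iota_{t}$. Second, by Lemma~\ref{Whlift}~(c) we have $p\circ\Phi\simeq[i_{1},i_{2}]\perp\cdots\perp[i_{1},i_{m}]$, so precomposing with $\iota_{t}$ immediately yields
\[p\circ\Phi\circ\iota_{t}\simeq\big([i_{1},i_{2}]\perp\cdots\perp[i_{1},i_{m}]\big)\circ\iota_{t}\simeq[i_{1},i_{t}],\]
because the perpendicular sum restricted to a single wedge summand is the corresponding summand map. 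This is the assertion that $\Phi\circ\iota_{t}$ is a lift of $[i_{1},i_{t}]$ through $p$.

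There is no real obstacle: the identification of $\Phi\circ\iota_{t}$ as a lift of $[i_{1},i_{t}]$ is a formal consequence of the linearity of the Whitehead product, which was already invoked in the proof of Lemma~\ref{Whlift}~(c). The only point requiring any care is that the wedge decomposition of $X_{1}\wedge\Sigma(\bigvee_{i=2}^{m} X_{i})$ one uses here must be the same one implicit in Lemma~\ref{Whlift}~(c); but both come from the natural homotopy equivalence $\Sigma(\bigvee_{i=2}^{m} X_{i})\simeq\bigvee_{i=2}^{m}\Sigma X_{i}$ smashed with $X_{1}$, so this is automatic.
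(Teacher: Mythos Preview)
Your proposal is correct and takes essentially the same approach as the paper, which gives no written proof at all (the statement is marked with an immediate~$\qqed$). Your argument simply spells out the intended one-line reason: restricting the perpendicular sum $[i_{1},i_{2}]\perp\cdots\perp[i_{1},i_{m}]$ to the wedge summand $X_{1}\wedge\Sigma X_{t}$ picks out $[i_{1},i_{t}]$.
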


\begin{proposition} 
   \label{wedgeex} 
   Let $X_{1},\ldots,X_{m}$ be pointed, path-connected spaces and suppose that there 
   is a homotopy cofibration 
   \(\nameddright{\Sigma A}{f}{\bigvee_{i=1}^{m}\Sigma X_{i}}{}{M}\). 
   Suppose that $f=f_{1}+f_{2}$ where:  
   \begin{itemize} 
      \item $f_{1}=\sum_{j=2}^{m} [i_{1},i_{j}]\circ h_{1,j}$ for some maps 
               \(h_{1,j}\colon\namedright{\Sigma A}{}{\Sigma X_{1}\wedge X_{j}}\); 
      \item there is at least one $t\in\{2,\ldots,m\}$ such that 
               \(\namedright{\Sigma A}{h_{1,t}}{\Sigma X_{1}\wedge X_{t}}\) 
               has a left homotopy inverse; 
      \item $f_{2}$ factors as 
               \(\nameddright{\Sigma A}{\gamma}{\bigvee_{i=2}^{m}\Sigma X_{i}}{I} 
                     {\bigvee_{i=1}^{m}\Sigma X_{i}}\) 
                for some map $\gamma$; 
      \item $\Sigma\gamma$ is null homotopic. 
   \end{itemize} 
   Let $h=\sum_{j=2}^{m} h_{1,j}$ and let $\overline{C}$ be the homotopy cofibre of 
   \(\namedright{\Sigma A}{h}{X_{1}\wedge(\bigvee_{i=2}^{m}\Sigma X_{i})}\). 
   Then the following hold: 
   \begin{letterlist} 
      \item there is a map 
               \(q'\colon\namedright{M}{}{\Sigma X_{1}}\) 
               extending $q_{1}$;  
      \item there is a homotopy fibration 
              \[\nameddright{(\Omega\Sigma X_{1}\ltimes\overline{C})\vee(\bigvee_{i=2}^{m}\Sigma X_{i})} 
                    {}{M}{q'}{\Sigma X_{1}};\]  
      \item the homotopy fibration in part~(b) splits after looping to give a homotopy equivalence 
              \[\Omega M\simeq\Omega\Sigma X_{1}\times 
                   \Omega\bigg((\Omega\Sigma X_{1}\ltimes\overline{C})\vee(\bigvee_{i=2}^{m}\Sigma X_{i})\bigg).\] 
     \end{letterlist} 
\end{proposition}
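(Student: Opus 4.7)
The plan is to apply Theorem~\ref{E'typeII} to the homotopy fibration
\(\nameddright{E}{p}{\bigvee_{i=1}^{m}\Sigma X_{i}}{q_{1}}{\Sigma X_{1}}\),
taking $X=X_{1}$, $\Sigma D=\bigvee_{i=2}^{m}\Sigma X_{i}$ and $k=1$. Part~(a) of the proposition is immediate: since $q_{1}\circ[i_{1},i_{j}]$ is null homotopic for each $j$ and $q_{1}\circ I$ is null homotopic by definition of the pinch map, $q_{1}\circ f\simeq\ast$ and so $q_{1}$ extends to a map $q'\colon M\to\Sigma X_{1}$. Moreover, because $q_{1}\circ i_{1}$ is homotopic to the identity, the composite \(\nameddright{\Sigma X_{1}}{i_{1}}{\bigvee_{i=1}^{m}\Sigma X_{i}}{}{M}\) is a section of $q'$, which will supply the loop space splitting in part~(c) once part~(b) is established.

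Using the splitting $X_{1}\ltimes\Sigma D\simeq(X_{1}\wedge\Sigma D)\vee\Sigma D$ given by $i\perp j$, and the identification $X_{1}\wedge\Sigma D\simeq\bigvee_{j=2}^{m}\Sigma X_{1}\wedge X_{j}$, define
\[\ell\colon\namedright{\Sigma A}{(i\circ h)+(j\circ\gamma)}{X_{1}\ltimes\Sigma D}.\]
For hypothesis~(b) of Theorem~\ref{E'typeII}, take $\delta$ to be the map $g$ from Lemma~\ref{Whlift}~(a), so that $\overline{a}\circ(1\ltimes\delta)$ is a homotopy equivalence by Lemma~\ref{Whlift}~(b). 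Hypothesis~(a) of Theorem~\ref{E'typeII} holds because $\Omega i_{1}$ is a right homotopy inverse for $\Omega q_{1}$. For hypothesis~(d), observe that $\pi\circ i\simeq\ast$ and $\pi\circ j\simeq\mathrm{id}_{\Sigma D}$, so $\pi\circ\ell\simeq\gamma$ and therefore $\Sigma\pi\circ\Sigma\ell\simeq\Sigma\gamma\simeq\ast$. For hypothesis~(e), $q\circ\ell\simeq h$; composing a left homotopy inverse of $h_{1,t}$ with the pinch map from $X_{1}\wedge\Sigma D\simeq\bigvee_{j=2}^{m}\Sigma X_{1}\wedge X_{j}$ onto its $t^{\mathrm{th}}$ wedge summand produces a left homotopy inverse of $h$.

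The main obstacle is verifying hypothesis~(c): that $\overline{a}\circ\overline{\mathfrak{c}}'_{1}\circ\ell$ is a lift of $f$ through $p$, so that it may be chosen as the lift $g$ required by Theorem~\ref{E'typeII}. Since $\overline{\mathfrak{c}}'_{1}=E\ltimes\delta$ by definition, we split the computation along the two summands of $\ell$. On the $i\circ h$ summand, the map $\overline{a}\circ(E\ltimes\delta)\circ i\circ h$ agrees with the lift constructed in Lemma~\ref{Whlift}~(c) precomposed with $h$, so by Corollary~\ref{Whliftcor2} and the linearity of the Whitehead product it lifts $\sum_{j=2}^{m}[i_{1},i_{j}]\circ h_{1,j}=f_{1}$. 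On the $j\circ\gamma$ summand, the naturality of $j$ gives $(E\ltimes\delta)\circ j\simeq j\circ\delta$, and then the identity $\overline{a}\circ j\simeq\mathrm{id}_{E}$ reduces the composite to $\delta\circ\gamma$, which lifts $I\circ\gamma=f_{2}$ since $p\circ\delta=I$. Adding the two contributions using the co-$H$ structure of $\Sigma A$ gives a lift of $f_{1}+f_{2}=f$. Theorem~\ref{E'typeII} now yields a homotopy equivalence $E'\simeq(\Omega\Sigma X_{1}\ltimes\overline{C})\vee(J_{0}(X_{1})\ltimes\Sigma D)$, where $E'$ is the homotopy fibre of $q'$; since $J_{0}(X_{1})$ is a point, the second summand is $\Sigma D=\bigvee_{i=2}^{m}\Sigma X_{i}$, giving part~(b). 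The section of $q'$ constructed in the first paragraph then splits this fibration after looping, yielding part~(c).
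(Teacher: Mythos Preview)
Your proof is correct and follows essentially the same approach as the paper: apply Theorem~\ref{E'typeII} with $X=X_{1}$, $\Sigma D=\bigvee_{i=2}^{m}\Sigma X_{i}$, $k=1$, $\delta$ the lift supplied by Lemma~\ref{Whlift}, and $\ell=(i\circ h)+(j\circ\gamma)$, then verify hypotheses (a)--(e) exactly as the paper does in its Steps~1--5. Your identification of $q\circ\ell\simeq h$ and $\pi\circ\ell\simeq\gamma$, your use of Lemma~\ref{Whlift}~(c) together with $\overline{a}\circ j\simeq\mathrm{id}_{E}$ to check hypothesis~(c), and your observation that $J_{0}(X_{1})\ltimes\Sigma D=\Sigma D$ all match the paper's argument, only presented more compactly.
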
 

\begin{proof} 
First observe that as $f_{1}$ factors through the Whitehead products $[i_{1},i_{j}]$ and 
$f_{2}$ factors through~$I$, the composite 
\(\nameddright{\Sigma A}{f}{\bigvee_{i=1}^{m}\Sigma X_{i}}{q_{1}}{\Sigma X_{1}}\) 
is null homotopic, so $q_{1}$ extends to a map 
\(q'\colon\namedright{M}{}{\Sigma X_{1}}\). 
This proves part~(a). 

To prove parts~(b) and~(c), Theorem~\ref{E'typeII} will be applied to the homotopy fibration 
\(\nameddright{E}{p}{\bigvee_{i=1}^{m}\Sigma X_{i}}{q_{1}}{\Sigma X_{1}}\) 
and the attaching map $f$ for $M$. The hypotheses for that theorem need to be checked. 
In the notation of Proposition~\ref{E'typeII}, let $D=\bigvee_{i=2}^{m} X_{i}$ and let  
\(\namedright{\Sigma D}{\delta}{E}\) 
be a lift of $I$ through $p$. 
\medskip 

\noindent 
\textit{Step 1}: The map $i_{1}$ is a right homotopy inverse for $q_{1}$, so hypothesis~(a) 
in Theorem~\ref{E'typeII} is satisfied. With $D$ and $\delta$ as above, the homotopy equivalence 
in Lemma~\ref{Whlift}~(b) implies that hypothesis~(b) of Theorem~\ref{E'typeII} is satisfied. 
\medskip 

\noindent 
\textit{Step 2}: For hypothesis~(c) of Theorem~\ref{E'typeII} we need to choose a
 lift $g$ of $f$ through $p$. Let $\ell_{1,j}$ be the composite 
\[\ell_{1,j}\colon\namedright{\Sigma A}{h_{1,j}}{\Sigma X_{1}\wedge X_{j}} 
      \hookrightarrow\namedright{X_{1}\wedge(\bigvee_{i=2}^{m}\Sigma X_{i})}{i} 
      {X_{1}\ltimes(\bigvee_{i=2}^{m}\Sigma X_{i})}.\] 
Then by Corollary~\ref{Whliftcor2} the composite 
\[\namedddright{\Sigma A}{\ell_{1,j}}{X_{1}\ltimes(\bigvee_{i=2}^{m}\Sigma X_{i})} 
     {E\ltimes\delta}{\Omega\Sigma X_{1}\ltimes E}{\overline{a}}{E}\] 
is a lift of $[i_{1},i_{j}]\circ h_{1,j}$ through $p$. Let 
$\ell_{1}=\sum_{j=2}^{m}\ell_{1,j}$. Then the composite 
\[g_{1}\colon\namedddright{\Sigma A}{\ell_{1}}{X_{1}\ltimes(\bigvee_{i=2}^{m}\Sigma X_{i})} 
     {E\ltimes\delta}{\Omega\Sigma X_{1}\ltimes E}{\overline{a}}{E}\] 
is a lift of $f_{1}$ through $p$. Let $\ell_{2}$ be the composite 
\[\ell_{2}\colon\nameddright{\Sigma A}{\gamma}{\bigvee_{i=2}^{m}\Sigma X_{i}}{j} 
       {X_{1}\ltimes(\bigvee_{i=2}^{m}\Sigma X_{i})}.\] 
Then by Corollary~\ref{Whliftcor1} the composite 
\[g_{2}\colon\namedddright{\Sigma A}{\ell_{2}}{X_{1}\ltimes(\bigvee_{i=2}^{m}\Sigma X_{i})} 
        {E\ltimes\delta}{\Omega\Sigma X_{1}\ltimes E}{\overline{a}}{E}\] 
is a lift of $f_{2}$ through $p$. Thus if 
\[\ell\colon\namedright{\Sigma A}{}{X_{1}\ltimes(\bigvee_{i=2}^{m}\Sigma X_{i})}\] 
is $\ell_{1}+\ell_{2}$ and 
\[g\colon\namedddright{\Sigma A}{\ell}{X_{1}\ltimes(\bigvee_{i=2}^{m}\Sigma X_{i})} 
        {E\ltimes\delta}{\Omega\Sigma X_{1}\ltimes E}{\overline{a}}{E}\] 
is $g_{1}+g_{2}$ then $g$ is a lift of $f$ through $p$. By definition, 
the map $\overline{\mathfrak{c}}'_{1}$ in Section~\ref{sec:improve} equals $E\ltimes 1$, 
so the map $g$ satisfies hypothesis~(c) of Theorem~\ref{E'typeII}. 
\medskip 

\noindent 
\textit{Step 3}: Consider the composite 
\[\nameddright{\Sigma A}{\ell}{X_{1}\ltimes(\bigvee_{i=2}^{m}\Sigma X_{i})}{\pi} 
      {\bigvee_{i=2}^{m}\Sigma X_{i}}.\] 
By definition, $\ell_{1,j}$ factors through $i$ and $\pi\circ i$ is null homotopic. 
Therefore $\pi\circ\ell_{1,j}$ is null homotopic for each $2\leq j\leq m$. As 
$\ell_{1}=\sum_{j=2}^{m}\ell_{1,j}$, we obtain a null homotopy for $\pi\circ\ell_{1}$. 
By definition, $\ell_{2}=j\circ\gamma$ and $\pi\circ j$ is the identity map, so 
$\pi\circ\ell_{2}=\gamma$. By hypothesis, $\Sigma\gamma$ is null homotopic, 
and therefore so is $\Sigma(\pi\circ\ell_{2})$. As $\ell=\ell_{1}+\ell_{2}$, we obtain 
a null homotopy for $\Sigma(\pi\circ\ell)$. This fulfils hypothesis~(d) of Theorem~\ref{E'typeII}. 
\medskip 

\noindent 
\textit{Step 4}: Consider the composition 
\[\nameddright{\Sigma A}{\ell}{X_{1}\ltimes(\bigvee_{i=2}^{m}\Sigma X_{i})}{q} 
      {X_{1}\wedge(\bigvee_{i=2}^{m}\Sigma X_{i})}.\] 
By definition, $\ell_{2}=j\circ\gamma$ and $q\circ j$ is null homotopic, so 
$q\circ\ell_{2}$ is null homotopic. Therefore, as $\ell=\ell_{1}+\ell_{2}$, we have 
$q\circ\ell\simeq q\circ\ell_{1}$. On the other hand, by definition, $\ell_{1,t}$ factors 
through $i$ and $q\circ i$ is homotopic to the identity map. Therefore 
$q\circ\ell_{1,j}$ is homotopic to the composite 
\begin{equation} 
  \label{h1j} 
  \namedright{\Sigma A}{h_{1,j}}{\Sigma X_{1}\wedge X_{j}} 
      \hookrightarrow X_{1}\wedge(\bigvee_{i=2}^{m}\Sigma X_{i}). 
\end{equation}  
The sum of the inclusions 
\(\namedright{\Sigma X_{1}\wedge X_{j}}{}{X_{1}\wedge(\bigvee_{i=2}^{m}\Sigma X_{i})}\) 
for $2\leq j\leq m$ is homotopic to the identity map, so as $h=\sum_{j=2}^{m} h_{1,j}$ 
and $\ell=\sum_{j=2}^{m}\ell_{1,j}$ we have $q\circ\ell_{1}$ homotopic to $h$. 
Hence $q\circ\ell\simeq h$. 
\medskip 

\noindent 
\textit{Step 5}: By hypothesis, there is a $t\in\{2,\ldots,m\}$ such that 
\(\namedright{\Sigma A}{h_{1,t}}{\Sigma X_{1}\wedge X_{t}}\) 
has a left homotopy inverse 
\(r\colon\namedright{\Sigma X_{1}\wedge X_{t}}{}{\Sigma A}\). 
Consider the composite 
\begin{equation} 
  \label{qh} 
  \namedddright{\Sigma A}{h}{X_{1}\wedge(\bigvee_{i=2}^{m}\Sigma X_{i})}{1\wedge q_{t}} 
       {X_{1}\wedge\Sigma X_{t}}{r}{\Sigma A} 
\end{equation}  
where $q_{t}$ is the pinch map to the $t^{th}$-wedge summand. 
Observe that~(\ref{h1j}) composed with $1\wedge q_{t}$ is null homotopic if $j\neq t$ and 
is homotopic to $h_{1,t}$ if $j=t$. Thus in~(\ref{qh}) the composite 
$(1\wedge q_{t})\circ h$ is homotopic to $h_{1,t}$. Hence as 
$r$ is a left homotopy inverse for $h_{1,t}$, the composite~(\ref{qh}) 
is homotopic to the identity map. In particular, $h$ has a left homotopy inverse. 
That is, by Step~$4$, $q\circ\ell$ has a left homotopy inverse. This fulfils hypothesis~(e) 
of Theorem~\ref{E'typeII}. 
\medskip 

\noindent 
\textit{Step 6}: As hypotheses~(a) to~(e) of Theorem~\ref{E'typeII} hold, 
applying the proposition immediately implies assertions~(b) and~(c), noting that by Step 3, 
$h=q\circ\ell$. 
\end{proof} 

The homotopy decomposition of $\Omega M$ in Proposition~\ref{wedgeex} 
can be made more precise by identifying the homotopy type of $\overline{C}$. 
One hypothesis is that for some $t\in\{2,\ldots,m\}$ the map 
\(\namedright{\Sigma A}{h_{1,t}}{\Sigma X_{1}\wedge X_{t}}\) 
has a left homotopy inverse. Let $B$ be the homotopy cofibre of $h_{1,t}$. The 
left homotopy inverse for $h_{1,t}$ and the fact that $\Sigma X_{1}\wedge X_{t}$ 
is a suspension implies that there is a homotopy equivalence 
\[\Sigma X_{1}\wedge X_{t}\simeq\Sigma A\vee B.\]

\begin{lemma} 
   \label{Cbartype} 
   In Proposition~\ref{wedgeex}, there is a homotopy equivalence 
   $\overline{C}\simeq\bigg(X_{1}\wedge(\displaystyle\bigvee_{\substack{i=2 \\ i\neq t}}\Sigma X_{i})\bigg)\vee B$. 
\end{lemma}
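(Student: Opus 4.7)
The plan is to simplify the map $h$ up to a self-equivalence of its target so that the cofibre computation becomes transparent. I would first split the target as
\[
X_{1}\wedge\Bigl(\bigvee_{i=2}^{m}\Sigma X_{i}\Bigr)\simeq Y\vee Z,\qquad
Y=X_{1}\wedge\Sigma X_{t},\quad
Z=X_{1}\wedge\Bigl(\bigvee_{\substack{i=2\\ i\neq t}}^{m}\Sigma X_{i}\Bigr),
\]
and, writing $h'=\sum_{j\neq t}h_{1,j}$ as a map $\Sigma A\to Z$, decompose $h\simeq i_{Y}\circ h_{1,t}+i_{Z}\circ h'$ using the co-$H$-space structure of $\Sigma A$. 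The two summands of $h$ then land in complementary wedge summands of $Y\vee Z$.

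Next I would use the given left homotopy inverse $r\colon Y\to\Sigma A$ of $h_{1,t}$ to define a self-map $\Phi\colon Y\vee Z\to Y\vee Z$ by
\[
\Phi|_{Y}=i_{Y}-i_{Z}\circ h'\circ r,\qquad \Phi|_{Z}=i_{Z},
\]
where the subtraction makes sense because $Y=X_{1}\wedge\Sigma X_{t}$ is a suspension. Direct distributivity of sums through $\Sigma A$ combined with $r\circ h_{1,t}\simeq\mathrm{id}_{\Sigma A}$ then gives
\[
\Phi\circ h\simeq i_{Y}\circ h_{1,t}-i_{Z}\circ h'\circ r\circ h_{1,t}+i_{Z}\circ h'\simeq i_{Y}\circ h_{1,t}.
\]
A two-sided homotopy inverse for $\Phi$ is obtained by replacing the minus sign with a plus sign; the same type of distributivity argument verifies that this does indeed invert $\Phi$, so $\Phi$ is a homotopy equivalence of $Y\vee Z$.

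Since $\Phi$ is a homotopy equivalence, the homotopy cofibre of $h$ is homotopy equivalent to the homotopy cofibre of $\Phi\circ h\simeq i_{Y}\circ h_{1,t}$. As $i_{Y}\circ h_{1,t}$ lands in the $Y$-summand of $Y\vee Z$ and the cofibre of $h_{1,t}$ itself is $B$, this cofibre splits off the $Z$-summand to give
\[
\overline{C}\simeq B\vee Z=B\vee\Bigl(X_{1}\wedge\bigvee_{\substack{i=2\\ i\neq t}}^{m}\Sigma X_{i}\Bigr),
\]
as asserted. The only real obstacle here is algebraic bookkeeping: choosing the sign in $\Phi$ so that the unwanted $h'$-component of $h$ cancels after composition with $\Phi$. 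No machinery beyond the co-$H$-space structure of $\Sigma A$ and the given left inverse $r$ is needed.
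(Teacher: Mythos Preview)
Your shearing-equivalence strategy is sound, but there is a technical wrinkle in the distributivity step. When you write
\[
(i_Y - i_Z\circ h'\circ r)\circ h_{1,t}\simeq i_Y\circ h_{1,t} - i_Z\circ h'\circ r\circ h_{1,t},
\]
the difference on the left is formed using the co-$H$ structure on $Y$, while the one on the right uses that on $\Sigma A$. Right distributivity $(\alpha+\beta)\circ\gamma\simeq\alpha\gamma+\beta\gamma$ across such a change of co-$H$ source only holds when $\gamma$ is a co-$H$ map, and nothing in the hypotheses forces $h_{1,t}$ to be one. The easy repair is to first invoke the splitting $Y\simeq\Sigma A\vee B$ (available precisely because $h_{1,t}$ has a left inverse and $Y$ is a suspension); under this equivalence $h_{1,t}$ becomes the inclusion $i_{\Sigma A}$, so $h\simeq i_{\Sigma A}+i_Z\circ h'$, and the shear $\Phi|_{\Sigma A}=i_{\Sigma A}-i_Z\circ h'$ now lives entirely in the co-$H$ structure of $\Sigma A$. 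Then only \emph{left} distributivity is needed to obtain $\Phi\circ h\simeq i_{\Sigma A}$, and the cofibre is visibly $B\vee Z$. With that adjustment your argument goes through.

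For comparison, the paper argues diagrammatically rather than by shearing: it sets up the ladder of horizontal cofibrations with rows $\Sigma A\xrightarrow{h}X_1\wedge(\bigvee_{i\geq 2}\Sigma X_i)\to\overline{C}$ and $\Sigma A\xrightarrow{h_{1,t}}X_1\wedge\Sigma X_t\to B$, connected by the pinch $1\wedge q_t$. The bottom row splits, and a right inverse $b\colon B\to X_1\wedge\Sigma X_t$ is pushed through the lower-right square (using that $i_t$ is a right inverse for $q_t$) to give a right inverse for $\overline{C}\to B$; this splits the right-hand column $Z\to\overline{C}\to B$. Your approach trades that diagram chase for a single change of basis on the target, which is more direct once the co-$H$ bookkeeping above is handled correctly.
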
 

\begin{proof} 
Let 
\(q_{t}\colon\namedright{\bigvee_{i=1}^{m}\Sigma X_{i}}{}{X_{t}}\) 
be the pinch map to the $t^{th}$-wedge summand. Then $q_{t}\circ h=h_{1,t}$, so there 
is a homotopy cofibration diagram 
\begin{equation} 
  \label{barCpo} 
  \diagram 
       & X_{1}\wedge(\displaystyle\bigvee_{\substack{i=2 \\ i\neq t}}\Sigma X_{i})\rdouble\dto 
            & X_{1}\wedge(\displaystyle\bigvee_{\substack{i=2 \\ i\neq t}}\Sigma X_{i})\dto \\ 
       \Sigma A\rto^-{h}\ddouble & X_{1}\wedge(\bigvee_{i=1}^{m}\Sigma X_{i})\rto\dto^{1\wedge q_{t}} 
            & \overline{C}\dto \\ 
       \Sigma A\rto^-{h_{1,t}} & X_{1}\wedge\Sigma X_{t}\rto & B. 
  \enddiagram 
\end{equation} 
The homotopy equivalence $\Sigma X_{1}\wedge X_{t}\simeq\Sigma A\vee B$ splitting 
the homotopy cofibration along the bottom row implies that the map 
\(\namedright{\Sigma X_{1}\wedge X_{t}}{}{B}\) 
has a right homotopy inverse 
\(b\colon\namedright{B}{}{X_{1}\wedge\Sigma X_{t}}\). 
As $i_{t}$ is a right homotopy inverse for $q_{t}$, we obtain a composite 
\[\lnamedddright{B}{b}{X_{1}\wedge\Sigma X_{t}}{1\wedge i_{t}}{X_{1}\wedge(\bigvee_{i=1}^{m}\Sigma X_{i})} 
       {}{\overline{C}}\] 
which, by the homotopy commutativity of the lower right square in~(\ref{barCpo}), is a 
right homotopy inverse for the map 
\(\namedright{\overline{C}}{}{B}\). 
Thus the right column in~(\ref{barCpo}) splits to give the asserted homotopy equivalence. 
\end{proof} 

A family of examples satisfying Proposition~\ref{wedgeex} is the following. In words it says 
that if there is a homotopy cofibration  
\(\nameddright{S^{2n-1}}{f}{\bigvee_{i=1}^{m} S^{n}}{}{M}\) 
where the attaching map $f$ is: (i) a sum of Whitehead products, at least one of which is $\pm [i_{1},i_{t}]$ 
for some $t\in\{2,\ldots,m\}$, and (ii) a map factoring through $\bigvee_{i=2}^{m} S^{n}$ that suspends 
trivially, then the homotopy type of $\Omega M$ can be precisely determined. 
           
\begin{proposition} 
   \label{sphereex} 
   Suppose that there is a homotopy cofibration 
   \[\nameddright{S^{2n-1}}{f}{\bigvee_{i=1}^{m} S^{n}}{}{M}.\] 
   Suppose that $f=f_{1}+f_{2}$ where: 
   \begin{itemize} 
      \item $f_{1}=\sum_{j=2}^{m} d_{j}\cdot [i_{1},i_{j}]$ for $d_{j}\in\mathbb{Z}$; 
      \item there is at least one $t\in\{2,\ldots,m\}$ such that $d_{t}=\pm 1$; 
      \item $f_{2}$ factors as 
               \(\nameddright{S^{2n-1}}{\gamma}{\bigvee_{i=2}^{m} S^{n}}{I} 
                     {\bigvee_{i=1}^{m} S^{n}}\) 
                for some map $\gamma$; 
      \item $\Sigma\gamma$ is null homotopic. 
   \end{itemize}    
   Then there is a homotopy fibration 
   \[\nameddright{(\Omega S^{n}\ltimes\overline{C})\vee(\bigvee_{i=2}^{m} S^{n})} 
        {}{M}{q'}{S^{n}}\] 
    where $\overline{C}\simeq S^{n-1}\wedge(\displaystyle\bigvee_{\substack{i=2 \\ i\neq t}}^{m} S^{n})$, 
    and this homotopy fibration splits after looping to give a homotopy equivalence 
    \[\Omega M\simeq\Omega S^{n}\times
           \Omega\bigg((\Omega S^{n}\ltimes\overline{C})\vee(\bigvee_{i=2}^{m} S^{n})\bigg).\] 
\end{proposition}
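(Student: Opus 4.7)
The plan is to derive Proposition~\ref{sphereex} as a direct specialization of Proposition~\ref{wedgeex} combined with Lemma~\ref{Cbartype}. Take $X_i = S^{n-1}$ for $1\le i\le m$, so that $\Sigma X_i = S^n$ and $\bigvee_{i=1}^m \Sigma X_i = \bigvee_{i=1}^m S^n$, and take $\Sigma A = S^{2n-1}$. With these choices, $\Sigma X_1 \wedge X_j \simeq S^n \wedge S^{n-1} \simeq S^{2n-1}$, so each integer multiple $d_j\cdot [i_1,i_j]$ can be rewritten as $[i_1,i_j]\circ h_{1,j}$ where $h_{1,j}\colon S^{2n-1}\to S^{2n-1}$ is a degree-$d_j$ self-map. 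This puts $f_1$ in the form required by Proposition~\ref{wedgeex}.

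The remaining hypotheses of Proposition~\ref{wedgeex} are then checked. Since $d_t = \pm 1$, the map $h_{1,t}$ is a homotopy equivalence of $S^{2n-1}$, so it certainly admits a left homotopy inverse. The assumed factorization of $f_2$ through $I\colon\bigvee_{i=2}^m S^n \to \bigvee_{i=1}^m S^n$ via the map $\gamma$, together with the hypothesis that $\Sigma\gamma$ is null homotopic, directly matches the two $f_2$-conditions in Proposition~\ref{wedgeex}. Applying that proposition yields a map $q'\colon M \to S^n$ extending the pinch map $q_1$, the homotopy fibration
\[
\nameddright{(\Omega S^{n}\ltimes\overline{C})\vee(\bigvee_{i=2}^{m} S^{n})}{}{M}{q'}{S^{n}},
\]
and the loop space splitting $\Omega M \simeq \Omega S^n \times \Omega\bigl((\Omega S^n \ltimes \overline{C})\vee \bigvee_{i=2}^m S^n\bigr)$, where $\overline{C}$ is the homotopy cofibre of $h = \sum_{j=2}^m h_{1,j}\colon S^{2n-1} \to X_1 \wedge \bigvee_{i=2}^m \Sigma X_i \simeq \bigvee_{j=2}^m S^{2n-1}$.

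Finally, I would identify the homotopy type of $\overline{C}$ using Lemma~\ref{Cbartype}, which gives $\overline{C} \simeq \bigl(X_1 \wedge \bigvee_{i \neq t} \Sigma X_i\bigr) \vee B$ where $B$ is the homotopy cofibre of $h_{1,t}$. Because $h_{1,t}$ is a homotopy equivalence, $B$ is contractible, and $X_1 \wedge \bigvee_{i=2,\, i\neq t}^m \Sigma X_i \simeq S^{n-1} \wedge \bigvee_{i=2,\, i\neq t}^m S^n$, which matches the claimed form of $\overline{C}$.

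There is no substantial obstacle here; the argument is essentially bookkeeping once Proposition~\ref{wedgeex} and Lemma~\ref{Cbartype} are available. The only point that warrants a moment's care is the translation between the integer coefficients $d_j$ of the Whitehead products $[i_1,i_j]$ and the self-maps $h_{1,j}$ of $S^{2n-1}$, together with the observation that the condition $d_t = \pm 1$ corresponds exactly to the left-homotopy-inverse hypothesis of Proposition~\ref{wedgeex} via degree $\pm 1$ self-equivalences of the sphere.
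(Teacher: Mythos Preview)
Your proposal is correct and follows essentially the same route as the paper: specialize Proposition~\ref{wedgeex} with $X_i=S^{n-1}$ and $\Sigma A=S^{2n-1}$, translate the integer coefficients $d_j$ into degree maps $h_{1,j}$ so that $d_t=\pm 1$ makes $h_{1,t}$ a homotopy equivalence (hence has a left homotopy inverse), and then invoke Lemma~\ref{Cbartype} with $B\simeq\ast$ to identify $\overline{C}$.
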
 

\begin{proof} 
The existence of the homotopy fibration and the decomposition for $\Omega M$ will follow 
from Proposition~\ref{wedgeex} once the hypotheses on the attaching map $f$ are shown 
to imply the hypotheses in the proposition. Observe that the map 
\(\namedright{\Sigma A}{h_{1,j}}{\Sigma X_{1}\wedge X_{j}}\) 
in Proposition~\ref{wedgeex} in our case is of the form 
\(\namedright{S^{2n-1}}{}{S^{2n-1}}\) 
and so is a degree map, which has been labelled $d_{j}$. The condition that $d_{t}=\pm 1$ 
for some $t\in\{2,\ldots,m\}$ implies that the map $1\wedge h_{1,t}\simeq 1\wedge d_{t}$ is a homotopy 
equivalence, and so has a right homotopy inverse. The conditions on $f_{2}$ and 
$\gamma$ are the same as in Proposition~\ref{wedgeex}. The homotopy type of $\overline{C}$ 
follows from Lemma~\ref{Cbartype}, noting that as $h_{1,t}$ is a homotopy equivalence its 
homotopy cofibre $B$ is contractible. 
\end{proof}     

\begin{remark} 
Observe that the homotopy type of $\Omega M$ in Proposition~\ref{sphereex} depends only on 
$m$ and $n$. In particular, the map $\gamma$ has no influence on the homotopy type. 
\end{remark} 

\begin{corollary} 
   \label{sphereexcor} 
   In Proposition~\ref{sphereex} the space 
   $(\Omega S^{n}\ltimes\overline{C})\vee(\bigvee_{i=2}^{m} S^{n_{i}+1}))$ 
   is homotopy equivalent to a wedge $W$ of spheres. In particular, 
   \[\Omega M\simeq\Omega S^{n_{1}+1}\times\Omega W.\] 
\end{corollary}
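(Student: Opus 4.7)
The plan is to unwind each piece of the decomposition obtained in Proposition~\ref{sphereex} and show that it is a wedge of spheres, so the whole space $(\Omega S^{n}\ltimes\overline{C})\vee(\bigvee_{i=2}^{m} S^{n})$ becomes such a wedge $W$. Once this is done, the splitting $\Omega M\simeq \Omega S^{n}\times\Omega W$ is immediate from Proposition~\ref{sphereex}.

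First, I would identify $\overline{C}$ explicitly. By Proposition~\ref{sphereex}, $\overline{C}\simeq S^{n-1}\wedge(\bigvee_{\substack{i=2\\i\neq t}}^{m} S^{n})\simeq \bigvee_{\substack{i=2\\i\neq t}}^{m} S^{2n-1}$, so $\overline{C}$ is a wedge of $(2n-1)$-spheres, in particular a suspension. By Lemma~\ref{halfsmashsusp},
\[\Omega S^{n}\ltimes\overline{C}\simeq (\Omega S^{n}\wedge\overline{C})\vee\overline{C},\]
so it suffices to show that $\Omega S^{n}\wedge\overline{C}$ is a wedge of spheres.

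Next, write $\overline{C}\simeq\Sigma\overline{C}'$ with $\overline{C}'\simeq\bigvee_{\substack{i=2\\i\neq t}}^{m} S^{2n-2}$. Then $\Omega S^{n}\wedge\overline{C}\simeq\Sigma(\Omega S^{n}\wedge\overline{C}')\simeq\Sigma\Omega S^{n}\wedge\overline{C}'$. By Lemma~\ref{SigmaJsplitting} applied to $X=S^{n-1}$, there is a homotopy equivalence
\[\Sigma\Omega S^{n}\simeq\bigvee_{k=1}^{\infty} S^{k(n-1)+1}.\]
Smashing this wedge of spheres with $\overline{C}'$ (itself a wedge of spheres) distributes over both wedges and gives a wedge of spheres; hence $\Omega S^{n}\wedge\overline{C}$ is a wedge of spheres. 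Combining with the wedge of $S^{2n-1}$'s making up $\overline{C}$ and the wedge $\bigvee_{i=2}^{m} S^{n}$, the whole space $(\Omega S^{n}\ltimes\overline{C})\vee(\bigvee_{i=2}^{m} S^{n})$ is a wedge $W$ of spheres.

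Finally, the decomposition $\Omega M\simeq\Omega S^{n}\times\Omega W$ is just Proposition~\ref{sphereex}(c) together with the equivalence just proved. There is no genuine obstacle here; the only subtlety is to observe that $\overline{C}$ is a suspension (which holds because $n\geq 2$, so $2n-1\geq 3$), which is what allows Lemma~\ref{halfsmashsusp} and the identity $A\wedge\Sigma B\simeq\Sigma A\wedge B$ to convert the half-smash and smash products into wedges of spheres.
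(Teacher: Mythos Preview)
Your proof is correct and follows essentially the same approach as the paper's own proof: write $\overline{C}$ as a suspension $\Sigma C'$, apply Lemma~\ref{halfsmashsusp} to split the half-smash, and use the James splitting of $\Sigma\Omega S^{n}$ to see that everything is a wedge of spheres. Your version is slightly more explicit in identifying $\overline{C}$ as a wedge of $(2n-1)$-spheres, but the logic is identical.
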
 

\begin{proof} 
It suffices to show that $\Omega S^{n_{1}+1}\ltimes\overline{C}$ is homotopy equivalent to a wedge 
of spheres. By Proposition~\ref{sphereex}, $\overline{C}$ is homotopy equivalent to a wedge of 
simply-connected spheres. In particular, $\overline{C}\simeq\Sigma C'$ where $C'$ is a wedge 
of connected spheres. Therefore 
\[\Omega S^{n}\ltimes C\simeq\Omega S^{n}\ltimes\Sigma C'\simeq 
       (\Sigma\Omega S^{n}\wedge C')\vee \Sigma C'.\] 
The James construction implies that $\Sigma\Omega S^{n}$ is homotopy equivalent 
to a wedge of spheres, and therefore so is $(\Sigma\Omega S^{n})\wedge C'$. Hence 
$\Omega S^{n}\ltimes\overline{C}$ is homotopy equivalent to a wedge of spheres. 
\end{proof} 

We give two examples of Proposition~\ref{sphereex}. The first is not new, as it can be 
derived from the results in any one of~\cite{BT1,BT2,BB}. The second is new in general. 

\begin{example} 
In Proposition~\ref{sphereex}, if the cofibration takes the form 
\[\nameddright{S^{2n-1}}{f}{\bigvee_{i=1}^{2m} S^{n}}{}{M}\] 
where $f=[i_{1},i_{2}]+[i_{3},i_{4}]+\cdots + [i_{2m-1},i_{2m}]$ then $M$ is an 
$(n-1)$-connected $2n$-dimensional Poincar\'{e} Duality complex. In fact, it is 
the $m$-fold connected sum $(S^{n}\times S^{n})^{\conn m}$. 
Proposition~\ref{sphereex} then gives a homotopy decomposition of $\Omega M$. 
\end{example} 

\begin{example} 
Modifying the previous example, consider a homotopy cofibration 
\[\nameddright{S^{2n-1}}{f'}{\bigvee_{i=1}^{2m} S^{n}}{}{M'}\] 
where $f'=[i_{1},i_{2}]+[i_{3},i_{4}]+\cdots + [i_{2m-1},i_{2m}]+f''$. 
Here, $f''$ is a composite  
\(f''\colon\nameddright{S^{2n-1}}{\gamma}{\bigvee_{i=2}^{2m} S^{n}}{I} 
       {\bigvee_{i=1}^{2m} S^{n}}\) 
with the property that $\Sigma\gamma$ is null homotopic. Possibly $\gamma$ is a 
sum of more Whitehead products, possibly it is a class of finite order, or some combination 
of the two. Then $M'$ may or may not be a Poincar\'{e} Duality complex but 
Proposition~\ref{sphereex} still applies, giving a homotopy decomposition of $\Omega M'$. 
Note that the decompositions for $\Omega M'$ and the space $\Omega M$ in the previous 
example are identical. That is, while $f''$ may mean $M\not\simeq M'$, after looping 
we nevertheless have $\Omega M\simeq\Omega M'$. 
\end{example} 

Next, we consider the case when the spaces $X_{i}$ in Proposition~\ref{wedgeex} 
are mod-$p^{r}$ Moore spaces. The analogue of Proposition~\ref{sphereex} involves 
mod-$p^{r}$ Whitehead products rather than ordinary Whitehead products. Let
\(a\colon\namedright{P^{m+1}(p^{r})}{}{Z}\) 
and 
\(b\colon\namedright{P^{n+1}(p^{r})}{}{Z}\) 
be maps. If $p^{r}\neq 2$, by Lemma~\ref{mooresmash} there is a map 
\[u\colon\namedright{P^{m+n+1}(p^{r})}{}{\Sigma P^{m}(p^{r})\wedge P^{n}(p^{r})}\] 
which has a left homotopy inverse. The \emph{mod-$p^{r}$ Whitehead product} is the composite 
\[[a,b]_{r}\colon\nameddright{P^{m+n+1}(p^{r})}{u}{\Sigma P^{m}(p^{r})\wedge P^{n}(p^{r})} 
       {[a,b]}{Z}\] 
where $[a,b]$ is the usual Whitehead product. 

\begin{lemma} 
   \label{mooreex} 
   Suppose that for $n\geq 2$ there is a homotopy cofibration 
   \[\nameddright{P^{2n+1}(p^{r})}{f}{\bigvee_{i=1}^{m} P^{n+1}(p^{r})}{}{M}.\] 
   Suppose that $f=f_{1}+f_{2}$ where: 
   \begin{itemize} 
      \item $f_{1}=\sum_{j=2}^{m} d_{j}\cdot [i_{1},i_{j}]_{r}$ for $d_{j}\in\mathbb{Z}/p^{r}\mathbb{Z}$; 
      \item there is at least one $t\in\{2,\ldots,m\}$ such that $d_{t}$ is a unit in $\mathbb{Z}/p^{r}\mathbb{Z}$; 
      \item $f_{2}$ factors as 
               \(\nameddright{P^{2n+1}(p^{r})}{\gamma}{\bigvee_{i=2}^{m} P^{n+1}(p^{r})}{I} 
                     {\bigvee_{i=1}^{m} P^{n+1}(p^{r})}\) 
                for some map $\gamma$; 
      \item $\Sigma\gamma$ is null homotopic. 
   \end{itemize}    
   Then there is a homotopy fibration 
   \[\nameddright{(\Omega P^{n+1}(p^{r})\ltimes\overline{C})\vee(\bigvee_{i=2}^{m} P^{n+1}(p^{r}))} 
        {}{M}{q'}{P^{n+1}(p^{r})}\] 
    where 
    $\overline{C}\simeq \bigg(P^{n}(p^{r})\wedge(\displaystyle\bigvee_{\substack{i=2 \\ i\neq t}}^{m} 
         P^{n+1}(p^{r}))\bigg)\vee P^{2n}(p^{r})$, 
    and this homotopy fibration splits after looping to give a homotopy equivalence 
    \[\Omega M\simeq\Omega P^{n+1}(p^{r})\times
           \Omega\bigg((\Omega P^{n+1}(p^{r})\ltimes\overline{C})\vee(\bigvee_{i=2}^{m} P^{n+1}(p^{r}))\bigg).\] 
\end{lemma}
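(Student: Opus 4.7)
The plan is to apply Proposition~\ref{wedgeex} with $X_i = P^n(p^r)$ for all $i$, so that $\Sigma X_i = P^{n+1}(p^r)$ and $\Sigma A = P^{2n+1}(p^r)$, and then use Lemma~\ref{Cbartype} together with Lemma~\ref{mooresmash} to identify the homotopy type of $\overline{C}$.

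First I would unpack the definition of the mod-$p^r$ Whitehead product. By definition, $[i_1,i_j]_r$ is the composite $[i_1,i_j]\circ u$ where $u\colon P^{2n+1}(p^r)\to \Sigma P^n(p^r)\wedge P^n(p^r)$ comes from Lemma~\ref{mooresmash}. Thus the summand $d_j\cdot [i_1,i_j]_r$ in $f_1$ equals $[i_1,i_j]\circ(d_j\cdot u)$, so in the notation of Proposition~\ref{wedgeex} the maps $h_{1,j}$ are identified as $h_{1,j}=d_j\cdot u$. The other hypotheses of Proposition~\ref{wedgeex} concerning $f_2$ and $\gamma$ are exactly those in the statement of the lemma, so they transfer verbatim.

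Next I would check that $h_{1,t}=d_t\cdot u$ has a left homotopy inverse. Since $d_t$ is a unit in $\mathbb{Z}/p^r\mathbb{Z}$, the degree $d_t$ map on $P^{2n+1}(p^r)$ is a homotopy equivalence (it induces an isomorphism on $H_\ast(-;\mathbb{Z}/p^r\mathbb{Z})$ and the Moore space is a $p$-local two-cell complex with integral homology concentrated at $p$; Whitehead's Theorem applies). By Lemma~\ref{mooresmash}, $\Sigma P^n(p^r)\wedge P^n(p^r)\simeq P^{2n+1}(p^r)\vee P^{2n}(p^r)$, and under this equivalence the map $u$ is (up to self-equivalence of its domain) the inclusion of the first wedge summand, so $u$ has a left homotopy inverse. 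Composing with the self-equivalence $d_t$ shows $h_{1,t}$ has a left homotopy inverse. With all hypotheses of Proposition~\ref{wedgeex} verified, that proposition produces the asserted homotopy fibration and the splitting of $\Omega M$ in terms of $\overline{C}$.

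It remains to identify $\overline{C}$. By Lemma~\ref{Cbartype},
\[\overline{C}\simeq\bigg(P^n(p^r)\wedge\!\!\bigvee_{\substack{i=2 \\ i\neq t}}^{m}\!\!P^{n+1}(p^r)\bigg)\vee B,\]
where $B$ is the homotopy cofibre of $h_{1,t}$. Since $h_{1,t}=d_t\cdot u$ and $d_t$ is a self-equivalence, $B$ is homotopy equivalent to the cofibre of $u$. Using the splitting $\Sigma P^n(p^r)\wedge P^n(p^r)\simeq P^{2n+1}(p^r)\vee P^{2n}(p^r)$ and the fact that $u$ corresponds to the inclusion into the first summand, we obtain $B\simeq P^{2n}(p^r)$, which yields the claimed formula for $\overline{C}$. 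The main technical obstacle is the identification of $u$'s cofibre; the cleanest way to dispatch this is to note that the splitting $\Sigma P^n(p^r)\wedge P^n(p^r)\simeq P^{2n+1}(p^r)\vee P^{2n}(p^r)$ is realized with $u$ being a right inverse of the pinch to $P^{2n+1}(p^r)$, so the cofibre is forced to be $P^{2n}(p^r)$ by the splitting lemma for cofibrations of suspensions.
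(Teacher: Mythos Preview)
Your proposal is correct and follows essentially the same approach as the paper: apply Proposition~\ref{wedgeex} with $X_i=P^n(p^r)$ and $h_{1,j}=d_j\cdot u$, then invoke Lemma~\ref{Cbartype} to identify $\overline{C}$, using Lemma~\ref{mooresmash} to see that the cofibre of $h_{1,t}$ is $P^{2n}(p^r)$. The paper's proof is in fact just a one-line pointer to the argument of Proposition~\ref{sphereex} with this substitution, so your write-up is simply a more explicit unpacking of the same reasoning.
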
 

\begin{proof} 
The argument is just as for Proposition~\ref{sphereex}, but with the map 
\(\namedright{\Sigma A}{h_{1,j}}{\Sigma X_{1}\wedge X_{j}}\) 
in Proposition~\ref{wedgeex} in this case being of the form  
\(\namedright{P^{2n+1}(p^{r})}{d_{j}\cdot u}{\Sigma P^{n}(p^{r})\wedge P^{n}(p^{r})}\). 
\end{proof}    

\begin{remark} 
As for Proposition~\ref{sphereex}, the homotopy type of $\Omega M$ in Lemma~\ref{mooreex} 
depends only on $m$ and $n$, with the map $\gamma$ having no influence on the homotopy type. 
\end{remark} 

\begin{corollary} 
   \label{mooreexcor} 
   In Lemma~\ref{mooreex} the space 
   $(\Omega P^{n+1}(p^{r})\ltimes\overline{C})\vee(\bigvee_{i=2}^{m} P^{n+1}(p^{r})$ 
   is homotopy equivalent to a wedge $W'$ of mod-$p^{r}$ Moore spaces. In particular, 
   \[\Omega M\simeq\Omega P^{n+1}(p^{r})\times\Omega W'.\] 
\end{corollary}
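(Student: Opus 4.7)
The plan is to mirror the argument of Corollary~\ref{sphereexcor}, with spheres replaced by mod-$p^{r}$ Moore spaces and with Lemma~\ref{mooresmash} playing the role of the trivial observation that smashes of spheres are spheres. Note the whole setup presumes $p^{r}\neq 2$ (otherwise the mod-$p^{r}$ Whitehead product $[-,-]_{r}$ in Lemma~\ref{mooreex} is not defined), so Lemma~\ref{mooresmash} is available throughout.

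First, it suffices to show that $\Omega P^{n+1}(p^{r})\ltimes\overline{C}$ is homotopy equivalent to a wedge of mod-$p^{r}$ Moore spaces; the second wedge summand $\bigvee_{i=2}^{m} P^{n+1}(p^{r})$ is already of this form, and the splitting $\Omega M\simeq\Omega P^{n+1}(p^{r})\times\Omega W'$ is immediate from Lemma~\ref{mooreex} once $W'$ is identified. By Lemma~\ref{mooreex}, $\overline{C}$ is already a wedge of mod-$p^{r}$ Moore spaces, and since $\Sigma P^{n}(p^{r})\simeq P^{n+1}(p^{r})$ every summand is a suspension, so I can write $\overline{C}\simeq\Sigma C'$ with $C'$ itself a wedge of mod-$p^{r}$ Moore spaces. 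Then Lemma~\ref{halfsmashsusp} gives
\[
\Omega P^{n+1}(p^{r})\ltimes\overline{C}\simeq\Omega P^{n+1}(p^{r})\ltimes\Sigma C'
   \simeq\bigl(\Sigma\Omega P^{n+1}(p^{r})\wedge C'\bigr)\vee\Sigma C',
\]
so it only remains to check that $\Sigma\Omega P^{n+1}(p^{r})\wedge C'$ is a wedge of mod-$p^{r}$ Moore spaces.

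Next, apply the James splitting (Lemma~\ref{SigmaJsplitting}) to $P^{n+1}(p^{r})=\Sigma P^{n}(p^{r})$ to obtain
\[
\Sigma\Omega P^{n+1}(p^{r})\simeq\bigvee_{k=1}^{\infty}\Sigma P^{n}(p^{r})^{\wedge k}.
\]
A straightforward induction on $k$ using Lemma~\ref{mooresmash} (valid since $p^{r}\neq 2$) shows that $P^{n}(p^{r})^{\wedge k}$ is homotopy equivalent to a finite wedge of mod-$p^{r}$ Moore spaces; suspending, each summand $\Sigma P^{n}(p^{r})^{\wedge k}$ is again such a wedge. Hence $\Sigma\Omega P^{n+1}(p^{r})$ is a (countable) wedge of mod-$p^{r}$ Moore spaces.

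Finally, smash the previous expression with the wedge $C'$ of Moore spaces, distribute the smash over the wedge on both sides, and apply Lemma~\ref{mooresmash} once more to each resulting smash $P^{s}(p^{r})\wedge P^{t}(p^{r})$. This yields $\Sigma\Omega P^{n+1}(p^{r})\wedge C'$ as a wedge of mod-$p^{r}$ Moore spaces, completing the identification of $W'$. The only real subtlety is bookkeeping the indexing (and degrees) of the Moore-space summands through the iterated application of Lemma~\ref{mooresmash}; the hypothesis $p^{r}\neq 2$ is used at each application and is the sole obstacle to extending the argument to the $2$-primary case, where $P^{s}(2)\wedge P^{t}(2)$ is indecomposable.
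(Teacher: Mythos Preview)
Your proof is correct and follows essentially the same approach as the paper, which explicitly says the argument is just as for Corollary~\ref{sphereexcor} with appeals to Lemma~\ref{mooresmash} to decompose iterated smash products of mod-$p^{r}$ Moore spaces. Your write-up is in fact more detailed than the paper's, spelling out the James splitting and the half-smash decomposition explicitly; the only minor imprecision is that the description of $\overline{C}$ in Lemma~\ref{mooreex} already contains smashes $P^{n}(p^{r})\wedge P^{n+1}(p^{r})$, so a first application of Lemma~\ref{mooresmash} is needed before you can assert $\overline{C}$ is a wedge of Moore spaces.
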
 

\begin{proof} 
The argument is just as for Corollary~\ref{sphereexcor} with appeals to 
Lemma~\ref{mooresmash} in order to decompose iterated smash products 
of mod-$p^{r}$ Moore spaces into wedges of mod-$p^{r}$ Moore spaces. 
\end{proof}

\newpage 
\section{An application to Poincar\'{e} Duality spaces} 
\label{sec:PD} 

In Lemma~\ref{mooreex} a mod-$p^{r}$ Moore space was attached to a wedge 
of mod-$p^{r}$ Moore spaces. We want to next consider attaching a sphere to a 
wedge of mod-$p^{r}$ Moore spaces. That is, we consider a homotopy cofibration of 
the form 
\[\nameddright{S^{2n}}{f}{\bigvee_{i=1}^{m} P^{n+1}(p^{r})}{}{M}\] 
and will assume throughout the section that $p$ is odd and $n\geq 2$. 
Such cofibrations are particularly interesting because for certain maps $f$ the space $M$ is an 
$(n-1)$-connected $(2n+1)$-dimensional Poincar\'{e} Duality complex that is rationally 
equivalent to $S^{2n+1}$. A highlight of this section is the proof of Theorem~\ref{PDexintro}. 

The distinction between attaching a Moore space and attaching a sphere is large 
in the sense that we can no longer appeal to Proposition~\ref{wedgeex} or even 
to Theorem~\ref{E'typeII}. Instead, we have to go back to the inner workings of 
the proof of Theorem~\ref{E'typeII} and make a modification that is specific to 
this case. This requires some initial lemmas. 

\begin{lemma} 
   \label{XAretract} 
   Suppose that there is a map 
   \(\namedright{A}{v}{X^{\wedge k}\wedge D}\) 
   such that 
   \(\namedright{X\wedge\Sigma A}{1\wedge\Sigma v}{X\wedge (X^{\wedge k}\wedge\Sigma D)}\) 
   has a left homotopy inverse. Then 
   \(\namedright{\Omega\Sigma X\wedge\Sigma A}{1\wedge\Sigma v} 
         {\Omega\Sigma X\wedge (X^{\wedge k}\wedge\Sigma D)}\) 
   has a left homotopy inverse.  
\end{lemma}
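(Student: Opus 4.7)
The plan is to use the James-type splitting of Lemma~\ref{SigmaJsplitting} to decompose both source and target as wedges and then invert the map summand-by-summand.

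First, smashing the homotopy equivalence $\phi\colon\bigvee_{j\geq 1}\Sigma X^{\wedge j}\stackrel{\simeq}{\longrightarrow}\Sigma\Omega\Sigma X$ with any pointed space $W$ and moving a suspension coordinate (via $\Sigma Z\wedge W\simeq Z\wedge\Sigma W$) gives a homotopy equivalence
\[\bigvee_{j\geq 1} X^{\wedge j}\wedge\Sigma W\stackrel{\simeq}{\longrightarrow}\Omega\Sigma X\wedge\Sigma W\]
which is natural in maps $W\to W'$. Since $X^{\wedge k}\wedge\Sigma D$ is the suspension $\Sigma(X^{\wedge k}\wedge D)$ and $\Sigma v$ is the suspension of $v\colon A\to X^{\wedge k}\wedge D$, applying naturality to $v$ gives a homotopy commutative square
\[\diagram
\bigvee_{j\geq 1} X^{\wedge j}\wedge\Sigma A\rto^-{\bigvee_j(1_{X^{\wedge j}}\wedge\Sigma v)}\dto^{\simeq}
& \bigvee_{j\geq 1} X^{\wedge(j+k)}\wedge\Sigma D\dto^{\simeq}\\
\Omega\Sigma X\wedge\Sigma A\rto^-{1\wedge\Sigma v}
& \Omega\Sigma X\wedge(X^{\wedge k}\wedge\Sigma D)
\enddiagram\]
in which the vertical maps are homotopy equivalences.

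Next, for each $j\geq 1$ factor $1_{X^{\wedge j}}\wedge\Sigma v$ as $1_{X^{\wedge(j-1)}}\wedge(1_X\wedge\Sigma v)$. By hypothesis $1_X\wedge\Sigma v$ admits a left homotopy inverse $r$, so $1_{X^{\wedge(j-1)}}\wedge r$ is a left homotopy inverse of $1_{X^{\wedge j}}\wedge\Sigma v$. Assembling these across summands into the map $\bigvee_{j\geq 1} X^{\wedge(j+k)}\wedge\Sigma D\to\bigvee_{j\geq 1} X^{\wedge j}\wedge\Sigma A$ that, on the $j$-th summand of the source, equals $1_{X^{\wedge(j-1)}}\wedge r$ followed by the inclusion of the $j$-th wedge summand of the target, produces a left homotopy inverse of $\bigvee_j(1_{X^{\wedge j}}\wedge\Sigma v)$. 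Conjugating this by the two vertical equivalences of the diagram gives the desired left homotopy inverse for $1\wedge\Sigma v$ on $\Omega\Sigma X\wedge\Sigma A$.

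The only subtle point in this argument is the wedge decomposition of $1\wedge\Sigma v$ under the two James splittings, which rests on naturality of $\phi$ with respect to smashing by a map in the auxiliary factor; once that decomposition is in hand, the construction of the left inverse is formal.
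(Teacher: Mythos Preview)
Your proof is correct and follows essentially the same approach as the paper: use the James splitting of $\Sigma\Omega\Sigma X$ (smashed with the auxiliary factor and natural in that factor) to reduce $1_{\Omega\Sigma X}\wedge\Sigma v$ to the wedge of the maps $1_{X^{\wedge j}}\wedge\Sigma v$, each of which has a left homotopy inverse obtained by smashing $1_{X^{\wedge(j-1)}}$ with the given left inverse of $1_X\wedge\Sigma v$, and then conjugate by the two splitting equivalences. The paper's argument is the same up to the direction of the vertical equivalences in the square.
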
 

\begin{proof} 
It will be convenient to write the identity map on a space $Z$ as $1_{Z}$. Let 
\(u\colon\namedright{X\wedge(X^{\wedge k}\wedge\Sigma D)}{}{X\wedge\Sigma A}\) 
be a left homotopy inverse for the map $1_{X}\wedge\Sigma v$. 
Then for each $t\geq 1$ the composite 
\[\lllnameddright{X^{\wedge t}\wedge\Sigma A}{1_{X^{\wedge t}}\wedge\Sigma v} 
     {X^{\wedge t}\wedge (X^{\wedge k}\wedge\Sigma D)} 
     {1_{X^{\wedge t}}\wedge u}{X^{\wedge t-1}\wedge\Sigma A}\] 
is homotopic to the identity map. Consider the diagram 
\[\diagram 
      \Omega\Sigma X\wedge\Sigma A\rrto^-{1_{\Omega\Sigma X}\wedge\Sigma v}\dto^{\simeq} 
          & & \Omega\Sigma X\wedge\Sigma D\dto^{\simeq} & & \\ 
       \bigvee_{t=1}^{\infty} X^{\wedge t}\wedge\Sigma A 
            \rrto^-{\bigvee_{t=1}^{\infty} 1_{X^{\wedge t}}\wedge\Sigma v} 
           & & \bigvee_{t=1}^{\infty} X^{\wedge t}\wedge (X^{\wedge k}\wedge\Sigma D) 
                   \rrto^-{\bigvee_{t=1}^{\infty} 1_{X^{\wedge t}}\wedge u} 
           & & \bigvee_{t=1}^{\infty} X^{\wedge t}\wedge\Sigma A. 
  \enddiagram\] 
The right square homotopy commutes by the naturality of the James splitting of 
$\Sigma\Omega\Sigma X$, where we have used the fact that $\Sigma v$ is a suspension 
to rewrite $1_{\Omega\Sigma X}\wedge\Sigma v$ as $1_{\Sigma\Omega\Sigma X}\wedge v$. 
The bottom row is homotopic to the identity map since 
$(1_{X^{\wedge t}}\wedge u)\circ(1_{X^{\wedge t}}\wedge\Sigma v)$ is homotopic to the identity 
map for each $t\geq 1$. Therefore the homotopy commutativity of the diagram implies 
that $1_{\Omega\Sigma X}\wedge\Sigma v$ has a left homotopy inverse. 
\end{proof}    

\begin{example} 
\label{spheremooresplit} 
The relevance of Lemma~\ref{XAretract} is as follows. Let $v$ be the composite 
\[v\colon\nameddright{S^{2n-1}}{}{P^{2n}(p^{r})}{u}{P^{n}(p^{r})\wedge P^{n}(p^{r})}\] 
where the left map is the inclusion of the bottom cell and $u$ is the inclusion of the top 
dimensional Moore space in the homotopy decomposition of $P^{n}(p^{r})\wedge P^{n}(p^{r})$ 
in Lemma~\ref{mooresmash}. In particular, $\Sigma v$ does not have a left homotopy inverse. 
However, Lemma~\ref{mooresmash} implies that 
\[\namedright{P^{n}(p^{r})\wedge S^{2n}}{1\wedge\Sigma v} 
       {P^{n}(p^{r})\wedge(\Sigma P^{n}(p^{r})\wedge P^{n}(p^{r}))}\] 
does have a left homotopy inverse. Lemma~\ref{XAretract} then implies that 
\[\namedright{\Omega P^{n+1}(p^{r})\wedge S^{2n}}{1\wedge\Sigma v} 
       {\Omega P^{n+1}(p^{r})\wedge(\Sigma P^{n}(p^{r})\wedge P^{n}(p^{r}))}\] 
has a left homotopy inverse. 
\end{example} 

Next, we consider what will be the analogue of the map $\ell$ in Theorem~\ref{E'typeII}. 

\begin{lemma} 
   \label{mooreell} 
   Suppose that there is a map 
   \(\namedright{S^{2n}}{\ell}{P^{n}(p^{r})\ltimes(\bigvee_{i=2}^{m} P^{n+1}(p^{r}))}\) 
   which induces an inclusion in mod-$p$ homology. If $p$ is odd then the order of $\ell$ is $p^{r}$ 
   and $\ell$ factors as a composite 
   \[\nameddright{S^{2n}}{}{P^{2n+1}(p^{r})}{\ell'}{P^{n}(p^{r})\ltimes(\bigvee_{i=2}^{m} P^{n+1}(p^{r}))}\] 
   for some map $\ell'$. 
\end{lemma}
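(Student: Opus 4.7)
My plan is to decompose the target $W := P^n(p^r) \ltimes \bigl(\bigvee_{i=2}^m P^{n+1}(p^r)\bigr)$ as a wedge of Moore spaces and analyze $\pi_{2n}$ via Hilton--Milnor. First, I would apply Lemma~\ref{halfsmashsusp} followed by Lemma~\ref{mooresmash} (valid since $p$ is odd, hence $p^r\neq 2$) to obtain
$$W \simeq \bigvee_{i=2}^m P^{n+1}(p^r) \vee \bigvee_{i=2}^m P^{2n+1}(p^r) \vee \bigvee_{i=2}^m P^{2n}(p^r),$$
realizing $W$ as a wedge of Moore spaces in dimensions $n+1$, $2n+1$, and $2n$. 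Since the reduced homology of $W$ is entirely $p$-primary, $W$ is $p$-local, and I will work $p$-locally throughout.

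Next, I would analyze the mod-$p$ Hurewicz map. The group $H_{2n}(W;\mathbb{Z}/p)$ receives a copy of $\mathbb{Z}/p$ from each $P^{2n+1}$ summand (via its bottom cell) and each $P^{2n}$ summand (via its top cell); the $P^{n+1}$ summands contribute nothing for $n\ge 2$. The key observation is that any $\alpha\colon S^{2n}\to P^{2n}(p^r)$ induces zero on $H_{2n}(\cdot;\mathbb{Z}/p)$: the pinch map $q\colon P^{2n}(p^r)\to S^{2n}$ induces an isomorphism on mod-$p$ $H_{2n}$, while by the Blakers--Massey exact sequence for the cofibration $S^{2n-1}\xrightarrow{p^r}S^{2n-1}\to P^{2n}(p^r)\to S^{2n}\xrightarrow{p^r}S^{2n}$, the composite $q\circ\alpha\in\pi_{2n}(S^{2n})=\mathbb{Z}$ lies in $\ker(p^r\colon\mathbb{Z}\to\mathbb{Z})=0$. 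Hence the mod-$p$ Hurewicz image of $\ell$ lies entirely in the $P^{2n+1}$ contribution. Projecting via the pinch $\pi\colon W\to\bigvee_{i=2}^m P^{2n+1}(p^r)$, I would write $\pi\circ\ell=\sum c_i\iota_i$ where $\iota_i$ is the bottom cell inclusion of the $i$th summand and $c_i\in\mathbb{Z}/p^r$. The hypothesis that $\ell_*$ is an inclusion forces some $c_i$ to be a unit mod $p$, so $\pi\circ\ell$ has order exactly $p^r$, and consequently $\ell$ has order at least $p^r$.

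To bound the order above by $p^r$, I would then invoke Hilton--Milnor to decompose $\pi_{2n}(W)_{(p)}$ as a direct sum of $\pi_{2n}$-groups of basic products. Each basic product reduces (via iterated Lemma~\ref{mooresmash}) to a wedge of Moore spaces $P^j(p^r)$ with $j\le 2n+1$, and a case-by-case analysis using the Puppe sequence together with the $p$-local vanishing $\pi_{2n}(S^{2n-1})_{(p)}=0$ and the torsion-freeness $\pi_{2n}(S^{2n})_{(p)}=\mathbb{Z}_{(p)}$ shows that each such contribution is cyclic of order dividing $p^r$: $\pi_{2n}(P^{2n+1}(p^r))=\mathbb{Z}/p^r$ directly, $\pi_{2n}(P^{2n}(p^r))_{(p)}=0$ as above, and the remaining $\pi_{2n}(P^{n+1}(p^r))_{(p)}$ has exponent at most $p^r$. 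The hardest case is this last one, which sits just outside the Blakers--Massey stable range for a direct Puppe-sequence analysis and constitutes the main obstacle; it is handled via a James-splitting argument (or a delicate metastable-range computation using the $p$-local structure of $\Omega P^{n+1}(p^r)$). Once the exponent bound $p^r$ on $\pi_{2n}(W)_{(p)}$ is established, $\ell$ has order exactly $p^r$, and since this order divides $p^r$, the cofibration $S^{2n}\xrightarrow{p^r}S^{2n}\to P^{2n+1}(p^r)$ allows $\ell$ to extend to a map $\ell'\colon P^{2n+1}(p^r)\to W$ with $\ell=\ell'\circ\iota$, completing the factorization.
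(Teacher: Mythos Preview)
Your overall strategy is the same as the paper's: decompose $W$ via Lemmas~\ref{halfsmashsusp} and~\ref{mooresmash} into a wedge of copies of $P^{n+1}(p^r)$, $P^{2n}(p^r)$, $P^{2n+1}(p^r)$; obtain the lower bound $p^r$ from the fact that $\ell$ must hit a bottom-cell inclusion of some $P^{2n+1}$ summand (your Hurewicz analysis is more detailed than the paper's one-line version but equivalent); and obtain the upper bound by reducing via Hilton--Milnor to exponents of $\pi_{2n}$ of individual Moore spaces.

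The gap is exactly where you flag it: the exponent of $\pi_{2n}(P^{n+1}(p^r))$. Your proposed ``James-splitting argument or a delicate metastable-range computation using the $p$-local structure of $\Omega P^{n+1}(p^r)$'' is a pointer, not a proof, and what it points at is precisely the Cohen--Moore--Neisendorfer theory. The paper invokes \cite{CMN} for the statement that for $p$ odd, $\pi_k(P^s(p^r))$ is annihilated by $p^r$ whenever $k\le 2s$; applying this with $s=n+1$, $k=2n$ (and to the higher Moore spaces arising from Hilton--Milnor smash products, all of which satisfy $s\ge n+1$) gives the upper bound immediately. This is a substantive theorem of the CMN exponent program and does not reduce to an elementary Blakers--Massey or Puppe-sequence argument. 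Replace your hand-wave by this citation and your proof coincides with the paper's. One minor point: your assertion $\pi_{2n}(P^{2n}(p^r))_{(p)}=0$ is correct, but ``as above'' only established vanishing of the Hurewicz image; the group itself vanishes because it is a quotient of $\pi_{2n}(S^{2n-1})_{(p)}=0$ for $p$ odd.
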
 

\begin{proof} 
By Lemmas~\ref{halfsmashsusp} and~\ref{mooresmash} there are homotopy equivalences 
\[\begin{split} 
   P^{n}(p^{r})\ltimes(\bigvee_{i=2}^{m} P^{n+1}(p^{r})) & \simeq 
     \bigg(\bigvee_{i=2}^{m} P^{n}(p^{r})\wedge P^{n+1}(p^{r})\bigg)\vee(\bigvee_{i=2}^{m} P^{n+1}(p^{r})) \\ 
     & \simeq\bigg(\bigvee_{i=2}^{m} P^{2n+1}(p^{r})\vee P^{2n}(p^{r})\bigg)\vee(\bigvee_{i=2}^{m} P^{n+1}(p^{r})). 
\end{split}\]  
Since $\ell$ induces an inclusion in mod-$p$ homology, it must map into at least one of the 
$P^{2n+1}(p^{r})$ wedge summands as the inclusion of the bottom cell (up to multiplication 
by a unit in $\mathbb{Z}/p^{r}\mathbb{Z}$). Therefore the order of $\ell$ is at least $p^{r}$. 
On the other hand, by~\cite{CMN}, if $p$ is odd then $\pi_{k}(P^{s}(p^{r}))$ is annihilated 
by $p^{r}$ for any $k\leq 2s$. The Hilton-Milnor Theorem implies that any wedge 
$\bigvee_{j=1}^{t} P^{s_{j}}(p^{r})$ with $n+1\leq s_{j}\leq 2n+1$ for all $1\leq j\leq t$ has the 
property that $\pi_{k}(\bigvee_{j=1}^{t} P^{s_{j}}(p^{r}))$ is annihilated by~$p^{r}$ for all $k\leq 2n+2$. 
Thus, in our case, $\pi_{2n}(P^{n}(p^{r})\ltimes(\bigvee_{i=2}^{m} P^{n+1}(p^{r})))$ is 
annihilated by $p^{r}$, so the order of $\ell$ is at most $p^{r}$. Hence the order of $\ell$ 
is exactly $p^{r}$. Consequently, $\ell$ extends to 
\(\namedright{P^{2n+1}(p^{r})}{\ell'}{P^{n}(p^{r})\ltimes(\bigvee_{i=2}^{m} P^{n+1}(p^{r}))}\) 
for some map $\ell'$. 
\end{proof} 

Define the spaces $\widetilde{C}$ and $\overline{C}$ by the homotopy cofibration diagram 
\begin{equation} 
  \label{CtildeCdgrm} 
  \diagram 
      & \bigvee_{i=2}^{m} P^{n+1}(p^{r})\rdouble\dto & \bigvee_{i=2}^{m} P^{n+1}(p^{r})\dto \\ 
      S^{2n}\rto^-{\ell}\ddouble & P^{n}(p^{r})\ltimes(\bigvee_{i=2}^{m} P^{n+1}(p^{r}))\rto\dto^{q} 
           & \widetilde{C}\dto \\ 
       S^{2n}\rto^-{q\circ\ell} & P^{n}(p^{r})\wedge(\bigvee_{i=2}^{m} P^{n+1}(p^{r}))\rto 
           & \overline{C}. 
  \enddiagram 
\end{equation} 

Since $\ell$ induces an inclusion in mod-$p$ homology so does $q\circ\ell$. In particular, 
there is a $t\in\{2,\ldots,m\}$ such that the composite 
\[\nameddright{S^{2n}}{q\circ\ell}{P^{n}(p^{r})\wedge(\bigvee_{i=2}^{m} P^{n+1}(p^{r}))} 
         {1\wedge q_{t}}{P^{n}(p^{r})\wedge P^{n+1}(p^{r})}\] 
induces an injection in mod-$p$ homology. 

\begin{lemma} 
   \label{ell'retract} 
   The composite 
   \[\namedddright{P^{2n+1}(p^{r})}{\ell'}{P^{n}(p^{r})\ltimes(\bigvee_{i=2}^{m} P^{n+1}(p^{r}))} 
          {q}{P^{n}(p^{r})\wedge(\bigvee_{i=2}^{m} P^{n+1}(p^{r}))}{1\wedge q_{t}} 
          {P^{n}(p^{r})\wedge P^{n+1}(p^{r})}\] 
   has a left homotopy inverse. 
\end{lemma}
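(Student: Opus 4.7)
The plan is to exploit the splitting $P^{n}(p^{r})\wedge P^{n+1}(p^{r})\simeq P^{2n+1}(p^{r})\vee P^{2n}(p^{r})$ from Lemma~\ref{mooresmash} by pinching onto the $P^{2n+1}(p^{r})$-summand and showing the resulting self-map of $P^{2n+1}(p^{r})$ is a homotopy equivalence. Let
\[\pi_{1}\colon\namedright{P^{n}(p^{r})\wedge P^{n+1}(p^{r})}{}{P^{2n+1}(p^{r})}\]
denote the pinch map and set
\[f=\pi_{1}\circ(1\wedge q_{t})\circ q\circ\ell'\colon\namedright{P^{2n+1}(p^{r})}{}{P^{2n+1}(p^{r})}.\]
If $f$ can be shown to be a homotopy equivalence with inverse $f^{-1}$, then $f^{-1}\circ\pi_{1}$ is the desired left homotopy inverse, so the task reduces to verifying that $f$ is an equivalence.

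First I would identify $f\circ i$, where
\(i\colon\namedright{S^{2n}}{}{P^{2n+1}(p^{r})}\)
is the bottom cell inclusion. Since $\ell'\circ i\simeq\ell$ by the construction in Lemma~\ref{mooreell}, we have $f\circ i\simeq\pi_{1}\circ(1\wedge q_{t})\circ q\circ\ell$, and the goal is to exhibit this as a unit multiple of $i$. Applying $\pi_{2n}$ to the Puppe sequence of
\(\nameddright{S^{2n-1}}{p^{r}}{S^{2n-1}}{}{P^{2n}(p^{r})}\)
places $\pi_{2n}(P^{2n}(p^{r}))$ between the cokernel of $p^{r}$ on $\pi_{2n}(S^{2n-1})=\mathbb{Z}/2$ and the kernel of $p^{r}$ on $\pi_{2n}(S^{2n})=\mathbb{Z}$; since $p$ is odd, $p^{r}$ acts as the identity on $\mathbb{Z}/2$ and is injective on $\mathbb{Z}$, forcing $\pi_{2n}(P^{2n}(p^{r}))=0$. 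A parallel computation gives $\pi_{2n}(P^{2n+1}(p^{r}))\cong\mathbb{Z}/p^{r}$, generated by the bottom cell inclusion. Because $P^{2n+1}(p^{r})$ is $(2n-1)$-connected and $P^{2n}(p^{r})$ is $(2n-2)$-connected, no Whitehead product contributes to $\pi_{2n}$ of their wedge, so
\[\pi_{2n}(P^{2n+1}(p^{r})\vee P^{2n}(p^{r}))\cong\mathbb{Z}/p^{r},\]
generated by the bottom cell inclusion into the $P^{2n+1}(p^{r})$-summand. The hypothesis that $(1\wedge q_{t})\circ q\circ\ell$ is nonzero on mod-$p$ $H_{2n}$ then forces $f\circ i\simeq c\cdot i$ for some $c\in(\mathbb{Z}/p^{r})^{\times}$.

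Finally, the reduced integral homology of $P^{2n+1}(p^{r})$ consists of the single group $H_{2n}(P^{2n+1}(p^{r});\mathbb{Z})=\mathbb{Z}/p^{r}$, and the bottom cell inclusion realises $i_{\ast}$ on $H_{2n}$ as the standard surjection $\mathbb{Z}\twoheadrightarrow\mathbb{Z}/p^{r}$. Consequently $f\circ i\simeq c\cdot i$ forces $f_{\ast}$ to be multiplication by the unit $c$ on this group, so $f$ induces isomorphisms on all integral homology and Whitehead's Theorem identifies $f$ as a homotopy equivalence. The main obstacle in the argument is ruling out a stray $P^{2n}(p^{r})$-contribution to $(1\wedge q_{t})\circ q\circ\ell$; this is exactly where the oddness of $p$ is essential, since it is what makes $\pi_{2n}(P^{2n}(p^{r}))=0$ and thereby keeps the identification of $f\circ i$ clean.
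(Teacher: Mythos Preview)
Your argument is correct and takes a genuinely different route from the paper's. The paper argues purely homologically: knowing that the restriction to $S^{2n}$ is injective on mod-$p$ $H_{2n}$, it invokes the $r$-th Bockstein $\beta^{(r)}$ on $P^{2n+1}(p^{r})$ to force injectivity on $H_{2n+1}$ as well (since $\beta^{(r)}(x_{2n+1})=x_{2n}$ implies the image of $x_{2n+1}$ cannot vanish), and the same Bockstein relation forces the image of $x_{2n}$ to lie in the $P^{2n+1}(p^{r})$-summand of $H_{2n}(P^{2n+1}(p^{r})\vee P^{2n}(p^{r});\mathbb{Z}/p)$; pinching then gives a mod-$p$ homology isomorphism and Whitehead finishes. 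Your approach is homotopical rather than homological: by computing $\pi_{2n}(P^{2n}(p^{r}))=0$ for $p$ odd (via the pair $(P^{2n}(p^{r}),S^{2n-1})$ and the fact that the homotopy of a mod-$p^{r}$ Moore space is $p$-torsion, killing the $\mathbb{Z}/2$ contribution from $\eta$), you see directly that the class $(1\wedge q_{t})\circ q\circ\ell$ already lands in the $P^{2n+1}(p^{r})$-summand up to homotopy, and is a unit multiple of the bottom cell. The paper's proof is shorter and stays in homology, but the Bockstein step (and why pinching preserves injectivity in degree $2n$) is left implicit; your proof is longer but makes the role of the odd-prime hypothesis completely transparent, locating it precisely at the vanishing of $\pi_{2n}(P^{2n}(p^{r}))$.
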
 

\begin{proof} 
The restriction of $(1\wedge q_{t})\circ q\circ\ell'$ to $S^{2n}$ is $(1\wedge q_{t})\circ q\circ\ell$, 
which is an injection in mod-$p$ homology. The action of the Bockstein then implies that 
$(1\wedge q_{t})\circ q\circ\ell'$ induces an injection in mod-$p$ homology. By 
Lemma~\ref{mooresmash}, $P^{n}(p^{r})\wedge P^{n+1}(p^{r})\simeq P^{2n+1}(p^{r})\vee P^{2n}(p^{r})$, 
so composing $(1\wedge q_{t})\circ q\circ\ell'$ with the pinch map to $P^{2n+1}(p^{r})$ 
gives a self-map of $P^{2n+1}(p^{r})$ which induces an isomorphism in mod-$p$ homology, 
and hence in integral homology, and so is a homotopy equivalence by Whitehead's Theorem. 
\end{proof} 

\begin{lemma} 
   \label{CtildeCtype} 
   Suppose that there is a map 
   \(\namedright{S^{2n}}{\ell}{P^{n}(p^{r})\ltimes(\bigvee_{i=2}^{m} P^{n+1}(p^{r}))}\) 
   which induces an inclusion in mod-$p$ homology. If $p$ is odd then the following hold: 
   \begin{letterlist} 
      \item the homotopy cofibration 
              \(\nameddright{\bigvee_{i=2}^{m} P^{n+1}(p^{r})}{}{\widetilde{C}}{}{\overline{C}}\) 
              in~(\ref{CtildeCdgrm}) splits to give a homotopy equivalence 
              \[\widetilde{C}\simeq(\bigvee_{i=2}^{m} P^{n+1}(p^{r}))\vee\overline{C};\] 
    \item there is a homotopy equivalence 
             \[\namedright{\bigg(P^{n}(p^{r})\wedge(\displaystyle\bigvee_{\substack{i=2 \\ i\neq t}}^{m} 
                  P^{n+1}(p^{r}))\bigg)\vee \bigg(S^{2n+1}\vee P^{2n}(p^{r})\bigg)}{\simeq}{\overline{C}}\] 
            where the map 
            \(\namedright{S^{2n+1}}{}{\overline{C}}\) 
            factors through the map 
            \(\namedright{\widetilde{C}}{}{\overline{C}}\). 
   \end{letterlist} 
\end{lemma}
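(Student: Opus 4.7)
The plan is to first factor $\ell$ through $P^{2n+1}(p^r)$ using Lemma~\ref{mooreell} and then apply the octahedral axiom to decompose both $\widetilde{C}$ and $\overline{C}$. Since $\ell$ induces an injection in mod-$p$ homology and $p$ is odd, Lemma~\ref{mooreell} yields a factorisation $\ell = \ell' \circ \iota$, where $\iota: S^{2n} \to P^{2n+1}(p^r)$ is the bottom-cell inclusion and $\ell': P^{2n+1}(p^r) \to P^n(p^r) \ltimes \bigvee_{i=2}^m P^{n+1}(p^r)$. Applying the octahedral axiom to the composites $\ell = \ell' \circ \iota$ and $q \circ \ell = (q \circ \ell') \circ \iota$ gives cofibration sequences
\[
S^{2n+1} \to \widetilde{C} \to \mathrm{Cof}(\ell')
\quad\text{and}\quad
S^{2n+1} \to \overline{C} \to \mathrm{Cof}(q \circ \ell')
\]
fitting in a commuting ladder induced by $q$. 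In particular, the map $S^{2n+1} \to \overline{C}$ factors as $S^{2n+1} \to \widetilde{C} \to \overline{C}$, yielding the final clause of part~(b).

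For part~(b), compute $\mathrm{Cof}(q \circ \ell')$. By Lemma~\ref{mooresmash}, $P^n(p^r) \wedge \bigvee_{i=2}^{m} P^{n+1}(p^r) \simeq \bigvee_{i=2}^{m}(P^{2n+1}(p^r) \vee P^{2n}(p^r))$. Lemma~\ref{ell'retract} shows that composing $q \circ \ell'$ with the projection onto the $P^{2n+1}(p^r)$ summand in position $t$ is a self-equivalence of $P^{2n+1}(p^r)$, so $q \circ \ell'$ has a left homotopy inverse. After a corresponding change of wedge basis, $q \circ \ell'$ becomes the inclusion of a $P^{2n+1}(p^r)$ summand, and its cofibre is the complementary wedge summand
\[
\mathrm{Cof}(q \circ \ell') \simeq \left(P^n(p^r) \wedge \bigvee_{\substack{i=2 \\ i\neq t}}^{m} P^{n+1}(p^r)\right) \vee P^{2n}(p^r).
\]
The cofibration $S^{2n+1} \to \overline{C} \to \mathrm{Cof}(q \circ \ell')$ splits via a right homotopy inverse obtained by including $\mathrm{Cof}(q \circ \ell')$ back into $P^n(p^r) \wedge \bigvee P^{n+1}(p^r)$ through the wedge decomposition and then projecting to $\overline{C}$; this yields the equivalence stated in part~(b).

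For part~(a), split the cofibration $\bigvee_{i=2}^m P^{n+1}(p^r) \to \widetilde{C} \to \overline{C}$ from the right column of~(\ref{CtildeCdgrm}) by extending the canonical projection $\pi: P^n(p^r) \ltimes \bigvee P^{n+1}(p^r) \to \bigvee P^{n+1}(p^r)$ across $\widetilde{C}$. Using the splitting $P^n(p^r) \ltimes \bigvee P^{n+1}(p^r) \simeq (P^n(p^r) \wedge \bigvee P^{n+1}(p^r)) \vee \bigvee P^{n+1}(p^r)$ from Lemma~\ref{halfsmashsusp}, write $\ell = \ell_A + \ell_B$; then $\pi \circ \ell = \ell_B$, and the mod-$p$ homology injectivity of $\ell$ forces its image into the smash summand (the other summand vanishes in degree $2n$ mod $p$ for $n \geq 2$), so $\ell_B$ is trivial in mod-$p$ homology. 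The principal obstacle is deducing $\ell_B \simeq \ast$ from this vanishing; this uses the odd primary hypothesis together with primary homotopy theory in the style of Cohen--Moore--Neisendorfer to control $\pi_{2n}(\bigvee P^{n+1}(p^r))_{(p)}$, combined with the bound from Lemma~\ref{mooreell} that $\ell_B$ has order dividing $p^r$ and factors through $P^{2n+1}(p^r)$. Once $\pi \circ \ell \simeq \ast$ is established, $\pi$ extends across $\widetilde{C}$ to a left homotopy inverse of $\bigvee P^{n+1}(p^r) \hookrightarrow \widetilde{C}$, yielding the splitting in part~(a).
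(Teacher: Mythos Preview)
Your argument for part~(b) and for the factorisation of $S^{2n+1}\to\overline{C}$ through $\widetilde{C}$ is essentially the same as the paper's, just phrased via the octahedral axiom rather than by drawing the explicit cofibration ladder. That part is fine.

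The gap is in part~(a). You try to build a retraction of $\bigvee_{i=2}^{m} P^{n+1}(p^{r})$ off $\widetilde{C}$ by extending the projection $\pi\colon P^{n}(p^{r})\ltimes(\bigvee P^{n+1}(p^{r}))\to\bigvee P^{n+1}(p^{r})$ across the cofibre of~$\ell$, which amounts to showing $\pi\circ\ell\simeq\ast$. But your justification collapses: since $H_{2n}(\bigvee_{i=2}^{m} P^{n+1}(p^{r});\mathbb{Z}/p)=0$ for $n\geq 2$, the map $\ell_{B}=\pi\circ\ell$ is \emph{automatically} zero in mod-$p$ homology, so that observation gives no information. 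Your appeal to ``primary homotopy theory in the style of CMN'' is not an argument, and in fact the conclusion need not hold: the hypothesis on $\ell$ is only that it injects in mod-$p$ homology, which constrains the smash component but places no restriction on the component $\ell_{B}\in\pi_{2n}(\bigvee P^{n+1}(p^{r}))$. There is no reason this group, or the image of $\ell$ in it, should vanish.

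The paper avoids this entirely. Rather than trying to extend $\pi$, it uses the map $b\colon\widetilde{C}\to G$ to the cofibre $G=\mathrm{Cof}(\ell')$, which you have already set up. Because $\ell'$ (not $\ell$) has a left homotopy inverse by Lemma~\ref{ell'retract}, one gets $P^{n}(p^{r})\ltimes(\bigvee P^{n+1}(p^{r}))\simeq P^{2n+1}(p^{r})\vee G$ with the summand $\bigvee_{i=2}^{m} P^{n+1}(p^{r})$ sitting inside $G$; the composite $\bigvee P^{n+1}(p^{r})\to\widetilde{C}\xrightarrow{b}G$ is then simply the inclusion of that wedge summand, and projecting off it gives the required left inverse. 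In other words, the retraction comes from the cofibre of $\ell'$, not from $\pi$, and you already have all the ingredients for this in your treatment of part~(b).
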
 

\begin{proof} 
The hypotheses imply that Lemma~\ref{mooreell} holds. The factorization of $\ell$ through $\ell'$ in Lemma~\ref{mooreell} implies that there is a homotopy cofibration diagram  
\begin{equation} 
  \label{CtildeGdgrm} 
  \diagram 
     S^{2n}\rto\ddouble & P^{2n+1}(p^{r})\rto\dto^{\ell'} & S^{2n+1}\dto \\ 
     S^{2n}\rto^-{\ell} & P^{n}(p^{r})\ltimes(\bigvee_{i=2}^{m} P^{n+1}(p^{r}))\rto\dto^{a} 
         & \widetilde{C}\dto^{b} \\ 
     & G\rdouble & G 
  \enddiagram 
\end{equation} 
which defines the space $G$ and the maps $a$ and $b$. By Lemma~\ref{ell'retract}, 
$\ell'$ has a left homotopy inverse. Therefore as 
$P^{n}(p^{r})\ltimes(\bigvee_{i=2}^{m} P^{n+1}(p^{r}))$ is a suspension, there is a 
homotopy equivalence 
\[P^{n}(p^{r})\ltimes(\bigvee_{i=2}^{m} P^{n+1}(p^{r}))\simeq P^{2n+1}(p^{r})\vee G.\] 
In particular, the map $a$ in~(\ref{CtildeGdgrm}) has a right homotopy inverse. 
The homotopy commutativity of the bottom right square in~(\ref{CtildeGdgrm}) 
then implies that $b$ also has a right homotopy inverse. Thus 
\[\widetilde{C}\simeq S^{2n+1}\vee G.\] 
Since $G$ is the homotopy cofibre of $\ell'$, the left homotopy inverse of $(1\wedge q_{t})\circ q\circ\ell'$ 
in Lemma~\ref{ell'retract} implies that 
\[G\simeq \bigvee_{i=2}^{m} P^{n+1}(p^{r})\vee 
      \bigg(P^{n}(p^{r})\wedge(\displaystyle\bigvee_{\substack{i=2 \\ i\neq t}}^{m} P^{n+1}(p^{r}))\bigg) 
      \vee P^{2n}(p^{r}).\] 
In particular, the upper right vertical arrow in~(\ref{CtildeCdgrm}) composed with 
\(\namedright{\widetilde{C}}{}{G}\) 
is the inclusion of $\bigvee_{i=2}^{m} P^{n+1}(p^{r})$ into $G$. Therefore, the map 
\(\namedright{\bigvee_{i=2}^{m} P^{n+1}(p^{r})}{}{\widetilde{C}}\) 
in~(\ref{CtildeCdgrm}) has a left homotopy inverse, proving part~(a). Further, this implies 
that its homotopy cofibre $\overline{C}$ satisfies 
\[\overline{C}\simeq\bigg(P^{n}(p^{r})\wedge(\displaystyle\bigvee_{\substack{i=2 \\ i\neq t}}^{m} P^{n+1}(p^{r}))\bigg) 
      \vee (S^{2n+1}\vee P^{2n}(p^{r})).\] 
Finally, note that the inclusion of $S^{2n+1}$ in $\overline{C}$ is via the composite 
\(\nameddright{S^{2n+1}}{}{\widetilde{C}}{}{\overline{C}}\), 
completing the proof of part~(b).  
\end{proof} 

Recall that for $1\leq k\leq m$ the map 
\(i_{k}\colon\namedright{P^{n+1}(p^{r})}{}{\bigvee_{i=1}^{m} P^{n+1}(p^{r})}\) 
is the inclusion of the $k^{th}$-wedge summand. The Whitehead product $[i_{j},i_{k}]$ is a map 
\(\namedright{\Sigma P^{n}(p^{r})\wedge P^{n}(p^{r})}{}{\bigvee_{i=1}^{m} P^{n+1}(p^{r})}\). 
Combining this with the map $v$ in Example~\ref{spheremooresplit} gives a composite 
\[\lnameddright{S^{2n}}{\Sigma v}{\Sigma P^{n}(p^{r})\wedge P^{n}(p^{r})}{[i_{j},i_{k}]} 
     {\bigvee_{i=1}^{m} P^{n+1}(p^{r})}\] 
where $\Sigma v$ induces an inclusion in mod-$p$ homology. Note that as $v$ factors 
as the composite 
\(\nameddright{S^{2n}}{}{P^{2n+1}(p^{r})}{}{\Sigma P^{n}(p^{r})\wedge P^{n}(p^{r})}\), 
where the left map is the inclusion of the bottom cell and the right map is the inclusion 
of the top dimensional Moore space, the map $[i_{j},i_{k}]\circ v$ can alternatively 
be regarded as 
\(\llnameddright{S^{2n}}{}{P^{2n+1}(p^{r})}{[i_{j},i_{k}]_{r}}{\bigvee_{i=1}^{k} P^{n+1}(p^{r})}\), 
where $[i_{j},i_{k}]_{r}$ is the mod-$p^{r}$ Whitehead product defined before 
Lemma~\ref{mooreex}. In what follows the notation will be formulated in terms 
of $[i_{j},i_{k}]\circ\Sigma v$. 

\begin{theorem} 
   \label{PDex} 
   Let $p$ be an odd prime, $r\geq 1$ and $n\geq 2$. Suppose that there is a homotopy cofibration 
   \[\nameddright{S^{2n}}{f}{\bigvee_{i=1}^{m} P^{n+1}(p^{r})}{}{M}.\] 
   Suppose also that $f=f_{1}+f_{2}$ where: 
   \begin{itemize} 
      \item $f_{1}=\sum_{j=2}^{m} [i_{1},i_{j}]\circ (d_{j}\cdot\Sigma v)$ for $d_{j}\in\mathbb{Z}$; 
      \item there is at least one $t\in\{2,\ldots,m\}$ such that the mod-$p$ reduction of $d_{t}$ is a unit; 
      \item $f_{2}$ factors as 
               \(\nameddright{S^{2n}}{\gamma}{\bigvee_{i=2}^{m} P^{n+1}(p^{r})}{I} 
                     {\bigvee_{i=1}^{m} P^{n+1}(p^{r})}\) 
                for some map $\gamma$; 
      \item $\Sigma\gamma$ is null homotopic. 
   \end{itemize}    
   Then there is a homotopy fibration 
   \[\nameddright{(\Omega P^{n+1}(p^{r})\ltimes\overline{C})\vee(\bigvee_{i=2}^{m} P^{n+1}(p^{r}))} 
        {}{M}{q'}{P^{n+1}(p^{r})}\] 
    where 
    $\overline{C}\simeq \bigg(P^{n}(p^{r})\wedge(\displaystyle\bigvee_{\substack{i=2 \\ i\neq t}}^{m} 
         P^{n+1}(p^{r}))\bigg)\vee \bigg(S^{2n+1}\vee P^{2n}(p^{r})\bigg)$, 
    and this homotopy fibration splits after looping to give a homotopy equivalence 
    \[\Omega M\simeq\Omega P^{n+1}(p^{r})\times
           \Omega\bigg((\Omega P^{n+1}(p^{r})\ltimes\overline{C})\vee(\bigvee_{i=2}^{m} P^{n+1}(p^{r}))\bigg).\] 
\end{theorem}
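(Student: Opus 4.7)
The plan is to apply the framework of Section~\ref{sec:improve}---specifically Proposition~\ref{Sigmakappatheta}---to the homotopy fibration $E\to\bigvee_{i=1}^{m}P^{n+1}(p^{r})\xrightarrow{q_{1}}P^{n+1}(p^{r})$ with $X_{1}=P^{n}(p^{r})$ and $D=\bigvee_{i=2}^{m}P^{n}(p^{r})$, and then to modify the desuspension step in Theorem~\ref{E'typeII} using the specific structure arising from Lemmas~\ref{mooreell} and~\ref{CtildeCtype}. First, since $f_{1}$ is a sum of Whitehead products and $f_{2}$ factors through $I$, the composite $q_{1}\circ f$ is null homotopic, so $q_{1}$ extends to $q'\colon M\to P^{n+1}(p^{r})$. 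By Lemma~\ref{Whlift}, if $\delta\colon\Sigma D\to E$ is a lift of $I$, then $\overline{a}\circ(1\ltimes\delta)$ is a homotopy equivalence, so hypotheses (a) and (b) of Theorem~\ref{E'typeII} hold.

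Following the template of Proposition~\ref{wedgeex}, define a map $\ell\colon S^{2n}\to P^{n}(p^{r})\ltimes(\bigvee_{i=2}^{m}P^{n+1}(p^{r}))$ as $\ell=\ell_{1}+\ell_{2}$, where $\ell_{1}$ packages the Whitehead summands of $f_{1}$ through $\Sigma v$ (Example~\ref{spheremooresplit}) and the inclusion $i$ into the half-smash, and $\ell_{2}=j\circ\gamma$. Corollaries~\ref{Whliftcor1} and~\ref{Whliftcor2} then ensure that $g=\overline{a}\circ(E\ltimes\delta)\circ\ell$ is a lift of $f$ through $p$, fulfilling hypothesis (c) with $k=1$ and $\overline{\mathfrak{c}}'_{1}=E\ltimes\delta$. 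Hypothesis~(d) holds because $\pi\circ\ell\simeq\gamma$ and $\Sigma\gamma$ is null homotopic. The assumption that some $d_{t}$ reduces to a unit mod~$p$ implies that $\ell$ induces an injection in mod-$p$ homology, since its component into the $t$-th $P^{2n+1}(p^{r})\vee P^{2n}(p^{r})$ summand of the target agrees, up to a unit, with the bottom-cell inclusion coming from $\Sigma v$. Proposition~\ref{Sigmakappatheta} then applies and, after passing to cofibres and invoking the five-lemma as in the proof of Theorem~\ref{E'typeII}, yields a homotopy equivalence
\[\psi\colon\Sigma\bigg((\Omega P^{n+1}(p^{r})\ltimes\overline{C})\vee(\textstyle\bigvee_{i=2}^{m}P^{n+1}(p^{r}))\bigg)\xrightarrow{\simeq}\Sigma E',\]
where $\overline{C}$ is the homotopy cofibre of $q\circ\ell$ and the extra wedge summand arises because $J_{0}(X)\ltimes\Sigma D\simeq\Sigma D=\bigvee_{i=2}^{m}P^{n+1}(p^{r})$.

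The main obstacle is the desuspension of $\psi$. Hypothesis (e) of Theorem~\ref{E'typeII} fails here---the sphere $S^{2n}$ cannot split off the target of $q\circ\ell$---so the argument desuspending $\psi_{1}$ via a right homotopy inverse of $\mu$ is no longer available. The substitute is Lemma~\ref{mooreell}, which forces $\ell$ to extend through $P^{2n+1}(p^{r})$ to a map $\ell'$, together with Lemma~\ref{ell'retract}, which asserts that $(1\wedge q_{t})\circ q\circ\ell'$ admits a left homotopy inverse. Consequently the cofibre $\widetilde{C}$ of $\ell$ itself (rather than of $q\circ\ell$) splits as $(\bigvee_{i=2}^{m}P^{n+1}(p^{r}))\vee\overline{C}$ by Lemma~\ref{CtildeCtype}(a), and $\overline{C}$ has the explicit wedge decomposition stated in Lemma~\ref{CtildeCtype}(b). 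Working with $\widetilde{C}$ in place of $X\wedge\Sigma D$ in the proof of Theorem~\ref{E'typeII}---that is, using the $\bigvee_{i=2}^{m}P^{n+1}(p^{r})$ factor coming from the splitting of $\widetilde{C}$ to absorb the $\Sigma A$-indeterminacy---one can construct the analogs of $\lambda$, $\mu$ and $\nu$ so that the restriction $\psi_{1}$ of $\psi$ to $\Sigma(\Omega P^{n+1}(p^{r})\ltimes\overline{C})$ is exhibited as a suspension; the other restriction $\psi_{2}$ is already a suspension as in Theorem~\ref{E'typeII}. Hence $\psi=\Sigma\psi'$ for some $\psi'$; since $\psi$ is a homology isomorphism between simply-connected $CW$-complexes, so is $\psi'$, and Whitehead's Theorem gives the desired equivalence $E'\simeq(\Omega P^{n+1}(p^{r})\ltimes\overline{C})\vee(\bigvee_{i=2}^{m}P^{n+1}(p^{r}))$. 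This identifies the homotopy fibre of $q'$, and the splitting after looping follows from the existence of a right homotopy inverse of $\Omega q'$ (induced by the inclusion of the first wedge summand followed by the cofibration map to $M$).
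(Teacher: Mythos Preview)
Your setup through the application of Proposition~\ref{Sigmakappatheta} is correct and matches the paper's Steps~1--2: hypotheses (a)--(d) of Theorem~\ref{E'typeII} hold, yielding the suspended equivalence $\psi$. You also correctly identify that hypothesis~(e) fails and that Lemmas~\ref{mooreell}--\ref{CtildeCtype} are the relevant substitutes.

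The gap is in your desuspension of $\psi_{1}$. You claim that by ``working with $\widetilde{C}$ in place of $X\wedge\Sigma D$'' one can construct analogues of $\lambda,\mu,\nu$ so that $\psi_{1}$ is exhibited as a suspension. But the argument in Theorem~\ref{E'typeII} requires a right homotopy inverse $\nu$ for the cofibre map $\mu$, and no such $\nu$ exists here: the map $\ell$ has order exactly $p^{r}$ by Lemma~\ref{mooreell}, so it cannot have a left homotopy inverse, and the cofibration $S^{2n}\to P^{n}(p^{r})\ltimes\Sigma D\to\widetilde{C}$ does not split. The splitting of $\widetilde{C}$ in Lemma~\ref{CtildeCtype} involves an $S^{2n+1}$ summand that does \emph{not} lift back to $P^{n}(p^{r})\ltimes\Sigma D$, so you cannot produce the section needed to run the Theorem~\ref{E'typeII} argument.

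The paper proceeds differently at this point. It splits $\Omega P^{n+1}(p^{r})\ltimes\overline{C}$ as $\overline{C}\vee(\Omega P^{n+1}(p^{r})\wedge\overline{C})$ and treats the two pieces separately. On the smash summand, Lemma~\ref{XAretract} \emph{does} give a splitting (this is your Example~\ref{spheremooresplit}), so $\psi_{2}$ desuspends. On the $\overline{C}$ summand the paper does \emph{not} desuspend $\psi_{1}$ at all; instead it builds a new map $\alpha\colon\overline{C}\to E'$ directly from the half-smash cofibration diagram (the paper's~(\ref{jelltheta})) together with the inclusion $\overline{C}\hookrightarrow\widetilde{C}$ from Lemma~\ref{CtildeCtype}(a). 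The candidate equivalence $\alpha\perp\psi'_{2}\perp\psi'_{3}$ is then checked in homology prime by prime: in mod-$p$ homology one compares with $\psi$ via surjectivity of $P^{n}(p^{r})\ltimes\Sigma D\to\widetilde{C}$, while at primes $q\neq p$ and rationally every Moore space vanishes and the diagram collapses to an identity on $S^{2n+1}$. This last step---the rational and mod-$q$ check isolating the $S^{2n+1}$---is essential and has no counterpart in your sketch.
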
 

\begin{proof} 
The proof proceeds in several steps. 
\medskip 

\noindent 
\textit{Step 1: An observation.} 
We are assuming that $f_{1}=\sum_{j=2}^{m} [i_{1},i_{j}]\circ (d_{j}\cdot\Sigma v)$ so the map 
\(\namedright{\Sigma A}{h_{1,j}}{\Sigma X_{1}\wedge X_{j}}\) 
in Proposition~\ref{wedgeex} in this case is 
\(\lnamedright{S^{2n}}{d_{j}\cdot\Sigma v}{\Sigma P^{n}(p^{r})\wedge P^{n}(p^{r})}\). 
This does not have a left homotopy inverse, regardless of the value of $d_{j}$. 
Therefore the hypotheses of Proposition~\ref{wedgeex} do not apply. However, by 
Lemma~\ref{mooresmash}, the map 
\(\lllnamedright{P^{n}(p^{r})\wedge S^{2n}}{1\wedge d_{j}\cdot\Sigma v} 
     {P^{n}(p^{r})\wedge\Sigma P^{n}(p^{r})\wedge P^{n}(p^{r})}\) 
does have a left homotopy inverse if the mod-$p$ reduction of $d_{t}$ is a unit. 
Therefore, by Lemma~\ref{XAretract}, the map 
\[\lllnamedright{\Omega P^{n+1}(p^{r})\ltimes S^{2n}}{1\wedge d_{j}\cdot\Sigma v} 
     {\Omega P^{n+1}(p^{r})\wedge(\Sigma P^{n}(p^{r})\wedge P^{n}(p^{r}))}\] 
has a left homotopy inverse. 
\medskip 

\noindent 
\textit{Step 2: The approach.} 
In light of Step 1, the approach is to modify the proof of Theorem~\ref{E'typeII}, on which the proof 
of Proposition~\ref{wedgeex} relied. Consider the data 
\[\diagram 
       & E\rto\dto & E'\dto \\ 
       S^{2n}\rto^-{f} & \bigvee_{i=1}^{m} P^{n+1}(p^{r})\rto\dto^{q_{1}} & M\dto^{q'} \\ 
       & P^{n+1}(p^{r})\rdouble & P^{n+1}(p^{r}). 
  \enddiagram\] 
The map $q'$ exists since $f=f_{1}+f_{2}$ where $f_{1}$ factors through the sum of the Whitehead 
products~$[\iota_{i},\iota_{j}]$ and so composes trivially with $q_{1}$, while $f_{2}$ factors 
through $I$ and so composes trivially with~$q_{1}$. Arguing just as for Steps 1 through 3 in 
the proof of Proposition~\ref{wedgeex} shows that 
the hypotheses of the present lemma imply parts~(a) through~(d) of Theorem~\ref{E'typeII} - 
which are identical to parts~(a) through~(d) of Proposition~\ref{Sigmakappatheta}. Therefore 
the $k=1$ case of Proposition~\ref{Sigmakappatheta} implies that there is a homotopy cofibration diagram 
\begin{equation} 
  \label{epsilonpsi2} 
  \diagram 
         \Sigma(\Omega P^{n+1}(p^{r})\ltimes S^{2n})\rdouble\dto^{\Sigma\kappa_{1}+\ast} 
              & \Sigma(\Omega P^{n+1}(p^{r})\ltimes S^{2n})\dto^{\Sigma\theta} \\ 
         \Sigma(\Omega P^{n+1}(p^{r})\ltimes(P^{n}(p^{r})\wedge\Sigma D))\vee 
                 \Sigma^{2} D\rto^-{\Sigma\epsilon}\dto^{\Sigma\lambda\vee 1}  
              & \Sigma E\dto^{\Sigma\eta} \\  
      \Sigma(\Omega P^{n+1}(p^{r})\ltimes\overline{C})\vee\Sigma^{2} D\rto^-{\psi} 
              & \Sigma E' 
  \enddiagram 
\end{equation} 
where $D=\bigvee_{i=2}^{m} P^{n+1}(p^{r})$, $\epsilon$ is a homotopy equivalence, 
$\lambda$ is the map to the homotopy cofibre of~$\kappa_{1}$, $\eta$ is the map to the 
homotopy cofibre of $\theta$, and $\psi$ is an induced map of cofibres. As $\epsilon$ is a 
homotopy equivalence the five-lemma implies that $\psi$ induces an isomorphism in homology 
and so is a homotopy equivalence by Whitehead's Theorem. 
\medskip 

\noindent 
\textit{Step 3: The homotopy type of $E'$, setting up}. 
Write $\psi=\psi_{1}\perp\psi_{2}\perp\psi_{3}$ where 
$\psi_{1}$, $\psi_{2}$ and $\psi_{3}$ are the restrictions of $\psi$ to 
$\Sigma\overline{C}$, $\Sigma(\Omega P^{n+1}(p^{r})\wedge\overline{C})$ and $\Sigma^{2} D$ respectively. 
Similarly write $\epsilon=\epsilon_{1}\perp\epsilon_{2}\perp\epsilon_{3}$ where $\epsilon_{1}$, 
$\epsilon_{2}$ and $\epsilon_{3}$ are the restrictions of $\epsilon$ to 
$P^{n}(p^{r})\wedge\Sigma D$, $\Omega P^{n+1}(p^{r})\wedge(P^{n}(p^{r})\wedge\Sigma D)$ 
and $\Sigma^{2} D$ respectively. First, observe that the bottom square 
in~(\ref{epsilonpsi2}) implies that $\psi_{3}=\Sigma\eta\circ\Sigma\epsilon_{3}$. In particular, 
if $\psi'_{3}=\eta\circ\epsilon_{3}$ then $\psi_{3}=\Sigma\psi'_{3}$. Second, consider the homotopy cofibration 
\(\nameddright{S^{2n}}{q\circ\ell}{P^{n}(p^{r})\wedge\Sigma D}{a}{\overline{C}}\), 
which defines the map $a$. This does not split but, by Step 1, the homotopy cofibration 
\(\llnameddright{\Omega P^{n+1}(p^{r})\wedge S^{2n}}{1\wedge (q\circ\ell)} 
    {\Omega P^{n+1}(p^{r})\wedge(P^{n}(p^{r})\wedge\Sigma D)}{1\wedge a} 
    {\Omega P^{n+1}(p^{r})\wedge\overline{C}}\) 
does split. If~$b$ is a right homotopy inverse of $1\wedge a$ then, since $\kappa_{3}=1\ltimes (q\circ\ell)$ 
by~(\ref{kappas}), the bottom square in~(\ref{epsilonpsi2}) implies that 
$\psi_{2}\simeq\Sigma\eta\circ\Sigma\epsilon_{2}\circ\Sigma b$. In particular, 
if $\psi'_{2}=\eta\circ\epsilon_{2}\circ b$ then $\psi_{2}\simeq\Sigma\psi'_{2}$. 
Third, consider the homotopy cofibration diagram (in which columns are homotopy cofibrations) 
\begin{equation} 
  \label{jelltheta} 
  \diagram 
       S^{2n}\rto^-{j}\dto^{\ell} & \Omega P^{2n+1}(p^{r})\ltimes S^{2n}\rdouble\dto^{1\ltimes\ell} 
              &  \Omega P^{2n+1}(p^{r})\ltimes S^{2n}\dto^{\theta} \\ 
       P^{n}(p^{r})\ltimes\Sigma D\rto^-{j}\dto 
              & \Omega P^{n+1}(p^{r})\ltimes(P^{n}(p^{r})\ltimes\Sigma D)\rto^-{e}\dto & E\dto^{\eta} \\ 
       \widetilde{C}\rto^-{j} & \Omega P^{n+1}(p^{r})\ltimes\widetilde{C}\rto^-{\xi} & E'   
  \enddiagram 
\end{equation}  
where $\xi$ is an induced map of cofibres. The splitting $\widetilde{C}\simeq\Sigma D\vee\overline{C}$ 
in Lemma~\ref{CtildeCtype}~(a) implies that there is a map 
\[\alpha\colon\nameddright{\overline{C}}{}{\widetilde{C}}{\xi\circ j}{E'}.\] 
Note that the map 
\[\beta\colon\nameddright{\Sigma D}{}{\widetilde{C}}{\xi\circ j}{E'}\] 
is the same as $\eta\circ e\circ j$ restricted to $\Sigma D$. 
\medskip 

\noindent 
\textit{Step 4: The homotopy type of $E'$.} 
We claim that the map 
\[\lllnamedright{\overline{C}\vee\Sigma(\Omega P^{n+1}(p^{r})\wedge\overline{C})\vee\Sigma^{2} D} 
      {\alpha\perp\psi'_{2}\perp\psi'_{3}}{E'}\] 
is a homotopy equivalence. It suffices to show that it induces an isomorphism in homology. 
To do this, it suffices to show that it induces an isomorphism in mod-$q$ homology for 
every prime $q$ and in rational homology. In mod-$p$ homology, since $\ell$ induces 
an injection, the map 
\(\namedright{P^{n}(p^{r})\ltimes\Sigma D}{}{\widetilde{C}}\) 
is a surjection. Thus the image of $(\xi\circ j)_{\ast}$ is determined by the image of 
$(\eta\circ e\circ j)_{\ast}$. That is, the image of $(\alpha\perp\beta)_{\ast}$ is determined 
by the image of $(\eta\circ e\circ j)_{\ast}$. By definition of $\epsilon$, 
$e\circ j\simeq\epsilon_{1}\perp\epsilon_{3}$. Thus, after suspending, the bottom square in~(\ref{epsilonpsi2}) 
implies that the image of $(\Sigma\eta\circ(\Sigma\epsilon_{1}\perp\Sigma\epsilon_{3}))_{\ast}$ 
equals the image of $(\psi_{1}\perp\psi_{3})_{\ast}$. Since $\beta=\psi'_{3}$, we obtain that 
the image of $(\Sigma\alpha\perp\Sigma\beta)_{\ast}$ equals the image of 
$(\psi_{1}\perp\psi_{3})_{\ast}$. Hence as $\psi=\psi_{1}\perp\psi_{2}\perp\psi_{3}$ is a 
homotopy equivalence, it induces an isomorphism in mod-$p$ homology, and therefore so does 
$\Sigma\alpha\perp\psi_{2}\perp\psi_{3}$. Hence, desuspending, $\alpha\perp\psi'_{2}\perp\psi'_{3}$ 
must induce an isomorphism in mod-$p$ homology. Localizing at a prime $q$ for $q\neq p$ or rationally, 
since $P^{n}(p^{r})$ is contractible, the homotopy cofibration diagram~(\ref{jelltheta}) reduces to 
\[\diagram 
       S^{2n}\rdouble\dto & S^{2n}\rdouble\dto & S^{2n}\dto \\ 
       \ast\rto\dto & \ast\rto\dto & \ast\dto \\ 
       S^{2n+1}\rdouble & S^{2n+1}\rdouble & S^{2n+1}. 
  \enddiagram\] 
In particular, $\widetilde{C}\simeq S^{2n+1}$ and $\xi\circ j$ is a homotopy equivalence. 
Observe also that $\alpha$ is a homotopy equivalence. Thus 
$\alpha\perp\psi'_{2}\perp\psi'_{3}$ is a homotopy equivalence and so induces an 
isomorphism in mod-$q$, or respectively rational, homology. Hence 
$\alpha\perp\psi'_{2}\perp\psi'_{3}$ induces an isomorphism in integral homology, as required. 
\medskip 

\noindent 
\textit{Step 5: The homotopy type of $\overline{C}$.} 
As $f_{1}=\sum_{j=2}^{m} [i_{1},i_{j}]\circ (d_{j}\cdot\Sigma v)$, the map 
\(\namedright{\Sigma A}{h_{1,j}}{\Sigma X_{1}\wedge X_{j}}\) 
in Proposition~\ref{wedgeex} in this case is 
\(\lnamedright{S^{2n}}{d_{j}\cdot\Sigma v}{\Sigma P^{n}(p^{r})\wedge P^{n}(p^{r})}\). 
As there is at least one $t\in\{2,\ldots,m\}$ such that the mod-$p$ reduction of $d_{j}$ is 
a unit, the map 
\(\lnamedright{S^{2n}}{d_{t}\cdot\Sigma v}{\Sigma P^{n}(p^{r})\wedge P^{n}(p^{r})}\) 
induces an inclusion in mod-$p$ homology. Therefore, by Lemma~\ref{CtildeCtype}, 
there is a homotopy equivalence 
$\overline{C}\simeq \bigg(P^{n}(p^{r})\wedge(\displaystyle\bigvee_{\substack{i=2 \\ i\neq t}}^{m} 
         P^{n+1}(p^{r}))\bigg)\vee \bigg(S^{2n+1}\vee P^{2n}(p^{r})\bigg)$. 
\end{proof}    

As a special case of Theorem~\ref{PDex} we prove Theorem~\ref{PDexintro}, restated verbatim. 

\begin{theorem} 
   \label{PDexintro2} 
   Let $p$ be an odd prime, $r\geq 1$ and $n\geq 2$. Suppose that there is a homotopy cofibration 
   \[\nameddright{S^{2n}}{f}{\bigvee_{i=1}^{m} P^{n+1}(p^{r})}{}{M}\] 
   where $f=\sum_{1\leq j<k\leq m} [i_{j},i_{k}]\circ (d_{j,k}\cdot v)$ for $d_{j,k}\in\mathbb{Z}$ 
   and at least one $d_{j,k}$ reduces to a unit mod-$p$. Rearranging the wedge summands 
   $\bigvee_{i=1}^{m} P^{n+1}(p^{r})$ so that some $d_{1,t}$ reduces to a unit mod-$p$, 
   there is a homotopy fibration 
   \[\nameddright{(\Omega P^{n+1}(p^{r})\ltimes\overline{C})\vee(\bigvee_{i=2}^{m} P^{n+1}(p^{r}))} 
        {}{M}{q'}{P^{n+1}(p^{r})}\] 
    where 
    $\overline{C}\simeq \bigg(P^{n}(p^{r})\wedge(\displaystyle\bigvee_{\substack{i=2 \\ i\neq t}}^{m} 
         P^{n+1}(p^{r}))\bigg)\vee \bigg(S^{2n+1}\vee P^{2n}(p^{r})\bigg)$, 
    and this homotopy fibration splits after looping to give a homotopy equivalence 
    \[\Omega M\simeq\Omega P^{n+1}(p^{r})\times
           \Omega\bigg((\Omega P^{n+1}(p^{r})\ltimes\overline{C})\vee(\bigvee_{i=2}^{m} P^{n+1}(p^{r}))\bigg).\] 
\end{theorem}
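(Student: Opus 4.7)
The plan is to deduce Theorem~\ref{PDexintro2} directly from Theorem~\ref{PDex} by partitioning the given attaching map into a component built from Whitehead products with $i_1$ and a component that, after suitable rearrangement, factors through the subwedge $\bigvee_{i=2}^{m} P^{n+1}(p^{r})$ and suspends trivially. Nothing beyond Theorem~\ref{PDex} and the naturality of Whitehead products is needed, so the work is entirely a matter of reorganising $f$.

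First, by hypothesis some $d_{j,k}$ reduces to a unit mod~$p$; by permuting the wedge summands of $\bigvee_{i=1}^{m} P^{n+1}(p^{r})$ we may assume that this occurs for a pair of the form $(1,t)$ with $t\in\{2,\dots,m\}$. After this relabelling, split the attaching map as $f=f_{1}+f_{2}$ with
\[
f_{1}=\sum_{j=2}^{m}[i_{1},i_{j}]\circ(d_{1,j}\cdot v),\qquad
f_{2}=\sum_{2\le j<k\le m}[i_{j},i_{k}]\circ(d_{j,k}\cdot v).
\]
The summand $f_{1}$ matches the form demanded of $f_{1}$ in Theorem~\ref{PDex} (taking $d_{j}=d_{1,j}$), and the existence of $t$ with $d_{1,t}$ a mod-$p$ unit provides the required distinguished index.

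For $f_{2}$, the point is that for $j,k\ge 2$ the inclusions $i_{j},i_{k}\colon P^{n+1}(p^{r})\to\bigvee_{i=1}^{m}P^{n+1}(p^{r})$ factor through the subwedge inclusion $I\colon\bigvee_{i=2}^{m}P^{n+1}(p^{r})\hookrightarrow\bigvee_{i=1}^{m}P^{n+1}(p^{r})$, say $i_{j}=I\circ i'_{j}$ and $i_{k}=I\circ i'_{k}$. By naturality of the Whitehead product, $[i_{j},i_{k}]\simeq I\circ[i'_{j},i'_{k}]$, so $f_{2}=I\circ\gamma$ where
\[
\gamma=\sum_{2\le j<k\le m}[i'_{j},i'_{k}]\circ(d_{j,k}\cdot v)\colon S^{2n}\longrightarrow\bigvee_{i=2}^{m}P^{n+1}(p^{r}).
\]
Since every Whitehead product suspends trivially and $\Sigma$ is additive on suspensions, $\Sigma\gamma$ is null homotopic as a sum of suspended Whitehead products. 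Thus all four bullet-point hypotheses of Theorem~\ref{PDex} are fulfilled: the decomposition of $f_{1}$, the existence of the distinguished index $t$, the factorization $f_{2}=I\circ\gamma$, and the vanishing of $\Sigma\gamma$.

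There is, honestly, no real obstacle here: the whole content of the theorem has already been absorbed into Theorem~\ref{PDex}, and the present argument is bookkeeping plus one application of the naturality of Whitehead products. The only mild subtlety to note is that the permutation of wedge summands at the outset is genuinely needed because Theorem~\ref{PDex} singles out the first summand as the target of $q'$; the hypothesis on $f$ in Theorem~\ref{PDexintro2} is symmetric in the indices, so the rearrangement is what allows us to place the unit mod-$p$ coefficient into the privileged position. Once this is done, Theorem~\ref{PDex} produces the asserted homotopy fibration, the stated decomposition of $\overline{C}$, and the splitting of $\Omega M$ verbatim.
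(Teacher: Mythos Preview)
Your proof is correct and follows essentially the same approach as the paper: both rearrange the wedge summands so that some $d_{1,t}$ is a unit mod~$p$, split $f=f_{1}+f_{2}$ with $f_{1}$ collecting the Whitehead products involving $i_{1}$ and $f_{2}$ the rest, observe that $f_{2}$ factors through $I$ via a map $\gamma$ that suspends trivially (being a sum of Whitehead products), and then invoke Theorem~\ref{PDex}. Your slightly more explicit use of the naturality of the Whitehead product to justify the factorization $f_{2}=I\circ\gamma$ is a welcome clarification but not a genuine departure.
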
 

\begin{proof} 
By hypothesis, there is a homotopy cofibration 
\[\nameddright{S^{2n}}{f}{\bigvee_{i=1}^{m} P^{n+1}(p^{r})}{}{M}\] 
where $f=\sum_{1\leq j<k\leq m} [i_{j},i_{k}]\circ (d_{j,k}\cdot v)$ for $d_{j,k}\in\mathbb{Z}$ 
and at least one $d_{j,k}$ reduces to a unit mod-$p$. Rearrange the wedge summands 
$\bigvee_{i=1}^{m} P^{n+1}(p^{r})$ so that at least one $d_{1,t}$ reduces to a unit mod-$p$. 
Let $f_{1}=\sum_{k=2}^{m} [i_{1},i_{k}]\circ. (d_{1,k}\cdot v)$ and 
$f_{2}=\sum_{2\leq j<k\leq m} [i_{j},i_{k}]\circ (d_{j,k}\cdot v)$. Then $f=f_{1}+f_{2}$, 
there is a $t\in\{2,\ldots,m\}$ such that the mod-$p$ reduction of $d_{1,t}$ is a unit, 
as $f_{2}$ does not involve $i_{1}$ it factors as a composite 
\(\nameddright{S^{2n}}{\gamma}{\bigvee_{i=2}^{m} P^{n+1}(p^{r})}{I}{\bigvee_{i=1}^{m} P^{n+1}(p^{r})}\) 
where $\gamma$ is the same sum of Whitehead products as in $f_{2}$ but with each $i_{j}$ 
for $2\leq j\leq k$ throught of as having range $\bigvee_{i=2}^{m} P^{n+1}(p^{r})$ rather 
than $\bigvee_{i=1}^{m} P^{n=1}(p^{r})$, and $\Sigma\gamma$ is null homotopic since it is 
a sum of Whitehead products. Thus the attaching map $f$ satisfies the hypotheses of 
Theorem~\ref{PDex}, implying that there is a homotopy fibration 
\[\nameddright{(\Omega P^{n+1}(p^{r})\ltimes\overline{C})\vee(\bigvee_{i=2}^{m} P^{n+1}(p^{r}))} 
       {}{M}{q'}{P^{n+1}(p^{r})}\] 
where 
$\overline{C}\simeq \bigg(P^{n}(p^{r})\wedge(\displaystyle\bigvee_{\substack{i=2 \\ i\neq t}}^{m} 
      P^{n+1}(p^{r}))\bigg)\vee \bigg(S^{2n+1}\vee P^{2n}(p^{r})\bigg)$, 
and this homotopy fibration splits after looping to give a homotopy equivalence 
\[\Omega M\simeq\Omega P^{n+1}(p^{r})\times
       \Omega\bigg((\Omega P^{n+1}(p^{r})\ltimes\overline{C})\vee(\bigvee_{i=2}^{m} P^{n+1}(p^{r}))\bigg).\] 
\end{proof} 

\begin{example} 
In Theorem~\ref{PDex}, or Theorem~\ref{PDexintro}, if the cofibration takes the form 
\[\nameddright{S^{2n}}{f}{\bigvee_{i=1}^{2m} P^{n+1}(p^{r})}{}{M}\] 
where $f=[i_{1},i_{2}]\circ v+[i_{3},i_{4}]\circ v+\cdots + [i_{2m-1},i_{2m}]\circ v$ then $M$ is an 
$(n-1)$-connected \mbox{$(2n+1)$}-dimensional Poincar\'{e} Duality complex. In fact, it is 
the $m$-fold connected sum $N\conn\cdots\conn N$ of the Poincar\'{e} Duality 
complex $N$ defined by the homotopy cofibration 
\[\llnameddright{S^{2n}}{[i_{1},i_{2}]\circ v}{P^{n+1}(p^{r})\vee P^{n+1}(p^{r})}{}{N}.\]  
Theorem~\ref{PDex} then gives a homotopy decomposition of $\Omega M$. 
\end{example} 

\begin{example} 
Modifying the previous example, consider a homotopy cofibration 
\[\nameddright{S^{2n}}{f'}{\bigvee_{i=1}^{2m} P^{n+1}(p^{r})}{}{M'}\] 
where $f'=[i_{1},i_{2}]\circ v+[i_{3},i_{4}]\circ v+\cdots + [i_{2m-1},i_{2m}]\circ v+f''$. 
Here, $f''$ is a composite  
\(f''\colon\nameddright{S^{2n}}{\gamma}{\bigvee_{i=2}^{2m} P^{n+1}(p^{r})}{I} 
       {\bigvee_{i=1}^{2m} P^{n+1}(p^{r})}\) 
with the property that $\Sigma\gamma$ is null homotopic. Possibly $\gamma$ is a 
sum of more Whitehead products, possibly it is a class of finite order, or some combination 
of the two. Then $M'$ may or may not be a Poincar\'{e} Duality complex but 
Theorem~\ref{PDex} still applies, giving a homotopy decomposition of $\Omega M'$. 
Note that the decompositions for $\Omega M'$ and the space $\Omega M$ in the previous 
example are identical. That is, while $f''$ may mean $M\not\simeq M'$, after looping 
we nevertheless have $\Omega M\simeq\Omega M'$. 
\end{example}

\newpage 

\section{Inert maps} 
\label{sec:inert} 

Recall from the Introduction that if 
\(\nameddright{\Sigma A}{f}{Y}{h}{Y'}\) 
is a homotopy cofibration then the map~$f$ is \emph{inert} if $\Omega h$ has a right 
homotopy inverse. An interesting example we have already seen is the homotopy cofibration 
\(\lllnameddright{X^{\wedge k}\wedge\Sigma Y}{ad^{k}(i_{1})(i_{2})}{\Sigma X\vee\Sigma Y}{m_{k}}{M_{k}}\) 
in Section~\ref{sec:2cone}. By Lemma~\ref{adinv}, $\Omega m_{k}$ has a right homotopy 
inverse, and hence $ad^{k}(i_{1})(i_{2})$ is inert. 

The inert property is exactly one of the main hypotheses of Theorem~\ref{BTfibinclusion}, 
and that theorem will play a key role in what follows. As such, it is useful to recall what it says, 
compressed slightly to only what will be needed subsequently. Suppose that 
\(\nameddright{\Sigma A}{f}{Y}{h}{Y'}\) 
is a homotopy cofibration and $E$ is the homotopy fibre of $h$. If $\Omega h$ has a 
right homotopy inverse then there is a homotopy equivalence 
\[\namedright{\Omega Y'\ltimes\Sigma A}{\theta}{E}\] 
and a homotopy fibration 
\[\llnameddright{(\Omega Y'\wedge\Sigma A)\vee\Sigma A}{[\gamma,f]+f}{Y}{h}{Y'}\] 
where $\gamma$ is the composite 
\(\nameddright{\Sigma\Omega Y'}{\Sigma s}{\Sigma\Omega Y}{ev}{Y}\). 
\medskip 

Suppose that there are homotopy cofibrations 
\[\nameddright{\Sigma A}{f}{X}{j}{M}\] 
and 
\[\nameddright{\Sigma A}{g}{Y}{k}{N}.\] 
Since $\Sigma A$ is a suspension, $f$ and $g$ can be added: $f+g$ is the composite 
\[f+g\colon\nameddright{\Sigma A}{\sigma}{\Sigma A\vee\Sigma A}{f\vee g}{X\vee Y}\] 
where $\sigma$ is the comultiplication on $\Sigma A$. Define $C$ by the homotopy cofibration  
\[\nameddright{\Sigma A}{f+g}{X\vee Y}{}{C}.\] 
Let 
\(q_{1}\colon\namedright{X\vee Y}{}{X}\) 
be the pinch map to the first wedge summand. Then there is a homotopy cofibration diagram 
\begin{equation} 
  \label{WX} 
  \diagram 
        \Sigma A\rto^-{f+g}\ddouble & X\vee Y\rto\dto^{q_{1}} & C\dto^{\varphi} \\ 
        \Sigma A\rto^-{f} & X\rto^-{j} & M 
  \enddiagram 
\end{equation}  
that defines the map $\varphi$. Let $h$ be the composite 
\[h\colon\nameddright{X\vee Y}{q_{1}}{X}{j}{M}.\] 
Note that by~(\ref{WX}), $h$ is homotopic to the composite 
\(\nameddright{X\vee Y}{}{C}{\varphi}{M}\). 
Let $E$ and $E'$ be the homotopy fibres of $h$ and $\varphi$ respectively. 
Then we obtain the following diagram of spaces and maps that collects the 
data that will go into Theorem~\ref{GTcofib}:  
\begin{equation} 
  \label{EE'} 
  \diagram 
       & E\rto\dto & E'\dto \\ 
       \Sigma A\rto^-{f+g} & X\vee Y\rto\dto^{h} & C\dto^{\varphi} \\ 
       & M\rdouble & M. 
   \enddiagram 
\end{equation} 

\begin{lemma} 
   \label{inverses} 
   Suppose that $f$ is inert. Then both $\Omega h$ and $\Omega\varphi$ have 
   right homotopy inverses. 
\end{lemma}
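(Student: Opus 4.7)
The plan is to handle the two statements separately, with the first being the substantive one and the second following formally from it via the factorization of $h$ through $\varphi$.

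First I would verify that $\Omega h$ has a right homotopy inverse. By construction $h = j\circ q_1$, so $\Omega h \simeq \Omega j\circ\Omega q_1$. The pinch map $q_1$ has the inclusion $i_1\colon X\to X\vee Y$ as a right homotopy inverse, and so $\Omega q_1$ admits $\Omega i_1$ as a right homotopy inverse. By the hypothesis that $f$ is inert, the map $\Omega j$ admits some right homotopy inverse $s\colon\Omega M\to\Omega X$. Composing, the map $\Omega i_1\circ s\colon\Omega M\to\Omega(X\vee Y)$ is a right homotopy inverse for $\Omega h$. This step is routine; there is no real obstacle, just bookkeeping.

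Second, I would deduce the statement for $\Omega\varphi$ by exploiting the factorization visible in diagram~(\ref{WX}). The bottom row of that diagram, read as the composite $X\vee Y\to C\to M$, is homotopic to $h$. Let $\iota$ denote the map $X\vee Y\to C$. Then $\Omega h\simeq\Omega\varphi\circ\Omega\iota$. If $t\colon\Omega M\to\Omega(X\vee Y)$ is the right homotopy inverse of $\Omega h$ constructed in the first step, then $\Omega\iota\circ t\colon\Omega M\to\Omega C$ satisfies
\[
\Omega\varphi\circ(\Omega\iota\circ t)\simeq(\Omega\varphi\circ\Omega\iota)\circ t\simeq\Omega h\circ t\simeq 1_{\Omega M},
\]
so this is a right homotopy inverse for $\Omega\varphi$.

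I expect no real obstacle here; the only subtlety worth emphasizing in the writeup is that the statement for $\Omega\varphi$ is a formal consequence of a general principle (if a composite has a right homotopy inverse, so does any postcomposition factor), and that the factorization we need is recorded directly in~(\ref{WX}). Neither claim requires any of the more delicate machinery from Section~\ref{sec:background}; Theorem~\ref{BTfibinclusion} and the structure of $C$ only enter later when this lemma is used as input.
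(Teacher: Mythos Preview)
Your proposal is correct and follows essentially the same argument as the paper: construct a right homotopy inverse for $\Omega h$ by composing $\Omega i_{1}$ with a right inverse for $\Omega j$ (using $h=j\circ q_{1}$), then deduce the statement for $\Omega\varphi$ from the factorization $h\simeq\varphi\circ\iota$. One minor wording issue: the factorization you cite from~(\ref{WX}) comes from the right-hand square (top-then-down equals down-then-right), not from the ``bottom row'' of that diagram; the paper phrases this via the equivalent bottom square of~(\ref{EE'}).
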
 

\begin{proof} 
As $f$ is inert there is a map 
\(t\colon\namedright{\Omega M}{}{\Omega X}\) 
such that $\Omega j\circ t$ is homotopic to the identity map on $\Omega M$. 
Consider the composite 
\[\namedddright{\Omega M}{t}{\Omega X}{\Omega i_{1}}{\Omega (X\vee Y)}{\Omega h}{\Omega M}\] 
where $i_{1}$ is the inclusion of the left wedge summand. By definition, $h= j\circ q_{1}$, 
so as $q_{1}\circ i_{1}$ is the identity map on $X$, we obtain 
\[\Omega h\circ\Omega i_{1}\circ t\simeq\Omega j\circ\Omega q_{1}\circ\Omega i_{1}\circ t\simeq 
       \Omega j\circ t\simeq id_{\Omega M}.\] 
Thus $\Omega h$ has a right homotopy inverse. The homotopy commutativity of 
the bottom square in~(\ref{EE'}) then implies that $\Omega\varphi$ also has a right 
homotopy inverse. 
\end{proof} 

Since $\Omega h$ has a right homotopy inverse, 
applying Theorem~\ref{GTcofib} to~(\ref{EE'}) gives a homotopy cofibration 
\begin{equation} 
  \label{Efcofib} 
  \lnameddright{\Omega M\ltimes\Sigma A}{\theta_{f+g}}{E}{}{E'}. 
\end{equation}  

Next consider the homotopy cofibration 
\(\nameddright{\Sigma A}{f}{X}{j}{M}\). 
Let $F$ be the homotopy fibre of $j$. Since $f$ is inert the map $\Omega j$ has a right 
homotopy inverse so by Theorem~\ref{BTfibinclusion} there is a homotopy equivalence  
\begin{equation} 
  \label{Fequiv} 
  \namedright{\Omega M\ltimes\Sigma A}{\theta_{f}}{F}. 
\end{equation}  

The homotopy cofibrations~(\ref{Efcofib}) and~(\ref{Fequiv}) can be put together. 
By definition, $h=j\circ q_{1}$, so there is a homotopy fibration diagram 
\begin{equation} 
  \label{kqdgrm} 
  \diagram 
           E\rto^-{\ell}\dto & F\dto \\ 
           X\vee Y\rto^-{q_{1}}\dto^{h} & X\dto^{j} \\ 
           M\rdouble & M. 
  \enddiagram 
\end{equation} 
where $\ell$ is the induced map of fibres. The right homotopy inverse 
\(s\colon\namedright{\Omega M}{}{\Omega(X\vee Y)}\) 
for $\Omega h$ implies that $\Omega q_{1}\circ s$ is a right homotopy inverse 
for $\Omega j$. The naturality property in Remark~\ref{GTcofibnat} implies that there  
is a homotopy commutative diagram of cofibrations 
\begin{equation} 
  \label{Thetacompat} 
  \diagram 
        \Omega M\ltimes\Sigma A\rto^-{\theta_{f+g}}\ddouble & E\rto\dto^{\ell} & E'\dto \\ 
        \Omega M\ltimes\Sigma A\rto^-{\theta_{f}} & F\rto & \ast  
  \enddiagram 
\end{equation} 
Note that $\theta_{f}$ being a homotopy equivalence implies that the 
map $\theta_{f+g}$ has a left homotopy inverse. Moreover, this inverse is 
independent of $g$. We record this for future reference. 

\begin{lemma} 
   \label{Thetainv} 
   The map 
   \(\namedright{\Omega M\ltimes\Sigma A}{\theta_{f+g}}{E}\) 
   has a left homotopy inverse that is independent of the map~$g$.~$\qqed$ 
\end{lemma}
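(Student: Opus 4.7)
The plan is to read off the left homotopy inverse directly from diagram~(\ref{Thetacompat}) and then verify that every ingredient in its construction is manufactured from $f$ alone. Specifically, the left square of~(\ref{Thetacompat}) gives a homotopy $\ell\circ\theta_{f+g}\simeq\theta_{f}$, and by~(\ref{Fequiv}) the map $\theta_{f}$ is a homotopy equivalence. So I would define the candidate inverse
\[r\colon\namedddright{E}{\ell}{F}{\theta_{f}^{-1}}{\Omega M\ltimes\Sigma A}\]
where $\theta_{f}^{-1}$ is any chosen homotopy inverse of $\theta_{f}$, and then compute
\[r\circ\theta_{f+g}\simeq\theta_{f}^{-1}\circ\ell\circ\theta_{f+g}\simeq\theta_{f}^{-1}\circ\theta_{f}\simeq\mathrm{id}_{\Omega M\ltimes\Sigma A},\]
which establishes that $r$ is a left homotopy inverse of $\theta_{f+g}$.

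The second (and only substantive) point is to argue that $r$ is independent of $g$. Here the main observation is that $E$, $F$, $\ell$, and $\theta_{f}$ all depend only on the data $\Sigma A\stackrel{f}{\to}X\stackrel{j}{\to}M$: the space $E$ is the homotopy fibre of $h=j\circ q_{1}$, whose target $M$ is determined by $f$ and whose source $X\vee Y$ enters only through the pinch map $q_{1}$; the map $\ell$ is the map of homotopy fibres induced by the square in~(\ref{kqdgrm}), which never references $g$; the space $F$ is the homotopy fibre of $j$; and the homotopy equivalence $\theta_{f}$ is built from $f$ via Theorem~\ref{BTfibinclusion}. Consequently every factor in $r=\theta_{f}^{-1}\circ\ell$ is constructed without reference to $g$, so the same map $r$ serves as a left homotopy inverse of $\theta_{f+g}$ for every choice of $g$.

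The only mild subtlety is that a strict statement of ``independence of $g$'' requires fixing the choice of $\theta_{f}^{-1}$ once and for all (up to homotopy); since $\theta_{f}$ itself is independent of $g$, this is harmless. I do not anticipate any genuine obstacle: the proof is essentially the observation that the entire left column of~(\ref{Thetacompat}) is pulled back from the $g$-free homotopy fibration sequence $\Omega M\ltimes\Sigma A\to F\to\ast$ via the map $\ell$, and the homotopy cofibration data involving $g$ only appears on the right column (through $E'$ and $C$).
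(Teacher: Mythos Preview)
Your proposal is correct and matches the paper's approach exactly: the paper simply observes, immediately before the lemma, that the left square of~(\ref{Thetacompat}) together with $\theta_{f}$ being a homotopy equivalence yields the left homotopy inverse $\theta_{f}^{-1}\circ\ell$, and that this is independent of $g$ since neither $\ell$ nor $\theta_{f}$ involves~$g$. Your write-up is just a more explicit rendering of that one-line justification.
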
 

Next, consider the special case when $g$ is the trivial map. In~(\ref{EE'}) the 
homotopy cofibration 
\(\nameddright{\Sigma A}{f+g}{X\vee Y}{}{C}\) 
becomes 
\(\nameddright{\Sigma A}{f+\ast}{X\vee Y}{j\vee 1}{M\vee Y}\) 
and the map 
\(\namedright{C}{\varphi}{M}\) 
can be chosen to be the pinch map 
\(\namedright{M\vee Y}{q_{1}}{M}\).   
Therefore the homotopy fibre $E'$ of $\varphi$ becomes $\Omega M\ltimes Y$. 
Hence the homotopy cofibration~(\ref{Efcofib}) becomes 
\begin{equation} 
  \label{Eastcofib} 
  \llnameddright{\Omega M\ltimes\Sigma A}{\theta_{f+\ast}}{E}{}{\Omega M\ltimes Y}. 
\end{equation}  
In this case we show that the cofibration~(\ref{Eastcofib}) splits in a way that 
behaves well with respect to the map $\ell$ in~(\ref{Thetacompat}). 

\begin{lemma} 
   \label{Eastinv} 
   The map 
   \(\namedright{E}{}{\Omega M\ltimes Y}\) 
   in~(\ref{Eastcofib}) has a right homotopy inverse 
   \(r\colon\namedright{\Omega M\ltimes Y}{}{E}\) 
   such that $\ell\circ r$ is null homotopic. 
\end{lemma}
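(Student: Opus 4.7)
The plan is to construct $r$ directly via the reduced loop action on $E$ and a lift of $i_2$. Since $h\circ i_2 = j\circ q_1\circ i_2 \simeq *$, there is a lift $\tilde{g}\colon Y\to E$ of the wedge inclusion $i_2\colon Y\to X\vee Y$ through $p$. By Lemma~\ref{inverses} the map $\Omega h$ has a right homotopy inverse, so Lemma~\ref{abarGamma} produces a map $\bar{a}_E\colon \Omega M\ltimes E\to E$. Define
\[r\colon \nameddright{\Omega M\ltimes Y}{1\ltimes\tilde{g}}{\Omega M\ltimes E}{\bar{a}_E}{E}.\]

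To see that $r$ is a right homotopy inverse of the cofibration map in~(\ref{Eastcofib}), note that when $g=\ast$ the space $C$ in~(\ref{WX}) specialises to $M\vee Y$ and $\varphi$ to the pinch map $M\vee Y\to M$, whose homotopy fibre is $E'\simeq\Omega M\ltimes Y$. The identification $\Omega M\ltimes Y\to E'$ is realised by $\bar{a}_{E'}\circ(1\ltimes\tilde{g}')$ for any lift $\tilde{g}'\colon Y\to E'$ of the wedge inclusion $Y\to M\vee Y$; choose $\tilde{g}' = (E\to E')\circ\tilde{g}$. The inclusion $M\to M\vee Y$ of the first summand induces a right homotopy inverse of $\Omega(M\vee Y\to M)$, while the composition of the inclusion $X\to X\vee Y$ with a right homotopy inverse $t\colon\Omega M\to\Omega X$ of $\Omega j$ (as in the proof of Lemma~\ref{inverses}) produces a right homotopy inverse of $\Omega h$ compatible with it along the map $X\vee Y\to M\vee Y$. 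Naturality of $\bar{a}$ from Remark~\ref{GTcofibnat} then yields
\[(E\to E')\circ r \simeq \bar{a}_{E'}\circ\bigl(1\ltimes ((E\to E')\circ\tilde{g})\bigr) = \bar{a}_{E'}\circ(1\ltimes\tilde{g}'),\]
which is the chosen identification $\Omega M\ltimes Y\simeq E'$.

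For the nullity of $\ell\circ r$, apply the same naturality of $\bar{a}$, now to the map of principal fibrations induced by $q_1\colon X\vee Y\to X$ over the identity on $M$, using $t$ as the right homotopy inverse on both sides of $\ell\colon E\to F$. This gives $\ell\circ\bar{a}_E\simeq\bar{a}_F\circ(1\ltimes\ell)$, so $\ell\circ r\simeq\bar{a}_F\circ(1\ltimes(\ell\circ\tilde{g}))$. The map $\ell\circ\tilde{g}\colon Y\to F$ projects via $F\to X$ to $q_1\circ i_2\simeq *$, hence factors up to homotopy through the connecting map $\partial_F\colon\Omega M\to F$ of the principal fibration $F\to X\to M$. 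Since $f$ is inert, $\Omega j$ has a right homotopy inverse and so $\partial_F$ is null homotopic; thus $\ell\circ\tilde{g}$ is null. Consequently $1\ltimes(\ell\circ\tilde{g})$ factors through the basepoint of the half-smash, and $\ell\circ r$ is null homotopic. The main technical point is coordinating the right homotopy inverses so that Remark~\ref{GTcofibnat} applies cleanly; once the common choice of $t$ and the compatible sections built from it are fixed, both naturality identities follow directly.
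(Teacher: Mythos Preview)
Your approach is correct and genuinely different from the paper's. The paper builds $r$ by a pullback argument: it notes that the top square of the fibration diagram over $j\vee 1\colon X\vee Y\to M\vee Y$ is a homotopy pullback, produces a map $\bar r\colon\Omega X\ltimes Y\to E$ from the fibre of $q_1\colon X\vee Y\to X$, and sets $r=\bar r\circ(t\ltimes 1)$. This gives $(E\to\Omega M\ltimes Y)\circ r\simeq(\Omega j\ltimes 1)\circ(t\ltimes 1)\simeq\mathrm{id}$ on the nose, and the null homotopy of $\ell\circ r$ follows because $\bar r$ composed into $X\vee Y$ is the fibre inclusion of $q_1$. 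Your argument instead runs everything through the reduced action $\bar a$ and the naturality statement of Remark~\ref{GTcofibnat}, which is more in keeping with the paper's overall machinery; the paper's proof here is more elementary and self-contained, avoiding the need to coordinate sections across two different naturality squares.

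One point deserves a sentence of justification. You assert that $\bar a_{E'}\circ(1\ltimes\tilde g')$ is the identification $\Omega M\ltimes Y\simeq E'$ for \emph{any} lift $\tilde g'$ of $i_2'$. Theorem~\ref{BTfibinclusion} gives this when the attached space is a suspension, but here $Y$ need not be one. The claim is still true: any lift $\tilde g'$ differs from the standard inclusion $j\colon Y\to\Omega M\ltimes Y$ by the action of some $\psi\colon Y\to\Omega M$, and the resulting self-map of $\Omega M\ltimes Y$ is congruent to the ``shear'' $(\omega,y)\mapsto(\omega\psi(y),y)$, which is a homotopy equivalence. Including this check (or simply noting that it suffices for $(E\to E')\circ r$ to be \emph{some} equivalence, then adjusting $r$ by its inverse, which preserves $\ell\circ r\simeq\ast$) closes the argument.
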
 

\begin{proof} 
The identifications in~(\ref{EE'}) when $g=\ast$ imply that there is a homotopy 
fibration diagram 
\[\diagram 
       E\rto\dto & \Omega M\ltimes Y\dto \\ 
       X\vee Y\rto^-{j\vee 1}\dto^{h} & M\vee Y\dto^{q_{1}} \\ 
       M\rdouble & M.   
   \enddiagram\]  
In particular, the upper square is a homotopy pullback. From the naturality of 
the pinch map $q_{1}$ we obtain a pullback map 
\[\xymatrix{
       \Omega X\ltimes Y\ar@/_/[ddr]\ar@/^/[drr]^{\Omega j\ltimes 1}\ar@{.>}[dr]^(0.6){\overline{r}}  & & \\ 
       & E\ar[r]\ar[d] & \Omega M\ltimes Y\ar[d] \\ 
        & X\vee Y\ar[r]^-{j\vee 1} & M\vee Y }\]
that defines $\overline{r}$. Since $f$ is inert, $\Omega j$ has a right homotopy inverse 
\(t\colon\namedright{\Omega M}{}{\Omega X}\). 
Let $r$ be the composite 
\[r\colon\nameddright{\Omega M\ltimes Y}{t\ltimes 1}{\Omega X\ltimes Y}{\overline{r}}{E}.\] 
Then the previous diagram implies that the composite 
\(\nameddright{\Omega M\ltimes Y}{r}{E}{}{\Omega M\ltimes Y}\) 
is homotopic to $(\Omega j\ltimes 1)\circ(t\ltimes 1)$, which is homotopic to the identity map. 
Thus 
\(\namedright{E}{}{\Omega M\ltimes Y}\) 
has a right homotopy inverse. 

It remains to show that $\ell\circ r$ is null homotopic. Consider the diagram 
\[\diagram 
      \Omega M\ltimes Y\rto^-{t\ltimes 1} & \Omega X\ltimes Y\rto^-{\overline{r}} 
           & E\rto^-{\ell}\dto & F\dto \\ 
      & & X\vee Y\rto^-{q_{1}} & X.  
  \enddiagram\] 
The square homotopy commutes by~(\ref{kqdgrm}). The definition of $\overline{r}$ 
as a pullback map implies that the composite 
\(\nameddright{\Omega X\ltimes Y}{\overline{r}}{E}{}{X\vee Y}\) 
is the map from the homotopy fibre of $q_{1}$ to the total space. Therefore 
composing it with $q_{1}$ is null homotopic so the lower direction around 
the diagram is null homotopic. Hence the upper direction around the diagram 
is null homotopic. By definition, $r=\overline{r}\circ (t\ltimes 1)$, implying that $\ell\circ r$ 
is null homotopic when composed with 
\(\namedright{F}{}{X}\). 
Now consider the homotopy fibration sequence 
\(\namedddright{\Omega M}{\partial}{F}{}{X}{j}{M}\), 
where $\partial$ is the connecting map. On the one hand, we have just seen 
that $\ell\circ r$ must lift through $\partial$. On the other hand, since $\Omega j$ 
has a right homotopy inverse, $\partial$ is null homotopic. Therefore $\ell\circ r$ 
is null homotopic, as asserted. 
\end{proof} 

In general, suppose that  
\(\nameddright{U}{s}{V}{t}{W}\) 
is a homotopy cofibration where $t$ has a right homotopy inverse $r$. Then the composite 
\(e\colon\nameddright{U\vee W}{s\vee r}{V\vee V}{\nabla}{V}\) 
induces an isomorphism in homology, where $\nabla$ is the fold map. Thus if $U$, $V$ 
and $W$ are simply-connected then $e$ is a homotopy equivalence by Whitehead's 
Theorem. In our case, we are assuming that all spaces are simply-connected, so 
the existence of the right homotopy inverse in Lemma~\ref{Eastinv} implies the following. 

\begin{corollary} 
  \label{Eastsplit} 
  From the homotopy cofibration 
  \(\lnameddright{\Omega M\ltimes\Sigma A}{\theta_{f+\ast}}{E}{}{\Omega M\ltimes Y}\) 
  we obtain a homotopy equivalence 
   \[\lllnameddright{(\Omega M\ltimes\Sigma A)\vee(\Omega M\ltimes Y)}{\theta_{f+\ast}\vee\,r} 
        {E\vee E}{\nabla}{E}\] 
   where $\nabla$ is the fold map.~$\qqed$  
\end{corollary}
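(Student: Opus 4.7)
The plan is to apply directly the general splitting principle articulated in the paragraph immediately preceding the corollary, with $U = \Omega M \ltimes \Sigma A$, $V = E$, $W = \Omega M \ltimes Y$, $s = \theta_{f+\ast}$, and $t$ the cofibration map $E \to \Omega M\ltimes Y$. Lemma~\ref{Eastinv} supplies the required right homotopy inverse $r\colon \Omega M \ltimes Y \to E$ of $t$, so the composite
\[
e \colon \lllnameddright{(\Omega M\ltimes\Sigma A)\vee(\Omega M\ltimes Y)}{\theta_{f+\ast}\vee r}{E\vee E}{\nabla}{E}
\]
is defined.

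The key step is to verify that $e$ induces an isomorphism in homology. This is the general fact recalled in the excerpt: by construction, the restriction of $e$ to $\Omega M \ltimes\Sigma A$ is $\theta_{f+\ast}$, and the restriction to $\Omega M \ltimes Y$ is $r$. In reduced homology the cofibration $\theta_{f+\ast}$ gives a short exact sequence, and the splitting by $r_\ast$ identifies $\widetilde H_\ast(E)$ with the direct sum of the images of $(\theta_{f+\ast})_\ast$ and $r_\ast$; thus $e_\ast$ is an isomorphism onto that direct sum decomposition.

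To upgrade this to a homotopy equivalence I would invoke Whitehead's Theorem, for which simple connectivity of all spaces in sight must be checked. Under the blanket assumptions of Section~\ref{sec:inert}, $X$, $Y$, $M$, and $A$ are simply-connected (in particular $\Sigma A$ is at least $2$-connected), so $\Omega M \ltimes \Sigma A \simeq (\Omega M \wedge \Sigma A) \vee \Sigma A$ and $\Omega M \ltimes Y \simeq (\Omega M \wedge Y) \vee Y$ are simply-connected by Lemma~\ref{halfsmashsusp}, and $E$ is simply-connected as the homotopy fibre of a map between simply-connected spaces with a section after looping. Hence $e$ is a weak equivalence between simply-connected $CW$-complexes, and therefore a homotopy equivalence.

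I do not anticipate a genuine obstacle; the argument is essentially bookkeeping on top of Lemma~\ref{Eastinv}. The only subtlety worth pausing over is making sure the splitting is realized by the specific map $\nabla \circ (\theta_{f+\ast} \vee r)$ rather than some abstract equivalence, but this is exactly what the fold-map construction achieves, and it is the form that will be convenient for later applications in Section~\ref{sec:connsum}.
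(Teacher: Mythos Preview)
Your proposal is correct and follows exactly the approach the paper takes: the corollary is simply an instance of the general splitting principle stated in the paragraph immediately preceding it, with the right homotopy inverse $r$ supplied by Lemma~\ref{Eastinv} and Whitehead's Theorem closing the argument. One small slip: Lemma~\ref{halfsmashsusp} only gives $\Omega M\ltimes Y\simeq(\Omega M\wedge Y)\vee Y$ when $Y$ is a suspension, which is not assumed here; the simple connectivity of $\Omega M\ltimes Y$ is better seen from its identification as the homotopy fibre of the pinch map $M\vee Y\to M$ between simply-connected spaces, together with the section of $\Omega q_1$.
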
 

Now return to the general case of the homotopy cofibration 
\(\nameddright{\Omega M\ltimes\Sigma A}{\theta_{f+g}}{E}{}{E'}\).  
We will use the special case when $g=\ast$ to show a splitting in the 
general case, and identify the homotopy type of $E'$. This requires 
a preliminary lemma, which is stated abstractly. To distinguish identity maps 
on different spaces, for a space $V$ let 
\(1_{V}\colon\namedright{V}{}{V}\) 
be the identity map on $V$. 

\begin{lemma} 
   \label{2cofibsplit} 
   Suppose that there are homotopy cofibrations 
   \(\nameddright{P}{p}{Q}{j_{p}}{R_{p}}\) 
   and 
   \(\nameddright{P}{q}{Q}{j_{q}}{R_{q}}\) 
   where all spaces are simply-connected. Also suppose that there are maps 
   \(\namedright{Q}{k}{P}\) 
   and 
   \(\namedright{R_{q}}{s}{Q}\) 
   such that $k\circ p\simeq 1_{P}$, $k\circ q\simeq 1_{P}$, $j_{q}\circ s\simeq 1_{R_{q}}$ 
   and $k\circ s\simeq\ast$. Then the composite 
   \(\nameddright{R_{q}}{s}{Q}{j_{p}}{R_{p}}\) 
   is a homotopy equivalence. 
\end{lemma}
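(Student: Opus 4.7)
\medskip

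The plan is to reduce to a homology argument via Whitehead's Theorem. Since all spaces are simply-connected, it suffices to show that $(j_p\circ s)_*$ is an isomorphism on integral homology. The idea is to use the retraction $k$ to split $H_*(Q)$ in a single way that is compatible with \emph{both} cofibrations simultaneously, and then to identify $s_*$ as the inverse of $(j_q)_*$ on the common splitting summand.

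First I would exploit the cofibration $\nameddright{P}{p}{Q}{j_p}{R_p}$. Because $k\circ p\simeq 1_P$, the map $p_*$ is a split monomorphism in reduced homology, so the long exact sequence breaks into split short exact sequences $0\to \widetilde H_*(P)\to\widetilde H_*(Q)\to\widetilde H_*(R_p)\to 0$. Setting $K=\ker k_*$, this yields a direct sum decomposition $\widetilde H_*(Q)=p_*\widetilde H_*(P)\oplus K$, and the key point is that $(j_p)_*$ restricts to an isomorphism $K\stackrel{\cong}{\to}\widetilde H_*(R_p)$. The same reasoning applied to $\nameddright{P}{q}{Q}{j_q}{R_q}$, using $k\circ q\simeq 1_P$, gives a second decomposition $\widetilde H_*(Q)=q_*\widetilde H_*(P)\oplus K$ with the \emph{same} kernel summand $K$, and $(j_q)_*\colon K\stackrel{\cong}{\to}\widetilde H_*(R_q)$.

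Next I would locate the image of $s_*$. The hypothesis $k\circ s\simeq\ast$ gives $k_*\circ s_*=0$, so $s_*(\widetilde H_*(R_q))\subseteq K$. The hypothesis $j_q\circ s\simeq 1_{R_q}$ gives $(j_q)_*\circ s_*=1$, and combined with the fact that $(j_q)_*|_K$ is already an isomorphism to $\widetilde H_*(R_q)$, this forces $s_*\colon \widetilde H_*(R_q)\to K$ to be precisely the inverse isomorphism. Composing with the isomorphism $(j_p)_*\colon K\stackrel{\cong}{\to}\widetilde H_*(R_p)$ shows that $(j_p\circ s)_*$ is an isomorphism on all reduced integral homology groups, and Whitehead's Theorem finishes the argument.

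The only subtlety, and what I would be most careful about, is verifying that the splitting summand $K=\ker k_*$ really is common to the two decompositions — that is, that the map $(j_p)_*$ really does restrict to an isomorphism on this particular $K$ (and similarly for $(j_q)_*$). This follows formally from the fact that $\widetilde H_*(R_p)\cong \widetilde H_*(Q)/p_*\widetilde H_*(P)$ and $k_*$ is a retraction of $p_*$, but it is worth spelling out in the write-up so that the two short exact sequences are not merely isomorphic abstractly but share a preferred complementary summand inside $\widetilde H_*(Q)$; once this is in hand, the rest is immediate.
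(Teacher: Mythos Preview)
Your proof is correct. The paper takes a more space-level route: it first uses the splitting $j_{q}\circ s\simeq 1_{R_{q}}$ together with $k\circ q\simeq 1_{P}$ and $k\circ s\simeq\ast$ to show that $\nameddright{R_{q}}{s}{Q}{k}{P}$ is itself a homotopy cofibration, and then stacks this against $\nameddright{P}{p}{Q}{j_{p}}{R_{p}}$ in a $3\times 3$ cofibration diagram; since $k\circ p\simeq 1_{P}$ the bottom row has trivial cofibre, hence so does $j_{p}\circ s$, and Whitehead finishes. Your argument bypasses this auxiliary cofibration entirely by working directly in homology and observing that the single subgroup $K=\ker k_{\ast}\subseteq\widetilde{H}_{\ast}(Q)$ simultaneously complements both $p_{\ast}\widetilde{H}_{\ast}(P)$ and $q_{\ast}\widetilde{H}_{\ast}(P)$. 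This is the heart of the matter in either approach, and you have isolated it cleanly; your version is shorter and more elementary, while the paper's version has the minor advantage of producing the extra cofibration $\nameddright{R_{q}}{s}{Q}{k}{P}$ as a byproduct.
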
 

\begin{proof} 
Start with the homotopy cofibration 
\(\nameddright{P}{q}{Q}{j_{q}}{R_{q}}\).  
Since all spaces are simply-connected, the fact that $j_{q}\circ s\simeq 1_{R_{q}}$ 
implies that the composite 
\[e\colon\nameddright{P\vee R_{q}}{p\vee s}{Q\vee Q}{\nabla}{Q}\] 
is a homotopy equivalence, where $\nabla$ is the fold map. Since 
$k\circ q\simeq 1_{P}$, $k\circ s\simeq\ast$, and the fold map is natural, 
we obtain a homotopy commutative square 
\[\diagram 
       P\vee R_{q}\rto^-{e}\dto^{q_{1}} & Q\dto^{k} \\ 
       P\rdouble & P 
  \enddiagram\] 
where $q_{1}$ is the pinch map to the first wedge summand. 
Restricting to $R_{q}$ we therefore obtain a homotopy cofibration 
\[\nameddright{R_{q}}{s}{Q}{k}{P}.\] 
Now the fact that $k\circ p\simeq 1_{P}$ implies that we obtain a 
homotopy pushout diagram 
\[\diagram 
     & R_{q}\rdouble\dto^{s} & R_{q}\dto \\ 
     P\rto^-{p}\ddouble & Q\rto^-{j_{p}}\dto^{k} & R_{p}\dto \\ 
     P\rto^-{=} & P\rto & \ast. 
  \enddiagram\] 
To be clear, $k\circ p\simeq 1_{P}$ implies that the 
homotopy cofibre along the bottom row is trivial, and therefore the 
homotopy cofibre of $j_{p}\circ s$ is trivial. Hence $j_{p}\circ s$ induces an 
isomorphism in homology and so, as spaces are simply-connected, it is a  
homotopy equivalence by Whitehead's Theorem. 
\end{proof} 

\begin{theorem} 
   \label{inertideal} 
   Suppose that there are homotopy cofibrations 
   \(\nameddright{\Sigma A}{f}{X}{}{M}\) 
   and 
   \(\nameddright{\Sigma A}{g}{Y}{}{N}\) 
   where $f$ is inert. Define the homotopy cofibration 
   \(\nameddright{\Sigma A}{f+g}{X\vee Y}{}{C}\) 
   and the homotopy fibration 
   \(\nameddright{E'}{}{C}{\varphi}{M}\) 
   as in~(\ref{EE'}). Then the following hold: 
   \begin{letterlist} 
      \item the composite 
               \(\nameddright{\Omega M\ltimes Y}{r}{E}{}{E'}\) 
               is a homotopy equivalence, implying that there is a homotopy fibration 
               \(\nameddright{\Omega M\ltimes Y}{}{C}{\varphi}{M}\); 
      \item there is a homotopy equivalence 
               $\Omega C\simeq\Omega M\times\Omega (\Omega M\ltimes Y)$; 
      \item $f+g$ is inert, that is, the map 
               \(\namedright{\Omega(X\vee Y)}{}{\Omega(M\conn N)}\) 
               has a right homotopy inverse;  
      \item there is a homotopy fibration 
               \[\nameddright{(\Omega C\wedge\Sigma A)\vee\Sigma A}{\Psi} 
                   {X\vee Y}{}{C}\] 
               where $\Psi=[\gamma,f+g]+(f+g)$. 
   \end{letterlist} 
\end{theorem}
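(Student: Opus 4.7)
Part~(a) is the heart of the argument and will be deduced from the abstract splitting criterion of Lemma~\ref{2cofibsplit} applied simultaneously to the two homotopy cofibrations coming from Theorem~\ref{GTcofib}, one for the attaching map $f+g$ and the other for $f+\ast$. Taking $P = \Omega M \ltimes \Sigma A$, $Q = E$ (which depends only on $f$, not on $g$), $R_p = E'$ with $p = \theta_{f+g}$ and $j_p \colon E \to E'$, $R_q = \Omega M \ltimes Y$ with $q = \theta_{f+\ast}$ and $j_q \colon E \to \Omega M \ltimes Y$, $s = r$ from Lemma~\ref{Eastinv}, and $k = \theta_f^{-1} \circ \ell$ (with $\ell$ as in~(\ref{kqdgrm}) and $\theta_f$ as in~(\ref{Fequiv})), the four hypotheses follow directly from what is already in place: $k \circ p \simeq 1$ and $k \circ q \simeq 1$ by Lemma~\ref{Thetainv}, because the left inverse of $\theta_{f+g}$ is independent of $g$ and in particular works for $g=\ast$; $j_q \circ s \simeq 1$ by Lemma~\ref{Eastinv}; and $k \circ s = \theta_f^{-1} \circ \ell \circ r \simeq \ast$ because $\ell \circ r$ is null homotopic by Lemma~\ref{Eastinv}. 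The lemma then yields that $j_p \circ s$, which is the composite $\Omega M \ltimes Y \xrightarrow{r} E \to E'$, is a homotopy equivalence, so the fibration $E' \to C \xrightarrow{\varphi} M$ becomes the fibration asserted in~(a).

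Part~(b) is then immediate: by Lemma~\ref{inverses} the map $\Omega \varphi$ has a right homotopy inverse, so the fibration in~(a) splits after looping to yield $\Omega C \simeq \Omega M \times \Omega(\Omega M \ltimes Y)$.

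For part~(c), the plan is to build a right homotopy inverse of $\Omega(X \vee Y) \to \Omega C$ which recovers the splitting from~(b). On the $\Omega M$ factor, the inertness of $f$ supplies $t \colon \Omega M \to \Omega X$, and the composite $\Omega i_1 \circ t \colon \Omega M \to \Omega(X \vee Y)$ already lifts the section $\Omega M \to \Omega C$ used in the proof of Lemma~\ref{inverses}. On the $\Omega(\Omega M \ltimes Y)$ factor, observe that the homotopy equivalence $\Omega M \ltimes Y \xrightarrow{\simeq} E'$ from~(a) is $j_p \circ r$, and by the naturality of the fibre inclusion in~(\ref{EE'}), the composite $E \xrightarrow{j_p} E' \to C$ agrees with $E \to X \vee Y \to C$; consequently the fibre inclusion $\Omega M \ltimes Y \simeq E' \to C$ lifts through $X \vee Y$ via $\Omega M \ltimes Y \xrightarrow{r} E \to X \vee Y$. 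Looping both factor-sections and combining them through the loop multiplication on $\Omega(X \vee Y)$ gives a map
\[\Omega M \times \Omega(\Omega M \ltimes Y) \longrightarrow \Omega(X \vee Y),\]
and the $H$-map property of $\Omega(X \vee Y) \to \Omega C$ forces the composite with $\Omega(X \vee Y) \to \Omega C$ to agree with the splitting equivalence of~(b). Hence $\Omega(X \vee Y) \to \Omega C$ has a right homotopy inverse, so $f+g$ is inert.

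Part~(d) then follows immediately by applying Theorem~\ref{BTfibinclusion} to the cofibration $\Sigma A \xrightarrow{f+g} X \vee Y \to C$, using the inertness established in~(c) together with the decomposition $\Omega C \ltimes \Sigma A \simeq (\Omega C \wedge \Sigma A) \vee \Sigma A$ from Lemma~\ref{halfsmashsusp}. I expect the main obstacle to be part~(c), specifically verifying that the two lifted factor-sections really do reassemble, via the loop multiplication and the $H$-map property, to the splitting equivalence of~(b); once this is in hand the remaining parts are essentially bookkeeping on top of the structural results already in place.
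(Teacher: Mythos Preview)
Your proposal is correct and follows essentially the same route as the paper's proof: part~(a) is obtained by feeding the two cofibrations $\theta_{f+g}$ and $\theta_{f+\ast}$ into Lemma~\ref{2cofibsplit} with exactly the identifications you give (the paper's left inverse $t$ is your $k=\theta_f^{-1}\circ\ell$), parts~(b) and~(d) are the immediate consequences you describe, and part~(c) is argued just as you outline by lifting each factor of the splitting through $\Omega(X\vee Y)$ and multiplying. Your anticipated obstacle in~(c) is not one---the paper disposes of it in a single sentence using precisely the $H$-map argument you sketch.
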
 

\begin{proof} 
By~(\ref{Efcofib}) and~(\ref{Eastcofib}) there are homotopy cofibrations 
\[\lnameddright{\Omega M\ltimes\Sigma A}{\theta_{f+g}}{E}{}{E'}\] 
and 
\[\lnameddright{\Omega M\ltimes\Sigma A}{\theta_{f+\ast}}{E}{}{\Omega M\ltimes Y}.\]  
By Lemma~\ref{Thetainv} there is a map 
\(t\colon\namedright{E}{}{\Omega M\ltimes\Sigma A}\) 
such that $t\circ\vartheta_{f+g}$ and $t\circ\vartheta_{f+\ast}$ 
are both homotopic to the identity map on $\Omega M\ltimes\Sigma A$.  
By Lemma~\ref{Eastinv} there is a map 
\(\namedright{\Omega M\ltimes Y}{r}{E}\) 
such that the composite 
\(\nameddright{\Omega M\ltimes Y}{r}{E}{}{\Omega M\ltimes Y}\) 
is homotopic to the identity map and $t\circ r$ is null homotopic. Therefore, by 
Lemma~\ref{2cofibsplit} the composite 
\(\nameddright{\Omega M\ltimes Y}{r}{E}{}{E'}\) 
is a homotopy equivalence. This proves part~(a). 

For part~(b), consider the homotopy fibration 
\(\nameddright{E'}{}{C}{\varphi}{M}\). 
By Lemma~\ref{inverses}, $\Omega\varphi$ has a right homotopy inverse. This 
immediately implies that there is a homotopy equivalence 
$\Omega C\simeq\Omega M\times\Omega E'$. 
Now substitute in the homotopy equivalence for $E'$ in part~(a) to obtain 
the asserted homotopy equivalence. 

For part~(c), let 
\(i\colon\namedright{X\vee Y}{}{C}\) 
denote the map to the cofibre of $f+g$. To say that $f+g$ is inert means that $\Omega i$ 
has a right homotopy inverse. To see this is the case, consider the loops on 
the homotopy pullback diagram in~(\ref{EE'}), 
\begin{equation} 
  \label{loopEE'} 
  \diagram 
        \Omega E\rto\dto & \Omega E'\dto \\ 
        \Omega(X\vee Y)\rto^-{\Omega i}\dto^{\Omega h} & \Omega C\dto^{\Omega\varphi} \\ 
        \Omega M\rdouble & \Omega M. 
  \enddiagram 
\end{equation}  
We check that the homotopy equivalence in part~(b) can be chosen to factor through $\Omega i$. 
First, by Lemma~\ref{inverses}, the map $\Omega h$ has a right homotopy inverse 
\(s\colon\namedright{\Omega M}{}{\Omega(X\vee Y)}\). 
Thus $\Omega i\circ s$ is a right homotopy inverse for $\Omega\varphi$. Second, 
by part~(a) the composite 
\(\nameddright{\Omega M\ltimes Y}{r}{E}{}{E'}\) 
is a homotopy equivalence. Let $r'$ be the composite 
\(\nameddright{\Omega M\ltimes Y}{r}{E}{}{X\vee Y}\). 
Then the homotopy commutativity of~(\ref{loopEE'}) and the fact that $\Omega i$ is 
an $H$-map implies that the composite 
\[\lnamedddright{\Omega M\times\Omega(\Omega M\ltimes Y)}{s\times\Omega r'} 
      {\Omega(X\vee Y)\times\Omega(X\vee Y)}{\mu}{\Omega(X\vee Y)}{\Omega i}{\Omega C}\] 
is a homotopy equivalence, where $\mu$ is the loop multiplication.  

Finally, now knowing that $f+g$ is inert by~(c), part~(d) is an immediate consequence of 
Theorem~\ref{BTfibinclusion} applied to the homotopy cofibration 
\(\nameddright{A}{f+g}{X\vee Y}{}{C}\). 
\end{proof} 

\begin{remark} 
Theorem~\ref{inertideal} says something notable. The fact that $f$ 
is inert implies that $f+g$ is inert, regardless of what $g$ is. 
\end{remark}

\newpage 

\section{Based loops on connected sums} 
\label{sec:connsum} 

In this section we apply Theorem~\ref{inertideal} to analyze the based loops on a 
connected sum of simply-connected Poincar\'{e} Duality complexes and prove 
Theorem~\ref{introconnsum}. Suppose that $M$ and $N$ 
are simply-connected Poincar\'{e} Duality complexes of dimension $n$, where~$n\geq 3$. Let $X$ 
and $Y$ be the $(n-1)$-skeletons of $M$ and~$N$ respectively. Then there are 
homotopy cofibrations 
\[\nameddright{S^{n-1}}{f}{X}{}{M}\] 
\[\nameddright{S^{n-1}}{g}{Y}{}{N}\] 
where $f$ and $g$ are the attaching maps for the top cells of $M$ and $N$ 
respectively. The connected sum $M\conn N$ is given by the homotopy cofibration 
\[\nameddright{S^{n-1}}{f+g}{X\vee Y}{}{M\conn N}.\] 
This is exactly the situation considered in the previous section, taking $A=S^{n-2}$ 
and $C=M\conn N$. Note that as $n\geq 3$ the space $S^{n-1}$ is a simply-connected suspension. 
As in Section~\ref{sec:inert}, there is a map 
\(\namedright{M\conn N}{\varphi}{M}\), 
where explicitly in this case it is the map given by collapsing $Y\subseteq M\conn N$ 
to a point. So from Theorem~\ref{inertideal} we immediately obtain the following, 
which is a more comprehensive version of Theorem~\ref{introconnsum}. 

\begin{theorem} 
   \label{connsum} 
   Let $M$ and $N$ be simply-connected Poincar\'{e} Duality complexes of dimension $n$, 
   where $n\geq 3$. If the attaching map $f$ for the top cell of $M$ is inert then the following hold: 
   \begin{letterlist} 
      \item there is a homotopy fibration 
               \(\nameddright{\Omega M\ltimes Y}{}{M\conn N}{\varphi}{M}\); 
      \item there is a homotopy equivalence 
              $\Omega(M\conn N)\simeq\Omega M\times\Omega(\Omega M\ltimes Y)$; 
      \item the attaching map $f+g$ for the top cell of $M\conn N$ is inert, that is, 
               the loop map 
               \(\namedright{\Omega(X\vee Y)}{}{\Omega(M\conn N)}\) 
               has a right homotopy inverse; 
      \item there is a homotopy fibration 
               \[\nameddright{(\Sigma\Omega(M\conn N)\wedge S^{n-1})\vee S^{n-1}}{\Psi} 
                   {X\vee Y}{}{M\conn N}\] 
               where $\Psi=[\gamma,f+g]+(f+g)$. 
    \end{letterlist} 
\end{theorem}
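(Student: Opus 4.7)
The plan is that Theorem~\ref{connsum} is essentially an immediate specialization of Theorem~\ref{inertideal} to the Poincar\'{e} Duality setting. First I would make the identifications required to feed the connected sum data into Theorem~\ref{inertideal}: take $A = S^{n-2}$, so that $\Sigma A = S^{n-1}$ is a simply-connected suspension (since $n\geq 3$), and take the two top-cell attaching cofibrations $\nameddright{S^{n-1}}{f}{X}{}{M}$ and $\nameddright{S^{n-1}}{g}{Y}{}{N}$ as the two input cofibrations $\nameddright{\Sigma A}{f}{X}{}{M}$ and $\nameddright{\Sigma A}{g}{Y}{}{N}$ in the hypotheses of Theorem~\ref{inertideal}. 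The defining homotopy cofibration $\nameddright{S^{n-1}}{f+g}{X\vee Y}{}{M\conn N}$ of the connected sum then plays the role of $\nameddright{\Sigma A}{f+g}{X\vee Y}{}{C}$, with $C = M\conn N$, and the map $\varphi\colon M\conn N\to M$ constructed in~(\ref{WX}) coincides here with the standard collapse map identifying the subspace $Y\subseteq X\vee Y\subseteq M\conn N$ to a point.

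Next I would verify the one hypothesis of Theorem~\ref{inertideal}, namely that $f$ is inert; but this is given as a hypothesis of Theorem~\ref{connsum}. With the setup in place, each of the four conclusions of Theorem~\ref{inertideal} directly delivers the corresponding conclusion of Theorem~\ref{connsum}: part~(a) is the homotopy fibration $\nameddright{\Omega M\ltimes Y}{}{M\conn N}{\varphi}{M}$; part~(b) is the splitting $\Omega(M\conn N)\simeq\Omega M\times\Omega(\Omega M\ltimes Y)$; part~(c) is the inertness of $f+g$, equivalently the existence of a right homotopy inverse for $\namedright{\Omega(X\vee Y)}{}{\Omega(M\conn N)}$; and part~(d) is the Whitehead product description of the homotopy fibre, where one uses the standard identification $\Omega C\wedge\Sigma A\cong\Sigma\Omega C\wedge A$ noted in Section~\ref{sec:background} together with $\Sigma S^{n-2}=S^{n-1}$ to rewrite the domain in the stated form.

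Since all the real technical content already resides in Theorem~\ref{inertideal}, there is essentially no genuine obstacle here; the Poincar\'{e} Duality hypothesis on $M$ and $N$ enters only insofar as it guarantees the existence of a connected sum realized as the cofibration $\nameddright{S^{n-1}}{f+g}{X\vee Y}{}{M\conn N}$ with $X$ and $Y$ the $(n-1)$-skeletons. The only mild bookkeeping is the index-matching in part~(d).
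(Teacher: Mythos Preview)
Your proposal is correct and matches the paper's approach exactly: the paper states Theorem~\ref{connsum} as an immediate consequence of Theorem~\ref{inertideal} after making the identifications $A=S^{n-2}$, $C=M\conn N$, noting that $n\geq 3$ makes $S^{n-1}$ a simply-connected suspension, and observing that $\varphi$ is the collapse map. No further argument is given in the paper, so your write-up is if anything slightly more detailed than the original.
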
 
\vspace{-1cm}~$\qqed$\bigskip 

We now give several examples of Theorem~\ref{connsum}. 
First, we consider taking the connected sum with an $(n-1)$-connected 
$2n$-dimensional Poincar\'{e} Duality complex. 

\begin{proposition} 
   \label{connsum4mnfld} 
   Let $M$ be an $(n-1)$-connected $2n$-dimensional Poincar\'{e} Duality 
   complex such that $n\geq 2$ and the attaching map for the top cell of $M$ 
   is inert. Let $N$ be an $(n-k)$-connected, $2n$-dimensional 
   Poincar\'{e} Duality complex with $n-k\geq 1$ and $3k-2\leq n$. Let 
   $Y=N-\ast$. Then~$Y$ is a suspension and there is a homotopy equivalence 
   \[\Omega(M\conn N)\simeq\Omega M\times\Omega((\Omega M\wedge Y)\vee Y).\] 
\end{proposition}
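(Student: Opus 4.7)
The plan is to reduce the statement to Theorem~\ref{connsum}(b) and Lemma~\ref{halfsmashsusp}. Since the attaching map $f$ for the top cell of $M$ is inert by hypothesis, Theorem~\ref{connsum}(b) immediately gives $\Omega(M\conn N)\simeq\Omega M\times\Omega(\Omega M\ltimes Y)$. If I can show that $Y$ is a suspension, then Lemma~\ref{halfsmashsusp} provides a homotopy equivalence $\Omega M\ltimes Y\simeq(\Omega M\wedge Y)\vee Y$, and substituting yields the claimed decomposition. So everything comes down to proving $Y$ is a suspension.

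The first step is to pin down the homology of $N$ via Poincar\'{e} duality. Since $N$ is $(n-k)$-connected, $H_{i}(N;\mathbb{Z})=0$ for $1\leq i\leq n-k$. Poincar\'{e} duality and universal coefficients then force $H_{i}(N;\mathbb{Z})=0$ for $n+k\leq i\leq 2n-1$. Moreover, since $H_{n-k}(N;\mathbb{Z})=0$, the Ext term vanishes and we obtain $H_{n+k-1}(N;\mathbb{Z})\cong H^{n-k+1}(N;\mathbb{Z})\cong\mathrm{Hom}(H_{n-k+1}(N;\mathbb{Z}),\mathbb{Z})$, which is torsion-free. Consequently, no $(n+k)$-cell is required to kill torsion in $H_{n+k-1}$, and $Y$, being homotopy equivalent to the $(2n-1)$-skeleton of $N$, admits a minimal CW structure with cells only in dimensions $n-k+1$ through $n+k-1$. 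Therefore $Y$ is $(n-k)$-connected with $\dim Y\leq n+k-1$.

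Setting $r=n-k$, the hypothesis $3k-2\leq n$ rewrites as $n+k-1\leq 2r+1$, so $\dim Y\leq 2r+1$. I would then invoke the standard fact that any $r$-connected CW complex of dimension at most $2r+1$ is a suspension. The argument is by induction on cells: the $(r+1)$-skeleton $Y_{0}$ is a wedge of $(r+1)$-spheres, hence $\Sigma Y_{0}'$ where $Y_{0}'$ is a wedge of $r$-spheres; inductively, if $Y_{d-1}=\Sigma Y_{d-1}'$ with $Y_{d-1}'$ being $(r-1)$-connected, then Freudenthal shows the suspension $\pi_{d-2}(Y_{d-1}')\to\pi_{d-1}(\Sigma Y_{d-1}')$ is surjective precisely when $d-2\leq 2r-1$, i.e.\ $d\leq 2r+1$, which covers every cell we need to attach. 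Thus each attaching map is itself a suspension $\Sigma\alpha'$, and $Y_{d-1}\cup_{\Sigma\alpha'}e^{d}\simeq\Sigma(Y_{d-1}'\cup_{\alpha'}e^{d-1})$ remains a suspension.

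The main obstacle will be the dimensional bookkeeping, specifically the use of Poincar\'{e} duality and universal coefficients to rule out $(n+k)$-cells in $Y$; without the torsion-freeness of $H_{n+k-1}(N;\mathbb{Z})$, the dimension bound $\dim Y\leq n+k-1$ would weaken to $n+k$, and the Freudenthal induction would demand the stronger hypothesis $3k-1\leq n$. Once the bound $\dim Y\leq n+k-1$ is established, the remaining Freudenthal induction and the final application of Theorem~\ref{connsum} and Lemma~\ref{halfsmashsusp} are routine.
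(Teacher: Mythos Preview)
Your proposal is correct and follows essentially the same route as the paper: reduce to Theorem~\ref{connsum}(b), then show $Y$ is a suspension so that Lemma~\ref{halfsmashsusp} splits $\Omega M\ltimes Y$. The paper's argument is more compressed---it simply cites the standard fact that an $(m-1)$-connected complex of dimension $\leq 2m-1$ is a suspension and asserts that the hypothesis $3k-2\leq n$ yields the needed dimension bound, without explicitly invoking Poincar\'{e} duality and universal coefficients to show $\dim Y\leq n+k-1$ first. Your version spells out precisely this step (including the torsion-freeness of $H_{n+k-1}$, which is what prevents an extra cell in dimension $n+k$), so you are filling in detail the paper omits rather than taking a different approach.
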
 

\begin{proof} 
It is well known that if $m\geq 2$ and $V$ is an $(m-1)$-connected $CW$-complex of 
dimension at most $2m-1$ then~$V$ is homotopy equivalent to a suspension. In our case, $Y$ 
is $(m-1)$-connected for $m=n-k+1$, the condition $n-k\geq 1$ implies $Y$ is 
simply-connected, and the condition that $3k-2\leq n$ implies that~$Y$ is of 
dimension~$\leq 2m-1$. Therefore $Y$ is a suspension. 

Since the attaching map for the top cell of $M$ is inert, by Theroem~\ref{connsum}~(b), there is a 
homotopy equivalence $\Omega(M\conn N)\simeq\Omega M\times\Omega(\Omega M\ltimes Y)$. 
Since $Y$ is a suspension, there is a homotopy equivalence 
$\Omega M\ltimes Y\simeq(\Omega M\wedge Y)\vee Y$, and the assertion follows. 
\end{proof} 

The hypotheses of Proposition~\ref{connsum4mnfld} hold in a wide variety of cases. 
By~\cite{BT2}, if $n\notin\{4,8\}$ then the attaching map for the top cell of an 
$(n-1)$-connected $2n$-dimensional Poincar\'{e} Duality complex~$M$ is inert. 
If $n\in\{4,8\}$ then the attaching map for the top cell is not known to be inert in all 
cases but it may be inert for specific cases: for example, the attaching maps for the 
top cells in $S^{4}\times S^{4}$ and $S^{8}\times S^{8}$ are both inert. 

Observe that if $n=2$ or $n=3$ then the condition $3k-2\leq n$ 
implies $k=1$, and $N$ is then either a simply-connected four-manifold if $n=2$ 
or a $2$-connected $6$-manifold if $n=3$; both cases are then simply repeating 
known decompositions from ~\cite{BT1} or~\cite{BB}. However, if $n=4$ 
then $k=2$ is valid, so we obtain a homotopy decomposition for 
$\Omega(M\conn N)$ when $M=S^{4}\times S^{4}$ and $N$ is 
any $2$-connected $8$-manifold. This is new - in~\cite{BT1} it was shown that 
if $\cohlgy{N;\mathbb{Z}}$ is torsion-free then such a decomposition exists but 
Proposition~\ref{connsum4mnfld} dispenses with the torsion-free cohomology condition. 
More generally, in~\cite{BT1} it was shown that if $M=S^{m}\times S^{2n-m}$ 
and $\cohlgy{N;\mathbb{Z}}$ is torsion-free then $\Omega(M\conn N)$ decomposes.  
Proposition~\ref{connsum4mnfld} significantly generalizes this to $M$ being 
any $(n-1)$-connected $2n$-dimensional Poincar\'{e} Duality complex with $n\geq 2$, 
and $N$ not having a torsion-free cohomology condition but some control over 
the dimensional range in which the middle cells appear. 

Next, we prove a general result in Proposition~\ref{prodexample} about taking 
the connected sum with a product and then increasingly specialize it. 

\begin{lemma} 
   \label{wedgeinv} 
   Let $X_{1},\ldots,X_{k}$ be simply-connected spaces and let 
   \(j\colon\namedright{\bigvee_{i=1}^{k} X_{i}}{}{\prod_{i=1}^{k} X_{i}}\) 
   be the inclusion of the wedge into the product. Then $\Omega j$ has 
   a right homotopy inverse. 
\end{lemma}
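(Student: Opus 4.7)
The plan is to construct the right homotopy inverse explicitly by multiplying loops of the wedge-summand inclusions. For $1\le i\le k$ let
\(\iota_{i}\colon\namedright{X_{i}}{}{\bigvee_{t=1}^{k} X_{t}}\)
be the inclusion of the $i^{th}$-wedge summand. Looping gives maps
\(\Omega\iota_{i}\colon\namedright{\Omega X_{i}}{}{\Omega(\bigvee_{t=1}^{k} X_{t})}\).
Since $\Omega(\bigvee_{t=1}^{k} X_{t})$ is a (homotopy-associative) $H$-space with multiplication $\mu$, define
\[s\colon\lllnamedright{\prod_{i=1}^{k}\Omega X_{i}}{\prod_{i=1}^{k}\Omega\iota_{i}}{\prod_{i=1}^{k}\Omega\Bigl(\bigvee_{t=1}^{k} X_{t}\Bigr)}{\mu}{\Omega\Bigl(\bigvee_{t=1}^{k} X_{t}\Bigr)}\]
where $\mu$ is an iterated loop multiplication.

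The next step is to identify $\Omega j\circ s$ with the homotopy equivalence $\prod_{i=1}^{k}\Omega X_{i}\simeq\Omega(\prod_{i=1}^{k} X_{i})$. Since $\Omega j$ is a loop map, hence an $H$-map, it commutes up to homotopy with the iterated multiplication, giving
\[\Omega j\circ s\simeq\mu'\circ\prod_{i=1}^{k}\Omega(j\circ\iota_{i}),\]
where $\mu'$ is the iterated multiplication on $\Omega(\prod_{t=1}^{k} X_{t})\simeq\prod_{t=1}^{k}\Omega X_{t}$. The composite $j\circ\iota_{i}$ is the canonical inclusion of $X_{i}$ into the product as the $i^{th}$-factor, so $\Omega(j\circ\iota_{i})$ is the inclusion of $\Omega X_{i}$ as the $i^{th}$-factor of $\prod_{t=1}^{k}\Omega X_{t}$. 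Using the fact that the loop multiplication on a product of loop spaces is, up to homotopy, the coordinate-wise product, the expression $\mu'\circ\prod_{i=1}^{k}(\text{$i^{th}$-factor inclusion})$ is homotopic to the identity map on $\prod_{i=1}^{k}\Omega X_{i}$. Composing the canonical equivalence $\Omega(\prod X_{i})\simeq\prod\Omega X_{i}$ with $s$ then gives the desired right homotopy inverse of $\Omega j$.

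The only real subtlety is making the construction of $s$ rigorous, since $\Omega(\bigvee X_{t})$ is only homotopy associative. This is handled by replacing $\Omega$ with Moore loops (or equivalently taking a strictly associative model), so that $\mu$ is genuinely associative and $s$ is unambiguously defined; alternatively, one inducts on $k$ using the $k=2$ case and the homotopy associativity of loop multiplication to choose a consistent bracketing. Either way, the argument is essentially formal once $s$ is in place.
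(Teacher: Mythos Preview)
Your proof is correct and follows essentially the same approach as the paper: define the right homotopy inverse by multiplying the looped wedge-summand inclusions $\Omega\iota_{i}$, then observe that $\Omega j\circ s$ recovers the identity on $\prod_{i}\Omega X_{i}$ because $j\circ\iota_{i}$ is the $i^{th}$-factor inclusion. The paper's proof is a one-line sketch of exactly this idea, whereas you have spelled out the $H$-map and associativity details more carefully.
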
 

\begin{proof} 
This is well known. Let 
\(j_{i}\colon\namedright{X_{i}}{}{\bigvee_{i=1}^{k} X_{i}}\) 
be the inclusion. Then $j\circ j_{i}$ is the inclusion of the $i^{th}$ factor 
in $\prod_{i=1}^{k} X_{i}$. Looping to multiply, the product of the maps $\Omega j_{i}$ 
for $1\leq i\leq k$ is a right homotopy inverse for $\Omega j$. 
\end{proof} 

The next lemma gives one source of Poincar\'{e} Duality complexes for which the right homotopy 
inverse hypothesis of Theorem~\ref{connsum} holds. 

\begin{lemma} 
   \label{prodM} 
   Let $k\geq 2$ and suppose that $M_{1},\ldots,M_{k}$ are nontrivial simply-connected 
   finite dimensional Poincar\'{e} Duality complexes. Let $M=\prod_{i=1}^{k} M_{i}$ and let 
   \(J\colon\namedright{M-\ast}{}{M}\) 
   be the inclusion. Then $\Omega J$ has a right homotopy inverse. 
\end{lemma}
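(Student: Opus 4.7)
The plan is to deduce this immediately from Lemma~\ref{wedgeinv} by producing a factorization of the standard wedge-into-product inclusion through $M-\ast$. Set $n_{i}=\dim M_{i}$ and $n=\sum_{i=1}^{k} n_{i}$. I would put the product CW structure on $M$, using CW structures on each $M_{i}$ with a single top cell $e^{n_{i}}$ and basepoint chosen in the $0$-skeleton. Under this structure the unique top cell of $M$ is $e^{n_{1}}\times\cdots\times e^{n_{k}}$, and $M-\ast$ may be taken to deformation retract onto the $(n-1)$-skeleton.

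First, I would check that for each $l$, the coordinate inclusion $M_{l}\hookrightarrow M$ has image in the subcomplex of cells all of whose factors off position $l$ are the basepoint $0$-cell; in particular it avoids the interior of the top cell. Taking the wedge of these coordinate inclusions then gives a factorization
\[\bigvee_{i=1}^{k} M_{i}\stackrel{\iota}{\longrightarrow} M-\ast\stackrel{J}{\longrightarrow} M\]
of the wedge-into-product inclusion $j$ that features in Lemma~\ref{wedgeinv}.

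To finish, I would apply Lemma~\ref{wedgeinv} to produce a right homotopy inverse $s\colon\Omega M\to\Omega(\bigvee_{i=1}^{k} M_{i})$ for $\Omega j$. Then the composite $\Omega\iota\circ s\colon\Omega M\to\Omega(M-\ast)$ satisfies
\[\Omega J\circ(\Omega\iota\circ s)\simeq\Omega(J\circ\iota)\circ s=\Omega j\circ s\simeq\mathrm{id}_{\Omega M},\]
exhibiting it as a right homotopy inverse for $\Omega J$. There is no real obstacle: once Lemma~\ref{wedgeinv} is granted, the lemma reduces to the elementary cellular observation that the wedge of factors of a product sits inside the complement of the product's top cell.
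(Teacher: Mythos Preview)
Your proof is correct and follows essentially the same approach as the paper: factor the wedge-into-product inclusion through $M-\ast$ via a cellular/dimension argument (using $k\geq 2$ and nontriviality of each $M_{i}$ to see that each coordinate inclusion misses the top cell), then invoke Lemma~\ref{wedgeinv}. The paper phrases the factorization slightly differently---noting that $d_{i}<D=\sum d_{j}$ so each $M_{i}$ maps into the $(D-1)$-skeleton---but the content is the same.
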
 

\begin{proof} 
As each $M_{i}$ is simply-connected, it may be approximated by a $CW$-complex. 
Doing so, let~$d_{i}$ be the dimension of $M_{i}$. Then $D=\sum_{i=1}^{k} d_{i}$ is the 
dimension of $M$. As $M_{i}$ is nontrivial, we have $d_{i}\geq 1$. Therefore 
as $k\geq 2$ we obtain $d_{i}<D$. Thus the inclusion 
\(\namedright{M_{i}}{}{M}\) 
factors through the $(D-1)$-skeleton of $M$, which is homotopy equivalent to $M-\ast$. 
Hence the inclusion 
\(\namedright{\bigvee_{i=1}^{k} M_{i}}{j}{\prod_{i=1}^{k} M_{i}=M}\) 
of the wedge into the product factors as a composite 
\(\nameddright{\bigvee_{i=1}^{k} M_{i}}{}{M-\ast}{J}{M}\). 
By Lemma~\ref{wedgeinv}, $\Omega j$ has a right homotopy inverse. 
Therefore, so does $\Omega J$. 
\end{proof} 

Theorem~\ref{connsum} and Lemma~\ref{prodM} immediately imply the following. 

\begin{proposition} 
   \label{prodexample} 
   Suppose that $M=\prod_{i=1}^{k} M_{i}$ for $k\geq 2$ and each $M_{i}$ is a nontrivial 
   simply-connected Poincar\'{e} Duality complex of dimension $n$. Let $N$ be any other 
   simply-connected Poincar\'{e} Duality complex of dimension $n$ 
   and let $Y=N-\ast$. Then there is a homotopy equivalence 
   \[\Omega(M\conn N)\simeq\Omega M\times\Omega(\Omega M\ltimes Y).\]  
\end{proposition}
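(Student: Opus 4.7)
The plan is to exhibit this proposition as essentially an immediate corollary of Theorem~\ref{connsum}~(b) combined with Lemma~\ref{prodM}. The only substantive verification is that the hypothesis of Theorem~\ref{connsum}~(b) is met, namely that the attaching map $f$ for the top cell of $M$ is inert.

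First, I would identify $M-\ast$ with the $(\dim M - 1)$-skeleton $X$ of a CW-structure on $M$. Since $M=\prod_{i=1}^{k} M_{i}$ is a simply-connected Poincar\'{e} Duality complex equipped with a CW-structure having a single top cell, removing an interior point of the top cell deformation retracts onto $X$, and this identification is compatible with the inclusion map into $M$. Write $f\colon\namedright{S^{n-1}}{}{X}$ for the attaching map of the top cell, so that the cofibration \(\nameddright{S^{n-1}}{f}{X}{}{M}\) models the top-cell attachment, and let $J\colon\namedright{M-\ast}{}{M}$ be the inclusion; under the identification $X\simeq M-\ast$ the map $X\hookrightarrow M$ corresponds to $J$.

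Next I would invoke Lemma~\ref{prodM}, whose hypotheses are precisely that $k\geq 2$ and each $M_{i}$ is a nontrivial simply-connected finite dimensional Poincar\'{e} Duality complex. That lemma asserts that $\Omega J$ has a right homotopy inverse. Transporting along the equivalence $X\simeq M-\ast$, this says the loop map induced by $X\hookrightarrow M$ has a right homotopy inverse, which by definition means that $f$ is inert.

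With inertness of $f$ established, Theorem~\ref{connsum}~(b), applied to $M$ and $N$ with $Y=N-\ast$, yields the homotopy equivalence
\[\Omega(M\conn N)\simeq\Omega M\times\Omega(\Omega M\ltimes Y),\]
as required. There is no real obstacle in this argument: the inert condition needed by Theorem~\ref{connsum} is furnished immediately by Lemma~\ref{prodM}, so the proof amounts to noting that the two prior results combine to give the conclusion.
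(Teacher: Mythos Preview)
Your proposal is correct and follows exactly the paper's approach: the paper states that Proposition~\ref{prodexample} is an immediate consequence of Theorem~\ref{connsum} and Lemma~\ref{prodM}, and you have simply made explicit the one-line verification that Lemma~\ref{prodM} supplies the inertness hypothesis required by Theorem~\ref{connsum}~(b).
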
 

We consider special cases of Proposition~\ref{prodexample} in which 
the decomposition of $\Omega(M\conn N)$ can be further refined. 

\begin{example} 
\label{Wall} 
Suppose that the product $M$ in Proposition~\ref{prodexample} has dimension $2n$ for $n\geq 2$.  
Let $N$ be an $(n-1)$-connected $2n$-dimensional Poincar\'{e} Duality 
complex. Then Poincar\'{e} Duality implies that $Y=N-\ast$ is homotopy equivalent 
to a wedge of~$d$ copies of $S^{n}$, where $d$ is the rank of $\cohlgy{N;\mathbb{Z}}$.  
If $d\geq 1$ then $Y$ is a suspension, so 
$\Omega M\ltimes Y\simeq(\Omega M\wedge Y)\vee Y$. 
Siimilarly, if $M$ is $(2n+1)$-diimensional for $n\geq 2$ and $N$ is an $(n-1)$-connected 
$(2n+1)$-dimensional Poincar\'{e} Duality complex then $Y=N-\ast$ is homotopy equivalent 
to a wedge of some number of copies of $S^{n}$, $S^{n+1}$ and Moore spaces $P^{n+1}(m)$ 
for various values of $m$. Again, if $Y$ is nontrivial then it is a suspension. Therefore, 
in both cases we obtain a homotopy equivalence 
\[\Omega(M\conn N)\simeq\Omega M\times((\Omega M\wedge Y)\vee Y).\] 
\end{example} 

\begin{example} 
\label{sphereexample} 
Suppose that $M=\prod_{i=1}^{k} S^{m_{i}}$ for $k\geq 2$, each sphere is simply-connected, 
and $N$ is as in Example~\ref{Wall}. Since $\Omega M\simeq\prod_{i=1}^{k}\Omega S^{n_{i}}$, 
iterating the fact that $\Sigma(X\times Y)\simeq\Sigma X\vee\Sigma Y\vee(\Sigma X\wedge Y)$ 
and iterating the fact from~\cite{J1} that 
\[\Sigma\Omega S^{m+1}\simeq\bigvee_{r=1}^{\infty} S^{rm+1}\] 
shows that $\Sigma\Omega M$ is homotopy equivalent to a wedge of spheres. 
If $M$ has dimension $2n$ and $N$ is an $(n-1)$-connected $2n$-dimensional Poincar\'{e} Duality 
complex with $Y=N-\ast$ nontrivial, then~$Y$ is homotopy equivalent to a wedge of copies 
of $S^{n}$, implying that $(\Omega M\wedge Y)\vee Y$ is homotopy equivalent to a 
wedge $W$ of spheres. Thus $\Omega(M\conn N)\simeq\Omega M\times\Omega W$. 
If $M$ has dimension $2n+1$ and $N$ is an $(n-1)$-connected $(2n+1)$-dimensional Poincar\'{e} 
Duality complex with $Y=N-\ast$ nontrivial, then $Y$ is homotopy equivalent to a wedge of spheres 
and Moore spaces, implying that $(\Omega M\wedge Y)\vee Y$ is also homotopy equivalent 
to a wedge $W'$ of spheres and Moore spaces. Thus 
$\Omega(M\conn N)\simeq\Omega M\times\Omega W'$. 
\end{example}

\begin{example} 
\label{cpnexample} 
Suppose that $M=\prod_{i=1}^{k}\cp^{m_{i}}$ for $k\geq 2$, $M$ has dimension $2n$, 
and $N$ is an $(n-1)$-connected $2n$-dimensional Poincar\'{e} Duality complex. 
Since $\Omega\cp^{r}\simeq S^{1}\times\Omega S^{2r+1}$, arguing as in the 
Example~\ref{sphereexample} shows that $\Sigma\Omega\cp^{r}$ is homotopy equivalent 
to a wedge of spheres, as is $\Sigma\Omega M$. Therefore, as in Example~\ref{sphereexample}, 
we obtain a homotopy decomposition of $\Omega(M\conn N)$ in terms of $\Omega M$ 
and the loops on a wedge of spheres. 
\end{example} 

\begin{example} 
\label{Whex} 
In Example~\ref{sphereexample}, suppose that $M=S^{m_{1}}\times S^{m_{2}}$, where   
$m_{1}+m_{2}=2n$, and $N$ is an $(n-1)$-connected $2n$-dimensional Poincar\'{e} 
Duality complex. The decomposition $\Omega(M\conn N)\simeq\Omega M\times\Omega W$ 
in Example~\ref{sphereexample} implies that $\Sigma\Omega(M\conn N)\wedge S^{2n-1}$ is homotopy 
equivalent to a wedge $U$ of spheres. Theorem~\ref{connsum}~(d) then implies that 
there is a homotopy fibration 
\[\nameddright{U\vee S^{2n-1}}{\Psi}{(S^{m_{1}}\vee S^{m_{2}})\vee Y}{} 
        {(S^{m_{1}}\times S^{m_{2}})\conn N}\] 
where the restriction of $\Psi$ to $U$ is a Whitehead product and the restriction 
of $\Psi$ to $S^{2n-1}$ is the attaching map for the top cell of $(S^{m_{1}}\times S^{m_{2}})\conn N$.        
Similarly, if $m_{1}+m_{2}=2n+1$ and $N$ is an $(n-1)$-connected $(2n+1)$-dimensional 
Poincar\'{e} Duality complex then the decomposition 
$\Omega(M\conn N)\simeq\Omega M\times\Omega W'$ in Example~\ref{sphereexample} 
implies that $\Sigma\Omega(M\conn N)\wedge S^{2n}$ is homotopy equivalent to a wedge $U'$ of 
spheres and Moore spaces. Theorem~\ref{connsum}~(d) then implies that 
there is a homotopy fibration 
\[\nameddright{U'\vee S^{2n}}{\Psi}{(S^{m_{1}}\vee S^{m_{2}})\vee Y}{} 
        {(S^{m_{1}}\times S^{m_{2}})\conn N}\] 
where the restriction of $\Psi$ to $U'$ is a Whitehead product and the restriction 
of $\Psi$ to $S^{2n}$ is the attaching map for the top cell of $(S^{m_{1}}\times S^{m_{2}})\conn N$. 
\end{example} 

\begin{example} 
We finish with an interesting specific example. Let $X$ be the Wu manifold, which 
is a $1$-connected $5$-manifold whose mod-$2$ cohomology satisfies 
$\cohlgy{X;\mathbb{Z}/2\mathbb{Z}}\cong\Lambda(x,Sq^{1}(x))$, 
where $\Lambda$ is the free exterior algebra functor and $Sq^{1}$ is the 
first Steenrod operation. As a $CW$-complex, $X=P^{3}(2)\cup e^{5}$. By 
Examples~\ref{sphereexample} and~\ref{Whex} we obtain: (i) a homotopy equivalence 
\[\Omega((S^{2}\times S^{3})\conn X)\simeq\Omega S^{2}\times\Omega S^{3}\times\Omega W\] 
where 
\[W=((\Omega S^{2}\times\Omega S^{3})\wedge P^{3}(2))\vee P^{3}(2)\] 
is homotopy equivalent to a wedge of mod-$2$ Moore spaces; and (ii) a homotopy 
fibration 
\[\nameddright{U'\vee S^{4}}{\Psi}{(S^{2}\vee S^{3})\vee P^{3}(2)}{} 
        {(S^{2}\times S^{3})\conn X}\] 
where 
\[U'=\Sigma^{5}\Omega((S^{2}\times S^{3})\conn X)\] 
is a wedge of spheres and mod-$2$ Moore spaces, the restriction of $\Psi$ to $U'$ is 
a Whitehead product, and the restriction of $\Psi$ to $S^{4}$ is the attaching 
map for the top cell of the connected sum. 
\end{example}

\newpage 

\section{Hopf algebras and one-relator algebras} 
\label{sec:Hopf} 

Now that we have many examples of inert maps we take a homological time-out in 
order to consider the effect an inert map has in homology. To set the stage, consider  
a homotopy cofibration 
\(\nameddright{\Sigma A}{f}{\Sigma Y}{h}{Y'}\) 
with the property that $\Omega h$ has a right homotopy inverse. Our aim is to calculate 
the homology of $\Omega Y'$. Take homology with field coefficients. By the Bott-Samelson 
Theorem there is an algebra isomorphism $\hlgy{\Omega\Sigma Y}\cong T(\rhlgy{Y})$, 
where $T(\ \ )$ is the free tensor algebra functor. 

\begin{proposition} 
   \label{OmegaChlgy} 
   Suppose that there is a homotopy cofibration 
   \(\nameddright{\Sigma A}{f}{\Sigma Y}{h}{Y'}\)  
   where $\Omega h$ has a right homotopy inverse. Let 
   \(\widetilde{f}\colon\namedright{A}{}{\Omega\Sigma Y}\) 
   be the adjoint of $f$ and let $R=\mbox{Im}(\widetilde{f}_{\ast})$. Then 
   there is an algebra isomorphism 
   \[\hlgy{\Omega Y'}\cong T(\rhlgy{Y})/(R)\] 
   where $(R)$ is the two-sided ideal generated by $R$.  
\end{proposition}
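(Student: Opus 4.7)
My plan combines Theorem~\ref{BTfibinclusion} with the Bott--Samelson theorem and a Poincar\'e series count. First, I apply Theorem~\ref{BTfibinclusion} to $f$ to obtain a homotopy fibration
\(\nameddright{\Omega Y'\ltimes\Sigma A}{\chi}{\Sigma Y}{h}{Y'}\).
Looping this fibration, the right homotopy inverse $s$ of $\Omega h$ splits the loop fibration, yielding a homotopy equivalence of spaces
\[\Omega\Sigma Y\simeq\Omega Y'\times\Omega(\Omega Y'\ltimes\Sigma A),\]
under which $\Omega h$ is projection onto the first factor, because the restriction of $\Omega h$ to $\Omega(\Omega Y'\ltimes\Sigma A)$ is $\Omega(h\circ\chi)$ and $h\circ\chi$ is null-homotopic.

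Next, since $\Omega Y'\ltimes\Sigma A\simeq\Sigma((\Omega Y'\wedge A)\vee A)$ is a suspension, the Bott--Samelson theorem gives $H_*(\Omega(\Omega Y'\ltimes\Sigma A))\cong T(H_*(\Omega Y')\otimes W)$ where $W=\widetilde{H}_*(A)$. Writing $b(t)$, $v(t)$, $w(t)$ for the Poincar\'e series of $H_*(\Omega Y')$, $V$, and $W$, the K\"unneth formula applied to the splitting yields
\[\frac{1}{1-v(t)}=\frac{b(t)}{1-b(t)w(t)},\]
which rearranges to $b(t)=1/(1-v(t)+w(t))$.

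On the algebraic side, $\Omega h$ is an $H$-map, so $(\Omega h)_*\colon T(V)\to H_*(\Omega Y')$ is a surjective algebra homomorphism. The composite $\Omega h\circ\widetilde{f}$ is adjoint to the null-homotopic $h\circ f$, so $R=\widetilde{f}_*(W)\subseteq\ker(\Omega h)_*$; since this kernel is a two-sided ideal we obtain a surjective algebra map $\pi\colon T(V)/(R)\twoheadrightarrow H_*(\Omega Y')$.

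The main obstacle is upgrading $\pi$ to an isomorphism. Surjectivity gives $P_{T(V)/(R)}(t)\ge b(t)$, so the issue reduces to the reverse inequality $P_{T(V)/(R)}(t)\le 1/(1-v(t)+w(t))$. This is precisely Anick's condition that the relations $R$ are \emph{strongly free} (inert in Anick's sense); it forces $\widetilde{f}_*$ to be injective and $\pi$ to be an isomorphism. The cleanest route is to work with the Adams--Hilton-type DGA $(T(V\oplus sW),d)$ with $d|_V=0$ and $d|_{sW}=\widetilde{f}_*$, whose homology computes $H_*(\Omega Y')$; the inertness hypothesis guarantees that this homology is precisely $T(V)/(R)$ with no higher syzygies. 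The delicate point is matching the space-level Poincar\'e series $b(t)$ with this algebraic description---an alternative is a direct induction on tensor length, comparing $\ker(\Omega h)_*$ (known via the vector-space splitting) with $(R)$ degree by degree.
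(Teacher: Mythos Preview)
Your setup is correct through the Poincar\'e-series identity $b(t)=1/(1-v(t)+w(t))$ and the surjection $\pi\colon T(V)/(R)\twoheadrightarrow H_*(\Omega Y')$, but the decisive step---the reverse inequality $P_{T(V)/(R)}(t)\le b(t)$---is never established. The proposed routes are circular or unexecuted: declaring $R$ ``strongly free'' is precisely what must be shown, and asserting that ``the inertness hypothesis guarantees no higher syzygies'' in the Adams--Hilton DGA restates the conclusion. There \emph{is} a way to finish along these lines: in $C=(T(V\oplus sW),d)$ the differential strictly lowers $sW$-weight, so $T(V)\cap dC=(R)$ and hence $T(V)/(R)\hookrightarrow H(C)$; granting $H(C)\cong H_*(\Omega Y')$ this would give the missing inequality. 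But you neither make this observation nor justify the Adams--Hilton identification for general $Y$ and $A$.

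The paper bypasses Poincar\'e series and DGAs by constructing a surjection in the \emph{reverse} direction, $c\colon H_*(\Omega Y')\to T(V)/(R)$. The key input is the explicit description of $\chi$ in Theorem~\ref{BTfibinclusion} as the sum of $f\circ\pi$ and the Whitehead product $[ev\circ s,f]\circ q$. On adjoints, the first summand factors through $\widetilde f$ and the second becomes a Samelson product with $\widetilde f$; in homology both land in the ideal $(R)$, so the quotient $a\colon T(V)\to T(V)/(R)$ annihilates the image of $(\Omega\chi)_*$. The right $H_*(\Omega(\Omega Y'\ltimes\Sigma A))$-module isomorphism $T(V)\cong H_*(\Omega Y')\otimes H_*(\Omega(\Omega Y'\ltimes\Sigma A))$ then forces $a$ to factor through $(\Omega h)_*$, yielding $c$. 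With surjections in both directions between finite-type graded modules, each is an isomorphism. The idea you are missing is that the Whitehead-product form of $\chi$, not merely its existence, is what drives the algebra.
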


\begin{proof} 
First observe that there is an algebra map 
\(\namedright{T(\rhlgy{Y})}{(\Omega h)_{\ast}}{\hlgy{\Omega Y'}}\). 
Since $\Omega h$ has a right homotopy inverse, $(\Omega h)_{\ast}$ is a surjection. 
Since $\widetilde{f}$ is homotopic to the composite 
\(\nameddright{A}{E}{\Omega\Sigma A}{\Omega f}{\Omega\Sigma Y}\), 
where $E$ is the suspension, the composite $\Omega h\circ\widetilde{f}$ is null homotopic. 
Therefore $(\Omega h)_{\ast}(R)=0$. As~$(\Omega h)_{\ast}$ is an algebra map, we obtain  
a factorization 
\[\diagram 
       T(\rhlgy{Y})\rto^-{(\Omega h)_{\ast}}\dto^{a} & \hlgy{\Omega Y'} \\ 
       T(\rhlgy{Y})/(R)\urto_-{b} & 
   \enddiagram\]  
where $a$ is the quotient map and $b$ is an induced algebra homomorphism. Since 
$(\Omega h)_{\ast}$ is surjective, so is $b$. 

On the other hand, by Theorem~\ref{BTfibinclusion} there is a homotopy fibration 
\[\nameddright{\Omega Y'\ltimes\Sigma A}{\chi}{\Sigma Y}{h}{Y'}\] 
where $\chi$ is the sum of the maps 
\(\nameddright{\Omega Y'\ltimes\Sigma A}{\pi}{\Sigma A}{f}{\Sigma Y}\) 
and 
\(\namedright{\Omega Y'\ltimes\Sigma A}{q}{\Omega Y'\wedge\Sigma A}
     \stackrel{[ev\circ s,f]}{\llarrow}\Sigma Y\) 
for 
\(s\colon\namedright{\Omega Y'}{}{\Omega Y}\) 
a right homotopy inverse of $\Omega h$. Consider the composite 
\[\nameddright{\hlgy{\Omega(\Omega Y'\ltimes\Sigma A)}}{(\Omega\chi)_{\ast}} 
      {\hlgy{\Omega\Sigma Y}\cong T(\rhlgy{Y})}{a}{T(\rhlgy{Y})/R}.\] 
Notice that the maps $\pi$ and $q$ are suspensions, so the adjoint of $f\circ\pi$ is 
homotopic to 
\(\alpha\colon\nameddright{\Omega Y'\ltimes A}{}{A}{\widetilde{f}}{\Omega\Sigma Y}\) 
and the adjoint of $[ev\circ s,f]\circ q$ is homotopic to 
\(\beta\colon\nameddright{\Omega Y'\ltimes A}{}{\Omega Y'\wedge A} 
      {\langle\widetilde{ev\circ s},\widetilde{f}\rangle}{\Omega\Sigma Y}\) 
where the right map is the Samelson product of $\widetilde{ev\circ s}$ (the adjoint 
of $ev\circ s$) and $\widetilde{f}$. The James construction implies that $\Omega\chi$ 
is homotopic to the multiplicative extension of $\alpha\perp\beta$. Therefore, 
as $a$ is an algebra map, $a\circ(\Omega\chi)_{\ast}$ is determined by its 
restriction to $a\circ(\alpha\perp\beta)_{\ast}$. By definition, $a$ sends the image of 
$\widetilde{f}_{\ast}$ to the identity element. Therefore $a\circ\alpha_{\ast}$ is trivial. 
Also, the Samelson product commutes with homology in the sense that 
$(\langle\widetilde{ev\circ s},\widetilde{f}\rangle)_{\ast}= 
     \langle(\widetilde{ev\circ s})_{\ast},\widetilde{f}_{\ast}\rangle$, 
where the bracket on the right is the commutator in $T(\rhlgy{Y})$. The triviality of  
$a\circ\widetilde{f}_{\ast}$ therefore implies that 
$a\circ (\langle\widetilde{ev\circ s},\widetilde{f}\rangle)_{\ast}$ is also trivial. 
Thus $a\circ\beta_{\ast}$ is trivial, implying that $a\circ(\alpha\perp\beta)_{\ast}$ is 
trivial, and hence $a\circ(\Omega\chi)_{\ast}$ is trivial. 

Further, as homotopy fibration 
\(\nameddright{\Omega Y'\ltimes\Sigma A}{\chi}{\Sigma Y}{h}{Y'}\) 
has the property that $\Omega h$ has a right homotopy inverse, there is an isomorphism 
$T(\rhlgy{Y})\cong\hlgy{\Omega Y'}\otimes\hlgy{\Omega(\Omega Y'\ltimes\Sigma A)}$ 
of right $\hlgy{\Omega(\Omega Y'\ltimes\Sigma A)}$-modules. Since $a$ is an 
algebra map and $a\circ(\Omega\chi)_{\ast}$ is trivial, we obtain a factorization 
\[\diagram 
      T(\rhlgy{Y})\rto^-{(\Omega h)_{\ast}}\dto^{a} & \hlgy{\Omega Y'}\dlto^{c} \\ 
      T(\rhlgy{Y})/(R) & 
  \enddiagram\] 
where $c$ is an algebra map and a surjection. 

Finally, consider the composite 
\[\namedddright{T(\rhlgy{Y})/(R)}{b}{\hlgy{\Omega Y'}}{c}{T(\rhlgy{Y})/(R)}{b}{\hlgy{\Omega Y'}}.\] 
As $b$ and $c$ are surjections, so are $c\circ b$ and $b\circ c$. Therefore $c\circ b$ 
and $b\circ c$ are surjective self-maps of $T(\rhlgy{Y})/(R)$ and $\hlgy{\Omega Y'}$ 
respectively. Any surjective self-map of a graded finite type module is an isomorphism, 
so both $c\circ b$ and $b\circ c$ are isomorphisms. As $b$ and $c$ are algebra maps, 
these isomorphisms are as algebras. 
\end{proof} 

\begin{remark} 
\label{Hopfalgremark} 
There is an improvement to Proposition~\ref{OmegaChlgy} if $Y$ is a suspension. 
In that case the Bott-Samelson Theorem improves to a Hopf algebra isomorphism 
$\hlgy{\Omega\Sigma Y}\cong T(\rhlgy{Y})$, where the tensor algebra is primitively generated. 
The quotient maps $b$ and $c$ in the proof are then Hopf algebra maps, and we obtain 
an isomorphism of Hopf algebras $\hlgy{\Omega Y'}\cong T(\rhlgy{Y})/(R)$. 
\end{remark} 

\begin{example} 
\label{loopMkhlgy}
Consider the homotopy cofibration 
\(\lllnamedright{X^{\wedge k}\wedge\Sigma Y}{ad^{k}(i_{1})(i_{2})}{\Sigma X\vee\Sigma Y} 
        \stackrel{m_{k}}{\longrightarrow} M_{k}\). 
By Lemma~\ref{adinv}, $\Omega m_{k}$ has a right homotopy inverse. Therefore 
Proposition~\ref{OmegaChlgy} applies and we obtain an algebra isomorphism 
\[\hlgy{\Omega M_{k}}\cong T(\rhlgy{X\vee Y})/(R)\] 
where $R$ is the image in homology of the adjoint of $ad^{k}(i_{1})(i_{2})$. 

A specific  case of interest is the homotopy cofibration 
\(\lllnamedright{S^{km+n+1}}{ad^{k}(i_{1})(i_{2})}{S^{m+1}\vee S^{n+1}} 
     \stackrel{m_{k}}{\longrightarrow} M_{k}\).  
We have $T(\rhlgy{S^{m}\vee S^{n}})=T(x,y)$ where $\vert x\vert=m$ and $\vert y\vert=n$. 
The adjoint of the iterated Whitehead product $ad^{k}(i_{1})(i_{2})$ is an iterated Samelson product, 
and its image in homology is the iterated commutator $ad^{k}(x)(y)$. If $m,n\geq 1$ then by 
Remark~\ref{Hopfalgremark} there is an isomorphism of Hopf algebras 
\[\hlgy{\Omega M_{k}}\cong T(x,y)/(ad^{k}(x)(y)).\] 
\end{example} 

The special case of Example~\ref{loopMkhlgy} is an example of the notion of a one-relator algebra. 
In general, an algebra is a \emph{one-relator algebra} if it is not free and can be written as the 
quotient of a free associative algebra by a two-sided ideal generated by a single element. 
There are many other examples of one-relator algebras that can be obtained from 
Proposition~\ref{OmegaChlgy}. 

\begin{example} 
Let $M$ be an $(n-1)$-connected $2n$-dimensional Poincar\'{e} Duality complex 
where $n\geq 2$. By Poincar\'{e} Duality, as a $CW$-complex $M$ has one zero-cell, 
$d$ $n$-cells for some $d\geq 0$ and one $2n$-cell. If $d=0$ then $M\simeq S^{2n}$. 
Otherwise, there is a homotopy cofibration 
\[\nameddright{S^{2n-1}}{f}{\bigvee_{i=1}^{d} S^{n}}{h}{M}\] 
where $f$ attaches the top cell to $M$. In~\cite{BT2} it was shown that 
that if $d\geq 2$ then $\Omega h$ has a right homotopy inverse. Therefore 
Proposition~\ref{OmegaChlgy} and Remark~\ref{Hopfalgremark} apply to show that there 
is an isomorphism of Hopf algebras 
\[\hlgy{\Omega M}\cong T(\rhlgy{\bigvee_{i=1}^{d} S^{n-1}})/(R)\] 
where $R=\mbox{Im}(\widetilde{f}_{\ast})$. Written explicitly, let $v_{i}\in\hlgy{\bigvee_{i=1}^{d} S^{n-1}}$ 
be a generator corresponding to the $i^{th}$ wedge summand of $\bigvee_{i=1}^{d} S^{n-1}$. 
The image $R$ of $\widetilde{f}_{\ast}$ is generated by a single 
element $r\in T(v_{1},\ldots,v_{m})$. Therefore there is an isomorphism of Hopf algebras 
\[\hlgy{\Omega M}\cong T(v_{1},\ldots,v_{m})/(r).\] 
A particular example of note is when $M$ is a simply-connected four-manifold.  
\end{example} 

The following example of a connected sum of products of two simply-connected spheres was calculated 
in~\cite{GIPS} using the Adams-Hilton model. 

\begin{example} 
Fix an integer $n\geq 4$. Let $M=\conn_{i=1}^{d} (S^{m_{i}}\times S^{n-m_{i}})$ where 
$m_{i}\geq 2$ for each $1\leq i\leq d$. Then there is a homotopy cofibration 
\[\nameddright{S^{n-1}}{f}{\bigvee_{k=1}^{d} S^{m_{i}}\vee S^{n-m_{i}}}{h}{M}\] 
where $f$ is the sum of the Whitehead products attaching the top sphere to each copy 
of \mbox{$S^{m_{i}}\times S^{n-m_{i}}$}. Iterating Theorem~\ref{connsum} shows that the map $\Omega h$ 
has a right homotopy inverse. Therefore Proposition~\ref{OmegaChlgy} and Remark~\ref{Hopfalgremark} 
imply that there is a Hopf algebra isomorphism 
\[\hlgy{\Omega M}\cong T(\rhlgy{\bigvee_{i=1}^{d} S^{m_{i}-1}\vee S^{n-m_{i}-1}})/(R)\] 
where $R=\mbox{Im}(\widetilde{f}_{\ast})$. Explicitly, let $u_{i}\in\hlgy{S^{m_{i}-1}}$ and 
$v_{i}\in\hlgy{S^{n-m_{i}-1}}$ be generators corresponding to the $i^{th}$ wedge summand 
in $\bigvee_{i=1}^{d} S^{m_{i}}\vee S^{n-m_{i}}$. The image $R$ of $\widetilde{f}_{\ast}$ is 
then generated by the single element $[u_{1},v_{1}]+\cdots+[u_{d},v_{d}]$. Therefore there 
is an isomorphism of Hopf algebras 
\[\hlgy{\Omega M}\cong T(u_{1},v_{1},\ldots,u_{d},v_{d})/([u_{1},v_{1}]+\cdots+[u_{d},v_{d}]).\] 
\end{example} 

\begin{example} 
Let $M$ be an $(n-1)$-connected $(2n+1)$-dimensional Poincar\'{e} Duality 
complex for~$n\geq 2$. By Poincar\'{e} Duality, 
\[H^{m}(M)\cong\left\{\begin{array}{ll} 
        \mathbb{Z} & \mbox{if $m=0$ or $m=2n+1$} \\ 
        \mathbb{Z}^{d} & \mbox{if $m=n$} \\ 
        \mathbb{Z}^{d}\oplus G & \mbox{if $m=n+1$} \\ 
        0 & \mbox{otherwise}\end{array}\right.\] 
for some integer $d\geq 0$ and some finite abelian group $G$. Assume that $d\geq 1$. 
Let $X$ be the $(n+1)$-skeleton of $M$. As in~\cite{BT2}, there is a homotopy 
equivalence $X\simeq(\bigvee_{i=1}^{d}(S^{n}\vee S^{n+1}))\vee\Sigma V$ where~$V$ 
is a wedge of $(n+1)$-dimensional Moore spaces. Therefore there is a homotopy cofibration 
\[\nameddright{S^{2n}}{f}{(\bigvee_{i=1}^{d} S^{n}\vee S^{n+1})\vee\Sigma V}{i}{M}\] 
By~\cite{BT2}, $\Omega i$ has a right homotopy inverse. Thus, by Proposition~\ref{OmegaChlgy} 
and Remark~\ref{Hopfalgremark} there is an isomorphism of Hopf algebras 
\[\hlgy{\Omega M}\cong T(\rhlgy{(\bigvee_{i=1}^{d} S^{n-1}\vee S^{n})\vee V})/(R)\] 
where $R=\mbox{Im}(\widetilde{f})_{\ast}$. As in the previous example, this may be rewritten 
as an isomorphism of Hopf algebras 
\[\hlgy{\Omega M}\cong T(\{u_{1},v_{1},\ldots,u_{d},v_{d}\}\oplus\rhlgy{V})/(r)\] 
where $\vert u_{i}\vert = n-1$, $\vert v_{i}\vert=n$ and $r$ generates the image of $\widetilde{f}_{\ast}$. 
\end{example} 

\begin{remark} 
Proposition~\ref{OmegaChlgy} does not apply in general to an $(n-1)$-connected $(2n+1)$-dimensional 
Poincar\'{e} Duality complex with $d=0$. That is, in the case when $X$ is homotopy 
equivalent to a wedge of Moore spaces. For example, if all the Moore spaces are of the form 
$P^{n+1}(p^{r})$ for a fixed odd prime $p$ and integer $r$, then there is a homotopy cofibration 
\[\nameddright{S^{2n}}{f}{\bigvee_{i=1}^{m} P^{n+1}(p^{r})}{i}{M}.\] 
We will show that $\Omega i$ does not have a right homotopy inverse, implying that one 
of the hypotheses of Proposition~\ref{OmegaChlgy} fails to hold. By Theorem~\ref{PDex} 
there is a homotopy fibration 
\[\nameddright{(\Omega P^{n+1}(p^{r})\ltimes\overline{C})\vee(\bigvee_{i=2}^{m} P^{n+1}(p^{r}))} 
     {h}{M}{q'}{P^{n+1}(p^{r})}\] 
that splits after looping and where $\overline{C}\simeq S^{2n+1}\vee W$ where $W$ is 
a wedge of mod-$p^{r}$ Moore spaces. In particular, $\Omega\overline{C}$ is 
rationally nontrivial (because of the factor $\Omega S^{2n+1}$). However, 
$\Omega(\bigvee_{i=1}^{m} P^{n+1}(p^{r}))$ is rationally trivial, so $\Omega i$ cannot 
have a right homotopy inverse. It would be interesting to calculate $\hlgy{\Omega M}$ 
in this case. 
\end{remark}

\newpage 

\section{A second foundational case} 
\label{sec:iteratedad} 

This section is in preparation for the next. To set things up, suppose 
that there is a space~$M$ with the property that there is a factorization of the inclusion 
\(\namedright{\bigvee_{i=1}^{m}\Sigma X_{i}}{}{\prod_{i=1}^{m}\Sigma X_{i}}\) 
as a composite 
\[\nameddright{\bigvee_{i=1}^{m}\Sigma X_{i}}{v}{M}{w}{\prod_{i=1}^{m}\Sigma X_{i}}\] 
for some maps $v$ and $w$. In addition, suppose that there is a homotopy cofibration 
\[\nameddright{\Sigma A}{f}{M}{}{M'}\] 
with the property that $w\circ f$ is null homotopic. Then $w$ extends to a map 
\[w'\colon\namedright{M'}{}{\prod_{i=1}^{m}\Sigma X_{i}}\] 
and there is a homotopy fibration diagram 
\begin{equation} 
  \label{MEdgrm} 
  \diagram 
         E\rto\dto^{p} & E'\dto^{p'} \\ 
         M\rto\dto^{w} & M'\dto^{w'} \\ 
         \prod_{i=1}^{m}\Sigma X_{i}\rdouble & \prod_{i=1}^{m}\Sigma X_{i}  
  \enddiagram 
\end{equation}  
that defines the spaces $E$ and $E'$ and the maps $p$ and $p'$. The inclusion $w\circ v$ 
of the wedge into the product has a right homotopy inverse after looping, implying that $\Omega w$ 
also has a right homotopy inverse 
\(s\colon\namedright{\prod_{i=1}^{m}\Omega\Sigma X_{i}}{}{\Omega M}\). 
Theorem~\ref{GTcofib} then implies that there is a homotopy cofibration 
\[\nameddright{\prod_{i=1}^{m}\Omega\Sigma X_{i}\ltimes\Sigma A}{\theta}{E}{}{E'}\] 
and a homotopy commutative diagram 
\[\diagram 
       \prod_{i=1}^{m}\Omega\Sigma X_{i}\ltimes\Sigma A\rto^-{\theta}\dto^{\simeq} 
             & E\dto^{p} \\ 
       ((\prod_{i=1}^{m}\Omega\Sigma X_{i})\wedge\Sigma A)\vee\Sigma A\rto^-{[\gamma,f]+f} 
             & M 
  \enddiagram\] 
where $\gamma$ is the composite 
\(\nameddright{\Sigma(\prod_{i=1}^{m}\Omega\Sigma X_{i})}{\Sigma s}{\Sigma\Omega M}{ev}{M}\). 
On the other hand, the suspension of a product splits as a wedge, and the James construction 
lets us further split each of the spaces $\Sigma\Omega\Sigma X_{i}$. In this section we show 
that those splittings can be chosen so that the maps from the wedge summands into $M$ can 
be identified as iterated Whitehead products. 

Recall from Lemma~\ref{SigmaJsplitting} that there is a natural homotopy equivalence 
\[\namedright{\bigvee_{k=1}^{\infty}\Sigma X^{\wedge k}}{\phi}{\Sigma\Omega\Sigma X}\] 
defined as follows. For $k\geq 1$, let $e_{k}$ be the composite 
\[e_{k}\colon\nameddright{X^{\times k}}{E^{\times k}}{(\Omega\Sigma X)^{\times k}} 
     {\mu}{\Omega\Sigma X}\] 
where $\mu$ is the standard loop multiplication. There is a natural homotopy equivalence 
$\Sigma(A\times B)\simeq\Sigma A\vee\Sigma B\vee(\Sigma A\wedge B)$. Iterating 
this we obtain a natural map 
\(\namedright{\Sigma X_{1}\wedge\cdots\wedge X_{k}}{}{\Sigma(X_{1}\times\cdots\times X_{k})}\). 
Let $\phi_{k}$ be the composite 
\[\phi_{k}\colon\nameddright{\Sigma X^{\wedge k}}{}{\Sigma(X^{\times k})}{\Sigma e_{k}} 
     {\Sigma\Omega\Sigma X}.\] 
Let 
\[\phi\colon\namedright{\bigvee_{k=1}^{\infty}\Sigma X^{\wedge k}}{}{\Sigma\Omega\Sigma X}\] 
be the wedge sum of the maps $\phi_{k}$ for $k\geq 1$. As shorthand, this is called the 
$\phi$-decomposition of $\Sigma\Omega\Sigma X$. 

Let $X_{1},\ldots, X_{m}$ be path-connected spaces. For $1\leq j\leq m$, let 
\[t_{j}\colon\namedright{X_{j}}{}{\bigvee_{i=1}^{m} X_{i}}\] 
be the inclusion of the $j^{th}$-wedge summand. 
Applying the James construction gives a map 
\[\llnamedright{\Omega\Sigma X_{j}}{\Omega\Sigma t_{j}}{\Omega\Sigma(\bigvee_{i=1}^{m} X_{i})}.\] 
Multiplying the maps $\Omega\Sigma t_{j}$ together for $1\leq j\leq m$ gives a map 
\begin{equation} 
  \label{Psidef} 
    \Psi\colon\namedright{\prod_{i=1}^{m} \Omega\Sigma X_{i}}{}{\Omega\Sigma(\bigvee_{i=1}^{m} X_{i})}. 
\end{equation} 

As $\Psi$ is not $\Omega\Sigma\psi$ for some map $\psi$, it need not necessarily be the case 
that the decomposition of $\Sigma(\prod_{i=1}^{m}\Omega\Sigma X_{i})$ obtained by combining 
the natural decomposition of the suspension of a product with the $\phi$-decomposition of each 
$\Sigma\Omega\Sigma X_{i}$ is compatible with the $\phi$-decomposition of 
$\Sigma\Omega\Sigma(\bigvee_{i=1}^{m} X_{i})$. However, in Proposition~\ref{Jamescompat} 
we will show that a decomposition of $\Sigma(\prod_{i=1}^{m}\Omega\Sigma X_{i})$ 
may be chosen to be compatible with the $\phi$-decomposition of 
$\Sigma\Omega\Sigma(\bigvee_{i=1}^{m} X_{i})$. 

\begin{proposition} 
   \label{Jamescompat} 
   There is a homotopy equivalence 
   \[\namedright{\displaystyle\bigvee_{k=1}^{\infty}\ \bigvee_{1\leq i_{1}\leq\cdots\leq i_{k}\leq m} 
            \Sigma X_{i_{1}}\wedge\cdots\wedge X_{i_{k}}}{\varepsilon} 
            {\Sigma(\prod_{i=1}^{m}\Omega\Sigma X_{i})}\] 
    satisfying a homotopy commutative diagram 
    \[\diagram 
           \displaystyle\bigvee_{k=1}^{\infty}\ \bigvee_{1\leq i_{1}\leq\cdots\leq i_{k}\leq m} 
                     \Sigma X_{i_{1}}\wedge\cdots\wedge X_{i_{k}}\rto^-{I}\dto^{\varepsilon} 
                 & \bigvee_{k=1}^{\infty}\Sigma(\bigvee_{i=1}^{m} X_{i})^{\wedge k}\dto^{\phi} \\ 
            \Sigma(\prod_{i=1}^{m}\Omega\Sigma X_{i})\rto^-{\Sigma\Psi} 
                 & \Sigma\Omega\Sigma(\bigvee_{i=1}^{m} X_{i}) 
      \enddiagram\] 
    where $I$ is an inclusion of wedge summands. 
\end{proposition}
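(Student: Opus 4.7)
The plan is to build $\varepsilon$ summand by summand using the James--Milnor splittings already available, and then to verify the diagram by direct computation on each summand. For each ordered tuple $1 \leq i_1 \leq \cdots \leq i_k \leq m$ with associated multiplicities $(k_1,\ldots,k_m)$ (where $i$ appears $k_i$ times), identify $X_{i_1}\wedge\cdots\wedge X_{i_k}$ with $\bigwedge_{i:k_i>0}X_i^{\wedge k_i}$. Define $\varepsilon$ on this summand as the composite
\[
\Sigma\bigwedge_{i:k_i>0}X_i^{\wedge k_i}\stackrel{\Sigma\sigma}{\longrightarrow}\Sigma\prod_{i}X_i^{\times k_i}\stackrel{\Sigma\prod_i e_{k_i}}{\longrightarrow}\Sigma\prod_i\Omega\Sigma X_i,
\]
where $\sigma$ is the standard inclusion of the corresponding smash wedge summand in the iterated splitting $\Sigma\prod_i A_i\simeq\bigvee_{\varnothing\neq S}\Sigma\bigwedge_{i\in S}A_i$ (applied to $A_i=X_i^{\times k_i}$), and $e_{k_i}$ is the map from Section~\ref{sec:ad}. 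Aggregating over all ordered tuples produces a single map $\varepsilon$.

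The map $\varepsilon$ is a homotopy equivalence: iterating the splitting of a suspended product and applying the $\phi$-decomposition from Lemma~\ref{SigmaJsplitting} to each $\Sigma\Omega\Sigma X_i$ yields a wedge decomposition of $\Sigma\prod_i\Omega\Sigma X_i$ whose summands, indexed by multi-indices $(k_1,\ldots,k_m)$, match the domain of $\varepsilon$; by construction $\varepsilon$ realizes this decomposition on the nose, and a short Bott--Samelson calculation shows $\varepsilon_*$ is an isomorphism. Since both sides are simply-connected of finite type, Whitehead's theorem then gives the equivalence.

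To verify homotopy commutativity it suffices to restrict to each summand $\Sigma X_{i_1}\wedge\cdots\wedge X_{i_k}$ and trace both routes for a representative point $[t;z_1,\ldots,z_k]$ with $z_j\in X_{i_j}$. The top-row composite $\phi\circ I$ maps the point to the summand $\Sigma X_{i_1}\wedge\cdots\wedge X_{i_k}\hookrightarrow\Sigma Y^{\wedge k}$ as $[t;t_{i_1}(z_1),\ldots,t_{i_k}(z_k)]$, and then $\phi_k=\Sigma(\mu\circ E^{\times k})\circ\Sigma(\text{smash-to-product})$ delivers $[t;E(t_{i_1}(z_1))\cdots E(t_{i_k}(z_k))]\in\Sigma\Omega\Sigma Y$. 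On the other route, $\varepsilon$ sends the point to a class in $\Sigma\prod_i\Omega\Sigma X_i$ whose $i$th coordinate is $E(z_1^{(i)})\cdots E(z_{k_i}^{(i)})$ (writing $z_j^{(i)}$ for the $z_l$ with $i_l=i$, kept in order), and then $\Sigma\Psi$ multiplies the images under $\Omega\Sigma t_i$ in the fixed order of $i$. Because $\Omega\Sigma t_i$ is a loop (hence algebra) map and $\Omega\Sigma t_i\circ E\simeq E\circ t_i$ by naturality of $E$, the resulting class is $[t;\prod_i\prod_j E(t_i(z_j^{(i)}))]$. The hypothesis $i_1\leq\cdots\leq i_k$ is precisely what groups identical indices consecutively, so this matches the top-row output.

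The main obstacle is that the iterated splitting of $\Sigma\prod_i A_i$ depends on a choice of section $\sigma$, and one must confirm that the choice made when defining $\varepsilon$ is compatible with the multiplicative structure imposed by $\Psi$ (and with the James decomposition of $\Sigma\Omega\Sigma Y$ used by $\phi$). If the direct geometric argument above feels delicate, a robust fallback is to verify the diagram in homology via the Bott--Samelson identifications $H_*(\Omega\Sigma X_i)\cong T(\tilde H_* X_i)$ and $H_*(\Omega\Sigma Y)\cong T(\tilde H_* Y)$, under which $\Psi_*$ is the ordered multiplication picking out exactly the submodule indexed by $1\leq i_1\leq\cdots\leq i_k\leq m$, and then upgrade to homotopy on each wedge summand using Whitehead's theorem, since each restriction is a map between simply-connected spaces of finite type.
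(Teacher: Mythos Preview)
Your construction of $\varepsilon$ and the verification that it is a homotopy equivalence via Bott--Samelson are essentially the paper's argument, with the added virtue that you make explicit the multiplication $e_{k_i}$ needed when indices repeat (the paper glosses over the passage from $\Sigma(\Omega\Sigma X_{i_1}\times\cdots\times\Omega\Sigma X_{i_k})$ to $\Sigma(\prod_{i=1}^{m}\Omega\Sigma X_i)$). For commutativity, the paper instead assembles naturality squares: starting from $t_{i_1}\times\cdots\times t_{i_k}$ and $E\times\cdots\times E$, suspending, and using naturality of the smash-to-product section. This avoids the pointwise trace entirely and sidesteps any worry about homotopy associativity of loop multiplication, which your on-the-nose formula tacitly uses (it can be repaired by working with Moore loops or the James model, but the diagrammatic route is cleaner).

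Your fallback, however, is genuinely wrong: agreement of two maps in homology does \emph{not} imply they are homotopic, and Whitehead's theorem says nothing of the sort---it only upgrades a single map inducing a homology isomorphism to a homotopy equivalence. Drop that paragraph; your primary argument (once the associativity point is handled) already suffices.
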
 

\begin{proof} 
First consider the diagram 
\[\diagram 
      X_{i_{1}}\times\cdots\times X_{i_{k}}\rrto^-{t_{i_{1}}\times\cdots\times t_{i_{k}}}\dto^{E\times\cdots\times E} 
            & & (\bigvee_{i=1}^{m} X_{i})^{\times k}\dto^{E^{\times k}}\drto^{e_{k}} & \\ 
      \Omega\Sigma X_{i_{1}}\times\cdots\times\Omega\Sigma X_{i_{k}} 
                \rrto^-{\Omega\Sigma t_{i_{1}}\times\cdots\times\Omega\Sigma t_{i_{k}}} 
            & & \Omega\Sigma(\bigvee_{i=1}^{m} X_{i})^{\times k}\rto^-{m} & \Omega\Sigma(\bigvee_{i=1}^{m} X_{i}). 
  \enddiagram\] 
The left square clearly commutes and the right square commutes by definition of $e_{k}$. Now 
suspend and use the naturality of the map 
\(\namedright{\Sigma A\wedge B}{}{\Sigma(A\times B)}\) 
to obtain a homotopy commutative diagram 
\[\spreaddiagramcolumns{0.5pc}\diagram 
      \Sigma X_{i_{1}}\wedge\cdots\wedge X_{i_{k}}\rrto^-{\Sigma t_{i_{1}}\wedge\cdots\wedge t_{i_{k}}}\dto
            & & \Sigma (\bigvee_{i=1}^{m} X_{i})^{\wedge k}\dto\drto^{\phi_{k}} & \\ 
      \Sigma (\Omega\Sigma X_{i_{1}}\times\cdots\times\Omega\Sigma X_{i_{k}}) 
                \rrto^-{\Sigma(\Omega\Sigma t_{i_{1}}\times\cdots\times\Omega\Sigma t_{i_{k}})} 
            & & \Sigma\Omega\Sigma(\bigvee_{i=1}^{m} X_{i})^{\times k}\rto^-{\Sigma m} 
            & \Sigma \Omega\Sigma(\bigvee_{i=1}^{m} X_{i}). 
  \enddiagram\] 
Observe that the map $\Sigma t_{i_{1}}\wedge\cdots\wedge t_{i_{k}}$ is the inclusion of a wedge 
summand. Doing this for each $1\leq i_{1}\leq\cdots\leq i_{k}\leq m$ then gives a homotopy 
commutative diagram 
\[\diagram 
      \displaystyle\bigvee_{1\leq i_{1}\leq\cdots\leq i_{k}\leq m} \Sigma X_{i_{1}}\wedge\cdots\wedge X_{i_{k}} 
                \rrto^-{I_{k}}\dto^{\varepsilon_{k}}
            & & \Sigma (\bigvee_{i=1}^{m} X_{i})^{\wedge k}\dto\drto^{\phi_{k}} & \\ 
      \Sigma (\Omega\Sigma X_{1}\times\cdots\times\Omega\Sigma X_{m}) 
                \rrto^-{\Sigma(\Omega\Sigma t_{1}\times\cdots\times\Omega\Sigma t_{m})} 
            & & \Sigma\Omega\Sigma(\bigvee_{i=1}^{m} X_{i})^{\times k}\rto^-{\Sigma m} 
            & \Sigma \Omega\Sigma(\bigvee_{i=1}^{m} X_{i}). 
  \enddiagram\] 
where $I_{k}$ is an inclusion of wedge summands. Finally, assembling these diagrams for 
each $k\geq 1$ gives a homotopy commutative diagram 
\[\diagram 
      \displaystyle\bigvee_{k=1}^{\infty}\ \bigvee_{1\leq i_{1}\leq\cdots\leq i_{k}\leq m} 
               \Sigma X_{i_{1}}\wedge\cdots\wedge X_{i_{k}}\rrto^-{I}\dto^{\varepsilon}
            & & \bigvee_{k=1}^{\infty}\Sigma (\bigvee_{i=1}^{m} X_{i})^{\wedge k}\dto\drto^{\phi} & \\ 
      \Sigma (\Omega\Sigma X_{1}\times\cdots\times\Omega\Sigma X_{m}) 
                \rrto^-{\Sigma(\Omega\Sigma t_{1}\times\cdots\times\Omega\Sigma t_{m})} 
            & & \Sigma\Omega\Sigma(\bigvee_{i=1}^{m} X_{i})^{\times k}\rto^-{\Sigma m} 
            & \Sigma \Omega\Sigma(\bigvee_{i=1}^{m} X_{i}).  
  \enddiagram\] 
where $I$ is an inclusion of wedge summands. Observe that the bottom row is $\Sigma\Psi$. 

It remains to show that $\varepsilon$ is a homotopy equivalence. Take homology with 
field coefficiets. For $1\leq i\leq m$, let $V_{i}=\rhlgy{X_{i}}$. By the Bott-Samelson and 
Kunneth Theorems, there is an algebra isomorphism 
\[\hlgy{\Omega\Sigma X_{1}\times\cdots\times\Omega\Sigma X_{m}}\cong 
      T(V_{1})\otimes\cdots\otimes T(V_{m}).\] 
The submodule consisting of elements of tensor length $k$ is 
$\bigvee_{1\leq i_{1}\leq\cdots\leq i_{k}\leq m} V_{i_{1}}\otimes\cdots\otimes V_{i_{k}}$. 
Thus the previous isomorphism implies there is a module isomorphism  
\[\rhlgy{\Omega\Sigma X_{1}\times\cdots\times\Omega\Sigma X_{m}}\cong\bigvee_{k=1}^{\infty}\  
     \bigvee_{1\leq i_{1}\leq\cdots\leq i_{k}\leq m} V_{i_{1}}\otimes\cdots\otimes V_{i_{k}}.\] 
For a fixed sequence $(i_{1},\ldots,i_{k})$ with $1\leq i_{1}\leq\cdots\leq i_{k}\leq m$, 
the composite 
\[\lllnameddright{\Sigma X_{i_{1}}\wedge\cdots\wedge X_{i_{k}}}{} 
     {\Sigma(X_{i_{1}}\times\cdots\times X_{i_{k}})}{\Sigma(E\times\cdots\times E)} 
     {\Sigma\Omega\Sigma X_{i_{1}}\times\cdots\times\Omega\Sigma X_{i_{k}}}\]  
induces the inclusion of the submodule $\Sigma V_{i_{1}}\otimes\cdots\otimes V_{i_{k}}$ 
Therefore $\varepsilon_{k}$ induces the inclusion of the submodule 
$\bigvee_{1\leq i_{1}\leq\cdots\leq i_{k}\leq m} \Sigma V_{i_{1}}\otimes\cdots\otimes V_{i_{k}}$ 
into $\rhlgy{\Sigma(\Omega\Sigma X_{1}\times\cdots\times\Omega\Sigma X_{m}}$, 
implying that $\varepsilon$ induces an isomorphism in homology. As this is true for 
mod-$p$ homology for all primes $p$ and rational homology, $\varepsilon$ incudes 
an isomorphism in integral homology. Hence it is a homotopy equivalence by 
Whitehead's Theorem. 
\end{proof} 

Next is a variation on Proposition~\ref{Jamescompat} that involves half-smashes and 
a generalization of the map~$c$ from Section~\ref{sec:ad}. The maps $b_{k}$ in 
Section~\ref{sec:ad} may be defined more generally as follows. Let 
\[\overline{b}_{1}\colon\namedright{X_{1}\wedge\Sigma Y}{}{X_{1}\ltimes\Sigma Y}\] 
be the inclusion $i$. For $k\geq 2$, define 
\[\overline{b}_{k}\colon\namedright{X_{1}\wedge\cdots\wedge X_{k}\wedge\Sigma Y}{} 
      {(X_{1}\times\cdots\times X_{k})\ltimes\Sigma Y}\] 
recursively by the composite 
\[\llnamedright{X_{1}\wedge X_{2}\wedge\cdots\wedge X_{k}\wedge\Sigma Y} 
      {i}{X_{1}\ltimes(X_{2}\wedge\cdots X_{k}\wedge\Sigma Y)}\] 
\[\hspace{4cm}\stackrel{1\ltimes\overline{b}_{k-1}}{\llarrow} 
     \llnamedright{X_{1}\ltimes((X_{2}\times\cdots\times X_{k})\ltimes\Sigma Y)} 
        {\varphi}{(X_{1}\times X_{2}\times\cdots\times X_{k})\ltimes\Sigma Y}.\] 
Note that the naturality of $i$ and $\varphi$ in all variables implies that $\overline{b}_{k}$ is 
also natural in all variables. Note also that the map $b_{k}$ in Section~\ref{sec:ad} 
is given by taking each $X_{i}$ for $1\leq i\leq k$ equal to a common space $X$. 
Applying the naturality of $\overline{b}_{k}$ to the inclusions 
\(\namedright{X_{j}}{t_{j}}{\bigvee_{i=1}^{m} X_{i}}\) 
then immediately gives the following. 

\begin{lemma} 
   \label{2bdgrm} 
   Let $X_{1},\ldots,X_{m}$ and $Y$ be path-connected spaces. For any 
   $1\leq i_{1}\leq\cdots\leq i_{k}\leq m$ there is a homotopy commutative diagram 
   \[\diagram 
         X_{i_{1}}\wedge\ldots\wedge X_{i_{k}}\wedge\Sigma Y 
               \rrto^-{t_{i_{1}}\wedge\cdots\wedge t_{i_{k}}\wedge 1}\dto^{\overline{b}_{k}} 
             & & (\bigvee_{i=1}^{m} X_{i})^{\wedge k}\wedge\Sigma Y\dto^{b_{k}} \\ 
         (X_{i_{1}}\times\cdots\times X_{i_{k}})\ltimes\Sigma Y 
                \rrto^-{(t_{i_{1}}\times\cdots\times t_{i_{k}})\ltimes 1} 
              & & (\bigvee_{i=1}^{m} X_{i})^{\times k}\ltimes\Sigma Y. 
       \enddiagram\] 
\end{lemma}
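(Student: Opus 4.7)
The plan is to prove the lemma by establishing the naturality of the maps $\overline{b}_k$ in all their variables and then specializing to the inclusions $t_{i_j}$. The map $b_k$ on the right-hand column should be viewed as the specialization of $\overline{b}_k$ in which every factor $X_i$ is taken to be the common space $\bigvee_{i=1}^{m} X_i$; once naturality is in hand, the diagram is immediate.

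First I would verify by induction on $k$ that $\overline{b}_k$ is natural in each of $X_1, \ldots, X_k$ and in $Y$. The base case $\overline{b}_1 = i$ is the inclusion of the smash into the half-smash, which is natural in both variables. For the inductive step, one observes that $\overline{b}_k$ is defined as a three-fold composite whose three factors are each natural: the first factor $i$ is natural, the second factor $1 \ltimes \overline{b}_{k-1}$ is natural by the inductive hypothesis (together with the functoriality of $\ltimes$), and the third factor $\varphi$ is natural by Lemma~\ref{halfsmashquotient}. Composing natural transformations gives the naturality of $\overline{b}_k$.

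Next I would apply this naturality to the inclusions $t_{i_j}\colon X_{i_j} \to \bigvee_{i=1}^{m} X_i$ for $j=1,\ldots,k$, together with the identity map on $\Sigma Y$. This produces a homotopy commutative square whose vertical maps are $\overline{b}_k$ applied to the tuples $(X_{i_1},\ldots,X_{i_k},Y)$ and $(\bigvee_i X_i,\ldots,\bigvee_i X_i,Y)$, and whose horizontal maps are $t_{i_1}\wedge\cdots\wedge t_{i_k}\wedge 1$ and $(t_{i_1}\times\cdots\times t_{i_k})\ltimes 1$ respectively. The right-hand vertical map is then precisely $b_k$ by the observation that $b_k$ coincides with $\overline{b}_k$ when all $X_j$ are the common space $\bigvee_i X_i$.

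There is no real obstacle here; the content of the lemma is bookkeeping of naturality, and the only place where care is required is checking that the recursive definition of $\overline{b}_k$ is built from natural transformations in all of its variables simultaneously (rather than only one at a time), so that the composite square assembles correctly. Once that is confirmed, the diagram in the statement is the desired instance of the naturality square.
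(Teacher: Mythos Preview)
Your proposal is correct and follows exactly the paper's approach: the paper notes just before the lemma that the naturality of $i$ and $\varphi$ in all variables implies $\overline{b}_k$ is natural in all variables, that $b_k$ is the special case of $\overline{b}_k$ with each factor equal to $\bigvee_{i=1}^m X_i$, and that applying naturality to the inclusions $t_j$ immediately gives the diagram. Your inductive verification of naturality is a slightly more explicit version of the same argument.
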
 
\vspace{-0.9cm}~$\qqed$\bigskip 

In what follows, the $k=0$ case of a smash product 
$X_{i_{1}}\wedge\cdots\wedge X_{i_{k}}\wedge\Sigma A$ refers to $\Sigma A$. 
By Lemma~\ref{cequiv}, there is a homotopy equivalence 
\[\namedright{\bigvee_{k=0}^{\infty}(\textstyle\bigvee_{i=1}^{m} X_{i})^{\wedge k}\wedge\Sigma A} 
      {c}{\Omega\Sigma(\bigvee_{i=1}^{m} X_{i})\ltimes\Sigma A}.\] 

\begin{lemma} 
   \label{cPsi} 
   There is a homotopy commutative diagram 
   \[\diagram 
           \displaystyle\bigvee_{k=0}^{\infty}\ \bigvee_{1\leq i_{1}\leq\cdots\leq i_{k}\leq m} 
                    (X_{i_{1}}\wedge\cdots\wedge X_{i_{k}})\wedge\Sigma A\rto^-{I}\dto^{\varepsilon'} 
              & \displaystyle\bigvee_{k=0}^{\infty}(\textstyle\bigvee_{i=1}^{m} X_{i})^{\wedge k}\wedge\Sigma A\dto^-{c} \\ 
           (\prod_{i=1}^{m}\Omega\Sigma X_{i})\ltimes\Sigma A\rto^-{\Psi\ltimes 1}     
              & \Omega\Sigma(\bigvee_{i=1}^{m} X_{i})\ltimes\Sigma A 
     \enddiagram\] 
   where $\varepsilon'$ is a homotopy equivalence and $I$ is an inclusion of wedge summands. 
\end{lemma}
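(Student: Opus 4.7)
The plan is to construct $\varepsilon'$ summand-by-summand in a way that mirrors the construction of $c$ in Section~\ref{sec:ad} but distributes tensor factors using the splitting of $\Sigma(\prod_{i=1}^{m}\Omega\Sigma X_{i})$ from Proposition~\ref{Jamescompat}. On the $k=0$ summand, take $\varepsilon'|_{\Sigma A}$ to be the inclusion
\(j\colon\namedright{\Sigma A}{}{(\prod_{i=1}^{m}\Omega\Sigma X_{i})\ltimes\Sigma A}\).
For $k\geq 1$ and each multi-index $1\leq i_{1}\leq\cdots\leq i_{k}\leq m$, let $\overline{e}_{(i_{1},\ldots,i_{k})}$ denote the composite
\(\lnamedddright{X_{i_{1}}\times\cdots\times X_{i_{k}}}{E^{\times k}}
     {\Omega\Sigma X_{i_{1}}\times\cdots\times\Omega\Sigma X_{i_{k}}}{\tau}{\prod_{i=1}^{m}\Omega\Sigma X_{i}}\),
where $\tau$ collects entries landing in the same factor via the loop multiplication. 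Then define
\[\varepsilon'_{(i_{1},\ldots,i_{k})}\colon\llnameddright{X_{i_{1}}\wedge\cdots\wedge X_{i_{k}}\wedge\Sigma A}
     {\overline{b}_{k}}{(X_{i_{1}}\times\cdots\times X_{i_{k}})\ltimes\Sigma A}
     {\overline{e}_{(i_{1},\ldots,i_{k})}\ltimes 1}{(\prod_{i=1}^{m}\Omega\Sigma X_{i})\ltimes\Sigma A}\]
and let $\varepsilon'$ be the wedge sum of the $\varepsilon'_{(i_{1},\ldots,i_{k})}$.

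To check that the square homotopy commutes, restrict to each wedge summand. The $k=0$ case is the naturality of $j$. For $k\geq 1$, the inclusion $I$ sends $X_{i_{1}}\wedge\cdots\wedge X_{i_{k}}\wedge\Sigma A$ to $(\bigvee_{i=1}^{m} X_{i})^{\wedge k}\wedge\Sigma A$ by $(t_{i_{1}}\wedge\cdots\wedge t_{i_{k}})\wedge 1$, and by definition of $c$ (via Lemma~\ref{cequiv}) and Lemma~\ref{2bdgrm} the upper direction around the diagram is congruent to $((e_{k}\circ(t_{i_{1}}\times\cdots\times t_{i_{k}}))\ltimes 1)\circ\overline{b}_{k}$. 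The lower direction is $((\Psi\circ\overline{e}_{(i_{1},\ldots,i_{k})})\ltimes 1)\circ\overline{b}_{k}$, so it suffices to show
\[\Psi\circ\overline{e}_{(i_{1},\ldots,i_{k})}\simeq e_{k}\circ(t_{i_{1}}\times\cdots\times t_{i_{k}}).\]
This follows from the naturality identity $\Omega\Sigma t_{j}\circ E\simeq E\circ t_{j}$ together with the definition of $\Psi$ as the multiplicative extension of the maps $\Omega\Sigma t_{j}$, which is precisely what drives Proposition~\ref{Jamescompat}.

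To verify $\varepsilon'$ is a homotopy equivalence, take homology with field coefficients and set $V_{i}=\rhlgy{X_{i}}$. By the Bott-Samelson and K\"unneth Theorems, there is an algebra isomorphism $\hlgy{\prod_{i=1}^{m}\Omega\Sigma X_{i}}\cong\bigotimes_{i=1}^{m} T(V_{i})$, and as a module this admits a basis indexed by ordered tensors $v_{i_{1}}\otimes\cdots\otimes v_{i_{k}}$ with $i_{1}\leq\cdots\leq i_{k}$. The standard module isomorphism $\rhlgy{B\ltimes\Sigma A}\cong\rhlgy{\Sigma A}\oplus(\rhlgy{B}\otimes\rhlgy{\Sigma A})$ identifies the target's reduced homology with the direct sum (over all $k\geq 0$ and ordered multi-indices) of $V_{i_{1}}\otimes\cdots\otimes V_{i_{k}}\otimes\rhlgy{\Sigma A}$. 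Arguing as in Lemma~\ref{cequiv}, $\overline{b}_{k}$ is a left homotopy inverse (up to the obvious quotient) of the appropriate smash projection, and hence a homology injection whose image is exactly $V_{i_{1}}\otimes\cdots\otimes V_{i_{k}}\otimes\rhlgy{\Sigma A}$; composing with $\overline{e}_{(i_{1},\ldots,i_{k})}\ltimes 1$ maps this submodule isomorphically onto the corresponding summand. Thus $\varepsilon'$ is a homology isomorphism for every field coefficient, hence an integral homology isomorphism, and so a homotopy equivalence by Whitehead's Theorem.

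The main obstacle is the compatibility identity $\Psi\circ\overline{e}_{(i_{1},\ldots,i_{k})}\simeq e_{k}\circ(t_{i_{1}}\times\cdots\times t_{i_{k}})$ when the multi-index has repetitions: one must be careful that the collection of factors performed by $\tau$ in $\prod_{i=1}^{m}\Omega\Sigma X_{i}$ before applying $\Psi$ agrees, after using $\Omega\Sigma t_{j}\circ E\simeq E\circ t_{j}$, with the single multiplication in $\Omega\Sigma(\bigvee_{i=1}^{m} X_{i})$ implicit in $e_{k}$. However, since the sequence is already ordered $i_{1}\leq\cdots\leq i_{k}$, no permutation of factors is required, so this reduces to the associativity of loop multiplication already used in the proof of Proposition~\ref{Jamescompat}.
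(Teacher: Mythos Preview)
Your proof is correct and follows essentially the same approach as the paper: construct $\varepsilon'$ summand-by-summand via $\overline{b}_{k}$ followed by suspension-and-include into $\prod_{i}\Omega\Sigma X_{i}$, verify commutativity by reducing to the identity $\Psi\circ\overline{e}_{(i_{1},\ldots,i_{k})}\simeq e_{k}\circ(t_{i_{1}}\times\cdots\times t_{i_{k}})$ (exactly as in Proposition~\ref{Jamescompat}), and check the equivalence by a Bott--Samelson homology count. One small point: in your commutativity argument you write that the upper direction is ``congruent to'' $((e_{k}\circ(t_{i_{1}}\times\cdots\times t_{i_{k}}))\ltimes 1)\circ\overline{b}_{k}$, but in fact Lemma~\ref{2bdgrm} together with the definition $c_{k}=(e_{k}\ltimes 1)\circ b_{k}$ gives an actual homotopy (indeed equality), not merely a congruence; since the lemma asserts a homotopy commutative diagram, you should say ``homotopic'' here---otherwise the conclusion would only be a congruent square, which is strictly weaker.
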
 

\begin{proof} 
The proof is similar to that for Proposition~\ref{Jamescompat}. It begins with the same 
first step, just half-smashed with $\Sigma A$. Consider the diagram 
\[\spreaddiagramcolumns{0.5pc}\diagram 
      (X_{i_{1}}\times\cdots\times X_{i_{k}})\ltimes\Sigma A 
                \rrto^-{(t_{i_{1}}\times\cdots\times t_{i_{k}})\ltimes 1}\dto^{(E\times\cdots\times E)\ltimes 1} 
            & & (\bigvee_{i=1}^{m} X_{i})^{\times k}\ltimes\Sigma A\dto^{E^{\times k}\ltimes 1}\drto^{e_{k}\ltimes 1} & \\ 
      (\Omega\Sigma X_{i_{1}}\times\cdots\times\Omega\Sigma X_{i_{k}})\ltimes\Sigma A 
                    \rrto^-{(\Omega\Sigma t_{i_{1}}\times\cdots\times\Omega\Sigma t_{i_{k}})\ltimes 1} 
            & & \Omega\Sigma(\bigvee_{i=1}^{m} X_{i})^{\times k}\ltimes\Sigma A\rto^-{m\ltimes 1} 
            & \Omega\Sigma(\bigvee_{i=1}^{m} X_{i})\ltimes\Sigma A. 
  \enddiagram\] 
The left square clearly commutes and the right square commutes by definition of $e_{k}$. 

Next, juxtapose the diagram above with that in Lemma~\ref{2bdgrm} (with $Y=A$) to obtain a homotopy 
commutative diagram 
\[\spreaddiagramcolumns{0.5pc}\diagram 
      (X_{i_{1}}\wedge\cdots\wedge X_{i_{k}})\wedge\Sigma A 
                \rrto^-{(t_{i_{1}}\wedge\cdots\wedge t_{i_{k}})\wedge 1}\dto 
            & & (\bigvee_{i=1}^{m} X_{i})^{\wedge k}\wedge\Sigma A\drto^{c_{k}}\dto & \\ 
      (\Omega\Sigma X_{i_{1}}\times\cdots\times\Omega\Sigma X_{i_{k}})\ltimes\Sigma A 
                    \rrto^-{(\Omega\Sigma t_{i_{1}}\times\cdots\times\Omega\Sigma t_{i_{k}})\ltimes 1} 
            & & \Omega\Sigma(\bigvee_{i=1}^{m} X_{i})^{\times k}\ltimes\Sigma A\rto^-{m\ltimes 1} 
            & \Omega\Sigma(\bigvee_{i=1}^{m} X_{i})\ltimes\Sigma A.  
  \enddiagram\] 
Observe that $t_{i_{1}}\wedge\cdots\wedge t_{i_{k}}\wedge 1$ is the inclusion of a wedge summand. 
As in Proposition~\ref{Jamescompat}, a similar diagram exists for each 
$1\leq i_{1}\leq\cdots\leq i_{k}\leq m$, and then all such diagrams for $k\geq 1$ may be assembled 
to give a homotopy commutative diagram 
\small\[\spreaddiagramcolumns{-0.5pc}\diagram 
      \displaystyle\bigvee_{k=1}^{\infty}\ \bigvee_{1\leq i_{1}\leq\cdots\leq i_{k}\leq m} 
               \Sigma X_{i_{1}}\wedge\cdots\wedge X_{i_{k}}\wedge\Sigma A\rrto^-{I}\dto^{\varepsilon'}
            & & \bigvee_{k=1}^{\infty}\Sigma (\bigvee_{i=1}^{m} X_{i})^{\wedge k}\wedge\Sigma A\dto\drto^{c} & \\ 
      (\Omega\Sigma X_{1}\times\cdots\times\Omega\Sigma X_{m})\ltimes\Sigma A 
                 \rrto^-{(\Omega\Sigma t_{1}\times\cdots\times\Omega\Sigma t_{m})\ltimes 1} 
            & & \Omega\Sigma(\bigvee_{i=1}^{m} X_{i})^{\times k}\ltimes\Sigma A\rto^-{m\ltimes 1} 
            & \Omega\Sigma(\bigvee_{i=1}^{m} X_{i})\ltimes\Sigma A  
  \enddiagram\]\normalsize
where $I$ is an inclusion of wedge summands. Observe that the bottom row is $\Psi\ltimes 1$ 
so the homotopy commutativity of the diagram implies that $c\circ I\simeq(\Psi\ltimes 1)\circ\varepsilon'$.  
An argument as in Proposition~\ref{Jamescompat} shows that $\varepsilon'$ is a homotopy 
equivalence. 
\end{proof} 
 
Recall from the setup at the beginning of the section that there is a composite 
\(\nameddright{\bigvee_{i=1}^{m}\Sigma X_{i}}{v}{M}{w}{\prod_{i=1}^{m}\Sigma X_{i}}\) 
that is homotopic to the inclusion of the wedge into the product. For $1\leq k\leq m$, let $v_{k}$ 
be the composite 
\[v_{k}\colon\nameddright{\Sigma X_{k}}{\Sigma t_{k}}{\bigvee_{i=1}^{m}\Sigma X_{i}}{v}{M}.\] 
Recall as well that two maps $f$ and $g$ are congruent if $\Sigma f\simeq\Sigma g$, implying 
that $f_{\ast}=g_{\ast}$. 

\begin{theorem} 
   \label{prodextend} 
   There is a homotopy cofibration 
   \[\nameddright{\displaystyle\bigvee_{k=0}^{\infty}\ \bigvee_{1\leq i_{1}\leq\cdots\leq i_{k}\leq m} 
              (X_{i_{1}}\wedge\cdots\wedge X_{i_{k}})\wedge\Sigma A}{\zeta}{E}{}{E'}\] 
    where the map $\zeta$ is congruent to a map $\zeta'$ satisfying a homotopy commutative diagram 
    \[\diagram 
            \displaystyle\bigvee_{k=0}^{\infty}\ \bigvee_{1\leq i_{1}\leq\cdots\leq i_{k}\leq m} 
              (X_{i_{1}}\wedge\cdots\wedge X_{i_{k}})\wedge\Sigma A\rto^-{\zeta'} 
                    \drto_-(0.6){\bigvee_{k=0}^{\infty}\ \bigvee_{1\leq i_{1}\leq\cdots\leq i_{k}\leq m} 
                    [v_{i_{1}},[v_{i_{2}},[\cdots [v_{i_{k}},f]]\cdots ]\hspace{2cm}} 
                 & E\dto^{p} \\ 
            & M.   
        \enddiagram\] 
\end{theorem}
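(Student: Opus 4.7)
The plan is to combine Theorem~\ref{GTcofib} with the wedge decomposition from Lemma~\ref{cPsi} to produce the cofibration, and then to identify the resulting map up to congruence by iterating the foundational construction of Section~\ref{sec:ad} across the multiple factors $\Omega\Sigma X_i$. First I would choose the right homotopy inverse $s\colon \prod_{i=1}^{m}\Omega\Sigma X_i \to \Omega M$ of $\Omega w$ to be the composite $\Omega v\circ\Psi$ with $\Psi$ as in~(\ref{Psidef}); this choice makes $s$ compatible with the individual James constructions on each factor. Applying Theorem~\ref{GTcofib} to~(\ref{MEdgrm}) yields a cofibration
$$(\textstyle\prod_{i=1}^{m}\Omega\Sigma X_i)\ltimes\Sigma A \xrightarrow{\theta} E \to E',$$
and Lemma~\ref{cPsi} provides a homotopy equivalence $\varepsilon'$ from the indexed wedge onto the domain of $\theta$. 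Setting $\zeta=\theta\circ\varepsilon'$ produces the asserted cofibration.

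To define $\zeta'$, I would mirror the construction of $\overline{\mathfrak{d}}_k$ from Section~\ref{sec:ad} but adapted to the product setting. Fix a lift $g\colon\Sigma A\to E$ of $f$ through $p$, and for the empty sequence set $\mathfrak{d}_{\emptyset}=g$. For $k\geq 1$ and $1\leq i_1\leq\cdots\leq i_k\leq m$, define $\mathfrak{d}_{i_1,\ldots,i_k}$ recursively as the composite
$$X_{i_1}\wedge(X_{i_2}\wedge\cdots\wedge X_{i_k}\wedge\Sigma A)\xrightarrow{i} X_{i_1}\ltimes(X_{i_2}\wedge\cdots\wedge X_{i_k}\wedge\Sigma A)\xrightarrow{E\ltimes\mathfrak{d}_{i_2,\ldots,i_k}}\Omega\Sigma X_{i_1}\ltimes E\xrightarrow{\overline{a}} E,$$
where the final arrow uses $\Omega\Sigma t_{i_1}$ composed with the action $\overline{a}$ of $\prod\Omega\Sigma X_i$ on $E$ (equivalently, $\overline{a}$ precomposed with the inclusion of the $i_1$-th factor into the product). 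Let $\zeta'$ be the wedge sum of the $\mathfrak{d}_{i_1,\ldots,i_k}$. To verify that $p\circ\zeta'$ realizes the asserted wedge of iterated Whitehead products, I would induct on $k$ just as in Lemma~\ref{dbarkaction}: the base case is $p\circ g\simeq f$, and the inductive step applies Proposition~\ref{thetaWh} together with the fact that $v_{i_1}$ factors through the suspension $\Sigma t_{i_1}$ and the naturality of the Whitehead product to introduce the bracket $[v_{i_1},-]$ at each stage.

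Finally, to establish the congruence $\Sigma\zeta\simeq\Sigma\zeta'$, I would argue by induction on $k$ in the spirit of Lemma~\ref{dcongruent}. The key ingredient is Lemma~\ref{halfsmashaction}, which says that the associativity diagram for the half-smash action commutes after suspension; this lets one commute the recursive construction past the iterated multiplication in $\prod\Omega\Sigma X_i$. Combined with the compatibility square from Lemma~\ref{cPsi} relating $\varepsilon'$ to the product of individual James decompositions, both $\Sigma\zeta$ and $\Sigma\zeta'$ restricted to each summand $(X_{i_1}\wedge\cdots\wedge X_{i_k})\wedge\Sigma A$ are shown to induce the same map in homology, whence they are congruent. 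The main obstacle will be the multi-variable bookkeeping: whereas Section~\ref{sec:ad} iterates a single half-smash by $\Omega\Sigma X$, here each stage uses a different factor $\Omega\Sigma X_{i_j}$, so one must carefully invoke the homeomorphism of Lemma~\ref{halfsmashquotient} to reconcile the iterated half-smash
$$(X_{i_1}\times\cdots\times X_{i_k})\ltimes\Sigma A\cong X_{i_1}\ltimes(X_{i_2}\ltimes(\cdots\ltimes(X_{i_k}\ltimes\Sigma A)))$$
with the product structure appearing via $\varepsilon'$ and $\Psi$. Once this indexing is in place, the congruence propagates through the wedge sum exactly as in Lemma~\ref{bardequiv}.
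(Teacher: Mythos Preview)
Your construction of $\zeta$ is exactly the paper's: choose $s=\Omega v\circ\Psi$, apply Theorem~\ref{GTcofib} to~(\ref{MEdgrm}), and precompose $\theta$ with the equivalence $\varepsilon'$ from Lemma~\ref{cPsi}. Your definition of $\zeta'$ and the inductive identification of $p\circ\zeta'$ via Proposition~\ref{thetaWh} are also correct; the key point that $s$ restricted to the $i_1$-th factor is $\Omega v_{i_1}$, so $ev\circ\Sigma s\circ\Sigma(\mathrm{incl}_{i_1}\circ E)\simeq v_{i_1}$, goes through as you indicate.

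Where you diverge from the paper is in the congruence step. You propose to rerun the proof of Lemma~\ref{dcongruent} in a multi-variable setting, tracking different factors $\Omega\Sigma X_{i_j}$ through the iterated half-smash and invoking Lemma~\ref{halfsmashaction} for the product $H$-space $\prod_i\Omega\Sigma X_i$. This can be made to work, but the paper sidesteps the bookkeeping entirely by inserting an \emph{auxiliary single-variable fibration}. Since $\bigvee_i\Sigma X_i=\Sigma(\bigvee_i X_i)$, the fibre $\overline{E}$ of the pinch
\(\big(\bigvee_i\Sigma X_i\big)\vee\Sigma A\xrightarrow{q_1}\bigvee_i\Sigma X_i\)
is precisely an instance of the foundational case of Section~\ref{sec:ad} with $X=\bigvee_i X_i$ and $Y=A$. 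Theorem~\ref{dbard} therefore already supplies a map $\mathfrak{d}$ into $\overline{E}$ that lifts $\bigvee_k ad^k(i_W)(i_{\Sigma A})$ and is congruent to $d=\beta\circ c$, with no new argument required. The square $(v\perp f,w)$ over the inclusion of wedge into product induces a map $\overline{E}\to E$, and the paper sets
\[\zeta'=\Gamma\circ(\Omega v\ltimes g)\circ\beta^{-1}\circ\mathfrak{d}\circ I,\]
where $I$ is the wedge inclusion from Lemma~\ref{cPsi}. Congruence of $\zeta'$ with $\zeta$ then follows in one line from the already-established congruence $\mathfrak{d}\sim d$, and the Whitehead-product identification comes from naturality: $ad^k(i_W)(i_{\Sigma A})$ restricted along $t_{i_1}\wedge\cdots\wedge t_{i_k}\wedge 1$ and pushed by $v\perp f$ is $[v_{i_1},[\cdots[v_{i_k},f]]\cdots]$.

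So your route is sound but reproves Section~\ref{sec:ad} in a product setting; the paper's route is to recognise that the single-variable theory over $\Sigma(\bigvee_i X_i)$ already contains the multi-variable statement, using Lemma~\ref{cPsi} only to match the two wedge decompositions. One minor omission in your sketch: you write ``fix a lift $g$'' but should note (as the paper does in Step~1) that $g$ must be chosen so that $\Sigma A\xrightarrow{g}E\to E'$ is null homotopic, which is needed for Theorem~\ref{GTcofib}~(a); the paper arranges this by defining $g$ through $\overline{E}$.
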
 

\begin{proof} 
The proof is broken into steps. 
\medskip 

\noindent 
\textit{Step 1: setting up}. 
After looping, the inclusion 
\(\namedright{\bigvee_{i=1}^{m}\Sigma X_{i}}{}{\prod_{i=1}^{m}\Sigma X_{i}}\) 
has a right homotopy inverse. A specific choice of a right homotopy 
inverse is given by the map $\Psi$ defined in~(\ref{Psidef}). Let $s$ be the composite 
\[s\colon\nameddright{\prod_{i=1}^{m}\Omega\Sigma X_{i}}{\Psi}{\Omega\Sigma(\bigvee_{i=1}^{m} X_{i})} 
      {\Omega v}{\Omega M}.\] 
Then as $w\circ v$ is homotopic to the inclusion of the wedge into the product, $s$ is a right 
homotopy inverse for $\Omega w$. 

As an intermediate stage, define the space $\overline{E}$ and the map $\overline{p}$ by the 
homotopy fibration 
\[\nameddright{\overline{E}}{\overline{p}}{(\bigvee_{i=1}^{m}\Sigma X_{i})\vee\Sigma A} 
      {q_{1}}{\bigvee_{i=1}^{m}\Sigma X_{i}}\] 
where $q_{1}$ is the pinch map. By Example~\ref{specialGTadexample} there is a lift 
\[\overline{g}\colon\namedright{\Sigma A}{}{\overline{E}}\] 
of the inclusion 
\(\namedright{\Sigma A}{i_{2}}{(\bigvee_{i=1}^{m}\Sigma X_{i})\vee\Sigma A}\) 
and a homotopy equivalence 
\[\llnamedright{\Omega\Sigma(\bigvee_{i=1}^{m}\Sigma X_{i})\ltimes\Sigma A} 
      {\overline{a}\circ(1\ltimes\overline{g})}{\overline{E}}.\] 
Since $w$ extends the inclusion of the wedge into the product and $w\circ f$ is null 
homotopic, there is a homotopy commutative square 
\[\diagram 
       (\bigvee_{i=1}^{m}\Sigma X_{i})\vee\Sigma A\rto^-{v\perp f}\dto^{q_{1}} & M\dto^{w} \\ 
       \bigvee_{i=1}^{m}\Sigma X_{i}\rto & \prod_{i=1}^{m}\Sigma X_{i}. 
  \enddiagram\] 
Let 
\(\alpha\colon\namedright{\overline{E}}{}{E}\) 
be the induced map of fibres and let $g$ be the composite 
\[g\colon\nameddright{\Sigma A}{\overline{g}}{\overline{E}}{\alpha}{E}.\] 
Notice that as $\overline{g}$ is a lift of the identity map on $\Sigma A$ to $\overline{E}$, 
the map $g$ is a lift of $f$ to $E$. Further, we claim that the composite 
\(\nameddright{\Sigma A}{g}{E}{}{E'}\) 
is null homotopic. Since $g$ lifts $f$, the composite 
\(\namedddright{\Sigma A}{g}{E}{}{E'}{p'}{M'}\) 
is homotopic to 
\(\nameddright{\Sigma A}{f}{M}{}{M'}\) 
by~(\ref{MEdgrm}), which is null homotopic since~$M'$ is the homotopy cofibre of $f$. 
Thus 
\(\nameddright{\Sigma A}{g}{E}{}{E'}\) 
lifts to the homotopy fibre of $p'$. But by~(\ref{MEdgrm}), as $\Omega w$ has a right 
homotopy inverse, so does $\Omega w'$, implying that the connecting map for the 
homotopy fibration 
\(\nameddright{E'}{p'}{M'}{w'}{\prod_{i=1}^{m}\Sigma X_{i}}\) 
is null homotopic. Hence the lift of 
\(\nameddright{\Sigma A}{g}{E}{}{E'}\) 
to the homotopy fibre of $p'$ must be null homotopic, implying that the composite 
\(\nameddright{\Sigma A}{g}{E}{}{E'}\) 
is null homotopic. 

With this choice of $g$ and the existence of a right homotopy inverse for $\Omega w$, 
Theorem~\ref{GTcofib} implies that there is a homotopy cofibration 
\begin{equation} 
  \label{thetacof} 
  \nameddright{(\prod_{i=1}^{m}\Omega\Sigma X_{i})\ltimes\Sigma A}{\theta}{E}{}{E'} 
\end{equation}  
where, by definition, $\theta$ is the composite 
\[\nameddright{(\prod_{i=1}^{m}\Omega\Sigma X_{i})\ltimes\Sigma A}{s\ltimes g} 
      {\Omega M\ltimes E}{\Gamma}{E}.\] 
\medskip 
 
\noindent 
\textit{Step 2: the map $\zeta$}. 
Consider the diagram 
\small 
\begin{equation} 
  \label{IGamma} 
  \spreaddiagramcolumns{-0.8pc}\diagram 
      \displaystyle\bigvee_{k=0}^{\infty}\ \bigvee_{1\leq i_{1}\leq\cdots\leq i_{k}\leq m} 
              (X_{i_{1}}\wedge\cdots\wedge X_{i_{k}})\wedge\Sigma A\rto^-{\varepsilon'}\dto^{I} 
           & (\prod_{i=1}^{m}\Omega\Sigma X_{i})\ltimes\Sigma A\dto^-{\Psi\ltimes 1}\drto^{s\ltimes g} & & \\ 
      \bigvee_{k=0}^{\infty} (\bigvee_{i=1}^{m} X_{i})^{\wedge k}\wedge\Sigma A\rto^-{c}
           & \Omega\Sigma(\bigvee_{i=1}^{m} X_{i})\ltimes\Sigma A\rto^-{\Omega v\ltimes g}    
           & \Omega M\ltimes E\rto^-{\Gamma} & E.
   \enddiagram 
 \end{equation} 
 \normalsize 
The left square homotopy commutes by Lemma~\ref{cPsi} and the triangle 
homotopy commutes by definition of $s$. In the upper direction around the diagram,  
$\Gamma\circ(s\ltimes g)$ is the definition of $\theta$ and, by Lemma~\ref{cPsi}, $\varepsilon'$ 
is a homotopy equivalence. So if $\zeta=\theta\circ\varepsilon'$ then by~(\ref{thetacof}) there 
is a homotopy cofibration 
\[\nameddright{\displaystyle\bigvee_{k=0}^{\infty}\ \bigvee_{1\leq i_{1}\leq\cdots\leq i_{k}\leq m} 
              (X_{i_{1}}\wedge\cdots\wedge X_{i_{k}})\wedge\Sigma A}{\zeta}{E}{}{E'}.\]  
\medskip 

\noindent 
\textit{Step 3: the map $\zeta'$ and the congruence with $\zeta$}. 
In the lower row of~(\ref{IGamma}) inserting the homotopy equivalence 
\[\beta\colon\llnamedright{\Omega\Sigma(\bigvee_{i=1}^{m}\Sigma X_{i})\ltimes\Sigma A} 
      {\overline{a}\circ(1\ltimes\overline{g})}{\overline{E}}\]  
and its inverse gives 
$\Gamma\circ(\Omega v\ltimes g)\circ c\simeq\Gamma\circ(\Omega v\ltimes g)\circ\beta^{-1}\circ 
       \overline{a}\circ(1\ltimes\overline{g})\circ c$. 
Notice that the composite $\overline{a}\circ(1\ltimes\overline{g})\circ c$ is the map $d$ 
from Section~\ref{sec:ad}. By Theorem~\ref{dbard}, $d$ is congruent to a map $\mathfrak{d}$  
that satisfies a homotopy commutative diagram 
\begin{equation} 
  \label{adWA} 
  \diagram 
      \bigvee_{k=0}^{\infty} (\bigvee_{i=1}^{m} X_{i})^{\wedge k}\wedge\Sigma A 
             \rto^-{\mathfrak{d}}\drto_{\bigvee_{k=1}^{\infty} ad^{k}(i_{W})(i_{\Sigma A})\ \ } 
         & \overline{E}\dto^{\overline{p}} \\ 
      & (\bigvee_{i=1}^{m}\Sigma X_{i})\vee\Sigma A 
  \enddiagram 
\end{equation} 
where $i_{W}$ and $i_{\Sigma A}$ are the inclusions of $\bigvee_{i=1}^{m}\Sigma X_{i}$ 
and $\Sigma A$ respectively into $(\bigvee_{i=1}^{m}\Sigma X_{i})\vee\Sigma A$. 
Let 
\[\zeta'=\Gamma\circ(\Omega v\ltimes g)\circ\beta^{-1}\circ\mathfrak{d}\circ I.\] 
The congruence between $\mathfrak{d}$ and $d=\overline{a}\circ(1\ltimes\overline{g})\circ c$ 
implies that $\zeta'$ is congruent to 
$\Gamma\circ(\Omega v\ltimes g)\circ\beta^{-1}\circ d\circ I$. 
As $\beta=\overline{a}\circ(1\ltimes\overline{g})$, we obtain a congruence between $\zeta'$ 
and $\Gamma\circ(\Omega v\ltimes g)\circ c\circ I$. The latter is the lower direction around the 
diagram~(\ref{IGamma}), and so is homotopic to the upper direction around that diagram, 
which is $\theta\circ\varepsilon'=\zeta$. Hence $\zeta'$ is congruent to $\zeta$. 
\medskip 

\noindent 
\textit{Step 4: identifying Whitehead products}. 
Finally, consider the diagram 
\[\spreaddiagramcolumns{-0.5pc}\diagram  
      \bigvee_{k=0}^{\infty} (\bigvee_{i=1}^{m} X_{i})^{\wedge k}\wedge\Sigma A 
             \rto^-{\mathfrak{d}}\drto_{\bigvee_{k=1}^{\infty} ad^{k}(i_{W})(i_{\Sigma A})\ \ } 
          & \overline{E}\rto^-{\beta^{-1}}\dto^{\overline{p}} 
          & \Omega\Sigma(\bigvee_{i=1}^{m} X_{i})\ltimes\Sigma A\rto^-{\Omega v\ltimes g}\dto 
          & \Omega M\ltimes E\rto^-{\Gamma}\dto & E\dto^{p} \\ 
      & (\bigvee_{i=1}^{m}\Sigma X_{i})\vee\Sigma A\rdouble 
          & (\bigvee_{i=1}^{m}\Sigma X_{i})\vee\Sigma A\rto^-{v\vee g} 
          & M\vee E\rto^-{1\vee p} & M. 
  \enddiagram\] 
The left triangle homotopy commutes by~(\ref{adWA}), the left of centre square homotopy commutes 
by the homotopy equivalence $\beta$, the right of centre square homotopy commutes by naturality 
and the right square commutes by the definition of $\Gamma$ in Section~\ref{sec:background}. 
The upper direction around the diagram, precomposed with $I$, is the definition of $\zeta'$. 
The lower direction around the diagram, precomposed with $I$, behaves as follows. 
Observe that the restriction of $I$ to the wedge summand 
$X_{i_{1}}\wedge\cdots\wedge X_{i_{k}}\wedge\Sigma A$ is the inclusion 
$t_{i_{1}}\wedge\cdots\wedge t_{i_{k}}\wedge 1$. Thus the restriction of $ad^{k}(i_{W})(i_{\Sigma A})$ 
to $X_{i_{1}}\wedge\cdots\wedge X_{i_{k}}\wedge\Sigma A$ is 
$[\Sigma t_{i_{1}},[\Sigma t_{i_{2}},[\cdots [\Sigma t_{i_{k}},i_{\Sigma A}]]\cdots ]$. 
The naturality of the Whitehead product,  
the definition of $v_{k}$ as $v\circ\Sigma\iota_{k}$, and the fact that $p\circ g\simeq f$ imply that 
\[(v\vee(p\circ g))\circ [\Sigma\iota_{i_{1}},[\Sigma\iota_{i_{2}},[\cdots [\Sigma\iota_{i_{k}},i_{\Sigma A}]]\cdots ] 
      \simeq [v_{i_{1}},[v_{i_{2}},[\cdots [v_{i_{k}},f]]\cdots ].\] 
Thus the lower direction around the diagram is the wedge sum of the iterated Whitehead products  
$[v_{i_{1}},[v_{i_{2}},[\cdots [v_{i_{k}},f]]\cdots ]$ for all $1\leq i_{1}\leq\cdots\leq i_{k}\leq m$ 
and all $k\geq 0$. Hence 
\[p\circ\zeta'\simeq\bigvee_{k=0}^{\infty}\ \bigvee_{1\leq i_{1}\leq\cdots\leq i_{k}\leq m} 
                    [v_{i_{1}},[v_{i_{2}},[\cdots [v_{i_{k}},f]]\cdots ]\] 
as asserted. 
\end{proof}

\newpage 

\section{Polyhedral products and Whitehead products} 
\label{sec:polyprod} 

The main application of Theorem~\ref{prodextend} is to polyhedral products. We 
first recall and formalize the definition in the Introduction. Let $K$ be an abstract simplicial 
complex on the vertex set $[m]=\{1,2,\ldots,m\}$. That is, $K$ is a collection of 
subsets $\sigma\subseteq [m]$ such that for any $\sigma\in K$ all subsets of $\sigma$ 
also belong to $K$. We will usually refer to $K$ as a simplicial complex rather than an 
abstract simplicial complex. A subset $\sigma\in K$ is a \emph{simplex} or \emph{face} 
of $K$. The emptyset $\emptyset$ is assumed to belong to $K$. For $1\leq i\leq m$,
let $(X_{i},A_{i})$ be a pair of pointed $CW$-complexes, where $A_{i}$ is 
a pointed subspace of~$X_{i}$. Let $\uxa=\{(X_{i},A_{i})\}_{i=1}^{m}$ be 
the sequence of $CW$-pairs. For each face $\sigma\in K$, let 
$\uxa^{\sigma}$ be the subspace of $\prod_{i=1}^{m} X_{i}$ defined by
\[\uxa^{\sigma}=\prod_{i=1}^{m} Y_{i}\qquad
       \mbox{where}\qquad Y_{i}=\left\{\begin{array}{ll}
                                             X_{i} & \mbox{if $i\in\sigma$} \\
                                             A_{i} & \mbox{if $i\notin\sigma$}.
                                       \end{array}\right.\]
The \emph{polyhedral product} determined by \uxa\ and $K$ is
\[\uxa^{K}=\bigcup_{\sigma\in K}\uxa^{\sigma}\subseteq\prod_{i=1}^{m} X_{i}.\] 
For example, suppose each $A_{i}$ is a point. If $K$ is a disjoint union
of $m$ points then $(\underline{X},\underline{\ast})^{K}$ is the wedge
$X_{1}\vee\cdots\vee X_{m}$, and if $K$ is the standard $(m-1)$-simplex
then $(\underline{X},\underline{\ast})^{K}$ is the product
$X_{1}\times\cdots\times X_{m}$. 

We aim to apply Theorem~\ref{prodextend} in the context of a homotopy cofibration 
\(\nameddright{\Sigma A}{}{\sux^{K}}{}{\sux^{\overline{K}}}\); 
this will be done in Proposition~\ref{Acofib}.  To prepare some definitions and preliminary 
results are needed. 

The \emph{boundary} of a simplex $\sigma$, 
written $\partial\sigma$, is the simplicial complex consisting of all the proper subsets 
of~$\sigma$. A simplex $\sigma$ is a (minimal) \emph{missing face} of $K$ if $\sigma\notin K$ 
but $\partial\sigma\subseteq K$. The geometric realization of $K$ is written $\vert K\vert$. 
Note that if $\sigma$ is a face of $K$ with $k$ elements then $\vert\sigma\vert\cong\Delta^{k-1}$, 
and $\vert\partial\sigma\vert\cong\partial\Delta^{k-1}$. The \emph{dimension} of $K$, 
written $\dm(\vert K\vert)$, is the dimension of the geometric realization~$\vert K\vert$. 

Given a simplicial complex $K$ on the vertex set $[m]$, let $\mathcal{S}=\{\sigma_{1},\ldots,\sigma_{r}\}$ 
be a subset of the set of missing faces of $K$. Define a new simplicial complex $\overline{K}$ by 
\[\overline{K}=K\cup\mathcal{S}.\] 
In terms of geometric realizations, $\vert\overline{K}\vert$ is obtained from $\vert K\vert$ 
by taking the missing faces indexed by~$\mathcal{S}$ and gluing them to $\vert K\vert$ along 
their boundaries. The naturality of the polyhedral product implies that the simplicial 
inclusion 
\(\namedright{K}{}{\overline{K}}\) 
induces a map 
\(\namedright{\uxa^{K}}{}{\uxa^{\overline{K}}}\). 

We now specialize to pairs of the form $(X_{i},\ast)$ in order to better identify 
certain spaces. By definition of the polyhedral product we have 
\[\ux^{\Delta^{m-1}}=\prod_{i=1}^{m} X_{i}.\] 
The \emph{fat wedge} is the subspace of $\prod_{i=1}^{m} X_{i}$ defined by 
\[FW(X_{1},\ldots,X_{m})=\{(x_{1},\ldots,x_{m})\in\prod_{i=1}^{m} X_{i}\mid 
      \mbox{at least one $x_{i}$ is $\ast$}\}.\] 
The definition of the polyhedral product implies that 
\[\ux^{\partial\Delta^{m-1}}=FW(X_{1},\ldots, X_{m}).\] 
Thus if $\sigma=(i_{1},\ldots,i_{k})\subseteq [m]$ then 
\[\ux^{\sigma}=\prod_{j=1}^{k} X_{i_{j}}\qquad\mbox{and}\qquad 
     \ux^{\partial\sigma}=FW(X_{i_{1}},\ldots, X_{i_{k}}).\] 
Therefore, in our case, for each missing face $\sigma=(i_{1},\ldots,i_{k})\in\mathcal{S}$ 
there is a cofibration 
 \begin{equation} 
   \label{FWcofib} 
   \nameddright{FW(X_{i_{1}},\ldots,X_{i_{k}})}{}{\prod_{j=1}^{k} X_{i_{j}}}{} 
        {X_{i_{1}}\wedge\cdots\wedge X_{i_{k}}}. 
\end{equation} 
We show that an analogue is true for the map of polyhedral products 
\(\namedright{\ux^{K}}{}{\ux^{\overline{K}}}\). 

\begin{remark} 
\label{faceremark} 
It is worth pointing out in what follows that when we write 
$\bigvee_{\sigma\in\mathcal{S}} X_{i_{1}}\wedge\cdots\wedge X_{i_{k}}$ 
we mean $\sigma=(i_{1},\ldots,i_{k})$ and it is understood that the number of 
vertices $k$ may be different for distinct missing faces in $\mathcal{S}$. 
\end{remark} 
 
 \begin{lemma} 
   \label{preLcofib} 
   Suppose that for $1\leq i\leq m$ each space $X_{i}$ is path-connected and each 
   missing face in $\mathcal{S}$ has at least two vertices. Then there is a homotopy cofibration 
   \[\nameddright{\ux^{K}}{}{\ux^{\overline{K}}}{} 
          {\bigvee_{\sigma\in\mathcal{S}} X_{i_{1}}\wedge\cdots\wedge X_{i_{k}}}.\] 
\end{lemma}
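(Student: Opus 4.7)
The plan is to exhibit the inclusion $\ux^{K}\hookrightarrow\ux^{\overline{K}}$ as a pushout of a coproduct of inclusions, one for each missing face in $\mathcal{S}$, and then identify each piece of the cofibre with the corresponding smash product via~(\ref{FWcofib}). First I would record the basic fact about intersections of polyhedral products: for any two faces $\sigma,\tau\subseteq[m]$ one has $\ux^{\sigma}\cap\ux^{\tau}=\ux^{\sigma\cap\tau}$ inside $\prod_{i=1}^{m}X_{i}$, which is immediate from the definition of $\ux^{\sigma}$.

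Next I would use the minimal missing-face hypothesis to establish two combinatorial assertions. Given $\sigma\in\mathcal{S}$, since $\sigma\notin K$ every $\tau\in K$ satisfies $\sigma\cap\tau\subsetneq\sigma$, so $\sigma\cap\tau\in\partial\sigma\subseteq K$, whence $\ux^{\sigma}\cap\ux^{K}=\ux^{\partial\sigma}$. Moreover, two distinct elements $\sigma,\sigma'\in\mathcal{S}$ must be incomparable: if $\sigma\subsetneq\sigma'$ then $\sigma\in\partial\sigma'\subseteq K$, contradicting $\sigma\notin K$. Thus $\sigma\cap\sigma'\subsetneq\sigma$ lies in $\partial\sigma\subseteq K$, giving $\ux^{\sigma}\cap\ux^{\sigma'}\subseteq\ux^{K}$.

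Now enumerate $\mathcal{S}=\{\sigma_{1},\dots,\sigma_{r}\}$ and set $K_{0}=K$, $K_{j}=K_{j-1}\cup\{\sigma_{j}\}$. The two assertions above (valid with $K$ replaced by $K_{j-1}$, since $\sigma_{j}$ remains a minimal missing face there) yield a pushout square
\[
\diagram
\ux^{\partial\sigma_{j}}\rto\dto & \ux^{\sigma_{j}}\dto \\
\ux^{K_{j-1}}\rto & \ux^{K_{j}}
\enddiagram
\]
in which the top map is a cofibration of $CW$-pairs. The hypothesis that every $\sigma\in\mathcal{S}$ has at least two vertices, together with path-connectedness of each $X_{i}$, ensures via~(\ref{FWcofib}) that the cofibre of $\ux^{\partial\sigma_{j}}\hookrightarrow\ux^{\sigma_{j}}$ is the reduced smash $X_{i_{1}}\wedge\cdots\wedge X_{i_{k}}$ where $\sigma_{j}=(i_{1},\dots,i_{k})$. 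Iterating the pushouts and reading off the total cofibre then yields the asserted homotopy cofibration.

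I expect the main issue to be purely bookkeeping: verifying that the iterative construction produces a single wedge in the cofibre rather than a more intricate iterated mapping cone. This is controlled by the second combinatorial assertion, which guarantees that the piece attached at stage $j$ meets $\ux^{K_{j-1}}$ precisely along $\ux^{\partial\sigma_{j}}$ and meets every previously attached piece only within $\ux^{K}$. Hence, after collapsing $\ux^{K}$ to a point, the individual quotients $\ux^{\sigma_{j}}/\ux^{\partial\sigma_{j}}$ are pairwise disjoint away from the basepoint and assemble cleanly into the claimed wedge of smash products.
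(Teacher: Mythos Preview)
Your argument is correct and takes a genuinely different route from the paper. The paper defines $C$ as the cofibre of $\ux^{K}\to\ux^{\overline{K}}$, invokes the Bahri--Bendersky--Cohen--Gitler stable splitting to identify $\Sigma C\simeq\bigvee_{\sigma\in\mathcal{S}}\Sigma X_{i_{1}}\wedge\cdots\wedge X_{i_{k}}$, then for each $\sigma\in\mathcal{S}$ builds a map $g_{\sigma}\colon X_{i_{1}}\wedge\cdots\wedge X_{i_{k}}\to C$ from the diagram in Remark~\ref{preLcofibremark}, assembles these into $g=\bigvee g_{\sigma}$, observes that $\Sigma g$ is an equivalence, and finally applies Whitehead's Theorem to conclude that $g$ itself is an equivalence. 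It is precisely this last step that consumes the hypotheses: path-connectedness of each $X_{i}$ together with $k\geq 2$ forces each smash $X_{i_{1}}\wedge\cdots\wedge X_{i_{k}}$ to be simply-connected, so Whitehead applies. Your approach bypasses BBCG entirely and computes the quotient $\ux^{\overline{K}}/\ux^{K}$ directly on the point-set level via the two combinatorial intersection identities; in particular your argument does not actually need the hypotheses at all, and your sentence invoking them to justify~(\ref{FWcofib}) is harmless but superfluous, since the fat-wedge cofibration~(\ref{FWcofib}) holds for any $k$. What the paper's approach buys is the explicit map $g_{\sigma}$ recorded in Remark~\ref{preLcofibremark}, which is used downstream in Lemma~\ref{desusppreLcofib}; your argument yields the same conclusion more cheaply but one should check that the resulting identification of the cofibre agrees with the paper's, which it does since both restrict to the quotient map $\ux^{\sigma}\to X_{i_{1}}\wedge\cdots\wedge X_{i_{k}}$ on each piece.
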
 

\begin{proof} 
Define the space $C$ by the cofibration 
\[\nameddright{\ux^{K}}{}{\ux^{\overline{K}}}{}{C}.\] 
In general, if $L$ is a simplicial complex on the vertex set $[m]$ then by~\cite{BBCG} 
$\Sigma\ux^{L}$ is homotopy equivalent to 
$\bigvee_{\tau\in L}\Sigma X_{i_{1}}\wedge\cdots\wedge X_{i_{\ell}}$ 
where $\tau=(i_{1},\ldots,i_{\ell})$, and this decomposition is natural with 
respect to simplicial maps 
\(\namedright{L}{}{L'}\). 
In our case, as $\overline{K}$ consists of all the faces of $K$ together with 
the missing faces indexed by $\mathcal{S}$, we obtain 
$\Sigma C\simeq\bigvee_{\sigma\in\mathcal{S}}\Sigma X_{i_{1}}\wedge\cdots\wedge X_{i_{k}}$. 
We claim that this decomposition for $\Sigma C$ desuspends. 

Fix $\sigma=(i_{1},\ldots,i_{k})\in\mathcal{S}$. Consider the diagram 
\[\diagram 
      FW(X_{i_{1}},\ldots,X_{i_{k}})\rto\dto & \prod_{j=1}^{k} X_{i_{j}}\rto\dto 
           & X_{i_{1}}\wedge\cdots\wedge X_{i_{k}}\dto^{g_{\sigma}} \\ 
      \ux^{K}\rto & \ux^{\overline{K}}\rto & C 
  \enddiagram\]  
where the map $g_{\sigma}$ will be defined momentarily. Since $\sigma$ is a missing face 
for $K$ but is a face of $\overline{K}$, the full subcomplexes of $\uxa^{K}$ and~$\uxa^{\overline{K}}$ 
on the vertex set $\{i_{1},\ldots,i_{k}\}$ are $FW(X_{i_{1}},\ldots,X_{i_{k}})$ and $\prod_{j=1}^{k} X_{i_{j}}$ 
respectively. Therefore, by the naturality of the polyhedral product with respect to simplicial 
maps, the left square above commutes. This induces a map of cofibres, which 
gives the right square and defines~$g_{\sigma}$. Notice that the decomposition 
of $\Sigma C$ implies that $\Sigma g_{\sigma}$ is the inclusion of a wedge summand. 
Thus if 
\[g\colon\namedright{\bigvee_{\sigma\in\mathcal{S}} X_{i_{1}}\wedge\cdots\wedge X_{i_{k}}} 
       {}{C}\] 
is the wedge sum of the maps $g_{\sigma}$ for all $\sigma\in\mathcal{S}$, then $\Sigma g$ 
is a homotopy equivalence. This implies that~$g_{\ast}$ induces an isomorphism in homology. 
As each space $X_{i}$ is path-connected and we assume that each missing face in $\mathcal{S}$ 
has at least two vertices, the spaces $X_{i_{1}}\wedge\cdots\wedge X_{i_{k}}$ are 
simply-connected. Hence, by Whitehead's Theorem, $g_{\ast}$ inducing an isomorphism 
in homology implies that $g$ is a homotopy equivalence. 
 \end{proof} 
 
 \begin{remark} 
 \label{preLcofibremark} 
 A useful piece of information to record from the proof of Lemma~\ref{preLcofib} 
 is that if $\sigma\in\mathcal{S}$ then the inclusion of $\sigma$ into $\overline{K}$ 
 induces a homotopy cofibration diagram 
 \[\diagram 
      FW(X_{i_{1}},\ldots,X_{i_{k}})\rto\dto & \prod_{j=1}^{k} X_{i_{j}}\rto\dto 
           & X_{i_{1}}\wedge\cdots\wedge X_{i_{k}}\dto^{g_{\sigma}} \\ 
      \ux^{K}\rto & \ux^{\overline{K}}\rto 
           & \bigvee_{\sigma\in\mathcal{S}} X_{i_{1}}\wedge\cdots\wedge X_{i_{k}}  
  \enddiagram\]  
where $g_{\sigma}$ is the inclusion of a wedge summand. 
 \end{remark} 
 
We now specialize further to pairs of the form $(\Sigma X_{i},\ast)$ in order 
to turn the cofibration in Lemma~\ref{preLcofib} one step to the left.    
The (reduced) \emph{join} of two pointed spaces $A$ and $B$ is 
the quotient space $A\ast B=(A\times I\times B)/\sim$ where $I=[0,1]$ is the 
unit interval and the defining relations are given by $(a,1,b)\sim (a',1,b)$, 
$(a,0,b)\sim (a,0,b')$ and $(\ast,t,\ast)\sim(\ast,0,\ast)$ for all 
$a,a'\in A$, $b,b'\in B$ and $t\in I$. There is a well known homotopy equivalence 
$A\ast B\simeq\Sigma A\wedge B$. 

In the case of pairs $(\Sigma X_{i},\ast)$, for each missing face $\sigma=(i_{1},\ldots,i_{k})\in\mathcal{S}$ 
there is a homotopy cofibration 
\[\nameddright{X_{i_{1}}\ast\cdots\ast X_{i_{k}}}{}{FW(\Sigma X_{i_{1}},\ldots,\Sigma X_{i_{k}})} 
      {}{\prod_{j=1}^{k}\Sigma X_{i_{j}}}\] 
that induces the cofibration in~(\ref{FWcofib}). Let 
\[f\colon\namedright{\bigvee_{\sigma\in\mathcal{S}} X_{i_{1}}\ast\cdots\ast X_{i_{k}}} 
         {}{\sux^{K}}\] 
be the wedge sum of the composites 
\(\nameddright{X_{i_{1}}\ast\cdots\ast X_{i_{k}}}{}{FW(\Sigma X_{i_{1}},\ldots,\Sigma X_{i_{k}})} 
       {}{\sux^{K}}\) 
for all $\sigma=(i_{1},\ldots,i_{k})\in\mathcal{S}$.  
       
\begin{lemma} 
   \label{desusppreLcofib} 
   Suppose that each missing face in $\mathcal{S}$ has at least two vertices. Then there is 
   a homotopy cofibration        
    \[\nameddright{\bigvee_{\sigma\in\mathcal{S}} X_{i_{1}}\ast\cdots\ast X_{i_{k}}} 
      {f}{\sux^{K}}{}{\sux^{\overline{K}}}\] 
    that induces the homotopy cofibration in Lemma~\ref{preLcofib}. 
\end{lemma}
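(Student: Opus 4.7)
The plan is to build $f$ as a wedge of higher Whitehead product attaching maps and identify the cofibre via an iterated pushout. The crucial input is a universal desuspension: for any pointed, path-connected spaces $Y_{1},\dots,Y_{k}$ there is a homotopy cofibration
\[\nameddright{Y_{1}\ast\cdots\ast Y_{k}}{w}{FW(\Sigma Y_{1},\dots,\Sigma Y_{k})}{}{\prod_{j=1}^{k}\Sigma Y_{j}},\]
where $w$ is the higher Whitehead product. For $k=2$ this is essentially the definition of the Whitehead product (giving the standard cofibration $X_{1}\ast X_{2}\to\Sigma X_{1}\vee\Sigma X_{2}\to\Sigma X_{1}\times\Sigma X_{2}$); for $k\geq 3$ it is a classical result of Porter. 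Via $\Sigma(Y_{1}\ast\cdots\ast Y_{k})\simeq\Sigma Y_{1}\wedge\cdots\wedge\Sigma Y_{k}$, this is the desuspension, one step to the left, of the top row of the cofibration diagram recorded in Remark~\ref{preLcofibremark}.

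With this in hand, for each missing face $\sigma=(i_{1},\dots,i_{k})\in\mathcal{S}$ the hypothesis $\partial\sigma\subseteq K$ gives a natural inclusion $\sux^{\partial\sigma}=FW(\Sigma X_{i_{1}},\dots,\Sigma X_{i_{k}})\hookrightarrow\sux^{K}$. I would define
\[f_{\sigma}\colon\namedddright{X_{i_{1}}\ast\cdots\ast X_{i_{k}}}{w_{\sigma}}{FW(\Sigma X_{i_{1}},\dots,\Sigma X_{i_{k}})}{}{\sux^{K}}\]
and take $f$ to be the wedge $\bigvee_{\sigma\in\mathcal{S}}f_{\sigma}$, which is the map in the statement. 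To identify the cofibre, I would use that the mapping cone of $f$ is $\sux^{K}$ with a cone attached independently for each $\sigma\in\mathcal{S}$. For a single $\sigma$, Step 1 identifies $\prod_{j=1}^{k}\Sigma X_{i_{j}}$ with $FW(\Sigma X_{i_{1}},\dots,\Sigma X_{i_{k}})\cup Cw_{\sigma}$, so coning off $X_{i_{1}}\ast\cdots\ast X_{i_{k}}$ via $f_{\sigma}$ replaces the subspace $\sux^{\partial\sigma}\subseteq\sux^{K}$ by $\sux^{\sigma}$. This is exactly the polyhedral-product pushout describing the addition of $\sigma$ to $K$, i.e. it produces $\sux^{K\cup\{\sigma\}}$. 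Performing all these cone attachments simultaneously agrees with the iterated pushout because the attaching maps of the various cones are supported on the distinct wedge summands of the domain of $f$, so we obtain the cofibre $\sux^{K\cup\mathcal{S}}=\sux^{\overline{K}}$.

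Compatibility with Lemma~\ref{preLcofib} then follows from the Puppe sequence: extending the cofibration $\bigvee_{\sigma}X_{i_{1}}\ast\cdots\ast X_{i_{k}}\xrightarrow{f}\sux^{K}\to\sux^{\overline{K}}$ one step to the right and using $\Sigma(X_{i_{1}}\ast\cdots\ast X_{i_{k}})\simeq\Sigma X_{i_{1}}\wedge\cdots\wedge\Sigma X_{i_{k}}$ reproduces the cofibration in Lemma~\ref{preLcofib}; naturality of the Porter cofibration against the face inclusions $\sigma\hookrightarrow\overline{K}$ ensures the induced map on cofibres matches the wedge of the maps $g_{\sigma}$ recorded in Remark~\ref{preLcofibremark}. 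The main obstacle is Step 1, since for $k\geq 3$ the desuspension of the fat-wedge cofibration and the identification of the attaching map with the higher Whitehead product are genuinely nontrivial and rely on Porter's theorem. A secondary bookkeeping point, in Step 3, is verifying that the simultaneous pushout of cones along $f$ agrees with the iterated pushout that defines $\sux^{\overline{K}}$; this is clean precisely because different summands of the wedge domain of $f$ carry independent attaching data.
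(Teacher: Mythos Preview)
Your proof is correct and follows essentially the same route as the paper. Both arguments rest on the Porter-type cofibration $X_{i_{1}}\ast\cdots\ast X_{i_{k}}\to FW(\Sigma X_{i_{1}},\dots,\Sigma X_{i_{k}})\to\prod_{j}\Sigma X_{i_{j}}$ for each missing face, together with the observation that $\sux^{\overline{K}}$ is obtained from $\sux^{K}$ by gluing in these top cells. The only difference is packaging: the paper records a single global pushout
\[\diagram
\bigcup_{\sigma\in\mathcal{S}} FW(\Sigma X_{i_{1}},\dots,\Sigma X_{i_{k}})\rto\dto & \bigcup_{\sigma\in\mathcal{S}}\prod_{j}\Sigma X_{i_{j}}\dto\\
\sux^{K}\rto & \sux^{\overline{K}}
\enddiagram\]
and then transfers the cofibration along the top row (which is the wedge of Porter cofibrations) down to the bottom row, whereas you phrase the same gluing as an iterated or simultaneous cone attachment directly on $\sux^{K}$. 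Your bookkeeping remark about independence of the attaching data is exactly what makes the two formulations agree.
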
 

\begin{proof} 
In general, as $\overline{K}=K\cup S$, the definition of the polyhedral product implies 
that there is a pushout 
\begin{equation} 
  \label{bigpo} 
  \diagram 
      \bigcup_{\sigma\in\mathcal{S}} FW(X_{i_{1}},\ldots,X_{i_{k}})\rto\dto 
            & \bigcup_{\sigma\in\mathcal{S}}\left(\prod_{j=1}^{k} X_{i_{j}}\right)\dto \\ 
      \ux^{K}\rto & \ux^{\overline{K}}.  
  \enddiagram 
\end{equation}  
By Lemma~\ref{preLcofib}, the homotopy cofibre along the bottom row is 
$\bigvee_{\sigma\in\mathcal{S}} X_{i_{1}}\wedge\cdots\wedge X_{i_{k}}$.   
The fact that~(\ref{bigpo}) is a pushout implies that the cofibre of the top row 
is also $\bigvee_{\sigma\in\mathcal{S}} X_{i_{1}}\wedge\cdots\wedge X_{i_{k}}$. 
By Remark~\ref{preLcofibremark}, the restriction of~(\ref{bigpo}) to 
\(\namedright{FW(X_{i_{1}},\ldots, X_{i_{k}})}{}{\prod_{j=1}^{k} X_{i_{j}}}\) 
corresponding to a fixed~$\sigma$ induces the inclusion of the wedge summand 
$X_{i_{1}}\wedge\cdots\wedge X_{i_{k}}$ into the cofibre. In our case each such map 
\(\namedright{FW(\Sigma X_{i_{1}},\ldots,\Sigma X_{i_{k}})}{}{\prod_{j=1}^{k}\Sigma X_{i_{j}}}\) 
is induced by a map 
\(\namedright{X_{i_{1}}\ast\cdots\ast X_{i_{k}}}{}{FW(\Sigma X_{i_{1}},\ldots,\Sigma X_{i_{k}})}\). 
Therefore there is a homotopy cofibration sequence 
\[\namedddright{\bigvee_{\sigma\in S} X_{i_{1}}\ast\cdots\ast X_{i_{k}}}{} 
     {\bigcup_{\sigma\in S} FW(\Sigma X_{i_{1}},\ldots,\Sigma X_{i_{k}})}{} 
     {\bigcup_{\sigma\in S}\bigg(\prod_{j=1}^{k}\Sigma X_{i_{j}}\bigg)} 
     {}{\bigvee_{\sigma\in S}\Sigma X_{i_{1}}\wedge\cdots\wedge\Sigma X_{i_{k}}}.\] 
Hence, as~(\ref{bigpo}) is a pushout, the definition of $f$ implies that there is a 
homotopy cofibration 
\[\nameddright{\bigvee_{\sigma\in\mathcal{S}} X_{i_{1}}\ast\cdots\ast X_{i_{k}}} 
   {f}{\sux^{K}}{}{\sux^{\overline{K}}}.\] 
\end{proof} 

\begin{remark} 
Lemma~\ref{desusppreLcofib} is also proved in~\cite[Theorem 5.1 and Remark 5.2]{IK2} 
as a consequence of a grand organizational scheme for polyhedral products called the 
fat wedge filtration. Our formulation is more elementary as the focus is only on what is 
needed for Lemma~\ref{desusppreLcofib}. 
\end{remark} 

Observe that 
$X_{i_{1}}\ast\cdots\ast X_{i_{k}}\simeq\Sigma^{k-1} X_{i_{1}}\wedge\cdots\wedge X_{i_{k}}$. 
As we assume each missing face $\sigma=(i_{1},\ldots,i_{k})\in\mathcal{S}$ has at least 
two vertices, we have $k\geq 2$ so $\Sigma^{k-1} X_{i_{1}}\wedge\cdots\wedge X_{i_{k}}$ 
is a suspension. Let 
\[A=\bigvee_{\sigma\in\mathcal{S}} \Sigma^{k-2} X_{i_{1}}\wedge\cdots\wedge X_{i_{k}}.\] 
As in Remark~\ref{faceremark}, note that the number $k$ depends on $\sigma$ and 
may be different for distinct elements of $\mathcal{S}$. The homotopy cofibration in 
Lemma~\ref{desusppreLcofib} may now be rewritten as follows. 

\begin{proposition} 
   \label{Acofib} 
   Let $K$ be a simplicial complex on the vertex set $[m]$, let $\mathcal{S}$ be a subset 
   of the missing faces of $K$, and let $\overline{K}=K\cup\mathcal{S}$. Then  
   there is a homotopy cofibration  
   \[\nameddright{\Sigma A}{}{\sux^{K}}{}{\sux^{\overline{K}}}.\] 
\end{proposition}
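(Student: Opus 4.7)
The plan is to obtain this cofibration as a direct rewriting of the homotopy cofibration established in Lemma~\ref{desusppreLcofib}. That lemma already produces a homotopy cofibration
\[\nameddright{\bigvee_{\sigma\in\mathcal{S}} X_{i_{1}}\ast\cdots\ast X_{i_{k}}}{f}{\sux^{K}}{}{\sux^{\overline{K}}},\]
and the assumption that every missing face in $\mathcal{S}$ has at least two vertices (which is automatic, since a one-vertex missing face would mean a vertex is not in $K$ while still being in its closure, contradicting the definition of missing face) ensures that each factor $X_{i_{1}}\ast\cdots\ast X_{i_{k}}$ is a genuine suspension. So the main task is just to identify the leftmost term of this cofibration with $\Sigma A$.

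To do this I would use the standard natural homotopy equivalence $Y_{1}\ast\cdots\ast Y_{k}\simeq\Sigma^{k-1}(Y_{1}\wedge\cdots\wedge Y_{k})$, obtained either by iterating the equivalence $Y\ast Z\simeq\Sigma(Y\wedge Z)$ recalled in the excerpt or directly from the standard identification of the iterated join. Applied to $\sigma=(i_{1},\ldots,i_{k})\in\mathcal{S}$, this gives
\[X_{i_{1}}\ast\cdots\ast X_{i_{k}}\simeq\Sigma^{k-1}(X_{i_{1}}\wedge\cdots\wedge X_{i_{k}})\simeq \Sigma\bigl(\Sigma^{k-2}X_{i_{1}}\wedge\cdots\wedge X_{i_{k}}\bigr),\]
so that taking the wedge over all $\sigma\in\mathcal{S}$ and remembering that suspension commutes with wedges yields
\[\bigvee_{\sigma\in\mathcal{S}} X_{i_{1}}\ast\cdots\ast X_{i_{k}}\simeq\Sigma\bigg(\bigvee_{\sigma\in\mathcal{S}}\Sigma^{k-2}X_{i_{1}}\wedge\cdots\wedge X_{i_{k}}\bigg)=\Sigma A.\]

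Finally, substituting this equivalence into the cofibration from Lemma~\ref{desusppreLcofib} produces the desired homotopy cofibration
\[\nameddright{\Sigma A}{}{\sux^{K}}{}{\sux^{\overline{K}}},\]
where the left-hand map is $f$ composed with the equivalence. There is no essential obstacle here; the proposition is really just a notational repackaging of Lemma~\ref{desusppreLcofib} that presents the leftmost term as a single suspension $\Sigma A$, which is the form needed to feed this cofibration into Theorem~\ref{prodextend} in the next stage of the argument (leading to Theorem~\ref{polyWhintro}).
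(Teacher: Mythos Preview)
Your proof is correct and follows essentially the same route as the paper: the proposition is stated there with no proof beyond a $\qqed$, since the preceding paragraph already observes that $X_{i_{1}}\ast\cdots\ast X_{i_{k}}\simeq\Sigma^{k-1}X_{i_{1}}\wedge\cdots\wedge X_{i_{k}}$ and that $k\geq 2$ makes this a suspension, so the cofibration of Lemma~\ref{desusppreLcofib} rewrites exactly as asserted. Your justification that every missing face has at least two vertices (because $K$ is on the vertex set $[m]$, so every singleton is already a face) is a small addition the paper leaves implicit.
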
 
\vspace{-1cm}~$\qqed$\bigskip 

The point of Proposition~\ref{Acofib} is to put us in a position to apply Theorem~\ref{prodextend}. 
Let $K$ be a simplicial complex on the vertex set $[m]$, let $\mathcal{S}$ be a subset of 
the missing faces of $K$ and let $\overline{K}=K\cup\mathcal{S}$. Then there is a homotopy 
fibration diagram 
\[\diagram 
        E\rto\dto^{p} & \overline{E}\dto^{\overline{p}} \\ 
        \sux^{K}\rto\dto^{w} & \sux^{\overline{K}}\dto^{\overline{w}} \\ 
        \prod_{i=1}^{m}\Sigma X_{i}\rdouble & \prod_{i=1}^{m}\Sigma X_{i}  
  \enddiagram\] 
where $w$ and $\overline{w}$ are inclusions. By~\cite{DS}, there is are homotopy equivalences 
\[E\simeq\sclxx^{K}\qquad\overline{E}\simeq\sclxx^{\overline{K}}\] 
and, under these equivalences, the maps $p$ and $\overline{p}$ become maps of polyhedral products 
induced by appropriate maps of pairs of spaces. The inclusion of the vertex set into $K$ 
induces a map of polyhedral products 
\[v\colon\namedright{\bigvee_{i=1}^{m}\Sigma X_{i}}{}{\sux^{K}}\] 
with the property that the composite 
\(\nameddright{\bigvee_{i=1}^{m}\Sigma X_{i}}{v}{\sux^{K}}{w}{\prod_{i=1}^{m}\Sigma X_{i}}\)  
is the inclusion of the wedge into the product. For $1\leq k\leq m$, 
let $v_{k}$ be the composite 
\[v_{k}\colon\nameddright{\Sigma X_{k}}{\Sigma t_{k}}{\bigvee_{i=1}^{m}\Sigma X_{i}}{v}{\sux^{K}}.\] 
By Lemma~\ref{Acofib} there is a homotopy cofibration  
\[\nameddright{\Sigma A}{}{\sux^{K}}{}{\sux^{\overline{K}}}.\] 
Thus all the hypotheses of Theorem~\ref{prodextend} apply and we obtain the following, 
which is a restatement of Theorem~\ref{polyWhintro}.  

\begin{theorem} 
   \label{polyWh} 
   There is a homotopy cofibration 
   \[\nameddright{\displaystyle\bigvee_{k=0}^{\infty}\ \bigvee_{1\leq i_{1}\leq\cdots\leq i_{k}\leq m} 
              (X_{i_{1}}\wedge\cdots\wedge X_{i_{k}})\wedge\Sigma A}{\zeta}{\sclxx^{K}} 
              {}{\sclxx^{\overline{K}}}\] 
    where the map $\zeta$ is congruent to a map $\zeta'$ satisfying a homotopy commutative diagram 
    \[\diagram 
            \displaystyle\bigvee_{k=0}^{\infty}\ \bigvee_{1\leq i_{1}\leq\cdots\leq i_{k}\leq m} 
              (X_{i_{1}}\wedge\cdots\wedge X_{i_{k}})\wedge\Sigma A\rto^-{\zeta'} 
                    \drto_-(0.6){\bigvee_{k=1}^{\infty}\ \bigvee_{1\leq i_{1}\leq\cdots\leq i_{k}\leq m} 
                    [v_{i_{1}},[v_{i_{2}},[\cdots [v_{i_{k}},f]]\cdots ]\hspace{2cm}} 
                 & \sclxx^{K}\dto^{p} \\ 
            & \sux^{K}.   
        \enddiagram\] 
\end{theorem}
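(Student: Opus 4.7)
The plan is to verify that the setup preceding the statement of Theorem~\ref{polyWh} exactly satisfies the hypotheses of Theorem~\ref{prodextend}, and then read off the conclusion. There are essentially three things to check: that the homotopy cofibration of polyhedral products from Proposition~\ref{Acofib} produces a valid attaching map $f\colon\Sigma A\to\sux^K$ satisfying $w\circ f\simeq\ast$; that the homotopy fibres $E$ and $E'$ of the two columns can be identified with $\sclxx^K$ and $\sclxx^{\overline{K}}$ respectively, compatibly with the map $p$; and that the Whitehead products appearing in the conclusion of Theorem~\ref{prodextend} match those written in Theorem~\ref{polyWh}.

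First I would establish the factorization setup. By Proposition~\ref{Acofib} there is a homotopy cofibration $\nameddright{\Sigma A}{f}{\sux^K}{}{\sux^{\overline{K}}}$, and by the construction in Lemma~\ref{desusppreLcofib}, the restriction of $f$ to each wedge summand $X_{i_1}\ast\cdots\ast X_{i_k}$ (indexed by a missing face $\sigma=(i_1,\ldots,i_k)\in\mathcal{S}$) factors as
\[\nameddright{X_{i_1}\ast\cdots\ast X_{i_k}}{}{FW(\Sigma X_{i_1},\ldots,\Sigma X_{i_k})}{}{\sux^K}.\]
Postcomposing with $w\colon\sux^K\to\prod_{i=1}^m\Sigma X_i$, the image of each wedge summand lies in the inclusion $\prod_{j=1}^k\Sigma X_{i_j}\hookrightarrow\prod_{i=1}^m\Sigma X_i$, so the composite factors through
\[\nameddright{X_{i_1}\ast\cdots\ast X_{i_k}}{}{FW(\Sigma X_{i_1},\ldots,\Sigma X_{i_k})}{}{\textstyle\prod_{j=1}^k\Sigma X_{i_j}}.\]
This is the connecting map of the fat wedge cofibration and is null homotopic, so $w\circ f\simeq\ast$.

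Second, I would import the Denham--Suciu identification~\cite{DS}: the homotopy fibre of $w\colon\sux^K\to\prod_{i=1}^m\Sigma X_i$ is homotopy equivalent to $\sclxx^K$, with the natural map to $\sux^K$ corresponding to the canonical polyhedral product projection, and similarly for $\overline{K}$. This gives a homotopy fibration diagram in the shape of~(\ref{MEdgrm}) with $M=\sux^K$, $M'=\sux^{\overline{K}}$, $E=\sclxx^K$, $E'=\sclxx^{\overline{K}}$ and $p$ the standard projection. The inclusion of the vertex set yields $v\colon\bigvee_{i=1}^m\Sigma X_i\to\sux^K$ with $w\circ v$ the canonical wedge-into-product inclusion, so all data hypothesized in the paragraph preceding Theorem~\ref{prodextend} are in place; in particular $\Omega w$ admits a right homotopy inverse via $\Psi$ defined in~(\ref{Psidef}). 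Each vertex map $v_k\colon\Sigma X_k\to\sux^K$ matches the map called $v_k$ in Theorem~\ref{prodextend}.

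Third, I would apply Theorem~\ref{prodextend} directly. It produces a homotopy cofibration
\[\nameddright{\displaystyle\bigvee_{k=0}^{\infty}\ \bigvee_{1\leq i_1\leq\cdots\leq i_k\leq m}(X_{i_1}\wedge\cdots\wedge X_{i_k})\wedge\Sigma A}{\zeta}{\sclxx^K}{}{\sclxx^{\overline{K}}}\]
together with a map $\zeta'$ congruent to $\zeta$ whose composite with $p$ is the wedge sum of iterated Whitehead products $[v_{i_1},[v_{i_2},[\cdots[v_{i_k},f]]\cdots]]$. This is exactly the assertion of Theorem~\ref{polyWh}. The main obstacle I anticipate is the bookkeeping around the $k=0$ term in the indexing and confirming that the $k=0$ summand $\Sigma A$ maps to $\sux^K$ via $f$ itself (treating the empty iterated Whitehead product as $f$), which is already handled by the convention in Theorem~\ref{prodextend}; everything else is a direct translation once the Denham--Suciu identification and the null-homotopy $w\circ f\simeq\ast$ are in hand.
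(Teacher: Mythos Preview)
Your proposal is correct and follows the same approach as the paper: verify the hypotheses of Theorem~\ref{prodextend} and read off the conclusion. The paper's own proof is a single sentence (``Thus all the hypotheses of Theorem~\ref{prodextend} apply''), relying on the naturality of the polyhedral product inclusion into $\prod_{i=1}^{m}\Sigma X_{i}$ to get the extension $\overline{w}$ of $w$ (and hence $w\circ f\simeq\ast$) for free; your explicit verification via the fat-wedge cofibration is a valid and slightly more hands-on alternative, but the overall argument is the same.
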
 
\vspace{-0.8cm}~$\qqed$\bigskip 

The value of Theorem~\ref{polyWh} comes from the potential for playing off $\zeta$ 
and $\zeta'$ in order to determine the homotopy type of $\sclxx^{\overline{K}}$ or the 
homotopy class of $p$. 
One way this can be used is explored in the next subsection. 
Before beginning that, it is worth noting that there are many contexts in which one might 
examine the transition in polyhedral products from $\sclxx^{K}$ to $\sclxx^{\overline{K}}$.  
     
\begin{example} 
\label{onefaceexample} 
One way to form a simplicial complex is to start with the vertex set and iteratively add 
one missing face at a time. For example, if $\sigma=(i_{1},\ldots,i_{\ell})$ is a missing face 
of $K$ and $\overline{K}=K\cup\sigma$ then the space $\Sigma A$ in Theorem~\ref{polyWh} 
is $X_{i_{1}}\ast\cdots\ast X_{i_{\ell}}$, and the theorem informs on the homotopy type 
of $\sclxx^{\overline{K}}$. 
\end{example} 

\begin{example} 
\label{skeletonexample} 
The process in Example~\ref{onefaceexample} may be accelerated by building up 
the simplicial complex skeleton-by-skeleton. Let $K$ be a simplicial complex. 
For $0\leq t\leq m-1$ let $K_{t}$ be the full $t$-skeleton of $K$. That is, 
$K_{t}$ is the simplicial complex consisting of all the faces of $K$ of 
dimension~$\leq t$. Notice that if $\sigma\in K_{t}$ then $\partial\sigma\subseteq K_{t-1}$. 
Notice also that $K_{0}$ is the vertex set of $K$. For $1\leq t\leq m-1$, 
let $\mathcal{S}_{t}=\{\sigma_{1},\ldots,\sigma_{r_{t}}\}$ be the set of 
$t$-dimensional faces of $K$. Observe that 
\[K_{t}=K_{t-1}\cup\mathcal{S}_{t}.\] 
Theorem~\ref{polyWh} then gives an approach to analyzing the homotopy type of 
$\sclxx^{K}$ by ``filtering" it via the spaces $\{\sclxx^{K_{t}}\}_{t=0}^{m-1}$. 
\end{example} 

\begin{example} 
\label{MFexample} 
Another curious example is to start with a simplicial complex $K$ and attach 
all of its missing faces simultaneously. That is, if $\mathcal{S}$ is the set of all 
missing faces of $K$, then let $\overline{K}=K\cup\mathcal{S}$. 
\end{example} 
\medskip 

\noindent 
\textbf{Properties when 
\(\namedright{\sclxx^{K}}{}{\sclxx^{\overline{K}}}\) 
is null homotopic}. 
This is related to Example~\ref{MFexample}. Let $K$ be a simplicial complex on the 
vertex set $[m]$. For a subset $I\subseteq [m]$ the \emph{full subcomplex} $K_{I}$ of $K$ 
is the simplicial complex consisting of those faces in $K$ whose vertices all lie in~$I$. 
There is a simplicial inclusion 
\(\namedright{K_{I}}{}{K}\) 
but this does not have a left inverse that is a simplicial map. On the other hand, the 
induced map of polyhedral products 
\(\namedright{\uxa^{K_{I}}}{}{\uxa^{K}}\) 
does have a left inverse constructed via projection maps~\cite{DS}. A missing face 
$\tau$ of $K$ has the property that $\partial\tau\subseteq K$ but $\tau\notin K$. 
If $\tau=(i_{1},\ldots,i_{k})$, let $I=\{i_{1},\ldots,i_{k}\}$. Then $K_{I}=\partial\tau$, 
so $\uxa^{\partial\tau}$ retracts off $\uxa^{K}$. In the case of pairs $\cxx$, 
the polyhedral product $\cxx^{\partial\tau}$ is homotopy equivalent to 
$\Sigma^{k-1} X_{i_{1}}\ast\cdots\ast X_{i_{k}}$~\cite{GT2}. 
In particular, $\cxx^{\partial\tau}$ is not contractible if each of $X_{i_{1}}$ through~$X_{i_{k}}$ 
is not contractible. Therefore if 
\(\namedright{K}{}{L}\) 
is any simplicial inclusion and $K$ and $L$ share a missing face $\tau$ 
then $\cxx^{\partial\tau}$ is a nontrivial retract of both $\cxx^{K}$ and $\cxx^{L}$. 
Hence if the induced map of polyhedral products  
\(\namedright{\cxx^{K}}{}{\cxx^{L}}\) 
is null homotopic then it must be the case that every missing face of $K$ is a face 
of $L$. Consequently, there must be a factorization 
\(\nameddright{K}{}{\overline{K}}{}{L}\) 
where $\overline{K}=K\cup\mathcal{S}$ for $\mathcal{S}$ the set of all missing faces of $K$. 

This lets us focus on when the map 
\(\namedright{\cxx^{K}}{}{\cxx^{\overline{K}}}\) 
is null homotopic. Note that while it is necessary to fill in the missing faces of $K$ to obtain 
such a null homotopy of polyhedral products it may not be sufficient. An example when it 
is sufficient is the following. Take $m=3$ and let $K=\{\{1\},\{2\},\{3\}\}$ be the 
simplicial complex determined by the three vertices. The missing faces of $K$ are 
$\mathcal{S}=\{(1,2),(1,3),(2,3)\}$. Let $\overline{K}=K\cup\mathcal{S}$, so 
$\overline{K}$ is the boundary of $\Delta^{2}$. Then, as in~\cite{GT1},  
\(\namedright{\cxx^{K}}{}{\cxx^{\overline{K}}}\) 
is null homotopic. 

Specialize now to the case when $\cxx$ is of the form $\sclxx$. In Proposition~\ref{KbarK} 
it is shown that the homotopy types of $\Sigma\sclxx^{K}$ and $\sclxx^{\overline{K}}$ are, 
in a precise sense, complementary. 

\begin{proposition} 
   \label{KbarK} 
   Let $K$ be a simplicial complex on the vertex set $[m]$ and let $\mathcal{S}$ 
   be the set of missing faces of $K$. Suppose that $\overline{K}=K\cup\mathcal{S}$ has the 
   property that the map of polyhedral products 
   \(\namedright{\sclxx^{K}}{}{\sclxx^{\overline{K}}}\) 
   is null homotopic. Then there is a homotopy equivalence 
   \[\Sigma\left(\displaystyle\bigvee_{k=0}^{\infty}\ \bigvee_{1\leq i_{1}\leq\cdots\leq i_{k}\leq m} 
              (X_{i_{1}}\wedge\cdots\wedge X_{i_{k}})\wedge\Sigma A\right)\simeq 
              \sclxx^{\overline{K}}\vee\Sigma\sclxx^{K}.\] 
\end{proposition}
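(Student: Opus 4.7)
The approach is to combine Theorem~\ref{polyWh} with the standard observation that the mapping cone of a null homotopic map splits as a wedge. Write
\[W=\bigvee_{k=0}^{\infty}\ \bigvee_{1\leq i_{1}\leq\cdots\leq i_{k}\leq m}(X_{i_{1}}\wedge\cdots\wedge X_{i_{k}})\wedge\Sigma A\]
for the source of the map $\zeta$ supplied by Theorem~\ref{polyWh}. That theorem provides a homotopy cofibration
\[\nameddright{W}{\zeta}{\sclxx^{K}}{j}{\sclxx^{\overline{K}}},\]
in which $j$ is the natural map of polyhedral products induced by the simplicial inclusion $K\hookrightarrow\overline{K}$; this identification is immediate from the construction of $\zeta$ in Theorem~\ref{prodextend} via the data $\nameddright{\Sigma A}{}{\sux^{K}}{}{\sux^{\overline{K}}}$ combined with the identifications $E\simeq\sclxx^{K}$ and $E'\simeq\sclxx^{\overline{K}}$. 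By hypothesis, $j$ is null homotopic.

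The next step is to shift the cofibration one place along the Puppe sequence: the three-term sequence $\sclxx^{K}\xrightarrow{j}\sclxx^{\overline{K}}\xrightarrow{\delta}\Sigma W$ is again a homotopy cofibration, so $\Sigma W$ is the mapping cone $C_{j}$. Now invoke the general principle that for any null homotopic map $f\colon X\to Y$, a chosen null homotopy $H\colon X\times I\to Y$ furnishes a homotopy equivalence
\[Y\vee\Sigma X\xrightarrow{\ \simeq\ }C_{f},\]
defined on the first summand by the inclusion $Y\hookrightarrow C_{f}$ and on the second summand by combining the cone $CX\subset C_{f}$ on the upper half with the null homotopy $H$ on the lower half (these match on $X\times\{1/2\}$ since both hit the basepoint after $H_{1}=\ast$). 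Applying this with $f=j$, $X=\sclxx^{K}$, $Y=\sclxx^{\overline{K}}$ yields a homotopy equivalence
\[\sclxx^{\overline{K}}\vee\Sigma\sclxx^{K}\xrightarrow{\ \simeq\ }C_{j}\simeq\Sigma W,\]
which is exactly the claimed homotopy equivalence.

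There is no serious obstacle here: once Theorem~\ref{polyWh} is in hand, the proposition is essentially a formal consequence of the fact that the mapping cone of a null homotopic map retracts onto the target wedged with the suspension of the source. The only ingredient that warrants brief mention is the verification that the map $j$ produced by Theorem~\ref{polyWh} agrees up to homotopy with the natural polyhedral product inclusion named in the hypothesis, but this is built into the construction and not a separate issue to resolve.
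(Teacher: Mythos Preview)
Your proof is correct and follows the same approach as the paper: invoke the homotopy cofibration of Theorem~\ref{polyWh} and then use that the right-hand map is null homotopic to split the next stage of the Puppe sequence. The paper's proof simply states that ``the assertion now follows immediately'' after recording the cofibration and the null homotopy, whereas you have helpfully spelled out the standard mapping-cone splitting argument.
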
 

\begin{proof} 
By Theorem~\ref{polyWh} there is a homotopy cofibration 
   \[\nameddright{\displaystyle\bigvee_{k=0}^{\infty}\ \bigvee_{1\leq i_{1}\leq\cdots\leq i_{k}\leq m} 
              (X_{i_{1}}\wedge\cdots\wedge X_{i_{k}})\wedge\Sigma A}{\zeta}{\sclxx^{K}} 
              {}{\sclxx^{\overline{K}}}.\] 
By hypothesis, the right map is null homotopic. The assertion now follows immediately. 
\end{proof} 

One condition implying that the map 
\(\namedright{\sclxx^{K}}{}{\sclxx^{\overline{K}}}\) 
is null homotopic is if the map $\zeta$ in the homotopy cofibration of Theorem~\ref{polyWh} 
has a right homotopy inverse. In that case the congruence in Theorem~\ref{polyWh} allows 
for more to be said. 

\begin{proposition} 
   \label{KbarK2} 
   Let $K$ be a simplicial complex on the vertex set $[m]$, let $\mathcal{S}$ be the set 
   of missing faces of $K$ and let $\overline{K}=K\cup\mathcal{S}$. If the map $\zeta$ 
   in Theorem~\ref{polyWh} has a right homotopy inverse then the map 
   \(\namedright{\sclxx^{K}}{p}{\sux^{K}}\) 
   factors through the wedge sum of Whitehead products 
   $\bigvee_{k=1}^{\infty}\ \bigvee_{1\leq i_{1}\leq\cdots\leq i_{k}\leq m} 
                    [v_{i_{1}},[v_{i_{2}},[\cdots [v_{i_{k}},f]]$ 
   and Proposition~\ref{KbarK} holds. 
\end{proposition}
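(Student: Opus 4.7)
The plan splits into two parts: verifying that the hypothesis of Proposition~\ref{KbarK} holds (i.e.\ that \(\sclxx^{K}\to\sclxx^{\overline{K}}\) is null homotopic) and constructing the factorization of \(p\). For brevity, write \(B\) for the big wedge
\(\bigvee_{k\geq 0}\bigvee_{1\leq i_{1}\leq\cdots\leq i_{k}\leq m}(X_{i_{1}}\wedge\cdots\wedge X_{i_{k}})\wedge\Sigma A\)
and \(W\) for the indicated wedge of iterated Whitehead products into \(\sux^{K}\).

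First I would handle Proposition~\ref{KbarK}. Theorem~\ref{polyWh} supplies a homotopy cofibration
\(\nameddright{B}{\zeta}{\sclxx^{K}}{j}{\sclxx^{\overline{K}}}\),
which in particular gives \(j\circ\zeta\simeq\ast\). If \(r\colon\sclxx^{K}\to B\) is a right homotopy inverse for \(\zeta\), meaning \(\zeta\circ r\simeq\mathrm{id}_{\sclxx^{K}}\), then
\[j\simeq j\circ(\zeta\circ r)=(j\circ\zeta)\circ r\simeq\ast.\]
Hence \(\sclxx^{K}\to\sclxx^{\overline{K}}\) is null homotopic, the hypothesis of Proposition~\ref{KbarK} is satisfied, and its conclusion follows.

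Next comes the factorization. By Theorem~\ref{polyWh} the map \(\zeta'\) satisfies \(p\circ\zeta'\simeq W\) and is congruent to \(\zeta\), i.e.\ \(\Sigma\zeta\simeq\Sigma\zeta'\). The transfer from \(\zeta\) to \(\zeta'\) proceeds via the self-map \(\zeta'\circ r\colon\sclxx^{K}\to\sclxx^{K}\). Suspending and using the congruence,
\[\Sigma(\zeta'\circ r)=\Sigma\zeta'\circ\Sigma r\simeq\Sigma\zeta\circ\Sigma r=\Sigma(\zeta\circ r)\simeq\Sigma(\mathrm{id}_{\sclxx^{K}}),\]
so \(\zeta'\circ r\) is congruent to the identity and therefore induces the identity on homology. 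Since the \(X_{i}\) are path-connected the pair \((C\Omega\Sigma X_{i},\Omega\Sigma X_{i})\) has contractible total space and connected subspace, from which a standard connectivity check yields that \(\sclxx^{K}\) is simply-connected; Whitehead's Theorem then forces \(\zeta'\circ r\) to be a homotopy equivalence. Let \(s\) be a homotopy inverse and set \(\alpha=r\circ s\colon\sclxx^{K}\to B\). Then \(\zeta'\circ\alpha\simeq\mathrm{id}_{\sclxx^{K}}\), and consequently
\[p\simeq p\circ\zeta'\circ\alpha\simeq W\circ\alpha,\]
exhibiting the desired factorization through the wedge of iterated Whitehead products.

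The main obstacle is precisely the passage from congruence to genuine homotopy: the hypothesis gives only that \(\zeta\) has a right inverse, while the Whitehead product data lives on the congruent map \(\zeta'\). Simply composing \(W\) with \(r\) would yield only a congruence \(\Sigma p\simeq\Sigma(W\circ r)\), not a homotopy. Repairing this requires the Whitehead-Theorem step applied to \(\zeta'\circ r\), which in turn depends on \(\sclxx^{K}\) being simply-connected. This is the same recurring mechanism used throughout the paper---congruence transports ``multiplicative'' information about \(\zeta\) across to ``Whitehead-product'' information about \(\zeta'\), and Whitehead's Theorem closes the resulting gap whenever the ambient spaces are simply-connected; the remainder of the argument is formal.
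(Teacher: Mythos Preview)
Your proof is correct and follows essentially the same approach as the paper: both arguments use the congruence $\Sigma\zeta\simeq\Sigma\zeta'$ to show that $\zeta'\circ r$ induces an isomorphism in homology, invoke Whitehead's Theorem to upgrade this to a homotopy equivalence, and then use the resulting right homotopy inverse for $\zeta'$ together with $p\circ\zeta'\simeq W$ to factor $p$. Your treatment is slightly more explicit in writing out the factorization $p\simeq W\circ\alpha$ and in justifying simple connectivity of $\sclxx^{K}$, but the underlying mechanism is identical.
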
 

\begin{proof} 
Let 
\[s\colon\namedright{\sclxx^{K}}{}
        {\displaystyle\bigvee_{k=0}^{\infty}\ \bigvee_{1\leq i_{1}\leq\cdots\leq i_{k}\leq m} 
        (X_{i_{1}}\wedge\cdots\wedge X_{i_{k}})\wedge\Sigma A}\] 
be a right homotopy inverse of $\zeta$. Consider the composite 
\[\nameddright{\sclxx^{K}}{s}
        {\displaystyle\bigvee_{k=0}^{\infty}\ \bigvee_{1\leq i_{1}\leq\cdots\leq i_{k}\leq m} 
        (X_{i_{1}}\wedge\cdots\wedge X_{i_{k}})\wedge\Sigma A}{\zeta'}{\sclxx^{K}}.\] 
Since $\zeta$ and $\zeta'$ are congruent, they have the same image in homology. 
Therefore, $(\zeta'\circ s)_{\ast}=(\zeta\circ s)_{\ast}$. As $s$ is a right homotopy inverse 
of $\zeta$, the map $(\zeta\circ s)_{\ast}$ is the identity map in homology. Thus 
$\zeta'\circ s$ induces an isomorphism in homology and so is a homotopy equivalence 
by Whitehead's Theorem. Consequently, the homotopy commutative diagram 
involving $\zeta'$ in Theorem~\ref{KbarK} implies that $p$ factors through the 
sum of Whitehead products 
$\bigvee_{k=1}^{\infty}\ \bigvee_{1\leq i_{1}\leq\cdots\leq i_{k}\leq m} 
                    [v_{i_{1}},[v_{i_{2}},[\cdots [v_{i_{k}},f]]$. 
                    
Next, by Theorem~\ref{polyWh} there is a homotopy cofibration 
\[\nameddright{\displaystyle\bigvee_{k=0}^{\infty}\ \bigvee_{1\leq i_{1}\leq\cdots\leq i_{k}\leq m} 
           (X_{i_{1}}\wedge\cdots\wedge X_{i_{k}})\wedge\Sigma A}{\zeta}{\sclxx^{K}} 
           {}{\sclxx^{\overline{K}}}.\] 
The existence of a right homotopy inverse for $\zeta$ implies that the right map in 
this homotopy cofibration is null homotopic. Therefore Proposition~\ref{KbarK} holds as well. 
\end{proof} 

Propositions~\ref{KbarK} and~\ref{KbarK2} raise several interesting questions. 

\begin{problem} 
For which $K$ and $\overline{K}$ is the map 
\(\namedright{\sclxx^{K}}{}{\sclxx^{\overline{K}}}\) 
null homotopic? 
\end{problem} 

\begin{problem} 
For which $K$ and $\overline{K}$ does the map $\zeta$ in Proposition~\ref{KbarK} have 
a right homotopy inverse? 
\end{problem} 

\begin{problem}  
\label{wedgesplitproblem} 
In the homotopy decomposition in Proposition~\ref{KbarK}, 
does each of the wedge summands $\Sigma X_{i_{1}}\wedge\cdots\wedge X_{i_{k}}\wedge\Sigma A$ 
map wholly to one of $\sclxx^{\overline{K}}$ or $\Sigma\sclxx^{K}$, 
or are there cases when there is a nontrivial decomposition 
\[\Sigma X_{i_{1}}\wedge\cdots\wedge X_{i_{k}}\wedge\Sigma A\simeq B\vee C\] 
with $B$ retracting off $\sclxx^{\overline{K}}$ and $C$ retracting off $\Sigma\sclxx^{K}$?  
For which $(i_{1},\ldots,i_{k})$ does 
$\Sigma X_{i_{1}}\wedge\cdots\wedge X_{i_{k}}\wedge\Sigma A$ 
map wholly into $\sclxx^{\overline{K}}$ or into $\Sigma\sclxx^{K}$? 
\end{problem} 

Despite the potential ambiguity involved in the homotopy decomposition in Proposition~\ref{KbarK} 
stated in Problem~\ref{wedgesplitproblem}, there are cases where interesting information can be 
extracted regardless. Suppose that for $1\leq i\leq m$ each space $X_{i}$ is a sphere. 
By definition, the space $A$ is a wedge sum of spaces of the form 
$X_{i_{1}}\wedge\cdots\wedge X_{i_{\ell}}$, and so is homotopy equivalent to a 
wedge of spheres. Therefore each of the spaces 
$X_{i_{1}}\wedge\cdots\wedge X_{i_{k}}\wedge\Sigma A$ is homotopy equivalent to 
a wedge of spheres, and hence 
$\displaystyle\bigvee_{k=0}^{\infty}\ \bigvee_{1\leq i_{1}\leq\cdots\leq i_{k}\leq m} 
              (X_{i_{1}}\wedge\cdots\wedge X_{i_{k}})\wedge\Sigma A$ 
is homotopy equivalent to a wedge of spheres. Any retract of this large wedge is then 
homotopy equivalent to a wedge of spheres. In particular, from the decomposition in 
Proposition~\ref{KbarK} we obtain the following. 

\begin{corollary} 
   \label{KbarKspheres} 
   Let $K$ be a simplicial complex on the vertex set $[m]$ and let $\mathcal{S}$ 
   be the set of missing faces of $K$. Suppose that $\overline{K}=K\cup\mathcal{S}$ has the 
   property that the map of polyhedral products 
   \(\namedright{\sclxx^{K}}{}{\sclxx^{\overline{K}}}\) 
   is null homotopic. If each space $X_{i}$ is a sphere for \mbox{$1\leq i\leq m$}, 
   then $\sclxx^{\overline{K}}$ is homotopy equivalent to a wedge of spheres.~$\qqed$ 
\end{corollary}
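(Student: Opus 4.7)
The plan is to apply Proposition~\ref{KbarK} directly and then invoke the standard fact that a simply-connected retract of a wedge of spheres is itself homotopy equivalent to a wedge of spheres.

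First I would verify that the space
\[
W \;=\; \bigvee_{k=0}^{\infty}\ \bigvee_{1\leq i_{1}\leq\cdots\leq i_{k}\leq m}(X_{i_{1}}\wedge\cdots\wedge X_{i_{k}})\wedge\Sigma A
\]
appearing in Proposition~\ref{KbarK} is a wedge of spheres under our hypothesis that each $X_i$ is a sphere. Indeed, the space $A$ defined just before Proposition~\ref{Acofib} is itself a wedge of iterated suspensions of smash products of the $X_i$'s, so $A$, and therefore $\Sigma A$, is a wedge of spheres. Since each iterated smash $X_{i_{1}}\wedge\cdots\wedge X_{i_{k}}$ is a single sphere and smashing distributes over wedges, each wedge summand $(X_{i_{1}}\wedge\cdots\wedge X_{i_{k}})\wedge\Sigma A$ is a wedge of spheres, and hence so is $W$. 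Consequently $\Sigma W$ is a wedge of spheres as well.

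Second, I would invoke Proposition~\ref{KbarK}: the null-homotopy hypothesis yields a homotopy equivalence $\Sigma W\simeq \sclxx^{\overline{K}}\vee\Sigma\sclxx^{K}$, exhibiting $\sclxx^{\overline{K}}$ as a retract of the wedge of spheres $\Sigma W$. Being a retract of a suspension of a path-connected space, $\sclxx^{\overline{K}}$ is simply-connected.

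The remaining step, which is the main content, is to show that a simply-connected retract $Y$ of a wedge of spheres $W'=\bigvee_{\beta}S^{n_\beta}$ is itself homotopy equivalent to a wedge of spheres. Let $r\colon W'\to Y$ and $s\colon Y\to W'$ be a retraction-section pair. Then $s_{\ast}$ is a split injection on integral homology, so $\widetilde H_{\ast}(Y;\mathbb{Z})$ is a direct summand of the free abelian group $\widetilde H_{\ast}(W';\mathbb{Z})$ and is in particular free abelian, finitely generated in each degree. For each summand inclusion $j_\beta\colon S^{n_\beta}\to W'$, set $\phi_\beta=r\circ j_\beta\colon S^{n_\beta}\to Y$. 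Then $r$ is homotopic to $\bigvee_\beta\phi_\beta$ and is surjective on homology, so the classes $(\phi_\beta)_{\ast}[S^{n_\beta}]$ span $\widetilde H_{\ast}(Y;\mathbb{Z})$. Selecting degree by degree a subfamily $\{\beta_\alpha\}$ whose associated homology classes form a $\mathbb{Z}$-basis of $\widetilde H_{\ast}(Y;\mathbb{Z})$ produces a map
\[
\bigvee_\alpha \phi_{\beta_\alpha}\colon \bigvee_\alpha S^{n_{\beta_\alpha}}\longrightarrow Y
\]
inducing an isomorphism on integral homology between simply-connected $CW$-complexes, hence a homotopy equivalence by Whitehead's Theorem.

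The chief obstacle is this final basis-selection step, though it reduces to routine linear algebra over $\mathbb{Z}$ carried out degree by degree: in each fixed degree $n$, $\widetilde H_n(Y;\mathbb{Z})$ is a direct summand of the finitely generated free abelian group $\widetilde H_n(W';\mathbb{Z})$, it is spanned by the classes $(\phi_\beta)_{\ast}[S^{n_\beta}]$ with $n_\beta=n$, and one simply extracts from these a $\mathbb{Z}$-basis of the summand.
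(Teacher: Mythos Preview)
Your argument is correct and follows the same route as the paper: show that the big wedge $W$ is a wedge of spheres when each $X_i$ is a sphere, invoke Proposition~\ref{KbarK} to exhibit $\sclxx^{\overline{K}}$ as a retract of $\Sigma W$, and conclude that a simply-connected retract of a wedge of spheres is again a wedge of spheres. The only difference is that the paper states this last fact without proof, whereas you supply the standard homology-basis argument; your treatment is thus a more detailed version of the same proof.
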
   

More is true. If the map 
\(\namedright{\sclxx^{K}}{}{\sclxx^{\overline{K}}}\) 
null homotopic then in the homotopy cofibration 
\[\nameddright{\displaystyle\bigvee_{k=0}^{\infty}\ \bigvee_{1\leq i_{1}\leq\cdots\leq i_{k}\leq m} 
           (X_{i_{1}}\wedge\cdots\wedge X_{i_{k}})\wedge\Sigma A}{\zeta}{\sclxx^{K}} 
           {}{\sclxx^{\overline{K}}}\] 
the map $\zeta$ induces an epimorphism in homology. If each $X_{i}$ a sphere for $1\leq i\leq m$ then 
$\displaystyle\bigvee_{k=0}^{\infty}\ \bigvee_{1\leq i_{1}\leq\cdots\leq i_{k}\leq m} 
           (X_{i_{1}}\wedge\cdots\wedge X_{i_{k}})\wedge\Sigma A$ 
is homotopy equivalent to a wedge of spheres so $\zeta_{\ast}$ being an epimorphism 
implies that a subwedge $W$ may be chosen so the composite 
\[W\hookrightarrow\namedright{\displaystyle\bigvee_{k=0}^{\infty}\ \bigvee_{1\leq i_{1}\leq\cdots\leq i_{k}\leq m} 
           (X_{i_{1}}\wedge\cdots\wedge X_{i_{k}})\wedge\Sigma A}{\zeta}{\sclxx^{K}}\] 
induces an isomorphism in homology and so is a homotopy equivalence. Thus $\zeta$ 
has a right homotopy inverse, and now Proposition~\ref{KbarK2} applies. Moreover, 
as $\zeta'$ is congruent to $\zeta$ by Theorem~\ref{polyWh}, they have the same image 
in homology, so the composite 
\[W\hookrightarrow\namedright{\displaystyle\bigvee_{k=0}^{\infty}\ \bigvee_{1\leq i_{1}\leq\cdots\leq i_{k}\leq m} 
           (X_{i_{1}}\wedge\cdots\wedge X_{i_{k}})\wedge\Sigma A}{\zeta'}{\sclxx^{K}}\] 
is a homotopy equivalence and the statement on ``factoring through" a wedge sum of 
Whitehead products in Proposition~\ref{KbarK2} becomes ``is" a wedge sum of Whitehead products. 

\begin{corollary} 
   \label{KbarK2spheres} 
   Let $K$ be a simplicial complex on the vertex set $[m]$ and let $\mathcal{S}$ 
   be the set of missing faces of $K$. Suppose that $\overline{K}=K\cup\mathcal{S}$ has the 
   property that the map of polyhedral products 
   \(\namedright{\sclxx^{K}}{}{\sclxx^{\overline{K}}}\) 
   is null homotopic. If $X_{i}$ is a sphere for $1\leq i\leq m$ then 
   $\sclxx^{K}$ and $\sclxx^{\overline{K}}$ are both 
   homotopy equivalent to wedges of spheres and the map 
   \(\namedright{\sclxx^{K}}{p}{\sux^{K}}\) 
   is a subwedge of the wedge sum of Whitehead products 
   $\bigvee_{k=1}^{\infty}\ \bigvee_{1\leq i_{1}\leq\cdots\leq i_{k}\leq m} 
                    [v_{i_{1}},[v_{i_{2}},[\cdots [v_{i_{k}},f]]$.~$\qqed$ 
\end{corollary}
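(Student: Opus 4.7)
\medskip
The strategy is to exploit the cofibration in Theorem~\ref{polyWh} together with the congruence $\Sigma\zeta\simeq\Sigma\zeta'$, in the very specific setting where every wedge summand in sight is a wedge of spheres.

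First I would invoke Theorem~\ref{polyWh} to obtain the homotopy cofibration
\[\nameddright{\displaystyle\bigvee_{k=0}^{\infty}\ \bigvee_{1\leq i_{1}\leq\cdots\leq i_{k}\leq m}
          (X_{i_{1}}\wedge\cdots\wedge X_{i_{k}})\wedge\Sigma A}{\zeta}{\sclxx^{K}}{}{\sclxx^{\overline{K}}},\]
and note that the hypothesis that $\bigvee X_i$ consists of spheres makes $\Sigma A$, and hence the total domain $D$ of $\zeta$, homotopy equivalent to a wedge of spheres (here $A$ itself is a wedge of spheres by its definition prior to Proposition~\ref{Acofib}, being built from joins of the~$X_i$). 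The null-homotopy hypothesis forces $\zeta$ to induce a surjection on integral homology.

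Next I would choose, summand by summand, a subwedge $W\subseteq D$ of spheres whose inclusion into $D$, followed by $\zeta$, induces an isomorphism on homology onto $\hlgy{\sclxx^K}$; this is possible precisely because $D$ is a wedge of spheres and $\zeta_{\ast}$ is surjective, so one can select generating spheres hitting a basis. Since the domain and target are simply-connected, Whitehead's Theorem upgrades this to a homotopy equivalence $W\xrightarrow{\simeq}\sclxx^K$, proving that $\sclxx^K$ is itself a wedge of spheres. The hypotheses of Proposition~\ref{KbarK2} are now verified (the map $\zeta$ has a right homotopy inverse, namely a homotopy inverse of $W\to\sclxx^K$ composed with the inclusion $W\hookrightarrow D$), and in particular Proposition~\ref{KbarK} applies: $\sclxx^{\overline{K}}$ splits off $\Sigma D$, so it too is a wedge of spheres.

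Finally I would promote the statement about $p$ from ``factors through'' to ``is a subwedge of.'' Here is where the distinction between $\zeta$ and $\zeta'$ becomes the main obstacle that one must navigate carefully. The map $\zeta'$ of Theorem~\ref{polyWh} is congruent to $\zeta$, so $\zeta'_{\ast}=\zeta_{\ast}$, and hence the composite $W\hookrightarrow D\xrightarrow{\zeta'}\sclxx^K$ also induces a homology isomorphism; by Whitehead's Theorem it is itself a homotopy equivalence. Composing a homotopy inverse of this latter equivalence with $p$ and using the lower triangle of Theorem~\ref{polyWh}, we see that $p$ agrees (up to this equivalence) with the restriction to $W$ of the wedge sum of iterated Whitehead products $\bigvee [v_{i_1},[v_{i_2},[\cdots,[v_{i_k},f]]\cdots]$. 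That restriction is, by construction of~$W$, precisely a subwedge of the full wedge of Whitehead products, which gives the claimed description.

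The only real subtlety — and thus the step I expect to require the most care — is ensuring that the subwedge $W$ can be chosen simultaneously to work for both $\zeta$ and $\zeta'$; but because congruent maps have equal induced maps in homology and all spaces involved are simply-connected wedges of spheres, the same $W$ serves both purposes automatically, and no further choice is needed.
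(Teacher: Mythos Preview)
Your proposal is correct and follows essentially the same argument as the paper: use Theorem~\ref{polyWh} to get the cofibration with domain a wedge of spheres, observe that the null-homotopy makes $\zeta_\ast$ surjective, select a subwedge $W$ on which $\zeta$ is a homology isomorphism (hence a homotopy equivalence), deduce that $\sclxx^{K}$ is a wedge of spheres and that $\zeta$ has a right inverse so Proposition~\ref{KbarK2} applies, and finally use the congruence $\zeta_\ast=\zeta'_\ast$ to see that the same $W$ works for $\zeta'$, upgrading ``factors through'' to ``is a subwedge of''. Your closing remark about the single subtlety---that the same $W$ serves both $\zeta$ and $\zeta'$ because congruent maps agree on homology---is exactly the point the paper makes.
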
 

Carrying on, the retraction of $S^{1}$ off $\Omega S^{2}$ induces a retraction of the pair 
$(CS^{1},S^{1})$ off the pair $(C\Omega S^{2},\Omega S^{2})$. Hence for any simplicial 
complex $K$ we obtain a retraction of $(\underline{CS^{1}},\underline{S^{1}})^{K}$ 
off $(\underline{C\Omega S^{2}},\underline{\Omega S^{2}})^{K}$. Further, this  retraction 
is natural for maps of simplicial complexes. Writing $(CS^{1},S^{1})$ 
in the more standard way as $(D^{2},S^{1})$, the polyhedral product 
$(\underline{D^{2}},\underline{S^{1}})^{K}$ is the \emph{moment-angle complex} that is 
critical to toric topology, more commonly written as $\zk$. In the context of 
Corollary~\ref{KbarKspheres}, we obtain compatible retractions of $\zk$ and  
$\mathcal{Z}_{\overline{K}}$ off $(\underline{C\Omega S^{2}},\underline{\Omega S^{2}})^{K}$ 
and $(\underline{C\Omega S^{2}},\underline{\Omega S^{2}})^{\overline{K}}$ respectively. 
The compatible retractions implies that as the map 
\(\namedright{(\underline{C\Omega S^{2}},\underline{\Omega S^{2}})^{K}}{} 
      {(\underline{C\Omega S^{2}},\underline{\Omega S^{2}})^{\overline{K}}}\) 
is null homotopic, so is the map 
\(\namedright{\zk}{}{\mathcal{Z}_{\overline{K}}}\). 
As $(\underline{C\Omega S^{2}},\underline{\Omega S^{2}})^{K}$ and 
$(\underline{C\Omega S^{2}},\underline{\Omega S^{2}})^{\overline{K}}$ are homotopy 
equivalent to wedges of spheres so are $\mathcal{Z}_{K}$ and $\mathcal{Z}_{\overline{K}}$. 

\begin{corollary} 
  \label{KbarKzk} 
   Let $K$ be a simplicial complex on the vertex set $[m]$ and let $\mathcal{S}$ 
   be the set of missing faces of $K$. Suppose that $\overline{K}=K\cup\mathcal{S}$ has the 
   property that the map of polyhedral products 
   \(\namedright{(\underline{C\Omega S^{2}},\underline{\Omega S^{2}})^{K}}{} 
      {(\underline{C\Omega S^{2}},\underline{\Omega S^{2}})^{\overline{K}}}\) 
   is null homotopic. Then the map 
   \(\namedright{\zk}{}{\mathcal{Z}_{\overline{K}}}\) 
   is null homotopic, both $\zk$ and $\mathcal{Z}_{\overline{K}}$ are homotopy 
  equivalent to wedges of spheres, and the map 
   \(\namedright{\zk}{}{(\underline{\mathbb{C}P}^{\infty},\underline{\ast})^{K}}\) 
   is a subwedge of the wedge sum of Whitehead products 
   $\bigvee_{k=1}^{\infty}\ \bigvee_{1\leq i_{1}\leq\cdots\leq i_{k}\leq m} 
                    [v_{i_{1}},[v_{i_{2}},[\cdots [v_{i_{k}},f]]$.~$\qqed$ 
\end{corollary}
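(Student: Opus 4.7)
The plan is to specialize Corollary~\ref{KbarK2spheres} to the case $X_{i}=S^{1}$ for all $i$ and then transport its conclusions across a natural retraction of pairs $(D^{2},S^{1})\hookrightarrow(C\Omega S^{2},\Omega S^{2})$. With this choice $\Sigma X_{i}=S^{2}$ is a sphere and $\sclxx^{K}=(\underline{C\Omega S^{2}},\underline{\Omega S^{2}})^{K}$, so the hypothesis of Corollary~\ref{KbarKzk} is exactly the hypothesis of Corollary~\ref{KbarK2spheres}. Invoking the latter, I would extract that $\sclxx^{K}$ and $\sclxx^{\overline{K}}$ are wedges of simply-connected spheres, and that the map $\sclxx^{K}\to\sux^{K}$ is homotopic to a subwedge of the iterated Whitehead products $[v_{i_{1}},[v_{i_{2}},[\cdots[v_{i_{k}},f]]\cdots]$ supplied by Theorem~\ref{polyWh}.

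Next I would build the pair retraction. Since $\pi_{1}(\Omega S^{2})\cong\pi_{2}(S^{2})\cong\mathbb{Z}$ is generated by the suspension $E\colon S^{1}\to\Omega S^{2}$, and since $S^{1}=K(\mathbb{Z},1)$, the identity on $\pi_{1}$ lifts to a map $r\colon\Omega S^{2}\to S^{1}$ with $r\circ E\simeq\mathrm{id}_{S^{1}}$. Coning $E$ and $r$ produces pair maps $(D^{2},S^{1})\to(C\Omega S^{2},\Omega S^{2})$ and $(C\Omega S^{2},\Omega S^{2})\to(D^{2},S^{1})$ whose composite is the identity on pairs. Functoriality of the polyhedral product, together with naturality in the simplicial inclusion $K\subseteq\overline{K}$, then yields compatible retractions $\zk\hookrightarrow\sclxx^{K}\to\zk$ and $\mathcal{Z}_{\overline{K}}\hookrightarrow\sclxx^{\overline{K}}\to\mathcal{Z}_{\overline{K}}$. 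Hence the map $\zk\to\mathcal{Z}_{\overline{K}}$ is a retract of the null-homotopic map $\sclxx^{K}\to\sclxx^{\overline{K}}$, so is itself null homotopic. Moreover $\zk$ and $\mathcal{Z}_{\overline{K}}$, being simply-connected retracts of wedges of simply-connected spheres, are themselves homotopy equivalent to wedges of spheres (by a minimal cell argument).

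For the Whitehead product description of $\zk\to (\underline{\mathbb{C}P}^{\infty},\underline{\ast})^{K}$, I would factor this map as
\[\zk\hookrightarrow\sclxx^{K}\stackrel{p}{\longrightarrow}\sux^{K}\longrightarrow(\underline{\mathbb{C}P}^{\infty},\underline{\ast})^{K},\]
where the last arrow is induced by the bottom-cell pair inclusion $(S^{2},\ast)\hookrightarrow(\mathbb{C}P^{\infty},\ast)$. This factorization is valid because the canonical pair map $(D^{2},S^{1})\to(\mathbb{C}P^{\infty},\ast)$ defining $\zk\to(\underline{\mathbb{C}P}^{\infty},\underline{\ast})^{K}$ agrees up to pair homotopy with the composite $(D^{2},S^{1})\to(C\Omega S^{2},\Omega S^{2})\to(S^{2},\ast)\hookrightarrow(\mathbb{C}P^{\infty},\ast)$. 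Since $\sclxx^{K}\to\sux^{K}$ is a subwedge of Whitehead products, and the map $\sux^{K}\to(\underline{\mathbb{C}P}^{\infty},\underline{\ast})^{K}$ sends these to the analogous Whitehead products by naturality, the full composite $\zk\to(\underline{\mathbb{C}P}^{\infty},\underline{\ast})^{K}$ is a subwedge of iterated Whitehead products of the required form, with each $v_{i_{j}}$ now interpreted as the inclusion of the $i_{j}^{\text{th}}$ copy of $\mathbb{C}P^{\infty}$ into $(\underline{\mathbb{C}P}^{\infty},\underline{\ast})^{K}$.

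The main obstacle will be the careful naturality bookkeeping in the last step: verifying that the two pair maps $(D^{2},S^{1})\to(\mathbb{C}P^{\infty},\ast)$ agree up to pair homotopy, and checking that the retraction of $\zk$ off $\sclxx^{K}$ selects precisely the subwedge of iterated Whitehead products in the $v_{i_{j}}$'s and $f$ delivered by Theorem~\ref{polyWh}, rather than some twisted or only-congruent variant.
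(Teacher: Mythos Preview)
Your approach is essentially the same as the paper's: specialize Corollary~\ref{KbarK2spheres} to $X_{i}=S^{1}$, use the retraction of $S^{1}$ off $\Omega S^{2}$ to obtain compatible pair retractions and hence retractions $\zk\hookrightarrow\sclxx^{K}$ natural in $K$, and then transport the null homotopy, the wedge-of-spheres property, and the Whitehead product description across these retractions. You are more explicit than the paper about the factorization through $\sux^{K}\to(\underline{\mathbb{C}P}^{\infty},\underline{\ast})^{K}$ needed for the last claim, and your caution about the naturality bookkeeping in that step is appropriate but not a genuine obstacle.
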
 

Corollary~\ref{KbarKzk} is connected to important problems in toric topology 
and combinatorics. By~\cite{BP} the space $\zk$ is homotopy equivalent to the 
complement of the complex coordinate subspace determined by $K$. A major question 
is combinatorics is to determine for which $K$ these complements of coordinate subspace 
arrangements are homotopy equivalent to a wedge of spheres. A series of 
papers~\cite{GT1,GT2,GW,IK1,IK2} identified families of simplicial complexes $K$ 
for which~$\zk$ is homotopy equivalent to a wedge of spheres, including shifted complexes 
and  those whose Alexander duals are vertex decomposable, shellable or sequentially 
Cohen-Macauley. All of these are subsumed by what~\cite{IK2} calls totally fillable or totally 
homology fillable complexes. Another family of simplicial complexes for which $\zk$ is 
homotopy equivalent to a wedge of spheres is flag complexes whose $1$-skeleton 
is a chordal graph~\cite{GPTW}. Several papers have examined when the map from 
$\zk$ to $(\underline{\mathbb{C}P}^{\infty},\underline{\ast})^{K}$ is described by Whitehead 
products~\cite{AP, GT3, IK3}. 

We end by posing a problem regarding how large might be the family of simplicial 
complexes with the property that $\zk$ is homotopy equivalent to a wedge of spheres. 
Let $\mathcal{F}$ be the collection of simplicial complexes that are either totally fillable 
or flag complexes having a $1$-skeleton that is a chordal graph. 

\begin{problem} 
Are there examples of $K$ and $\overline{K}$ in Corollary~\ref{KbarKzk} 
for which $\mathcal{Z}_{\overline{K}}$ or $\zk$ is homotopy equivalent to a 
wedge of spheres but $\overline{K}$ or $K$ is not in $\mathcal{F}$? 
\end{problem}

\newpage 

\bibliographystyle{amsalpha}

\end{document}